\documentclass[11pt]{article}
\usepackage[margin=1in]{geometry}
\usepackage[T1]{fontenc}
\usepackage{mathptmx}
\usepackage{amsthm,amsmath,amsfonts,amssymb}
\usepackage{mathtools}
\usepackage{bm}
\usepackage{booktabs}
\usepackage{graphicx}
\usepackage{enumitem}
\usepackage{microtype}
\usepackage{needspace}
\usepackage{flafter}
\usepackage{etoolbox}
\BeforeBeginEnvironment{theorem}{\Needspace{8\baselineskip}}
\BeforeBeginEnvironment{definition}{\Needspace{8\baselineskip}}
\usepackage[authoryear]{natbib}
\setcitestyle{authoryear,round,semicolon}
\usepackage{xcolor}
\definecolor{linkblue}{rgb}{0.05,0.20,0.55}
\usepackage[colorlinks=true,linkcolor=linkblue,citecolor=linkblue,urlcolor=linkblue]{hyperref}
\hypersetup{pdftitle={Efficiency Optimality without Pathwise Differentiability: A Variational Theory for Marginal-Integral Functionals},pdfauthor={Shuoxun Xu and Xinzhou Guo}}
\providecommand{\texorpdfstring}[2]{#1}

\let\raluoriginalsection\section
\renewcommand{\section}{\Needspace{6\baselineskip}\raluoriginalsection}
\let\raluoriginalsubsection\subsection
\renewcommand{\subsection}{\Needspace{4\baselineskip}\raluoriginalsubsection}
\let\raluoriginalsubsubsection\subsubsection
\renewcommand{\subsubsection}{\Needspace{4\baselineskip}\raluoriginalsubsubsection}
\numberwithin{equation}{section}
\theoremstyle{plain}
\newtheorem{theorem}{Theorem}[section]
\newtheorem{proposition}[theorem]{Proposition}
\newtheorem{corollary}[theorem]{Corollary}
\newtheorem{lemma}[theorem]{Lemma}
\theoremstyle{definition}
\newtheorem{definition}[theorem]{Definition}
\newtheorem{assumption}[theorem]{Assumption}

\newtheorem{remark}[theorem]{Remark}

\newcommand{\R}{\mathbb{R}}
\newcommand{\N}{\mathbb{N}}
\newcommand{\cP}{\mathcal{P}}
\newcommand{\cX}{\mathcal{X}}
\newcommand{\cW}{\mathcal{W}}
\newcommand{\cU}{\mathcal{U}}
\newcommand{\cN}{\mathcal{N}}
\newcommand{\cK}{\mathcal{K}}
\newcommand{\cA}{\mathcal{A}}
\newcommand{\cF}{\mathcal{F}}
\newcommand{\cT}{\mathcal{T}}
\newcommand{\cM}{\mathcal{M}}
\newcommand{\cZ}{\mathcal{Z}}
\newcommand{\cB}{\mathcal{B}}
\newcommand{\cY}{\mathcal{Y}}

\newcommand{\E}{\mathbb{E}}
\newcommand{\Var}{\mathrm{Var}}
\newcommand{\Cov}{\mathrm{Cov}}
\newcommand{\1}{\mathbf{1}}
\DeclareMathOperator{\argmax}{argmax}
\DeclareMathOperator{\argmin}{argmin}
\DeclareMathOperator{\supp}{supp}

\newcommand{\eqdef}{\coloneqq}
\newcommand{\Wext}{\mathcal{W}_{\mathrm{aux}}}
\newcommand{\Vlow}{V_{\mathrm{low}}}
\newcommand{\Vmin}{V^{\min}}
\newcommand{\TRL}{\cT_{\mathrm{RL}}}
\newcommand{\IFRL}{\cF_{\mathrm B}}

\newcommand{\psib}{\bar{\psi}}

\title{Efficiency Optimality without Pathwise Differentiability:\\ A Variational Theory for Marginal-Integral Functionals}
\author{Shuoxun Xu\thanks{Division of Biostatistics, School of Public Health, University of California, Berkeley.  E-mail: \texttt{shuoxunxu\_ucb@berkeley.edu}.}
\and
Xinzhou Guo\thanks{Department of Mathematics, The Hong Kong University of Science and Technology.  E-mail: \texttt{xinzhoug@ust.hk}.  Corresponding author.}}
\date{September 2026}

\begin{document}
\maketitle

\begin{abstract}
In this work, we provide a new perspective on semiparametric efficiency theory. In particular, we reformulate the questions of optimal efficiency and its attainment as a variational problem: minimize variance over estimating functions subject to robust unbiasedness constraints. We develop this theory for marginal-integral functionals without requiring pathwise differentiability. We study estimating functions whose expectations remain equal to the target when any one specified nuisance component is misspecified and the others are correct. We characterize the infimum of their variances through conditional variance minimization. For maxima of affine functions of treatment-specific conditional means, we obtain explicit optimal weights and construct cross-fitted estimators that attain the bound under conditions on nuisance estimation and the probability of near ties. For analogous maxima based on jointly observed quantities, the optimal weights use the full conditional covariance matrix. We also identify conditions under which the variance bound agrees with a classical convolution bound for parametric perturbations that preserve ties to first order. Examples include optimal policy values, $L^1$ calibration error, Balke--Pearl bounds, and mediation parameters. In an application to the National Longitudinal Survey of Young Men, covariance weighting reduces the median estimated variance of the cross-fitted estimating function relative to equal weighting by $12.9\%$ for the lower Balke--Pearl endpoint and $16.5\%$ for the upper, with improvements in all $20$ repeated cross-fitting splits.
\end{abstract}

\noindent\textit{Keywords:} Semiparametric efficiency, nonregular functionals, optimal treatment regimes, multiply robust estimation, influence functions.

\section{Introduction}\label{sec:intro}

Many statistical targets are obtained by applying a known function to conditional means, probabilities, or distributions and then averaging over covariates. An optimal treatment value averages the largest conditional mean outcome across treatments. Calibration error averages an absolute difference between a prediction and the conditional mean outcome given that prediction. Bounds on partially identified parameters often average a maximum or minimum of functions of conditional probabilities. We refer to this general class as \emph{marginal-integral functionals}. It includes both smooth parameters and parameters involving maxima, minima, or absolute values; Section~\ref{sec:mi} gives the formal definition.

Classical semiparametric efficiency theory uses pathwise differentiability to represent changes in the target along regular parametric submodels by a canonical gradient. The convolution theorem then bounds the asymptotic variance of regular estimators \citep{BKRW1993}. For an optimal treatment value, differentiability can fail when several treatments have the same largest conditional mean on a set of positive probability. Similar difficulties arise for other targets involving maxima or absolute values. Asymptotically linear and asymptotically normal estimators may nevertheless exist. The efficiency question is how to compare their asymptotic variances and determine the best precision allowed by specified robustness requirements.

Our central contribution is to reformulate the questions of optimal efficiency and its attainment as a variational problem. We design an estimating function whose expectation equals the target and whose centered version gives an asymptotic linear representation. Its variance determines asymptotic precision, while robust unbiasedness specifies the constraints under which that variance is minimized.

Let $O$ denote an observation, $\cP$ the statistical model, and $\Psi(Q)$ the target under a law $Q\in\cP$. Write $\eta(Q)=(\eta_1(Q),\ldots,\eta_d(Q))$ for the nuisance functions subject to robustness, and $\kappa(Q)$ for target-specific information held at its true value in the unbiasedness requirement. Inside $f$, the nuisance functions are evaluated at the observed covariates. At a law $P\in\cP$, we seek a solution to
\begin{equation}\label{eq:intro-variational}
\begin{aligned}
f^\star &\in \underset{f}{\operatorname{arg\,min}}\;
\Var_P\!\left[f\{O;\eta(P),\kappa(P)\}\right]\\
\text{subject to}\quad
&\E_Q\!\left[f\{O;\eta',\kappa(Q)\}\right]=\Psi(Q),\\
&\text{whenever }\eta'_{-j}=\eta_{-j}(Q),
\quad Q\in\cP,\quad j=1,\ldots,d.
\end{aligned}
\end{equation}
Here $\eta'$ ranges over working nuisance values arising from distributions in $\cP$, and the subscript $-j$ denotes all components except $j$. The equality $\eta'_{-j}=\eta_{-j}(Q)$ means that those components are correct, up to $Q$-null sets. Thus the same estimating function must remain unbiased throughout the model when any one nuisance component is misspecified and the others are correct. Analyst-chosen working quantities are included in the choice of $f$.

The variance infimum in~\eqref{eq:intro-variational} gives the efficiency bound for this class. A minimizing function, when it exists, gives the optimal influence representation after centering at the target. We establish conditions for this minimum to exist and for cross-fitted estimators to attain it. We call the resulting estimator class robust asymptotically linear unbiased, or RALU. For optimal treatment values, $\eta$ consists of the outcome regressions and propensity scores, while $\kappa$ records the set of optimal treatments. The constraint is then unbiasedness when either nuisance collection is correct, with that set held at its true value; a feasible estimator also estimates the optimal set. Section~\ref{sec:ralu} gives the formal class definition.

The softmax approach to valid inference for optimal policy values was initiated by \citet{Chen2025Softmax}. \citet{XuGuo} addressed the efficiency question for the binary optimal-treatment value. Their local smoothing construction has two tuning parameters: one controls the approximation error, while the other determines the relative weights assigned to tied treatments. The optimal weights are proportional to each arm's treatment probability divided by its conditional outcome variance; under homoscedasticity they reduce to the treatment probabilities. Xu and Guo formalized this optimization through the RALU class, derived a sharp asymptotic-variance lower bound, constructed an adaptive-smoothing estimator that attains it, and developed a multi-stage extension in their supplement.

The binary problem illustrates why these weights matter. Let $\mu_a(x)$, $\pi_a(x)$, and $\sigma_a^2(x)$ denote the arm-$a$ outcome regression, propensity, and conditional outcome variance, and let $T=\{x:\mu_1(x)=\mu_0(x)\}$ be the tie set. For the optimal-treatment value $\E[\max_a\mu_a(X)]$, the optimal RALU and equal-weight conditional residual variances on $T$ are, respectively,
\begin{equation}\label{eq:intro-binary}
\begin{aligned}
V_{\mathrm{opt}}(x)&=\Bigl\{\frac{\pi_1(x)}{\sigma_1^2(x)}+\frac{\pi_0(x)}{\sigma_0^2(x)}\Bigr\}^{-1},\\
V_{\mathrm{eq}}(x)&=\frac14\Bigl\{\frac{\sigma_1^2(x)}{\pi_1(x)}+\frac{\sigma_0^2(x)}{\pi_0(x)}\Bigr\}.
\end{aligned}
\end{equation}
They coincide exactly when the two arms have equal conditional information, $\pi_1/\sigma_1^2=\pi_0/\sigma_0^2$. Off $T$, both equal $\sigma_{a^\ast}^2(x)/\pi_{a^\ast}(x)$ for the unique optimal arm. Their total variances add $\Var\{\max_a\mu_a(X)\}$ to the respective integrated conditional residual variances.

Both weighting rules estimate the same optimal value: assigning different probabilities to equally effective treatments leaves the expected outcome unchanged. Their precision differs because the information available about the tied treatments can differ. The minimum-variance rule weights the arms in proportion to $\pi_a/\sigma_a^2$, assigning greater weight to treatments with more information. Theorem~\ref{thm:kink} derives the corresponding formula for any finite number of treatments, and Theorem~\ref{thm:cov-kink} shows how correlations change the optimal weights when several quantities are observed together.

\citet{Whitehouse2025Softmax} developed softmax inference for nonparametric policy values and marginal expectations of finite maxima of affine functions of unknown regressions. Their nonparametric static-policy estimator is semiparametrically efficient when the optimal treatment is unique almost surely; at treatment ties, its asymptotic representation weights the optimal arms equally. Their discussion identifies the need for ``an analogue of efficiency theory for this irregular setting'' that permits direct comparison of irregular estimators. Our theory addresses that question. For static multi-arm optimal-treatment values, the optimal RALU variance is no larger than that of their nonparametric softmax estimator. When total variances are finite, the gain is strict if arms with unequal conditional information are tied on a set of positive probability. Section~\ref{sec:numerics} compares the two estimators in designs exhibiting this difference.

The lower-bound argument varies the covariate distribution while holding the conditional distributions fixed. Unbiasedness under these changes fixes the conditional expectation of the estimating function at the true nuisance values. The law of total variance therefore separates a common contribution from variation in the conditional target and a conditional residual variance that can be optimized. We show that the infimum of total variance equals this common contribution plus the average conditional variance infimum. Additional conditions ensure that a minimizing estimating function exists. For maxima of affine transformations of treatment-specific conditional means, cross-fitting yields estimators that attain the bound when the nuisance estimates converge sufficiently quickly and the probability of near ties satisfies the stated rate conditions.

The same unbiasedness identities explain the scope of the theory. Under the perturbation conditions of Section~\ref{sec:kink}, if one transformed conditional mean uniquely determines the maximum, its transformation must be affine over the relevant interval of conditional means. Nonlinear transformations can remain compatible with unbiasedness when several transformed means remain tied under the allowed perturbations. The most explicit formulas thus concern affine functions, while the general variance minimization principle applies to the broader class of marginal-integral functionals.

We also relate the variance bound to classical semiparametric efficiency. We consider regular parametric perturbations under which tied conditional means have the same first-order change. We identify conditions under which the optimal influence function is the canonical gradient for the tangent space generated by these perturbations, so its variance equals the associated classical convolution bound. The argument uses orthogonal projection onto this tangent space. The connection is developed in Section~\ref{sec:bridge}, including correlated quantities and singular covariance matrices.

The main contributions are the following.
\begin{enumerate}[wide=0pt,label={\textup{(\roman*)}},itemsep=3pt]
\item We extend the RALU class to marginal-integral functionals with any finite number of specified nuisance components. Theorem~\ref{thm:ulb} characterizes the infimum of the variance over estimating functions satisfying the unbiasedness requirements by minimizing conditional variances.
\item We characterize the restrictions that robustness places on the form of the estimating function and on nonlinear transformations of conditional means (Lemma~\ref{lem:func-eq-maxarm} and Proposition~\ref{prop:affine-necessary}). These results explain when the exact unbiasedness requirements can be satisfied.
\item We derive explicit minimum-variance weights for maxima of affine functions, using conditional variances for separately observed treatment outcomes and conditional covariance matrices for jointly observed quantities (Theorems~\ref{thm:kink} and~\ref{thm:cov-kink}). For the treatment-specific models, Theorem~\ref{thm:achieve} and Corollary~\ref{cor:achieve-global} give conditions under which cross-fitted estimators attain the bound.
\item We identify conditions under which the optimal influence function coincides with the canonical gradient for a specified family of regular parametric submodels that preserve ties to first order, giving a classical convolution interpretation of the variance bound (Theorem~\ref{thm:bridge} and Corollary~\ref{cor:bridge-maxarm}).
\item The theory recovers the Xu--Guo optimal-treatment bound, gives a direct efficiency comparison with the nonparametric softmax estimator of \citet{Whitehouse2025Softmax}, and applies to $L^1$ calibration, covariate-assisted Balke--Pearl bounds, and mediation.
\end{enumerate}

The empirical application examines the value of covariance weighting for covariate-assisted Balke--Pearl bounds, following \citet{levis2025covariate}. These bounds combine several functions of conditional probabilities. Their estimating functions use the same observations and can be correlated, so the covariance between them affects precision. In the Card--NLSYM data, optimizing their weights using the full covariance matrix reduces the median estimated variance by $12.94\%$ for the lower endpoint and $16.52\%$ for the upper endpoint relative to equal weighting on the same estimated set of maximizing or minimizing functions. The estimated variance is lower in every repeated cross-fitting split at both endpoints.

Several complementary literatures address inference for nonregular targets. Generalized van Trees inequalities yield local minimax risk bounds without differentiability \citep{TakatsuKuchibhotla2024}. Directional delta methods and local asymptotic minimax theory address directionally differentiable functionals \citep{FangSantos2019,Fang2014LAM}, while smoothing methods replace a nondifferentiable target by a sequence of differentiable approximations \citep{Bibaut2017Smoothing,Chen2025Softmax,Whitehouse2025Softmax}. Online one-step estimation and subagging provide additional procedures for optimal-value inference \citep{LuedtkeVDL2016,ShiLuSong2020}. These works address local risk, distributional approximation, or the construction of valid nonregular inference. The present paper derives asymptotic-variance bounds for estimating functions satisfying specified robustness requirements and constructs estimators that attain those bounds under the stated conditions.

Recent work on aggregated conditional linear programs develops de-biased estimators and entropic smoothing \citep{BenMichael2025CLP}, while \citet{KhandamiryanSemenova2026} establish regular inference under almost-sure uniqueness of the conditional dual optimizer. Our Balke--Pearl analysis treats the case where several conditional optimizers are tied on a set of positive probability. The relevant variance comparison then depends on their conditional covariance matrix, which determines the gain from combining their estimating functions.

Section~\ref{sec:setup} introduces the target functionals and nuisance parameters. Section~\ref{sec:ralu} defines the estimator class, and Section~\ref{sec:ulb} reduces its variance problem to conditional optimization. Section~\ref{sec:kink} derives the explicit optimal weights and constructs estimators that attain the bound. Section~\ref{sec:bridge} connects the results to classical efficiency. Sections~\ref{sec:numerics} and~\ref{sec:card-data} present the numerical study and empirical application, and Section~\ref{sec:disc} concludes. Proofs, further applications, and additional numerical details are collected in the appendices.

Throughout, $\R$ denotes the real line and $\N$ the natural numbers. For a probability measure $P$ on a measurable space $(\Omega,\mathcal{G})$ and a $\mathcal{G}$-measurable real-valued function $h$ with $\E_P|h|<\infty$, we write $\E_P[h]$ for the integral. Conditional expectations are denoted $\E_P[h\mid\mathcal{F}]$ or $\E_{Q}[h]$ for the regular conditional law $Q$. For such an integrable $h$, we write $\Var_P[h]\eqdef \E_P[(h-\E_P h)^2]\in[0,\infty]$; this extended variance is finite exactly when $h\in L^2(P)$. By a Polish space we mean a separable completely metrizable topological space; all Polish spaces are equipped with their Borel $\sigma$-algebras. The notation $A\eqdef B$ means ``$A$ is defined as $B$''.

\Needspace{10\baselineskip}
\section{Setup}\label{sec:setup}

\subsection{Observation model and target functional}\label{sec:obs}

We observe i.i.d.\ data $\{(X_i,W_i)\}_{i=1}^n$ drawn from an unknown law $P\in\cP$, where $X_i$ takes values in a Polish space $\cX$, the \emph{covariate space}, and $W_i$ in a Polish space $\cW$, the \emph{residual space}; both spaces are fixed throughout.  Here $\cP$ is a class of probability measures on the product space $(\cX\times\cW,\cB_\cX\otimes\cB_\cW)$, and the target of inference is a measurable real-valued functional $\Psi:\cP\to\R$.

The two coordinates play different roles: $X$ is the variable over which $\Psi$ integrates, while $W$ carries the additional information that identifies the pointwise summary entering the integrand; Section~\ref{sec:mi} formalizes this structure.  In the binary optimal-treatment-regime problem, $\cW=\{0,1\}\times\R$ collects treatment and outcome; in mediation analysis, treatment, mediator, and outcome; in instrumental-variable problems, instrument, treatment, and outcome.

Throughout, $\cM_1(\cdot)$ denotes the Borel probability measures on the indicated Polish space.  For each $P\in\cP$, $P_X\in\cM_1(\cX)$ denotes the marginal of $P$ on $\cX$, and $Q_x^P\in\cM_1(\cW)$ the regular conditional law of $W$ given $X=x$ under $P$, which exists and is $P_X$-a.s.\ unique by the disintegration theorem for Polish probability spaces.

\subsection{Marginal-integral structure}\label{sec:mi}

The functionals we study share a common form: a covariate average of a fixed score evaluated at a $P$-dependent summary of the conditional law.  We take this form as the primitive of the framework.

\begin{definition}[Marginal-integral structure]\label{def:mi}
A \emph{marginal-integral structure} for the target functional $\Psi$ is a tuple $(\cU,\psi,u)$ consisting of:
\begin{enumerate}[wide=0pt,label={(MI\arabic*)},itemsep=0pt]
\item a Polish space $\cU$, the \emph{summary space};
\item a measurable function $\psi:\cX\times\cU\to\R$, the \emph{scoring function};
\item a collection of measurable maps $u_P:\cX\to\cU$, indexed by $P\in\cP$, with $u_P(x)$ the \emph{pointwise summary} of $P$ at $x$,
\end{enumerate}
satisfying, for every $P\in\cP$,
\begin{equation}\label{eq:mi-rep}
\Psi(P)=\E_{X\sim P_X}\bigl[\psi(X,u_P(X))\bigr],
\end{equation}
the right-hand side being well-defined, with $\psi(\cdot,u_P(\cdot))\in L^1(P_X)$. The dependence of $u_P(x)$ on $P$ is through $Q_x^P$ alone, using the same conditional-law rule across $P$. Assumption~\ref{ass:meas-main} states the joint measurability required of the local quantities.
\end{definition}

Definition~\ref{def:mi} separates what the analyst knows from what must be learned.  The scoring function $\psi$ is fixed and known (in all of our examples the kink enters through $\psi$), while the summary $u_P$ carries all dependence on the law, and carries it locally: $u_P(x)$ is a functional of the conditional law at $x$ alone, a requirement that Assumption~\ref{ass:rich} extends to the working models and the structural component.

Representation~\eqref{eq:mi-rep} is a restriction relative to the chosen observation decomposition $(X,W)$. With the conditional laws held fixed, the target must be linear in the covariate marginal. Functionals that depend nonlinearly on this marginal mixing, such as an unconditional quantile in a general regression model, require a different representation. The distinction concerns how the target combines the conditional laws, rather than whether its pointwise summary is itself linear.

\subsection{Nuisance and auxiliary inputs}\label{sec:nuisance}

In applications the integrand $\psi(x,u_P(x))$ in~\eqref{eq:mi-rep} is assembled from a small number of statistical components with sharply different asymptotic roles: working models that must be fit from data and whose estimation errors enter the bias; a discrete structural component (an optimal arm, a sign, an active index) that can be plugged in directly under a margin condition; and inputs that the analyst chooses freely and that affect variance only.  The next definition separates the three roles from the outset.

\begin{definition}[Admissible nuisance]\label{def:nuisance}
An \emph{admissible nuisance} for $\Psi$ with marginal-integral structure $(\cU,\psi,u)$ is a triple $(\eta,\kappa,\Wext)$ specified as follows.
\begin{enumerate}[wide=0pt,label={(N\arabic*)},itemsep=2pt]
\item \textbf{Primary nuisance.} A vector of length $d\ge1$, $\eta=(\eta_1,\dots,\eta_d)$, where each \emph{working-model} functional assigns to $P\in\cP$ a specified measurable map $\eta_j(P):\cX\to\cN_{0,j}$, valued in a Polish space $\cN_{0,j}$. We write $\cN_0\eqdef\cN_{0,1}\times\cdots\times\cN_{0,d}$ and $\eta(P)(x)\eqdef(\eta_1(P)(x),\dots,\eta_d(P)(x))\in\cN_0$. The integer $d$ is the \emph{component count} of $\eta$.

\item \textbf{Structural component.} A collection of measurable maps $\kappa(P):\cX\to\cK_0$, indexed by $P\in\cP$, for a Polish space $\cK_0$. We require $\kappa$ to be determined measurably by the primary nuisance and the covariate: there is a measurable map $\kappa_\flat:\cX\times\cN_0\to\cK_0$ with
\begin{equation}\label{eq:kappa-flat}
\kappa(P)(x)=\kappa_\flat\bigl(x,\eta(P)(x)\bigr)\qquad
\text{for all }P\in\cP,\ x\in\cX.
\end{equation}
Thus two conditional laws with the same primary-nuisance value have the same structural component.

\item \textbf{Auxiliary working space.} A Polish space $\Wext$. An element $w\in\{\text{measurable maps }\cX\to\Wext\}$ is the analyst's free choice and is \emph{not} a functional of $P$.
\end{enumerate}
The triple must identify the integrand pointwise: there is a measurable function defined on the full domain $\psib:\cX\times\cN_0\times\cK_0\to\R$ such that
\begin{equation}\label{eq:pt-id}
\psi(x,u_P(x))=\psib(x,\eta(P)(x),\kappa(P)(x))\qquad P_X\text{-a.s., for every }P\in\cP.
\end{equation}
Only the restriction of $\psib$ to $\{(x,\eta(P)(x),\kappa(P)(x)):P\in\cP\}$ enters the theory; an arbitrary measurable extension to the rest of its domain is fixed once and for all.
\end{definition}

We call the fixed triple $(\eta,\kappa,\Wext)$, including the grouping of $\eta$ into $d$ components, the \emph{nuisance declaration}. A \emph{slot} is an input position of a score map: each primary component, the structural component, and the auxiliary input occupy separate slots.

The primary nuisance, structural component, and auxiliary input are distinguished by their roles in the $\sqrt n$-analysis of plug-in estimators.  Each component $\eta_j$ is a \emph{working model}: a functional of $P$ that the analyst estimates from data, so that errors in $\eta$ are bias-relevant; they will be controlled by the exact multiple robustness condition of Definition~\ref{def:ralu}.  The integer $d$ counts functionals, not scalar parameters.  The structural component $\kappa$ is bias-relevant as well but is handled differently: it is plugged in directly, and a margin condition renders the plug-in error $o_P(n^{-1/2})$.  The auxiliary $w$ affects variance and is freely chosen; a procedure may use estimated conditional variances to select it.  In binary-action OTR, $\eta$ collects the arm-wise outcome regressions and the propensity ($d=2$), $\kappa$ is the set of optimal arms, and $\Wext$ encodes working conditional variances; the further applications instantiate the triple analogously.

Equation~\eqref{eq:kappa-flat} ties the structural component to the working models and covariate. The rigidity lemmas vary the nuisance components while preserving this structural value. The kernel-substitution condition below ensures that these variations are realized by laws in the comparison model; structural compatibility alone does not establish model membership. The conditional-variance reduction uses only (M1), while the simultaneous local identities used for rigidity require (M1)--(M3) and the measurability conditions below.  In every worked example, the dependence in~\eqref{eq:kappa-flat} takes the form $\kappa=\argmax$, $\mathrm{sign}$, $\argmin$, or a constant, each determined by $\eta_1$. In the active-set examples, $\cK_0$ is a finite power set, so $\kappa$ can record all branches tied at a kink.

\subsection{Standing conditions}\label{sec:standing-model}

The lower-bound and rigidity arguments of Sections~\ref{sec:ulb} and~\ref{sec:kink} manipulate the law directly: they reweight its covariate marginal, substitute one working-model component at a time, and replace conditional laws by compatible alternatives.  The standing assumption records that $\cP$ supports these operations, and that the nuisance components $\eta$ and $\kappa$ are local: their values at $x$ depend on $P$ only through the conditional law there.

Fix a Borel graph $\{(x,Q):Q\in\mathfrak K_x\}$ of locally admissible conditional laws. The Borel maps $\eta_{\rm loc}(x,Q)$ and $\kappa_{\rm loc}(x,Q)$ give the primary nuisance and structural values determined by $Q$ at $x$.  For $P\in\cP$, define the compatible fibre
\begin{equation}\label{eq:QxP-rich}
\mathfrak{Q}_x^P\eqdef\bigl\{Q\in\mathfrak K_x:
\eta_{\rm loc}(x,Q)=\eta(P)(x),\
\kappa_{\rm loc}(x,Q)=\kappa(P)(x)\bigr\}.
\end{equation}
The set $\mathfrak Q_x^P$, called a compatible fibre, consists of admissible local laws that agree with $P$ in these values. It is specified by the local graph, independently of arbitrary choices of conditional versions on marginal null sets. Score-specific completeness and perturbation conditions are introduced after the corresponding identifying scores have been defined.

\begin{assumption}[Standing conditions on the model]\label{ass:rich}
The class $\cP$ satisfies:
\begin{enumerate}[wide=0pt,itemsep=2pt]
\item[(M1)] \textbf{Marginal reweighting closure.} For every $P\in\cP$ and measurable $A\subseteq\cX$ with $P_X(A)>0$, the law $P|_A$ with marginal $P_X(\cdot\cap A)/P_X(A)$ and unchanged conditional kernel belongs to $\cP$, with the same nuisance and structural readouts as $P$ on $A$.
\item[(M2)] \textbf{Integrable kernel substitution.} There is a finite Borel cost $C(x,Q)\ge1$ on the admissible graph, controlling the absolute target integrand and the moments required by the model. Every $P\in\cP$ satisfies $Q_x^P\in\mathfrak K_x$ almost surely and $\int C(x,Q_x^P)\,dP_X(x)<\infty$.  Every universally measurable selection $R_x\in\mathfrak K_x$ with $\int C(x,R_x)\,dP_X(x)<\infty$ defines a law in $\cP$ with marginal $P_X$.
\item[(M3)] \textbf{Locality.} At $P_X$-almost every $x$, $\eta(P)(x)=\eta_{\rm loc}(x,Q_x^P)$ and $\kappa(P)(x)=\kappa_{\rm loc}(x,Q_x^P)$. The primary integrand has the same locality property.
\end{enumerate}
\end{assumption}

\begin{assumption}[Measurability for kernel substitution]\label{ass:meas-main}
On the admissible graph $\{(x,Q):Q\in\mathfrak K_x\}$, the local nuisance and structural maps, the local target integrand $\psi\{x,u_{\rm loc}(x,Q)\}$, and the finite model cost $C(x,Q)$ are Borel, where $u_{\rm loc}(x,Q)$ is the conditional summary determined by $Q$. They satisfy
\begin{equation}\label{eq:model-cost-control}
\bigl|\psi\{x,u_{\rm loc}(x,Q)\}\bigr|\le C(x,Q),
\qquad
\int C(x,Q_x^P)\,dP_X(x)<\infty\quad(P\in\cP).
\end{equation}
Universally measurable selections are interpreted in the $P_X$-completion and replaced by Borel versions when forming probability kernels.
\end{assumption}

The cost $C$ is a model moment bound, independent of the candidate score map. For a selected field $R_x\in\mathfrak K_x$, set $A_m=\{x:C(x,R_x)\le m\}$ and use $R_x$ on $A_m$ and the original kernel $Q_x^P$ on $A_m^c$. The resulting cost is integrable, so (M2) puts each resulting law in $\cP$; the sets $A_m$ exhaust the domain because the local cost is finite. For finitely many jointly selected kernels use the sum of their costs. We refer to this operation as localized kernel substitution. Bounded-outcome models permit a constant cost; unbounded outcomes are allowed under the displayed moment bound.

A canonical bounded-outcome model makes the conditions above concrete and serves as a running example.  Let $\cX=[0,1]$, $A\in\{1,\ldots,K\}$, $0\le Y\le1$, and let $\cP$ consist of all joint laws with $\pi_a(x)\ge\epsilon$ for a fixed $\epsilon\in(0,1/K)$ and positive conditional arm variances.  The target is $\E[\max_a\E(Y\mid X,A=a)]$, the nuisance components are the arm regressions and the propensity, and the structural component is the active set.  The admissible graph is Borel, the constant cost $C=2$ satisfies (M2), and marginal restrictions and every admissible kernel substitution stay in $\cP$, so (M1)--(M3) hold.  The simplex-weighted augmented inverse-probability-weighted rule $\sum_{a\in k}\lambda_a(x,k)\{u_a(x)+\1\{A=a\}(Y-u_a(x))/\pi_a(x)\}$ is bounded by $1+1/\epsilon$ at admissible slots and is exactly unbiased whenever the regression or the propensity is correct, so the algebraic class of Section~\ref{sec:ralu} is nonempty.  A subclass on which one estimator learns the nuisances at the required rates is described after Definition~\ref{def:ralu}.

This bounded-outcome model is the canonical comparison model of the paper, and the reader may keep it in mind throughout.  It satisfies (M1)--(M3); the max-arm perturbation conditions of Section~\ref{sec:kink}; the one-wrong envelope condition of Corollary~\ref{cor:achieve-global}, because every admissible slot evaluation of the weighted rule is bounded; and the local closure of Section~\ref{sec:bridge}.  This model supplies the structural conditions for the max-arm results below; estimator attainment additionally uses the stated learning conditions on the chosen subclass.  The model is used again to illustrate the class and learnability conditions of Sections~\ref{sec:ralu} and~\ref{sec:achieve}.  The results are nevertheless stated for a general model satisfying the listed conditions, because the shared-score structures of Section~\ref{sec:cov-extended} use comparison models of their own, such as the finite-cell Balke--Pearl model of Section~\ref{sec:card}.

The comparison model $\cP$ and nuisance declaration now specify the exact identities used below. Sections~\ref{sec:ralu} and~\ref{sec:ulb} define the admissible score maps and reduce their variance minimization to a conditional problem.

\section{The class of robust asymptotically linear unbiased estimators}\label{sec:ralu}

The algebraic comparison uses the comparison model $\cP$. Asymptotic linearity is required on a specified learnable subclass $\cP_{\mathrm{ach}}\subseteq\cP$, where the subscript denotes attainment, and the nuisance functions can be learned at the required rates.  This subset need not be closed under kernel substitutions.  Exact robustness determines the variance benchmark; learning conditions determine its attainability.

\begin{definition}[RALU class]\label{def:ralu}
Fix a marginal-integral structure $(\cU,\psi,u)$ for $\Psi$ and an admissible nuisance $(\eta,\kappa,\Wext)$ with primary-nuisance component count $d$, and fix $\cP_{\mathrm{ach}}\subseteq\cP$. An estimator sequence $(T_n)_{n\ge1}$ belongs to $\TRL(\Psi;\eta,\kappa,\Wext)$ if there exists a measurable function
\begin{equation}\label{eq:f-domain}
f:\cX\times\cW\times\cN_0\times\cK_0\times\Wext\to\R
\end{equation}
such that:
\begin{enumerate}[wide=0pt,label={(\Alph*)},itemsep=2pt]
\item \textbf{Asymptotic linearity.}~For every $P\in\cP_{\mathrm{ach}}$ there is a measurable $w_{f,P}:\cX\to\Wext$ with
\begin{equation}\label{eq:AL}
T_n=\frac{1}{n}\sum_{i=1}^n f\bigl(X_i,W_i,\eta(P)(X_i),\kappa(P)(X_i),
w_{f,P}(X_i)\bigr)+o_P(n^{-1/2}).
\end{equation}
The map $w_{f,P}$ may depend on $P$, while the score map $f$ and the robustness requirement below are common to the comparison model.

\item \textbf{Exact multiple robustness.} For every $P,P'\in\cP$ and every measurable $w:\cX\to\Wext$, if there exists $j\in\{1,\dots,d\}$ such that
\begin{equation}\label{eq:agree-others}
\eta_l(P')(x)=\eta_l(P)(x)\quad\text{for all }l\neq j,\quad P_X\text{-a.s.,}
\end{equation}
then
\[
f\bigl(X,W,\eta(P')(X),\kappa(P)(X),w(X)\bigr)\in L^1(P)
\]
and
\begin{equation}\label{eq:MR-unbias}
\E_P\bigl[f\bigl(X,W,\eta(P')(X),\kappa(P)(X),w(X)\bigr)\bigr]=\Psi(P).
\end{equation}

\end{enumerate}
The function $f$ is an uncentered \emph{influence-function map} for $(T_n)$ on $\cP_{\mathrm{ach}}$. A full-slot map means a function defined on the entire domain in~\eqref{eq:f-domain}, including working nuisance values that differ from the truth. The algebraic family $\IFRL(\Psi;\eta,\kappa,\Wext)$ consists of \emph{all} measurable full-slot maps in~\eqref{eq:f-domain} satisfying (B) on $\cP$; membership does not presuppose an estimator satisfying (A). When the models need to be explicit we write $\TRL(\Psi;\cP_{\mathrm{ach}}\mid\cP)$, suppressing the fixed slot declaration.  We optimize the lower bound over $\IFRL$ and prove estimator attainment separately on $\cP_{\mathrm{ach}}$.
\end{definition}

In the bounded-outcome model of Section~\ref{sec:standing-model}, the simplex-weighted rule displayed there belongs to $\IFRL$, and the subclass of laws whose propensities and Bernoulli arm means are constant in $x$, with arm means in $[\delta,1-\delta]$ for a fixed $\delta>0$, is a $\cP_{\mathrm{ach}}$ for the estimator of Section~\ref{sec:achieve}: clipped empirical arm means and proportions are root-$n$ consistent, the tolerance $\tau_n=n^{-1/3}$ dominates their errors, and every positive branch gap at a fixed law is bounded away from zero.  That subclass is not closed under kernel substitution, which is precisely why the two roles are kept separate.

\label{rem:factorization-dependence}%
The nuisance grouping and structural information determine which estimating functions are compared. The algebraic family $\IFRL(\Psi;\eta,\kappa,\Wext)$ and its variance bound depend on this specification as well as on $(\Psi,\cP)$. Condition~(B) constrains $f$ only at laws $P'$ that agree with $P$ in all but one component. Refining the grouping of the same primary vector, with the structural input fixed, shrinks the set of such $P'$, weakens~(B), enlarges the class, and can only lower the bound; coarsening can only raise it. With $d=1$, robustness requires unbiasedness at every working value arising from a law in the model. Whether the class is nonempty then depends on the target and the structural information.

For binary OTR, the declaration treats all arm means as one component and the propensity as another, and keeps the true set of optimal arms in the structural input. Condition~(B) therefore demands unbiasedness when either the mean component or the propensity is correct. The benchmark compares representations satisfying this same requirement. Section~\ref{sec:bridge} identifies its classical interpretation through local experiments that preserve the active set.

The structural input in~(B) is always $\kappa(P)$, even when a primary nuisance component is replaced. In plain language, of the $d$ working models in $\eta$, \emph{at most one may disagree between $P$ and $P'$}, and the influence function remains unbiased under the mismatched plug-in.  The pattern \emph{at most $1$ wrong $\Rightarrow$ unbiased} subsumes the classical instances of multiple robustness.  For $d=2$ (ATE or OTR, with $\eta$ the outcome regression and the propensity), it reads \emph{either one of two correct}, which is standard double robustness; for $d=3$ (mediation, with the mediator-law pair as third component), it reads \emph{any two of three correct}, the triple-robust pattern of \cite{TchetgenShpitser2012}.

Exact robustness is verified by evaluating the expectation under each allowed nuisance substitution. A second-order expansion helps identify the required cancellations: cross-products between different component errors vanish when all but one component are correct, while terms involving only the remaining component must cancel in the estimating function itself.

No smoothness condition on $f$ is imposed; the integrability and equalities in (B) are part of membership. Assumptions~\ref{ass:rich} and~\ref{ass:meas-main} make the nuisance-substitution and conditional-law identities simultaneous over each compatible fibre.  Conditions~(A)--(B), asymptotic linearity and exact unbiasedness under one-component misspecification, are therefore the complete class definition.

Definition~1 of \cite{XuGuo} imposes only measurability on $f$, together with asymptotic linearity and double-robust unbiasedness. Under the nonparametric binary-OTR specialization, with the outcome-regression and propensity-score blocks as the two primary nuisance components and the active set equivalently encoded by the sign of the treatment contrast, evaluation at the correct nuisance limits yields the same unit-sum binary tie-slope program as in \cite{XuGuo}. Consequently, the two analyses have the same variance-minimizing limiting representation and sharp bound at such laws.  The correspondence is between limiting programs; the finite-$n$ constructions differ, Xu and Guo attaining the common bound through their two-parameter adaptive-smoothing estimator and Section~\ref{sec:achieve} through a tolerance active set.

Definition~\ref{def:ralu} imposes no square-integrability on $f$.  The lower bound of Theorem~\ref{thm:ulb} is stated directly on the variance of $f$ evaluated at the true nuisance values under $P$; if $\Var_P[f^P_w]=\infty$ (with notation introduced in~\eqref{eq:fPw} below), the bound holds trivially.  Estimator-level statements requiring $\sqrt n$-consistency are corollaries (Corollary~\ref{cor:est-level}) and carry their own square-integrability and uniform-integrability conditions.

The auxiliary $w$ ranges freely over measurable maps $\cX\to\Wext$; it enters only through the last argument of $f$, and (B) places no correctness requirement on it.  Different choices of $w$ specify different evaluations of an algebraic map and may yield different estimator constructions.  The pointwise lower bound turns out not to depend on $w$ (Lemma~\ref{rem:vmin-w-invariant}); the auxiliary matters at the \emph{achievement} stage, where the analyst's working variance determines which member of the class is constructed (Theorem~\ref{thm:achieve}, Section~\ref{sec:weighted-family}).

\section{Conditional-variance reduction}\label{sec:ulb}

Exact robustness fixes the conditional mean of every admissible evaluation at the true nuisance values. Its variance therefore splits into a fixed marginal term and a conditional residual variance. Theorem~\ref{thm:ulb} proves that minimizing these conditional variances gives the exact global infimum over $\IFRL$. It also establishes simultaneous local identities that constrain every admissible map. In Section~\ref{sec:kink}, the rigidity results convert those identities into affine representations, and Theorems~\ref{thm:kink} and~\ref{thm:cov-kink} solve the resulting finite-dimensional quadratic problems.

\subsection{Pointwise admissibility and the pointwise variance floor}
\label{sec:pt-admissibility}

For a measurable $f$ as in~\eqref{eq:f-domain}, $P\in\cP$, and a measurable $w:\cX\to\Wext$, define its \emph{evaluation at the true nuisance values}
\begin{equation}\label{eq:fPw}
f^P_w(x,W)\eqdef f\bigl(x,W,\eta(P)(x),\kappa(P)(x),w(x)\bigr),
\quad x\in\cX,\ W\in\cW.
\end{equation}
The notation $f_w^P$ will be used without introducing a separate name for this evaluation.

Recall the $(\eta,\kappa)$-compatible fibre in~\eqref{eq:QxP-rich}. Its kernels have the same local nuisance and structural values as $P$ at $x$. Their measurable selections can be realized by the localized kernel substitution justified above.  The score-specific assumptions below supply the additional completeness used for closed forms.

\begin{definition}[Pointwise admissibility]\label{def:pt-adm}
Let $P\in\cP$ and $w:\cX\to\Wext$ be fixed. We say a measurable $f:\cX\times\cW\times\cN_0\times\cK_0\times\Wext\to\R$ is \emph{pointwise admissible at $(P,w)$} if, outside one $P_X$-null set, both of the following hold with absolutely convergent integrals:
\begin{enumerate}[wide=0pt,label={(\Roman*)},itemsep=2pt]
\item \textbf{Nuisance-substitution invariance.} For every $j\in\{1,\dots,d\}$ and every local slot value $v$ realized by an admissible kernel $R\in\mathfrak K_x$, called a witness for $v$, with $\eta_{{\rm loc},l}(x,R)=\eta_l(P)(x)$ for $l\ne j$ and $v=\eta_{{\rm loc},j}(x,R)$,
\begin{equation}\label{eq:PUnb}
\begin{aligned}
&\E_{Q_x^P}\!\Bigl[f\bigl(x,W,\eta_1(P)(x),\dots,
  v,\dots,\\[-2pt]
&\hspace{7em}\eta_d(P)(x),\kappa(P)(x),w(x)\bigr)\Bigr]\\
&\hspace{3em}=\psib\bigl(x,\eta(P)(x),\kappa(P)(x)\bigr).
\end{aligned}
\end{equation}
\item \textbf{Law-variation invariance.} For every $Q'\in\mathfrak{Q}_x^P$,
\begin{equation}\label{eq:LVI}
\E_{Q'}\!\bigl[f^P_w(x,\cdot)\bigr]
=\psib\bigl(x,\eta(P)(x),\kappa(P)(x)\bigr),
\end{equation}
where $f^P_w$ is defined in~\eqref{eq:fPw}.
\end{enumerate}
\end{definition}

The two invariances are complementary: \eqref{eq:PUnb} fixes the truth-law expectation as the substituted nuisance value varies, while \eqref{eq:LVI} fixes the expectation at the true nuisance values as the integrating law $Q'$ varies in the $(\eta,\kappa)$-equivalence class. Theorem~\ref{thm:ulb} shows that every $f\in\IFRL$ satisfies \emph{both}.

\begin{definition}[Pointwise variance lower bound]\label{def:vmin}
For $P\in\cP$ and $w:\cX\to\Wext$ write $v_f(x)\eqdef\Var_{Q_x^P}[f^P_w(x,\cdot)]$ for a map $f\in\IFRL$ with $\Var_P[f^P_w]<\infty$, and let $\mathcal V_{P,w}$ be the family of these conditional-variance functions. Define
\begin{equation}\label{eq:vmin}
\Vmin(\cdot\,;P,w)\eqdef\operatorname*{ess\,inf}\nolimits_{v\in\mathcal V_{P,w}}v
\qquad\text{in }L^0(P_X),
\end{equation}
the $P_X$-a.e.\ largest measurable lower bound for $\mathcal V_{P,w}$. For a nonempty family, it is the pointwise infimum of a countable subfamily, up to $P_X$-null sets. All statements about $\Vmin$ are statements in $L^0(P_X)$, the space of measurable functions identified up to almost-sure equality. The \emph{variance floor} at $(P,w)$ is
\begin{equation}\label{eq:Vlow}
\Vlow(P,w)\eqdef\Var_{P_X}\!\bigl[\psi(X,u_P(X))\bigr]
+\E_{P_X}\!\bigl[\Vmin(X;P,w)\bigr].
\end{equation}
When the competitor family is empty we set $\Vmin\eqdef+\infty$, and $\Vlow$ is read as an extended-real benchmark.
\end{definition}

\label{rem:ess-inf-necessary}%
Conditional variances are defined only up to $P_X$-null sets, and the exceptional set in a rigidity inequality may depend on the candidate map $f$. The essential infimum in Definition~\ref{def:vmin} accommodates both facts. An inequality $v_f\ge H$ holding $P_X$-a.e.\ for each $f$ separately gives $\Vmin\ge H$ $P_X$-a.e., because $H$ is a measurable lower bound for the whole family. A map $f^\ast\in\IFRL$ with $\Var_P[(f^\ast)^P_w]<\infty$ and $v_{f^\ast}=H$ $P_X$-a.e.\ gives the reverse inequality. The closed-form theorems below establish both steps, identifying $\Vmin=H$ in $L^0(P_X)$ independently of the chosen conditional-variance versions.

The competitors in Definition~\ref{def:vmin} are full-slot maps satisfying (B) throughout $\cP$. The local identities constrain their evaluations by varying both the conditional law and the working nuisance values. A map that realizes the minimizing conditional variance establishes algebraic sharpness; an estimator with that evaluation in its asymptotic-linear expansion establishes statistical attainment.

\begin{lemma}[$\Vmin$ does not depend on $w$]\label{rem:vmin-w-invariant}
For every $P\in\cP$ and all measurable $w,w':\cX\to\Wext$,
\[
\Vmin(\cdot\,;P,w)=\Vmin(\cdot\,;P,w')\quad\text{in }L^0(P_X),
\qquad \Vlow(P,w)=\Vlow(P,w').
\]
\end{lemma}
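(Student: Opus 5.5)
The argument is a reparametrization of the auxiliary slot. Given $f\in\IFRL$ and two measurable maps $w,w':\cX\to\Wext$, I will build a new full-slot map $g$ whose evaluation at $w'$ reproduces the evaluation of $f$ at $w$. This gives $\mathcal V_{P,w}\subseteq\mathcal V_{P,w'}$, and swapping the roles of $w$ and $w'$ gives equality of the two families. Once the families coincide, their essential infima coincide in $L^0(P_X)$. The first term of $\Vlow$ does not involve $w$, so $\Vlow(P,w)=\Vlow(P,w')$ follows immediately. The empty-family convention ($\Vmin=+\infty$) is carried along by the same equality.

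\textbf{The naive construction fails.} The obvious choice is
\[
g(x,W,n,k,z)\eqdef f(x,W,n,k,w(x)).
\]
This $g$ simply ignores its last slot, and it is measurable because $w$ is measurable. Its evaluation at $(P,w')$ is $f^P_w$, so it gives the same conditional variance $v_f$ and the same finite total variance. The difficulty is condition (B) for $g$. For an arbitrary auxiliary map $\tilde w$, we need
\[
\E_P\bigl[g(X,W,\eta(P')(X),\kappa(P)(X),\tilde w(X))\bigr]
=\E_P\bigl[f(X,W,\eta(P')(X),\kappa(P)(X),w(X))\bigr],
\]
and (B) for $f$, applied with the auxiliary map $w$, says this equals $\Psi(P)$. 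The same instance of (B) for $f$ also supplies the required integrability. So $g\in\IFRL$. But this $g$ depends on $w$, which is a fixed map from the statement, not on $P$. That is permissible, because (B) quantifies over all $P,P'$ and all $\tilde w$, and $g$ is a single full-slot map that does not depend on $P$. No problem arises here.

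\textbf{The main obstacle: null sets.} The real subtlety is that $\Vmin$ is defined through conditional variances taken at one fixed $P$, with $w$ and $w'$ given. The construction above already gives $v_g=v_f$ everywhere, not merely $P_X$-a.e., so the inclusion $\mathcal V_{P,w}\subseteq\mathcal V_{P,w'}$ holds exactly. The only remaining checks are routine:
\begin{itemize}
\item measurability of $(x,W,n,k,z)\mapsto f(x,W,n,k,w(x))$, which is a composition of Borel maps;
\item that the finite-variance requirement $\Var_P[g^P_{w'}]<\infty$ transfers, which holds because $g^P_{w'}=f^P_w$ identically.
\end{itemize}
The symmetric construction with $w'$ in place of $w$ gives the reverse inclusion, hence equality of the families, and the lemma follows.

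In effect the auxiliary input can always be absorbed into the score map, because (B) imposes no correctness requirement on $w$.
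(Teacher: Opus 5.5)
Your proof is correct, and it takes a different and simpler route than the paper's. The paper uses the pointwise involution $m_x$ that swaps $w(x)$ and $w'(x)$ and fixes every other point of $\Wext$. It sets $\widetilde f(x,z,e,k,v)=f(x,z,e,k,m_x(v))$ and transfers (B) by noting that $m_x(a(x))$ is a measurable auxiliary field for every measurable $a$. Joint measurability of $(x,v)\mapsto m_x(v)$ there relies on the diagonal of the Polish space $\Wext$ being Borel.

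You instead freeze the auxiliary slot at $w(x)$. Then (B) for $g$ at any $\tilde w$ is literally (B) for $f$ at $w$, including the integrability part, and measurability is just composition with $w$. Your version is more elementary. It also gives a slightly stronger fact: every function in $\mathcal V_{P,w}$ is realized by a member of $\IFRL$ that ignores its auxiliary input.

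What the swap buys is that $f\mapsto\widetilde f$ is an involutive bijection that retains dependence on the auxiliary slot. An estimator represented by $f$ on $\cP_{\mathrm{ach}}$ therefore keeps its asymptotic-linear representation, with $w_{\widetilde f,Q}(x)=m_x(w_{f,Q}(x))$. Your frozen $g$ evaluates to $f$ at $w$ rather than at $w_{f,Q}$, so it does not transfer membership in $\TRL$. That extra conclusion is not needed for this lemma, because $\mathcal V_{P,w}$ is defined over $\IFRL$.

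Your section labels (``the naive construction fails'' and ``the main obstacle: null sets'') contradict your own text. The construction works, and $g^P_{w'}=f^P_w$ holds identically, so there is no null-set issue to handle.
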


Lemma~\ref{rem:vmin-w-invariant} shows that changing the auxiliary map does not change the conditional or total variance floor when optimization ranges over all admissible score maps. We henceforth write the common quantities as $\Vmin(\cdot;P)$ and $\Vlow(P)$. The evaluation $f_w^P$ still depends on $w$, which determines the constructed estimator and whether it attains the common floor.

\subsection{The influence-function-level lower bound}\label{sec:ulb-main}

Theorem~\ref{thm:ulb} identifies the total variance infimum by optimizing conditional residual variances. Its first part proves this reduction using marginal reweighting. Its second part makes the local unbiasedness identities simultaneous over the admissible conditional laws and working nuisance values, providing the constraints used in the closed-form analysis.

\begin{theorem}[Conditional-variance reduction and exact algebraic floor]
\label{thm:ulb}
Fix $P\in\cP$ and a measurable $w:\cX\to\Wext$.
\begin{enumerate}[wide=0pt,label={\textup{(\roman*)}},itemsep=2pt]
\item Under (M1), every $f\in\IFRL$ satisfies
\[
\E_{Q_x^P}\!\left[f_w^P(x,\cdot)\right]
=\psi\{x,u_P(x)\}\quad P_X\text{-a.e.},
\]
and, in the extended-real sense,
\begin{equation}\label{eq:ulb-bound}
\begin{aligned}
\Var_P\!\bigl[f^P_w\bigr]
&=\Var_{P_X}\!\bigl[\psi\{X,u_P(X)\}\bigr]
+\E_{P_X}\!\bigl[\Var_{Q_X^P}\{f_w^P(X,\cdot)\}\bigr]\\
&\ge\Vlow(P).
\end{aligned}
\end{equation}
Moreover,
\begin{equation}\label{eq:ulb-inf}
\inf_{f\in\IFRL}\Var_P\!\bigl[f^P_w\bigr]\ =\ \Vlow(P).
\end{equation}
\item Under (M1)--(M3) and Assumption~\ref{ass:meas-main}, every $f\in\IFRL$ satisfies the following identity outside one $P_X$-null set: for every $j\in\{1,\dots,d\}$, every $Q'\in\mathfrak Q_x^P$, and every $v$ with a witness $R\in\mathfrak K_x$ such that $\eta_{{\rm loc},l}(x,R)=\eta_l(P)(x)$ for $l\ne j$ and $\eta_{{\rm loc},j}(x,R)=v$, the integral is absolutely convergent and
\begin{equation}\label{eq:combined-pt}
\begin{aligned}
&\E_{Q'}\!\bigl[f\bigl(x,W,\eta_1(P)(x),\dots,v,\dots,\eta_d(P)(x),
\kappa(P)(x),w(x)\bigr)\bigr]\\
&\hspace{3em}=\psib\bigl(x,\eta(P)(x),\kappa(P)(x)\bigr).
\end{aligned}
\end{equation}
The exceptional set may depend on $(f,P,w)$ but is common to all these $j,Q',v,R$.
\end{enumerate}
\end{theorem}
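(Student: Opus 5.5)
For part (i), I will first show that exact robustness fixes the conditional mean. Take $P'=P$ in (B), which is allowed since every $j$ trivially satisfies~\eqref{eq:agree-others}. Apply (B) to each restricted law $P|_A$ supplied by (M1), for every measurable $A$ with $P_X(A)>0$. Because $P|_A$ has the same kernel and the same nuisance and structural readouts on $A$, this gives
\[
\int_A \E_{Q_x^P}[f^P_w(x,\cdot)]\,dP_X(x)=\int_A\psi\{x,u_P(x)\}\,dP_X(x)\qquad\text{for all such }A.
\]
Here integrability of $f^P_w$ under $P|_A$ follows from $L^1(P)$ membership, since $dP|_A\le P_X(A)^{-1}dP$. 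The function $g(x)\eqdef\E_{Q_x^P}[f^P_w(x,\cdot)]-\psi\{x,u_P(x)\}$ is therefore $P_X$-integrable. Its integral vanishes over every set of positive measure, and trivially over null sets. Taking $A=\{g>0\}$ and $A=\{g<0\}$ yields $g=0$ $P_X$-a.e.

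The variance identity in~\eqref{eq:ulb-bound} is then the law of total variance, read in the extended-real sense: both sides are $+\infty$ together. To justify this I will use $\E[h^2]=\E[\E[h^2\mid X]]$ in $[0,\infty]$ and subtract the finite mean. By the definition of $\Vmin$ as an essential infimum, $v_f\ge\Vmin$ a.e. whenever $\Var_P[f^P_w]<\infty$, and the inequality $\ge\Vlow(P)$ follows. Lemma~\ref{rem:vmin-w-invariant} lets me drop $w$.

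For~\eqref{eq:ulb-inf}, the direction $\ge$ is now immediate. For $\le$, I will use the fact that the essential infimum is attained as a pointwise infimum of a countable family $v_{f_k}$, which I may take to be closed under finite minima, with $f_1,f_2,\ldots\in\IFRL$. I then want to glue maps: given $f_1,f_2$, let $B=\{v_{f_1}\le v_{f_2}\}$ and set
\[
f\eqdef f_1\,\1\{x\in B\}+f_2\,\1\{x\notin B\}.
\]
The key check is that $f\in\IFRL$. Condition (B) for $f$ splits as an integral over $B$ and an integral over $B^c$. On each piece, I will apply (B) for $f_i$ under the restricted law $P|_B$ or $P|_{B^c}$, where $P'$ is correspondingly restricted. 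Together with part (i)'s localized identity, this shows the pieces sum to $\Psi(P)$. This step needs care: (B) for $f_i$ under $P|_B$ gives $\int_B$ of the misspecified expectation against $P_X(\cdot\cap B)$, and rescaling returns the needed piece. Iterating along an increasing sequence of such minima, and applying monotone convergence to $\E[v]$, the total variances decrease to $\Vlow(P)$. If the family is empty, both sides equal $+\infty$.

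For part (ii), I will proceed in three steps. \textbf{First}, apply the same localization as in part (i) with $P'$ replaced by a law obtained through localized kernel substitution. Choose $R$ as a witness field, so that $P'$ has kernel $R_x$ on a set $A_m$ and marginal $P_X$. By (M2) and (M3), this $P'$ lies in $\cP$ and has $\eta_{-j}(P')=\eta_{-j}(P)$ on $A_m$. Then (B) plus reweighting gives~\eqref{eq:PUnb} a.e., for that selection. \textbf{Second}, to let the integrating law vary, replace $P$ itself by a substituted law whose kernel is $Q'_x\in\mathfrak Q^P_x$ on $A_m$. This law has the same $\eta,\kappa$, hence the same $\psib$, so combining with the first step gives~\eqref{eq:combined-pt} for each fixed measurable selection pair $(Q',R)$. \textbf{Third}, and this is the main obstacle, the exceptional null set must be made common to all uncountably many $(j,Q',v,R)$. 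My plan is a measurable-selection contradiction argument. Consider the set of $x$ at which some admissible tuple violates the identity or absolute convergence. This set is analytic, being the projection of a Borel set in the graph by Assumption~\ref{ass:meas-main}, and hence universally measurable. If it had positive $P_X$-measure, the Jankov--von Neumann theorem would give a universally measurable violating selection on it. Truncating by the cost sets $A_m$ keeps the selection integrable, so it defines a law in $\cP$ by (M2). Applying the second step to this selection then contradicts the violation on a set of positive measure. Sign and non-integrability violations are handled by splitting into three Borel pieces.
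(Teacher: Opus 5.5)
Your proposal is correct and follows the paper's proof essentially step for step. It uses (M1)-localization for the conditional mean, the extended-real law of total variance, and a finite-patching lemma with a countable minimizing family and monotone convergence for \eqref{eq:ulb-inf}. For part (ii) it uses cost-localized gluing of a jointly selected pair $(Q',(v,R))$ followed by a Jankov--von Neumann contradiction on the joint graph, which is the paper's violation-selection lemma.

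One small repair is needed in the patching step. Keep the working law $P'$ fixed and apply (B) at $(P|_B,P')$, as the paper does, rather than restricting $P'$. The reason is that $P'$ may have a different marginal with $P'_X(B)=0$, so $P'|_B$ need not exist. Keeping $P'$ fixed costs nothing: the agreement condition for $(P|_B,P')$ only has to hold $(P|_B)_X$-a.s., and it is inherited from the one for $(P,P')$.
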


Theorem~\ref{thm:ulb}\textup{(i)} shows that robustness fixes the conditional mean of every admissible representation. The marginal contribution to variance is therefore common to all competitors, and the efficiency comparison reduces to their conditional residual variances. Equation~\eqref{eq:ulb-inf} identifies the integrated conditional floor with the global variance infimum using only (M1). Existence of a minimizing map is established under the realization conditions below.

Theorem~\ref{thm:ulb}\textup{(ii)} makes the constraints simultaneous over the local nuisance values and integrating kernels. It includes both invariances in Definition~\ref{def:pt-adm} on one common null set, which is essential for the rigidity results that follow. Realization by a map and attainment by an estimator are established under the additional conditions in Sections~\ref{sec:weighted-family}--\ref{sec:achieve}.

\subsection{Estimator-level corollary}\label{sec:est-corollary}

The influence-function bound transfers to estimators through asymptotic linearity and the central limit theorem.  The corollary records the transfer, together with the uniform-integrability condition that a variance, rather than distributional, statement costs.

\begin{corollary}[Estimator-level lower bound]\label{cor:est-level}
Assume (M1). Let $T_n\in\TRL(\Psi;\eta,\kappa,\Wext)$ have influence function $f$ with auxiliary $w_f$, and suppose $\Var_P[f^P_{w_f}]<\infty$ for some $P\in\cP_{\mathrm{ach}}$. Then
\begin{equation}\label{eq:CLT-RALU}
\sqrt n\bigl(T_n-\Psi(P)\bigr)\ \Rightarrow\
\mathcal{N}\!\bigl(0,\Var_P[f^P_{w_f}]\bigr).
\end{equation}
In particular, the asymptotic variance of $\sqrt n(T_n-\Psi(P))$ is bounded below by $\Vlow(P)$. If additionally the family $\{n(T_n-\Psi(P))^2\}_{n\ge1}$ is uniformly integrable under $P$, then
\begin{equation}\label{eq:liminf-var}
\lim_{n\to\infty}\Var_P\!\bigl(\sqrt n\,T_n\bigr)=\Var_P[f^P_{w_f}]
\ge\Vlow(P).
\end{equation}
\end{corollary}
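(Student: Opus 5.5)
The proof has three steps: asymptotic normality from (A) and the central limit theorem, the variance bound from Theorem~\ref{thm:ulb}(i), and the variance limit from uniform integrability.

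\textbf{Step 1: asymptotic normality.} Fix $P\in\cP_{\mathrm{ach}}$ and write $Z_i\eqdef f^P_{w_f}(X_i,W_i)$. These are i.i.d.\ under $P$ with finite variance. Exact robustness (B), applied with $P'=P$ (the agreement condition~\eqref{eq:agree-others} holds trivially for any $j$) and $w=w_f$, gives integrability and $\E_P[Z_1]=\Psi(P)$. Condition (A) then yields
\[
\sqrt n\bigl(T_n-\Psi(P)\bigr)=n^{-1/2}\sum_{i=1}^n\{Z_i-\Psi(P)\}+o_P(1).
\]
The Lindeberg--L\'evy CLT applied to the sum, together with Slutsky's lemma for the remainder, gives~\eqref{eq:CLT-RALU}.

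\textbf{Step 2: the variance floor.} The limiting variance is $\Var_P[f^P_{w_f}]$, and $f\in\IFRL$ by definition of the RALU class. Theorem~\ref{thm:ulb}(i), which needs only (M1), gives $\Var_P[f^P_{w_f}]\ge\Vlow(P,w_f)$. By Lemma~\ref{rem:vmin-w-invariant}, $\Vlow(P,w_f)=\Vlow(P)$, so the asymptotic variance is bounded below by $\Vlow(P)$.

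\textbf{Step 3: convergence of the variance.} Set $S_n=\sqrt n(T_n-\Psi(P))$, so that $S_n\Rightarrow S\sim\mathcal N(0,\sigma^2)$ with $\sigma^2=\Var_P[f^P_{w_f}]$. Since $\{S_n^2\}$ is uniformly integrable, convergence in distribution upgrades to convergence of second moments, $\E S_n^2\to\sigma^2$. Uniform integrability of $S_n^2$ also implies uniform integrability of $S_n$, because $|S_n|\le 1+S_n^2$. Hence $\E S_n\to\E S=0$. Combining the two limits,
\[
\Var_P(\sqrt n\,T_n)=\Var_P(S_n)=\E S_n^2-(\E S_n)^2\to\sigma^2.
\]
Together with Step 2, this proves~\eqref{eq:liminf-var}.

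The only point needing care is this last step: moment convergence must be justified by uniform integrability, not by convergence in distribution alone. The variance is invariant to the shift by $\Psi(P)$, so it does not matter that $T_n$ is centered at $\Psi(P)$ rather than at its own mean. The remaining steps are direct appeals to (A), (B), Theorem~\ref{thm:ulb}(i) and Lemma~\ref{rem:vmin-w-invariant}.
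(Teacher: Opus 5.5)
Your proposal is correct and follows the paper's proof essentially step for step. Condition (B) at the pair $(P,P)$ gives the mean $\Psi(P)$, (A) with the i.i.d.\ central limit theorem gives the normal limit, Theorem~\ref{thm:ulb} gives $\Var_P[f^P_{w_f}]\ge\Vlow(P)$, and uniform integrability upgrades weak convergence to convergence of the first two moments. Your explicit appeal to Lemma~\ref{rem:vmin-w-invariant}, and your remark that uniform integrability of $S_n^2$ implies that of $S_n$, only spell out points the paper leaves implicit.
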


Corollary~\ref{cor:est-level} transfers the influence-function bound to the normal limit of any estimator in the class with a square-integrable representation. Uniform integrability is needed only for the convergence of the estimator's variance in~\eqref{eq:liminf-var}; asymptotic linearity and square integrability suffice for the distributional conclusion in~\eqref{eq:CLT-RALU}. Thus the same lower bound governs the variance of the limiting normal distribution, and, under the additional moment condition, the limit of the finite-sample variances.

\section{Closed forms and attainment for kink-type functionals}\label{sec:kink}

The conditional-variance reduction of Theorem~\ref{thm:ulb} becomes explicit when the exact robustness identities determine the admissible slopes. We first treat a maximum of affine arm values, with one arm observed per subject; optimal treatment values have this form. Rigidity gives an affine score family (Section~\ref{sec:func-eq}), variance minimization gives its optimal weights (Section~\ref{sec:weighted-family}), and Theorem~\ref{thm:kink} identifies the harmonic variance floor under an explicit oracle-realization condition. Section~\ref{sec:achieve} gives a cross-fitted estimator, while Section~\ref{sec:delimitation} explains where affineness is necessary. Balke--Pearl bounds and $L^1$ calibration use the separate shared-score structure of Section~\ref{sec:cov-extended}, in which several candidate scores are observed together and the optimization uses their full covariance.

\subsection{Max-arm structure}\label{sec:max-arm}

The closed form rests on one structural hypothesis: the residual data decompose into arms, exactly one of which is observed per subject, and the target maximizes over arm summaries.  The definition makes this precise.

\begin{definition}[Max-arm structure]\label{def:max-arm}
A marginal-integral structure $(\cU,\psi,u)$ is of \emph{max-arm type} if there exist:
\begin{enumerate}[wide=0pt,label={(K\arabic*)},itemsep=2pt]
\item a finite set $\cA$ with $|\cA|=K$, the \emph{arm space}, and a Polish space $\widetilde\cW$ such that the residual space factorizes as $\cW=\cA\times\widetilde\cW$, with a generic $W=(A,\widetilde W)$;
\item for each $P\in\cP$, the conditional law $Q_x^P$ disintegrates as $Q_x^P(\mathrm{d}a,\mathrm{d}\widetilde w)=\pi_a^P(x)\,Q_{x,a}^P(\mathrm{d}\widetilde w)$ where $\pi_a^P(x)\eqdef P(A=a\mid X=x)$ and $Q_{x,a}^P$ is the regular conditional law of $\widetilde W$ given $(X,A)=(x,a)$;
\item a measurable arm-functional $u_{a}:\cP\to\{\text{measurable maps }\cX\to\cU_0\}$ for each $a\in\cA$, valued in a fixed Polish $\cU_0\subseteq\R$, such that $u_P(x)=(u_{P,a}(x))_{a\in\cA}\in\cU_0^\cA$;
\item a measurable scalar function $g:\cX\times\cA\times\cU_0\to\R$, smooth in $u_a\in\cU_0$ for each $(x,a)$, such that the scoring function takes the \emph{max-arm form}
\begin{equation}\label{eq:max-form}
\psi(x,u)=\max_{a\in\cA}\,g(x,a,u_a),\qquad u=(u_a)_{a\in\cA}\in\cU.
\end{equation}
\end{enumerate}
\end{definition}

We call $g(x,a,\cdot)$ the arm-$a$ link and $g(x,a,u_{P,a}(x))$ its branch value at $P$. The branch is active when its value equals the maximum in~\eqref{eq:max-form}.

The primary nuisance then has $d=2$ components: $\eta_1(P)$, the vector of arm-conditional summaries $(u_{P,a})_{a\in\cA}$, and $\eta_2(P)$, the arm-propensity $\pi^P(\cdot)\eqdef(\pi_a^P(\cdot))_{a\in\cA}$.  The structural component is the active set
\begin{equation}\label{eq:Aact}
\cA^{\mathrm{act}}(x;P)\eqdef\argmax_{a\in\cA}g\bigl(x,a,u_{P,a}(x)\bigr),
\end{equation}
encoded as $\kappa(P)(x)=\cA^{\mathrm{act}}(x;P)$. The structural space $\cK_0$ is the finite collection of nonempty subsets of $\cA$; thus a structural input $k$ is itself an active set, and we write $\cA^\ast(k)=k$. We also write $\cA^\ast(x;P)\eqdef\cA^{\mathrm{act}}(x;P)$.  The auxiliary space $\Wext$ may encode arm-specific working variances for an identifying score $h_a$ (Definitions~\ref{def:ident-score} and~\ref{def:work-var} below). For the max-arm RALU results, the propensity-slot space is the open probability simplex and every law in the declared model satisfies $\pi_a^P(x)>0$ for all $a\in\cA$, $P_X$-a.e.  Thus the full-slot rules below are real-valued on their entire declared domain; the uniform overlap constants needed for feasible estimation are imposed separately in Assumption~\ref{ass:achieve-reg}.

Because $\cA$ is finite, the active set is finite, the quadratic minimizations below attain their minima, and the positive gap $\Delta_+(x)$ in~\eqref{eq:pos-gap} separates the active and inactive arms whenever $\cA^\ast(x;P)\ne\cA$.  Infinite arm spaces require an additional isolated-gap or sieve condition and are outside the present scope. Binary OTR is the max-arm structure with $\cA=\{0,1\}$, $h_a=Y$, and $g$ the identity; multi-arm OTR takes $\cA=\{1,\dots,K\}$.

\subsection{Identifying score and working variances}\label{sec:ident-score}

To formulate the closed-form expression for $\Vmin$, we introduce two ingredients tied to the arm-functional $u_a$.

\begin{definition}[Identifying score]\label{def:ident-score}
An \emph{identifying score} for the arm-functional family $(u_a)_{a\in\cA}$ is a measurable family of functions $h_a:\widetilde\cW\to\R$, one per arm, such that
\begin{equation}\label{eq:ident-score}
u_{P,a}(x)=\E_{Q_{x,a}^P}[h_a(\widetilde W)]\qquad
\text{for all }P\in\cP,\ x\in\cX,\ a\in\cA.
\end{equation}
\end{definition}

We use $\bm u_P$ for the vector of arm means and $\bm e_a\in\R^{|\cA|}$ for the coordinate vector of arm $a$. In the perturbation conditions below, $D_x$ records changes in arm means that keep the active set fixed. The set $\mathcal R_P(x)$ contains the inactive arms that can be varied individually in this way; the conditions need not make every inactive arm individually reachable.

\begin{assumption}[Max-arm perturbation conditions]\label{ass:armwise}
For every $P\in\cP$ and $P_X$-a.e.\ $x$, the following conditions hold.
\begin{enumerate}[wide=0pt,label={(X\arabic*)},itemsep=2pt]
\item \textbf{Armwise fixed-mean completeness.}  For every $a\in\cA$, hold $\pi^P(x)$ and the off-arm conditional laws fixed.  For every $\epsilon\in(0,1)$ and every finitely supported probability law $\nu$ on $\supp Q_{x,a}^P$ satisfying $\E_\nu[h_a]=u_{P,a}(x)$, the admissible arm laws contain $(1-\epsilon)Q_{x,a}^P+\epsilon\nu$.  The resulting correspondence has Borel graph and finite model-admissibility cost $C$ as in Assumption~\ref{ass:rich}.  Selections with integrable cost glue to a law in $\cP$ with the same $P_X$, $\eta$, and $\kappa$; arbitrary selections are localized to $\{C\le m\}$ and equal the baseline elsewhere.

\item \textbf{Structure-preserving directions.}  There is a correspondence $x\mapsto D_x\subseteq\R^{|\cA|}$ with Borel graph and $\bm 0\in D_x$, together with a measurable correspondence of perturbed inactive arms $x\mapsto\mathcal R_P(x)\subseteq \cA\setminus\cA^\ast(x;P)$.  For every $a\in\mathcal R_P(x)$, $t\bm e_a\in D_x$ for all sufficiently small $t$ that leave $a$ inactive.  Moreover, $D_x$ contains a jointly measurable tie-preserving path $\bm r_x(t)$ for $|t|<\epsilon_P(x)$, where $\epsilon_P:\cX\to(0,\infty)$ is measurable.  Its inactive coordinates are zero, and it satisfies
\[
g\bigl(x,a,u_{P,a}(x)+r_{x,a}(t)\bigr)
=g\bigl(x,a,u_{P,a}(x)\bigr)+t,
\qquad a\in\cA^\ast(x;P).
\]

\item \textbf{Mean-shifting conditional laws.}  For every $\bm\delta\in D_x$ there is a conditional law $Q_{x,\bm\delta}$ on $\cA\times\widetilde\cW$ having arm probabilities $\pi^P(x)$ and arm score means $\bm u(Q_{x,\bm\delta})=\bm u_P(x)+\bm\delta$, where $u_a(Q)\eqdef\E_Q[h_a(\widetilde W)\mid A=a]$, such that
\begin{equation}\label{eq:bridge}
Q_{x,\bm\delta}\ll Q_x^P,\qquad Q_{x,\bm 0}=Q_x^P.
\end{equation}
The corresponding perturbation changes $\eta_1=\bm u_P$ only and preserves $\eta_2=\bm\pi^P$ and $\kappa$.  The correspondence of mean-shifting laws has Borel graph, and its kernels have finite model-admissibility cost $C$.  Every universally measurable selection $x\mapsto Q_{x,\bm\delta(x)}$ with integrable cost glues to a law in $\cP$ with marginal $P_X$. The baseline direction $\bm0$ is used outside each cost-localized set.
\end{enumerate}
\end{assumption}

In OTR, $h_a(y)=y$ identifies the conditional outcome regression $u_{P,a}$. The existence of an identifying score is the \emph{linear-functional} character of $u_a$ with respect to its conditional law, a property that is automatic when $u_a$ is a conditional expectation.

\begin{definition}[Working variance]\label{def:work-var}
The \emph{conditional variance} of the identifying score is
\begin{equation}\label{eq:sigma-def}
\sigma_a^2(x;P)\eqdef\Var_{Q_{x,a}^P}\!\bigl[h_a(\widetilde W)\bigr].
\end{equation}
A \emph{working variance} is a measurable map $\widehat\sigma^2:\cX\times\cA\to(0,\infty)$ chosen by the analyst, serving as $\Wext$-valued auxiliary input. We write $\widehat\sigma_a^2(x)\eqdef\widehat\sigma^2(x,a)$.
\end{definition}

\subsection{Rigidity of admissible influence functions}\label{sec:func-eq}

Exact multiple robustness pins the influence function pointwise: using pointwise admissibility (Definition~\ref{def:pt-adm}) and the armwise score and mean conditions in Assumption~\ref{ass:armwise}, the following lemma reduces every admissible evaluation $f_w^P$ to an affine family in the identifying scores, with the slope freedom that the rest of the section optimizes.

\begin{lemma}[Max-arm functional form]\label{lem:func-eq-maxarm}
Assume the max-arm structure of Definition~\ref{def:max-arm} with a scalar identifying score $h_a:\widetilde\cW\to\R$ (Definition~\ref{def:ident-score}).  Suppose Assumptions~\ref{ass:rich} and~\ref{ass:armwise} and Assumption~\ref{ass:meas-main} hold.  Fix $P\in\cP$, a measurable $w$, and a score map $f\in\IFRL$.  Assume $g(x,a,\cdot)$ is continuously differentiable with $g_a^\prime(x)\eqdef\partial_{u_a}g(x,a,u_{P,a}(x))\neq 0$; assume further that every \emph{active} link $g(x,a,\cdot)$, $a\in\cA^{\mathrm{act}}(x;P)$, is affine on $\{u_{P,a}(x)+r_{x,a}(t):|t|<\epsilon_P(x)\}$, and that $\pi_a^P(x)>0$ and $\sigma_a^2(x;P)<\infty$ for all $a\in\cA$, with $\sigma_a^2(x;P)>0$ for $a\in\cA^{\mathrm{act}}(x;P)$, for $P_X$-a.e.\ $x$.

Then there exist measurable functions $\beta_a,c_a:\cX\to\R$, $a\in\cA$, and a set $N_{f,P,w}\in\mathcal B_{\cX}$ with $P_X(N_{f,P,w})=0$ such that, for every $x\notin N_{f,P,w}$,
\begin{equation}\label{eq:lem-form}
f^P_w(x,(a,\widetilde w))\;=\;\beta_a(x)\,h_a(\widetilde w)\,+\,c_a(x),
\qquad Q_{x,a}^P\text{-a.s.},\quad a\in\cA,
\end{equation}
with the slopes constrained as follows.
\begin{enumerate}[wide=0pt,label={\textup{(\arabic*)}},itemsep=2pt]
\item \textbf{Strict region.} If $\cA^{\mathrm{act}}(x;P)=\{a^\ast\}$ is a singleton, then
\begin{equation}\label{eq:lem-strict}
\beta_{a^\ast}(x)=\frac{g_{a^\ast}^\prime(x)}{\pi_{a^\ast}^P(x)},
\qquad
\beta_a(x)=0\quad\text{for }a\in\mathcal R_P(x).
\end{equation}

\item \textbf{Kink region.} If $|\cA^{\mathrm{act}}(x;P)|\geq 2$, then $\beta_a(x)=0$ for every $a\in\mathcal R_P(x)$, and the active-arm slopes $(\beta_a)_{a\in\cA^{\mathrm{act}}(x;P)}$ satisfy the single linear identity
\begin{equation}\label{eq:lem-kink}
\sum_{a\in\cA^{\mathrm{act}}(x;P)}\pi_a^P(x)\cdot
\frac{\beta_a(x)}{g_a^\prime(x)}\;=\;1,
\end{equation}
\end{enumerate}
The intercepts satisfy
\[
\sum_{a\in\cA}\pi_a^P(x)
\{\beta_a(x)u_{P,a}(x)+c_a(x)\}
=\psib(x,\eta(P)(x),\kappa(P)(x)).
\]
\end{lemma}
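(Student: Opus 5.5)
The plan is to apply Theorem~\ref{thm:ulb}(ii) at a fixed $P$ and $w$, giving one $P_X$-null set outside which the combined identity~\eqref{eq:combined-pt} holds simultaneously for all compatible laws $Q'\in\mathfrak Q_x^P$ and all witnessed substituted values $v$. The argument then runs pointwise in $x$ outside this set, intersected with the null sets where the hypotheses on $\pi^P$, $\sigma^2$ and $g'$ fail. Measurability of $\beta_a,c_a$ will come at the end from explicit formulas: $\beta_a(x)$ as a ratio of conditional covariance to variance, $\Cov_{Q_{x,a}^P}(f^P_w,h_a)/\sigma_a^2(x;P)$ when $\sigma_a^2>0$, and $c_a$ as the corresponding conditional mean residual.

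\textbf{Step 1 (affine form).} Use the law-variation invariance~\eqref{eq:LVI} with the contaminated arm laws of (X1). Fix $a$ and hold $\pi^P(x)$ and the other arm laws fixed. For every finitely supported $\nu$ on $\supp Q^P_{x,a}$ with $\E_\nu h_a=u_{P,a}(x)$, the identity~\eqref{eq:LVI} at $Q'=(1-\epsilon)Q^P_x+\epsilon\cdot(\text{arm-}a\text{ replaced by }\nu)$ is linear in $\epsilon$. It therefore gives $\E_\nu[f^P_w(x,(a,\cdot))]=\E_{Q^P_{x,a}}[f^P_w(x,(a,\cdot))]$. Taking $\nu$ supported on two or three points with the same $h_a$-mean, this says the function $\widetilde w\mapsto f^P_w(x,(a,\widetilde w))$ has equal value on all barycentric combinations matching the mean. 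This is the standard argument that a function whose integral against every mean-constrained finitely supported measure is constant must be affine in $h_a$ on the support. Concretely, points with equal $h_a$ receive equal $f$-values, and the induced function of $h_a$ satisfies the three-point chord equality, so it is affine.

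To make this hold $Q^P_{x,a}$-a.s., restrict to a full-measure subset of the support where $f$ is defined pointwise; the point-mass contamination tests values at support points. I expect this to be the main technical obstacle. One must arrange a single null set, use the Borel-graph measurable selection in (X1) together with localization to $\{C\le m\}$, and handle the case $\sigma_a^2=0$ on inactive arms (there $\beta_a$ can be taken $0$ and $c_a$ the mean). This yields~\eqref{eq:lem-form}.

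\textbf{Step 2 (intercept identity and slope constraints).} Insert~\eqref{eq:lem-form} into~\eqref{eq:LVI} with $Q'=Q^P_x$; this gives the intercept identity, since the true-law conditional mean equals $\psib$. Next take $Q'=Q_{x,\bm\delta}$ from (X3). Each $Q_{x,\bm\delta}\ll Q^P_x$ keeps $\pi^P$ and $\kappa$ and changes only arm means, so $f^P_w$ (evaluated at the true $\eta$) integrates to $\sum_a\pi_a\{\beta_a(u_{P,a}+\delta_a)+c_a\}$. Meanwhile the law has $\eta_1$ changed, and~\eqref{eq:LVI} requires $Q'\in\mathfrak Q^P_x$, which fails because $\eta_1$ moves. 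The constraint therefore instead comes from combining~\eqref{eq:PUnb}-type substitution with the fact that $f\in\IFRL$ must be unbiased at the perturbed law $P_{\bm\delta}$ (glued via (X3), in $\cP$) with the propensity slot correct and the mean slot at the true value $\bm u_P$. Here $\eta_2$ agrees and $\kappa$ is preserved, so (B) with $j=1$ applies. The expectation is $\sum_a\pi_a\{\beta_a(u_{P,a}+\delta_a)+c_a\}$, and it must equal $\psib$ at $P_{\bm\delta}$, namely $\max_a g(x,a,u_{P,a}+\delta_a)$.

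Subtracting the $\bm\delta=\bm 0$ case gives $\sum_a\pi_a\beta_a\delta_a=\max_a g(x,a,u_{P,a}+\delta_a)-\max_a g(x,a,u_{P,a})$ for all $\bm\delta\in D_x$. Taking $\bm\delta=t\bm e_a$ with $a\in\mathcal R_P(x)$ leaves the right side $0$, so $\beta_a=0$. Taking the tie-preserving path $\bm r_x(t)$ gives the right side equal to $t$. By affineness of the active links, $r_{x,a}(t)=t/g'_a$, so $\sum_{a\in\cA^\ast}\pi_a\beta_a/g'_a=1$, which is~\eqref{eq:lem-kink}. In the singleton case this reads $\beta_{a^\ast}=g'_{a^\ast}/\pi_{a^\ast}$, giving~\eqref{eq:lem-strict}.

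The pointwise-in-$x$ version of this unbiasedness follows by localizing the perturbation through (M1) and localized substitution, exactly as in Theorem~\ref{thm:ulb}(ii). Countably many rational $t$ then suffice, which keeps the exceptional set a single null set.
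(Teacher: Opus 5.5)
Your proposal is correct and follows essentially the paper's proof. The fixed-mean contaminations of (X1) give armwise affineness; the paper cites the fixed-mean form of Lemma~\ref{lem:moment-affine}, whose scalar case you reprove directly. Applying (B) with ambient law $P_{\bm\delta}$ and working law $P$, then localizing via (M1), is exactly the bridge identity of Lemma~\ref{lem:matched-bridge} and Corollary~\ref{cor:matched-arm}, from which the reachable inactive directions and the tie-preserving path give \eqref{eq:lem-strict}--\eqref{eq:lem-kink}. The only difference is null-set bookkeeping: you use countably many rational displacements with measurable kernel selections rather than the violation-selection lemma on the whole direction--kernel graph, which suffices here because affine active links need only one nonzero $t$ per direction.
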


Lemma~\ref{lem:func-eq-maxarm} reduces the optimization over measurable estimating functions to their arm-specific slopes and intercepts. Fixed-mean completeness gives the affine form in~\eqref{eq:lem-form}. The mean-shifting laws in Assumption~\ref{ass:armwise}~(X3) then turn exact robustness into a finite-difference identity: individual changes in the inactive arms indexed by $\mathcal R_P(x)$ force their slopes to vanish, and the tie-preserving path gives~\eqref{eq:lem-kink}. An inactive arm outside $\mathcal R_P(x)$ can retain a conditionally centered term, but its nonnegative variance contribution cannot reduce the lower bound.

\subsection{The weighted family and optimal weights}\label{sec:weighted-family}

For the explicit construction in this subsection, take affine links $g(x,a,u)=g_a^\prime(x)u+\rho(x,a)$ with $g_a^\prime(x)\ne0$, positive arm probabilities, finite arm-score variances, and positive variances on the active set. All these requirements are understood $P_X$-a.e. at the evaluating law $P$. Here $\rho(x,a)$ is the known affine intercept. We abbreviate $\cA^\ast(x)=\cA^\ast(x;P)$ and $\psib(x)=\psib(x,\eta(P)(x),\kappa(P)(x))=\psi(x,u_P(x))$ within calculations at that law.

For $P\in\cP$ and a measurable weight map $\bm\lambda:\cX\to[0,1]^{\cA}$ supported on $\cA^\ast(x;P)$ and satisfying $\sum_{a\in\cA^\ast(x;P)}\lambda_a(x)=1$, define
\begin{equation}\label{eq:weighted-eval}
\begin{aligned}
\phi_P^{\bm\lambda}(x,(a,\widetilde w))
&\eqdef\psib(x,\eta(P)(x),\kappa(P)(x))\\
&\quad+\1\{a\in\cA^\ast(x;P)\}
\frac{g_a^\prime(x)\lambda_a(x)}{\pi_a^P(x)}
\{h_a(\widetilde w)-u_{P,a}(x)\}.
\end{aligned}
\end{equation}

By Lemma~\ref{lem:func-eq-maxarm}, every admissible evaluation $f_w^P$ is affine in the identifying scores, with active-arm slopes $\beta_a=g_a^\prime\lambda_a/\pi_a^P$ parametrized by weight maps $\lambda$ on the active set summing to one (cf.~\eqref{eq:lem-kink}), and intercepts $c_a$ satisfying $\sum_a\pi_a^P\{\beta_au_{P,a}+c_a\}=\psib$.  The intercepts are then fixed by a second minimization.  Writing $r_a\eqdef\beta_au_{P,a}+c_a$ for the arm-level conditional means, the law of total variance within $x$ gives, for every finite-variance evaluation,
\begin{equation}\label{eq:within-between}
\Var_{Q_x^P}\bigl[f_w^P(x,\cdot)\bigr]
=\sum_{a}\pi_a^P\beta_a^2\sigma_a^2+\sum_a\pi_a^P\,(r_a-\psib)^2,
\end{equation}
so that the intercepts minimizing the second sum equalize every arm-level mean to $\psib$, and a nonzero slope on an inactive arm outside $\mathcal R_P(x)$, which the constraints leave free, can only enlarge the first sum.  Equalizing the arm-level conditional means and setting the inactive slopes to zero reduces the optimization to active-arm coefficients summing to one. These coefficients may initially be signed. The variance minimizer is strictly positive, so the nonnegative-weight family in~\eqref{eq:weighted-eval} contains an optimizer even though it does not contain every RALU evaluation.

For class membership we need a map defined at working nuisance inputs, not only the evaluation at the truth. Write $e=(e_1,e_2)\in\cN_0$ for the mean and propensity inputs and $k\in\cK_0$ for a nonempty active set. Let $\omega(x,k)$ be supported on $k$ with nonnegative coordinates summing to one. The corresponding full nuisance rule is
\begin{multline}\label{eq:f-form-weighted-slot}
f^{\omega}\bigl(x,(a,\widetilde w),e,k,v\bigr)
\eqdef\sum_{a'\in\cA^\ast(k)}\omega_{a'}(x,k)\,g\bigl(x,a',e_{1,a'}\bigr)\\
+\1\{a\in\cA^\ast(k)\}\,
\frac{g_a^\prime(x)\,\omega_a(x,k)}{e_{2,a}}\,
\bigl(h_a(\widetilde w)-e_{1,a}\bigr)
\end{multline}
When the weights use working variances, $\omega$ also depends on $(e_2,v)$; its rule is given in~\eqref{eq:lambda-opt-working}. We suppress these additional arguments in~\eqref{eq:f-form-weighted-slot}. The rule does not use the working mean $e_1$ in its weights.

The intercept of $f^\omega$ is the $\omega$-weighted plug-in of the arm values (the general form of the working-weighted tie intercept of \cite{XuGuo}), rather than an arbitrary off-truth extension of $\psib$: the identification~\eqref{eq:pt-id} specifies $\psib$ only at nuisance and structural values generated by laws in $\cP$.

At the true nuisance and structural inputs the intercept equals $\sum_{a'}\omega_{a'}(x)\,g(x,a',u_{P,a'}(x)) =\psib(x,\eta(P)(x),\kappa(P)(x))$ (the active-arm $g$-values being tied), recovering the evaluation at the true nuisance values in~\eqref{eq:weighted-eval}.  The algebra for nuisance-substitution invariance~\eqref{eq:PUnb} is exact: for the mean slot, the affine identity $g(x,a',e_{1,a'})+g_{a'}^\prime(x)(u_{P,a'}(x)-e_{1,a'}) =g(x,a',u_{P,a'}(x))$ cancels the intercept's deviation for every $e_1$; for the propensity input, the corrections are conditionally centered for every $e_2$.

These are conditional expectation identities whenever the displayed terms are integrable. Their use as condition~(B) on the whole comparison model also requires absolute integrability for each compatible pair of laws. This global requirement is separate from the algebra; it is guaranteed, for example, by the envelope in~\eqref{eq:maxarm-onewrong-envelope}.

Minimizing the true conditional variance over the unit-sum constraint yields the \emph{oracle weights}
\begin{equation}\label{eq:lambda-opt-oracle}
\lambda_a^\ast(x)\eqdef
\frac{\pi_a^P(x)/[(g_a^\prime(x))^2\,\sigma_a^2(x;P)]}
{\sum_{b\in\cA^\ast(x)}\pi_b^P(x)/[(g_b^\prime(x))^2\,\sigma_b^2(x;P)]},
\end{equation}
for $a\in\cA^\ast(x)$, and $\lambda_a^\ast(x)=0$ otherwise. Their optimal value is
\begin{equation}\label{eq:var-oracle-value}
\sum_{a\in\cA^\ast(x)}\frac{(g_a^\prime)^2(\lambda_a^\ast)^2\sigma_a^2}
{\pi_a^P}(x)
=\biggl(\sum_{a\in\cA^\ast(x)}
\frac{\pi_a^P(x)}{(g_a^\prime(x))^2\,\sigma_a^2(x;P)}\biggr)^{\!-1},
\end{equation}
the harmonic variance value.

\begin{assumption}[Oracle realization for max-arm scores]\label{ass:oracle-maxarm}
At the evaluating law $P$, there exist a full nuisance map $f_P^\ast\in\IFRL$ and a measurable auxiliary input $w_P^\ast$ such that $(f_P^\ast)^P_{w_P^\ast}=\phi_P^{\bm\lambda^\ast}$ $P$-a.s., where $\phi_P^{\bm\lambda^\ast}$ is defined by~\eqref{eq:weighted-eval} and~\eqref{eq:lambda-opt-oracle}, and $\Var_P[\phi_P^{\bm\lambda^\ast}]<\infty$.
\end{assumption}

Assumption~\ref{ass:oracle-maxarm} requires an estimating function that satisfies the robustness identities throughout the comparison model and has the proposed optimal evaluation at $P$. The formula~\eqref{eq:f-form-weighted-slot} supplies one when every evaluation with one misspecified nuisance component is integrable. Corollary~\ref{cor:achieve-global} gives a sufficient envelope for this integrability and, separately, learning conditions under which a feasible estimator attains the same variance.

\subsection{The closed-form variance floor at kink points}\label{sec:vmin-closed}

The variance minimization above bounds every admissible score map. Oracle realization makes the bound exact.

\begin{theorem}[Harmonic lower bound and exact floor]\label{thm:kink}
Assume the max-arm structure of Definition~\ref{def:max-arm}, an identifying score as in Definition~\ref{def:ident-score}, and Assumptions~\ref{ass:rich}, \ref{ass:meas-main}, and~\ref{ass:armwise}. Fix $P\in\cP$. Suppose $g(x,a,u)=g_a^\prime(x)u+\rho(x,a)$ for all $u\in\R$, with $g_a^\prime(x)\ne0$. Suppose also that $\pi_a^P(x)>0$ and $\sigma_a^2(x;P)<\infty$ for every arm, and $\sigma_a^2(x;P)>0$ on the active set, for $P_X$-a.e.\ $x$. Then
\[
\Vmin(x;P)\ge
\biggl(\sum_{a\in\cA^\ast(x)}
\frac{\pi_a^P(x)}{(g_a^\prime(x))^2\,\sigma_a^2(x;P)}\biggr)^{-1}
\qquad P_X\text{-a.e.}
\]
If Assumption~\ref{ass:oracle-maxarm} holds at $P$, then
\begin{equation}\label{eq:vmin-harmonic}
\Vmin(x;P)
=\biggl(\sum_{a\in\cA^\ast(x)}
\frac{\pi_a^P(x)}{(g_a^\prime(x))^2\,\sigma_a^2(x;P)}\biggr)^{\!-1}
\qquad P_X\text{-a.e.},
\end{equation}
and its integral, plus $\Var_{P_X}[\psi(X,u_P(X))]$, is attained by the map in that assumption.
\end{theorem}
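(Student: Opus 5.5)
The plan is to prove the lower bound pointwise for each competitor and then pass to the essential infimum, as in the remark after Definition~\ref{def:vmin}. Fix $f\in\IFRL$ with $\Var_P[f^P_w]<\infty$. Since $g$ is affine in $u$ for all $u$, the active-link affineness hypothesis of Lemma~\ref{lem:func-eq-maxarm} holds automatically. Its remaining hypotheses are assumed in the theorem. So the lemma applies and gives, off a $P_X$-null set $N_{f,P,w}$, the representation $f^P_w(x,(a,\widetilde w))=\beta_a(x)h_a(\widetilde w)+c_a(x)$ $Q^P_{x,a}$-a.s. The lemma also gives $\beta_a=0$ on $\mathcal R_P(x)$ and the unit-sum constraint $\sum_{a\in\cA^\ast(x)}\pi_a\beta_a/g_a'=1$ on the active set. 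In the strict region this constraint reduces to $\beta_{a^\ast}=g'_{a^\ast}/\pi_{a^\ast}$, which is the same identity with a singleton active set.

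Next, bound the conditional variance using the within/between decomposition~\eqref{eq:within-between}. For $x$ with $v_f(x)<\infty$, we have $v_f(x)=\sum_a\pi_a\beta_a^2\sigma_a^2+\sum_a\pi_a(r_a-\psib)^2$. Dropping the between term and all inactive arms, which contribute nonnegatively, gives $v_f(x)\ge\sum_{a\in\cA^\ast(x)}\pi_a\beta_a^2\sigma_a^2$. Then substitute $\lambda_a\eqdef\pi_a\beta_a/g_a'$, which are signed and satisfy $\sum\lambda_a=1$. This yields $\sum_a (g_a')^2\lambda_a^2\sigma_a^2/\pi_a$. Applying Cauchy--Schwarz to $1=\sum_a\lambda_a=\sum_a (\lambda_a g_a'\sigma_a/\sqrt{\pi_a})\cdot(\sqrt{\pi_a}/(g_a'\sigma_a))$ gives
\[
\sum_a \frac{(g_a')^2\lambda_a^2\sigma_a^2}{\pi_a}\ge\Bigl(\sum_{a\in\cA^\ast(x)}\frac{\pi_a}{(g_a')^2\sigma_a^2}\Bigr)^{-1}=:H(x).
\]
The division is legitimate because $\sigma_a^2>0$ on the active set. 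Equality holds exactly at $\lambda^\ast$ of~\eqref{eq:lambda-opt-oracle}. Hence $v_f\ge H$ $P_X$-a.e. Finiteness of $v_f$ a.e. follows from $\Var_P[f^P_w]<\infty$ and the decomposition~\eqref{eq:ulb-bound}. The function $H$ is measurable because $\cA^\ast(x)$ ranges over finitely many sets and each piece is measurable. Since $H$ is a measurable lower bound for every member of $\mathcal V_{P,w}$, it lies below the essential infimum, so $\Vmin(\cdot;P)\ge H$. If the family is empty, the bound holds trivially since $\Vmin=+\infty$. Lemma~\ref{rem:vmin-w-invariant} removes the dependence on $w$.

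For the reverse inequality under Assumption~\ref{ass:oracle-maxarm}, take $f^\ast_P$ and $w^\ast_P$. Its evaluation is $\phi_P^{\lambda^\ast}$, which has finite variance. A direct computation from~\eqref{eq:weighted-eval} shows that each arm-level conditional mean equals $\psib$, so the between term vanishes. The within term equals~\eqref{eq:var-oracle-value}, which is $H(x)$. Therefore $v_{f^\ast}=H$ a.e. at auxiliary $w^\ast_P$, and Lemma~\ref{rem:vmin-w-invariant} transfers this to any $w$, giving~\eqref{eq:vmin-harmonic}. Finally, Theorem~\ref{thm:ulb}(i) shows that $\Var_P[\phi^{\lambda^\ast}_P]=\Var_{P_X}[\psi]+\E_{P_X}[H]=\Vlow(P)$, so the floor is attained.

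The main obstacle is bookkeeping rather than analysis. The null set from Lemma~\ref{lem:func-eq-maxarm} depends on $f$, and the decomposition~\eqref{eq:within-between} must be justified with possibly infinite terms. Both issues are handled by the per-$f$ a.e.\ inequality combined with the essential-infimum characterization. A secondary point is that inactive arms outside $\mathcal R_P(x)$ may carry nonzero slopes; they only add nonnegative variance, so dropping them is harmless.
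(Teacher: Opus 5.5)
Your proposal is correct and follows the paper's proof essentially step for step. You use Lemma~\ref{lem:func-eq-maxarm} for the affine form and the unit-sum slope constraint, then the arm-conditional within/between variance decomposition with the between-arm and inactive-arm terms dropped, then the reparametrization $\lambda_a=\pi_a\beta_a/g_a'$ with Cauchy--Schwarz, and finally the oracle map of Assumption~\ref{ass:oracle-maxarm} for the reverse inequality. Your explicit appeal to Lemma~\ref{rem:vmin-w-invariant}, to pass from the oracle's auxiliary input $w_P^\ast$ to an arbitrary $w$, makes precise a step the paper leaves implicit.
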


On a strict branch with a nondegenerate reachable interval of active-arm means, affineness is necessary, by Proposition~\ref{prop:affine-necessary}, so that without affineness the infimum in~\eqref{eq:vmin} would there run over an empty set.  It is not forced along a persistent tie: the governing restriction is the exact perturbation equation along a tie-preserving path.  A nonlinear link can satisfy that equation; the harmonic form above is the case in which it reduces to a single linear constraint.  A separate first-order relaxation is discussed in Section~\ref{sec:delimitation}.

Theorem~\ref{thm:kink} bounds the variance of every admissible estimating function using the exact robustness identities. Assumption~\ref{ass:oracle-maxarm} makes this bound sharp by supplying a function with the minimizing variance. The bound depends on the true conditional variances and is independent of the analyst's working variances, as Lemma~\ref{rem:vmin-w-invariant} requires. Section~\ref{sec:achieve} gives the additional learning conditions for an estimator to attain it.

The expression~\eqref{eq:vmin-harmonic} is the harmonic-mean-type formula that recovers the Xu--Guo OTR bound when $\cA=\{0,1\}$ and $g(x,a,u_a)=u_a$ (so $g_a^\prime\equiv 1$).

Under the equality conditions of Theorem~\ref{thm:kink}, an analyst who commits to a working variance $w(x)=(\widehat\sigma^2_a(x))_a$ and minimizes the working-variance proxy obtains the working weights
\begin{equation}\label{eq:lambda-opt-working}
\widehat\lambda_a(x) \eqdef
\frac{\pi_a^P(x)/[(g_a^\prime(x))^2\widehat\sigma_a^2(x)]}
{\sum_{b\in\cA^\ast(x)}\pi_b^P(x)/[(g_b^\prime(x))^2\widehat\sigma_b^2(x)]},
\end{equation}
whose \emph{true} conditional variance is
\begin{equation}\label{eq:vmin-closed}
V_{\widehat\lambda}(x;P,w)
\eqdef
\frac{\displaystyle\sum_{a\in\cA^\ast(x)}\pi_a^P(x)\,\sigma_a^2(x;P)
\big/\bigl[(g_a^\prime(x))^2\widehat\sigma_a^4(x)\bigr]}
{\Bigl(\displaystyle\sum_{b\in\cA^\ast(x)}\pi_b^P(x)\big/
\bigl[(g_b^\prime(x))^2\widehat\sigma_b^2(x)\bigr]\Bigr)^{\!2}}
\;\ge\;\Vmin(x;P),
\end{equation}
with equality if and only if $\widehat\sigma_a^2(x)=c_P(x)\sigma_a^2(x;P)$ for all active arms and some $c_P(x)>0$: the price of a misspecified working variance is an achieved variance, not a lower bound.

The working-weighted value~\eqref{eq:vmin-closed} converges to the ($w$-free) bound~\eqref{eq:vmin-harmonic} as $\widehat\sigma^2\to\sigma^2(\cdot;P)$ (proportionality on the active set suffices). Under the learning conditions of Theorem~\ref{thm:achieve}, a consistent variance learner therefore yields an estimator attaining the bound.  This is the principle behind the adaptive smoothing of \cite{XuGuo}, whose asymmetry parameter is replaced by any consistent estimate of the optimal tie weight, in its general form; it is the precise sense in which the auxiliary slot matters: not for the bound, but for which member of $\TRL$ the analyst can construct (Theorem~\ref{thm:achieve}).

On the strict region $\{|\cA^\ast(x)|=1\}$, only one arm $a^\ast(x)$ is active, so $\lambda^\ast_{a^\ast}(x)=1$ and~\eqref{eq:vmin-harmonic} reduces to
\begin{equation}\label{eq:vmin-strict}
\Vmin(x;P)\bigm|_{|\cA^\ast(x)|=1}
=\frac{(g_{a^\ast}^\prime(x))^2\sigma_{a^\ast}^2(x;P)}{\pi_{a^\ast}^P(x)}.
\end{equation}
For OTR ($g_a^\prime\equiv 1$), this is the standard AIPW conditional residual variance for the optimal arm.

\begin{corollary}[Recovery of the optimal-treatment-regime bound]
\label{rem:otr-recovery-main}
Under the equality conditions of Theorem~\ref{thm:kink}, for binary OTR ($\cA=\{0,1\}$, $g(x,a,u_a)=u_a$, so $g'\equiv 1$), \eqref{eq:vmin-harmonic} reduces on the strict region to $\sigma_{a^\ast}^2/\pi_{a^\ast}^P$ and on the kink region to the harmonic mean $(\pi_1^P/\sigma_1^2+\pi_0^P/\sigma_0^2)^{-1}$, recovering the efficiency bound of \cite{XuGuo} as a special case: Theorem~3 there under conditional homoscedasticity, where the kink term reduces to $\sigma^2$, and Theorem~A.1 of their supplement in general.
\end{corollary}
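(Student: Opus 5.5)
The corollary is a direct specialization of Theorem~\ref{thm:kink}, so the plan is to verify that binary OTR fits the hypotheses and then evaluate the harmonic formula~\eqref{eq:vmin-harmonic}. Take $\cA=\{0,1\}$, $\widetilde W=Y$, $h_a(y)=y$, and $g(x,a,u)=u$. This is affine with $g_a'\equiv1\neq0$ and $\rho\equiv0$, so the affine-link hypothesis holds. The conditional variances $\sigma_a^2(x;P)$ are then the arm-wise outcome variances. The remaining standing conditions (Assumptions~\ref{ass:rich}, \ref{ass:meas-main}, \ref{ass:armwise}, \ref{ass:oracle-maxarm}, positivity, and finite variances) are exactly the ``equality conditions'' the corollary assumes. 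Theorem~\ref{thm:kink} therefore applies and gives $\Vmin(x;P)=\bigl(\sum_{a\in\cA^\ast(x)}\pi_a^P/\sigma_a^2\bigr)^{-1}$ $P_X$-a.e.

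Next, split $\cX$ into the strict region and the kink region according to $|\cA^\ast(x)|$.
\begin{itemize}
\item On the strict region, $\cA^\ast(x)=\{a^\ast\}$, and the sum has a single term. This gives $\sigma_{a^\ast}^2/\pi_{a^\ast}^P$, matching~\eqref{eq:vmin-strict}.
\item On the kink region, $\cA^\ast(x)=\{0,1\}$, which happens exactly when $\mu_1(x)=\mu_0(x)$. The sum becomes $(\pi_1^P/\sigma_1^2+\pi_0^P/\sigma_0^2)^{-1}$.
\end{itemize}
Under conditional homoscedasticity, $\sigma_1^2=\sigma_0^2=\sigma^2$. Since $\pi_1^P+\pi_0^P=1$, the kink term reduces to $\sigma^2$. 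The total floor $\Vlow(P)$ is then $\Var\{\max_a\mu_a(X)\}$ plus the integral of these pieces, by~\eqref{eq:Vlow}.

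The main obstacle is the recovery claim: identifying this $\Vlow(P)$ with the Xu--Guo bounds (their Theorem~3 and Theorem~A.1). The step I would rely on is the correspondence already noted after Definition~\ref{def:ralu}. Under the binary-OTR declaration, the robustness constraints evaluated at the true nuisances yield the same unit-sum tie-slope program as in \cite{XuGuo}. Here that program is~\eqref{eq:lem-kink} with $g'\equiv1$, namely $\pi_1\beta_1+\pi_0\beta_0=1$.

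To close the argument, I would minimize $\pi_1\beta_1^2\sigma_1^2+\pi_0\beta_0^2\sigma_0^2$ subject to this constraint by a Lagrange multiplier. The optimal slopes are $\beta_a\propto1/\sigma_a^2$, which gives the harmonic value. I would then check that this value matches the cited displays term by term: the strict-region term, the tie term, and the marginal variance of $\max_a\mu_a$. The consistency check against~\eqref{eq:intro-binary} confirms the kink expression.
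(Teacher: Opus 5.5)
Your proposal is correct and follows essentially the paper's own argument in Section~\ref{sec:otr-recovery}. Both specialize the max-arm structure to $g_a'\equiv1$, $h_a(y)=y$, apply Theorem~\ref{thm:kink} under its equality conditions (the paper additionally cites Proposition~\ref{prop:verify-maxarm} for the perturbation conditions), read off the singleton and two-arm active sets, add $\Var_{P_X}[\max_a u_{P,a}(X)]$ to obtain $\Vlow(P)$, and match Xu--Guo through the shared unit-sum tie-slope program; your Lagrange-multiplier step is redundant given Theorem~\ref{thm:kink}, but it correctly reproduces slopes proportional to $1/\sigma_a^2$ and the harmonic value.
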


The max-arm structure allows any finite number of treatment arms. Section~\ref{sec:cov-extended} considers jointly observed scores, and Section~\ref{sec:card-data} illustrates the corresponding covariance weighting for Balke--Pearl bounds.

For optimal-treatment values, where $g(x,a,u_a)=u_a$, the nonparametric softmax estimator of \citet[Theorem~3.3]{Whitehouse2025Softmax} has an asymptotic representation assigning weight $1/|\cA^\ast(x)|$ to each optimal arm.  For an active set $S$ of size $m$, its conditional residual variance is $m^{-2}\sum_{a\in S}\sigma_a^2(x;P)/\pi_a^P(x)$.  This value is no smaller than the harmonic RALU floor in~\eqref{eq:vmin-harmonic}.  Equality holds exactly when $\sigma_a^2(x;P)/\pi_a^P(x)$ is constant over $a\in S$.  Whenever the corresponding total variances are finite, the integrated improvement is strict if this condition fails on a set of positive $P_X$-probability.  Corollary~\ref{cor:whitehouse-otr-dominance} proves this variance comparison. At a binary tie, the Xu--Guo asymmetry parameter $t_0(x)$ is the active-arm weight $\lambda_1(x)$.  Their smoothing scale controls approximation and validity, while adapting $t_0(x)$ implements the optimizer in~\eqref{eq:lambda-opt-oracle}.

\subsection{Attainment by a cross-fitted estimator}\label{sec:achieve}

We now show that the lower bound is attained, under regularity conditions, by an explicit RALU estimator built on the weighted family~\eqref{eq:weighted-eval}.

The estimator retains every arm whose fitted branch value is within a shrinking tolerance of the fitted maximum. This allows it to keep treatments that are truly tied even when their fitted values differ. A direct plug-in $\operatorname{argmax}$ can split such ties: under nondegenerate continuously distributed estimation error it does so almost surely, however small the errors. Write $\widehat\eta_n=(\widehat\eta_{1,n},\widehat\eta_{2,n})$ for the fitted arm means and propensities and $\widehat\sigma_n^2$ for the fitted working variances. We omit $n$ and the training-fold superscript when stating the pointwise rule; all fitted functions are evaluated at $x$.

\begin{definition}[Tolerance active set]\label{def:tol-face}
Let $\widehat g_a(x)\eqdef g(x,a,\widehat\eta_{1,a}(x))$ and let $\tau_n\downarrow0$ be a tolerance sequence.  Set
\begin{equation}\label{eq:tol-face}
\widehat\cA_{\tau_n}(x)\eqdef\Bigl\{a\in\cA:\
\max_{b}\widehat g_b(x)-\widehat g_a(x)\le\tau_n\Bigr\},
\end{equation}
which the estimator supplies to the structural slot, with weights normalized on it.
\end{definition}

Here and below $g_a(x)\eqdef g(x,a,u_{P,a}(x))$ denotes the true branch value, so that $\|\widehat g-g\|_\infty$ is the $P_X$-essential supremum over $x$ of $\max_{a\in\cA}|\widehat g_a(x)-g_a(x)|$; the three-argument link $g(x,a,u)$ itself is known.

The estimator is computed in four steps. First, use the training folds to fit the arm regressions, propensities, and working variances. Second, form the tolerance active set~\eqref{eq:tol-face} and normalize the weight map $\omega$ on it; these weights determine how much each candidate arm contributes to precision. Third, evaluate the augmented rule $f^\omega$ of~\eqref{eq:f-form-weighted-slot} on the held-out observations, keeping each observation separate from the data used to fit its nuisance functions. Finally, average these contributions across folds. In symbols, the cross-fitted estimator is
\begin{equation}\label{eq:Tnstar-def}
T_n^{\omega}\eqdef\frac{1}{n}\sum_{i=1}^n
f^{\omega}\!\bigl(X_i,W_i;\widehat\eta_n^{(-k(i))}(X_i),
\widehat\cA_{\tau_n}^{(-k(i))}(X_i),\widehat\sigma_n^{2,(-k(i))}(X_i)\bigr),
\end{equation}
where $\{1,\dots,n\}$ is partitioned into $\mathcal K\ge 2$ folds $I_1,\dots,I_{\mathcal K}$ whose sizes differ by at most one (calligraphic $\mathcal K$ is the fold count, distinct from the arm count $K=|\cA|$), $k(i)$ is the fold containing $i$, and $\widehat\eta_n^{(-k)}, \widehat\cA_{\tau_n}^{(-k)},\widehat\sigma_n^{2,(-k)}$ are fitted using $\{1,\dots,n\}\setminus I_k$. For a measurable function $r$ of $(X,W)$, write $\mathbb P_n r=n^{-1}\sum_{i=1}^n r(X_i,W_i)$.

\begin{assumption}[Regularity for attainment]\label{ass:achieve-reg}
We assume the following. Every rate condition below holds for the maximum over the training folds.
\begin{enumerate}[wide=0pt,label={(R\arabic*)},itemsep=2pt]
\item \textbf{Measurable active set and inputs.} The maps $x\mapsto\cA^\ast(x;P)$, $\pi^P$, $g'$, and $\sigma^2(\cdot;P)$ are measurable.

\item \textbf{Positive-gap margin condition.}~Let
\begin{equation}\label{eq:pos-gap}
\Delta_+(x)\eqdef\min_{a\notin\cA^\ast(x;P)}
\bigl\{\psib(x)-g\bigl(x,a,u_{P,a}(x)\bigr)\bigr\}\in(0,\infty],
\end{equation}
the gap between the active value and the nearest \emph{strictly inactive} arm, with $\Delta_+(x)=\infty$ when every arm is active. For every $t>0$ small,
\begin{equation}\label{eq:margin-cond}
P_X\bigl(0<\Delta_+(X)\le t\bigr)\le C\,t^{\gamma}
\end{equation}
for some $\gamma>0$ and $C<\infty$.

\Needspace{7\baselineskip}
\item \textbf{Plug-in convergence.} There exist measurable, $\cN_0$-valued nuisance estimators $\widehat\eta_n$ and a tolerance active set $\widehat\cA_{\tau_n}$ such that the componentwise product rate
\begin{equation}\label{eq:product-rate}
\max_{a\in\cA}\bigl\|\widehat\eta_{1,a}-u_{P,a}\bigr\|_{L^2(P_X)}\,
\bigl\|\widehat\eta_{2,a}-\pi_a^P\bigr\|_{L^2(P_X)}=o_P(n^{-1/2})
\end{equation}
holds, and $\|\widehat\eta_n-\eta(P)\|_2=o_P(1)$.

The tolerance sequence $\tau_n\downarrow0$ of Definition~\ref{def:tol-face} satisfies $\|\widehat g-g\|_\infty=o_P(\tau_n)$ and $\sqrt n\,\tau_n^{1+\gamma}\to0$.  The working-variance estimator $\widehat\sigma^2_n$ converges in $L^2(P_X)$ at rate $o_P(1)$ to a nonrandom, measurable, $P$-specific limit $\widehat\sigma^2_{\infty,P}$; foldwise,
\[
\max_{k\le\mathcal K}
\bigl\|\widehat\sigma_n^{2,(-k)}-\widehat\sigma^2_{\infty,P}
\bigr\|_{L^2(P_X)}=o_P(1).
\]

\item \textbf{Affineness of $g$ in $u_a$.} For every arm $a\in\cA$ and $P_X$-a.e.\ $x$,
\begin{equation}\label{eq:affine-g}
g(x,a,u)=g_a^\prime(x)\cdot u+\rho(x,a)\qquad\text{for all }u\in\R,
\end{equation}
for some scalars $g_a^\prime(x)\in\R$ and $\rho(x,a)\in\R$.

\item \textbf{Moments and cross-fitting.} $\E_{P_X}[\psi(X,u_P(X))^2]<\infty$, the number of folds $\mathcal K$ is fixed with balanced fold sizes.

\item \textbf{Uniform bounds.} There exist constants $0<\epsilon_0\le 1$ and $0<c\le C<\infty$ such that, $P_X$-a.e.: overlap $\pi_a^P(x)\ge\epsilon_0$ for all $a\in\cA$, and the plug-in propensity estimators take values in $[\epsilon_0/2,1]$; the slopes satisfy $c\le|g_a^\prime(x)|\le C$; the identifying-score conditional variances satisfy $\sigma_a^2(x;P)\le C$; and the working variances satisfy $c\le\widehat\sigma^2_{n,a}(x),\widehat\sigma^2_{\infty,P}(x,a) \le C$.
\end{enumerate}
\end{assumption}

Condition~(R2) excludes exact ties from the margin event and therefore allows positive tie mass.  Conditions~(R2)--(R3) require the explicit tolerance window
\[
\|\widehat g-g\|_\infty\ll\tau_n
\ll n^{-1/[2(1+\gamma)]}.
\]
Under~(R6), the working-variance weight map is uniformly Lipschitz in its propensity and variance slots on each fixed active set. Oracle weights are treated as a fixed measurable map at the evaluating law and require no slotwise Lipschitz property.  The theorem below covers the max-arm structure; attainment for the Balke--Pearl and mediation structures requires structure-specific conditions.  Uniform integrability is not used for asymptotic linearity and is imposed only for the variance-convergence conclusion.

\begin{theorem}[Asymptotic linearity of the weighted estimator]\label{thm:achieve}
Assume the max-arm structure, identifying scores as in Definition~\ref{def:ident-score}, and Assumption~\ref{ass:meas-main}. Let $\omega$ be either a fixed measurable map $(x,k)\mapsto\omega(x,k)\in[0,1]^{\cA}$ or the working-variance weight rule in~\eqref{eq:lambda-opt-working}, with
\[
\sum_{a\in\cA^\ast(k)}\omega_a(x,k)=1
\]
for every nonempty structural value $k$.  Fix $P_0\in\cP$ and suppose the foldwise conditions in Assumption~\ref{ass:achieve-reg} hold at $P_0$.  Then the cross-fitted estimator in~\eqref{eq:Tnstar-def} satisfies
\[
T_n^\omega
=\mathbb P_n\bigl[(f^\omega)^{P_0}_{w_{f,P_0}}\bigr]
+o_{P_0}(n^{-1/2}),
\qquad
w_{f,P_0}(x)=\bigl(\widehat\sigma^2_{\infty,P_0}(x,a)\bigr)_a.
\]
The limiting score has variance
\begin{equation}\label{eq:achieve-eq}
\Var_{P_0}\!\bigl[(f^{\omega})^{P_0}_{w_{f,P_0}}\bigr]
=\Var_{P_{0,X}}[\psi(X,u_{P_0}(X))]
+\E_{P_{0,X}}\!\Bigl[\sum_{a\in\cA^\ast(X;P_0)}
\frac{(g_a^\prime)^2\,\bar\omega_a^2\,\sigma_a^2}
{\pi_a^{P_0}}(X)\Bigr],
\end{equation}
where $\bar\omega$ denotes the realized weights ($\bar\omega=\omega$ for a fixed map; $\bar\omega=\widehat\lambda(\cdot;\widehat\sigma^2_{\infty,P_0})$ when the weights are obtained from working variances). If $\{n(T_n^\omega-\Psi(P_0))^2:n\ge1\}$ is uniformly integrable, then $n\Var_{P_0}(T_n^\omega)$ converges to the right-hand side of \eqref{eq:achieve-eq}.
\end{theorem}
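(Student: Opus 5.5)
The proof follows the standard cross-fitting decomposition. For each fold $k$, write $\widehat f_k(x,W)$ for the fitted rule $f^\omega$ evaluated at $(\widehat\eta^{(-k)},\widehat\cA^{(-k)}_{\tau_n},\widehat\sigma^{2,(-k)})$, and $f_0\eqdef(f^\omega)^{P_0}_{w_{f,P_0}}$ for the oracle evaluation at $(\eta(P_0),\cA^\ast(\cdot;P_0),\widehat\sigma^2_{\infty,P_0})$. Then
\[
T_n^\omega-\mathbb P_nf_0=\sum_{k}\frac{|I_k|}{n}\Bigl\{(\mathbb P_{n,k}-P_0)(\widehat f_k-f_0)+P_0(\widehat f_k-f_0)\Bigr\},
\]
where $\mathbb P_{n,k}$ is the empirical measure on $I_k$. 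We treat the empirical-process term and the bias term separately. As an intermediate step, we replace the fitted set by the true one through the hybrid $\widetilde f_k$, which uses $\widehat\eta^{(-k)},\widehat\sigma^{2,(-k)}$ but the true active set $\cA^\ast(x;P_0)$.

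\textbf{Active-set step.} We show that $\widehat\cA^{(-k)}_{\tau_n}(x)=\cA^\ast(x;P_0)$ except on $\{0<\Delta_+(x)\le 2\tau_n\}$, with probability tending to one. Suppose $\|\widehat g-g\|_\infty\le\tau_n/2$, which holds with probability tending to one by (R3). Every truly active arm then has fitted gap at most $\tau_n$ and is retained. Any inactive arm with $\Delta_+>2\tau_n$ has fitted gap greater than $\tau_n$ and is excluded. On the exceptional set, $\widehat f_k-\widetilde f_k$ has conditional mean bounded by $O(\tau_n)$. The reason is that the intercept of $f^\omega$ is a weighted average of fitted branch values, all within $O(\tau_n)$ of $\psib$ there, while the augmentation terms are conditionally centred up to the product-rate error. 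By (R6) this difference is also uniformly $L^2$-bounded. The bias contribution is therefore at most $\tau_n\cdot P_X(0<\Delta_+\le2\tau_n)\lesssim\tau_n^{1+\gamma}=o(n^{-1/2})$ by (R2)--(R3). The $L^2$ norm of the difference tends to zero, so its empirical-process part is $o_P(n^{-1/2})$ by the conditional Chebyshev argument.

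\textbf{Bias of the hybrid.} With the active set fixed at the truth and $g$ affine by (R4), direct computation of the conditional expectation gives
\[
\E_{Q^{P_0}_x}\bigl[\widetilde f_k\bigr]-\psib(x)=\sum_{a\in\cA^\ast}\omega_a\,g_a'\,(\widehat\eta_{1,a}-u_a)\Bigl(1-\frac{\pi_a}{\widehat\eta_{2,a}}\Bigr),
\]
using the affine identity displayed after~\eqref{eq:f-form-weighted-slot}. The factor $|1-\pi_a/\widehat\eta_{2,a}|\le 2\epsilon_0^{-1}|\widehat\eta_{2,a}-\pi_a|$, and the weights and slopes are bounded, so Cauchy--Schwarz and \eqref{eq:product-rate} make this bias $o_P(n^{-1/2})$. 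The empirical term needs $\|\widetilde f_k-f_0\|_{L^2(P_0)}\to0$ in probability. This follows from $L^2$ convergence of $\widehat\eta,\widehat\sigma^2$ together with the uniform Lipschitz property of the working-variance weights on each fixed active set, which holds under (R6); a fixed weight map is trivially Lipschitz. Boundedness from (R6), together with dominated convergence on the $\sigma_a^2$-weighted residuals, converts these into $L^2$ convergence of the scores. Conditional on the training folds, $(\mathbb P_{n,k}-P_0)(\widetilde f_k-f_0)$ has variance $|I_k|^{-1}\|\widetilde f_k-f_0\|^2$, so it is $o_P(n^{-1/2})$.

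\textbf{Variance formula and main difficulty.} Formula~\eqref{eq:achieve-eq} follows from the within/between decomposition~\eqref{eq:within-between}, since $f_0$ has conditional mean $\psib$ with arm-level means equal to $\psib$. The variance-convergence conclusion is standard: uniform integrability plus the CLT for $\mathbb P_nf_0$ gives convergence of second moments. I expect the main obstacle to be the active-set step, specifically near-tie arms that are inactive but within $2\tau_n$. There the fitted rule can include a wrong arm, or split weights differently, and the resulting error must be controlled by both the $O(\tau_n)$ size of the bias and the $\tau_n^\gamma$ mass. I would first verify carefully that including an inactive arm with gap at most $2\tau_n$ changes the conditional mean only by its gap times a bounded weight plus a product-rate term, and is not inflated by $1/\widehat\eta_2$.
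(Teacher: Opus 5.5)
Your proposal is correct and is essentially the paper's argument. It uses the same cross-fitted split into a conditionally centred empirical term, controlled by conditional Chebyshev and $L^2$ stability of the Lipschitz weights under (R6), and a conditional-bias term, controlled by active-set recovery on $\{\|\widehat g-g\|_\infty\le\tau_n/2\}$, the margin bound $\tau_n^{1+\gamma}$, and the product-form bias identity from affineness.

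The difference is bookkeeping. You route through a hybrid score with the true active set, whereas the paper treats $\widehat f_k-f_0$ in one exact decomposition whose active-set contribution is the term $B=\sum_a\widehat\omega_a(g_a-\psib)$, bounded via Lemma~\ref{lem:face-recovery}. One small slip: with the intercept written in fitted branch values, the augmentation terms alone are centred only up to $O(\max_a|\widehat\eta_{1,a}-u_{P,a}|)$, not up to a product-rate error. This is still $o_P(\tau_n)$ uniformly, so your $O(\tau_n)$ bound on the exceptional set stands.
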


Theorem~\ref{thm:achieve} establishes asymptotic linearity at a law where the stated learning conditions hold. The next corollary combines this result with the global integrability requirement for exact robustness and identifies the equality cases.

\begin{corollary}[Class membership and attainment]\label{cor:achieve-global}
Let the comparison model $\cP$ satisfy the lower-bound hypotheses of Theorem~\ref{thm:kink} at each $P\in\cP$, without imposing Assumption~\ref{ass:oracle-maxarm}. Fix a weight rule $\omega$ of Theorem~\ref{thm:achieve}. Suppose one collection of learning rules, one fold scheme, and one tolerance sequence satisfy the hypotheses of Theorem~\ref{thm:achieve} at every $P\in\cP_{\mathrm{ach}}\subseteq\cP$. Suppose also that, for every $P,P'\in\cP$ satisfying~\eqref{eq:agree-others} for some $j$, the one-wrong envelope
\begin{equation}\label{eq:maxarm-onewrong-envelope}
\begin{aligned}
\mathcal E_{P,P'}(X,A,\widetilde W)
\eqdef\sum_{a\in\cA^\ast(X;P)}\Biggl[&
\left|g\bigl(X,a,u_{P',a}(X)\bigr)\right|\\
&+\frac{\1\{A=a\}|g_a^\prime(X)|}{\pi_a^{P'}(X)}
\left\{|h_a(\widetilde W)|+|u_{P',a}(X)|\right\}\Biggr]
\end{aligned}
\end{equation}
satisfies $\E_P[\mathcal E_{P,P'}]<\infty$. Then the full-slot rule $f^\omega$ belongs to $\IFRL$ on $\cP$, and $T_n^\omega\in\TRL(\Psi;\cP_{\mathrm{ach}}\mid\cP)$. For every $P\in\cP_{\mathrm{ach}}$:
\begin{enumerate}[wide=0pt,label={(\roman*)},itemsep=2pt]
\item A fixed oracle map $\lambda^\ast(P)$, extended and normalized on every structural value, gives a member attaining $\Vlow(P)$ at the law $P$.
\item A single rule that reads $w_{f,P}=\sigma^2(\cdot;P)$ from the auxiliary input gives an adaptive member attaining $\Vlow(P)$ at every law where the variance learner is consistent.
\item The working-variance rule satisfies
\[
\Var_P[(f^{\widehat\lambda})^P_{w_{f,P}}]\ge\Vlow(P),
\]
with equality if and only if $\widehat\sigma^2_{\infty,P}\propto\sigma^2(\cdot;P)$ on the active sets.
\end{enumerate}
Under the fixed-oracle construction in~\textup{(i)} or the adaptive construction in~\textup{(ii)},
\begin{equation}\label{eq:achieve-sandwich}
\Var_P[\phi_P^{\bm\lambda^\ast}]
=\inf_{f\in\IFRL}\Var_P\!\bigl[f^P_w\bigr]=\Vlow(P)
\qquad\text{for every measurable }w,
\end{equation}
so the algebraic variance floor is exact and attained.
\end{corollary}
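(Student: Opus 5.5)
The proof has three parts. First we show that the full-slot rule $f^\omega$ satisfies condition (B) on all of $\cP$, so it lies in $\IFRL$. Then we invoke Theorem~\ref{thm:achieve} on $\cP_{\mathrm{ach}}$ to get class membership of $T_n^\omega$. Finally we compare the variance formula~\eqref{eq:achieve-eq} with the lower bound of Theorem~\ref{thm:kink}.

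\emph{Membership in $\IFRL$.} Fix $P,P'\in\cP$ agreeing in all components but one, and a measurable $w$. The structural input is $k=\kappa(P)(x)=\cA^\ast(x;P)$. Every term of $f^\omega$ evaluated at $(\eta(P'),\kappa(P),w)$ is dominated by $\mathcal E_{P,P'}$, since $\omega_a\in[0,1]$. The envelope hypothesis therefore gives absolute integrability and justifies conditioning on $X$ and then on $A$. We treat the two cases separately.
\begin{itemize}
\item \emph{$j=1$ (wrong means, correct propensity).} The arm-$a$ correction has conditional mean $g_a'\omega_a(u_{P,a}-e_{1,a})$. By the affine identity for $g$ quoted after~\eqref{eq:f-form-weighted-slot}, this cancels the intercept deviation. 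The conditional mean is then $\sum_{a\in k}\omega_a g(x,a,u_{P,a})$, which equals $\psib$ because the active branch values are tied and the weights sum to one on $k$.
\item \emph{$j=2$ (wrong propensity, correct means).} Each correction is conditionally centred given $(X,A=a)$, whatever the value of $e_2$. The intercept equals $\psib$ by the same tie argument.
\end{itemize}
When $\omega$ is the working-variance rule, it depends on $(e_2,v)$ but not on $e_1$, so these identities hold for each fixed value of those slots. Integrating over $P_X$ and using~\eqref{eq:pt-id} gives~\eqref{eq:MR-unbias}. Theorem~\ref{thm:achieve} then supplies (A) at every $P\in\cP_{\mathrm{ach}}$ with $w_{f,P}=\widehat\sigma^2_{\infty,P}$, so $T_n^\omega\in\TRL(\Psi;\cP_{\mathrm{ach}}\mid\cP)$.

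\emph{Items (i)--(iii).} By Theorem~\ref{thm:ulb}(i), $\Var_P[f^P_w]\ge\Vlow(P)$ for every $f\in\IFRL$. By Theorem~\ref{thm:kink}, $\Vmin$ is bounded below by the harmonic value~\eqref{eq:var-oracle-value}.
\begin{itemize}
\item \emph{(i)} Take $\omega$ to be $\lambda^\ast(P)$ on $\cA^\ast(x;P)$, extended to other structural values by any normalized measurable rule (for instance uniform). This is a fixed map, already shown to lie in $\IFRL$. Its evaluation at $P$ is $\phi_P^{\bm\lambda^\ast}$. By~\eqref{eq:achieve-eq} its variance is $\Var_{P_X}[\psi]$ plus the integrated harmonic value. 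This matches the lower bound, so Assumption~\ref{ass:oracle-maxarm} holds, and Theorem~\ref{thm:kink} gives equality for $\Vmin$ and for $\Vlow(P)$.
\item \emph{(ii)} The argument is the same: the weights computed from the auxiliary slot equal the oracle weights~\eqref{eq:lambda-opt-oracle} whenever $w=\sigma^2(\cdot;P)$.
\item \emph{(iii)} Apply the pointwise Cauchy--Schwarz comparison behind~\eqref{eq:vmin-closed}. The weighted sum in~\eqref{eq:vmin-closed} dominates the harmonic value pointwise, with equality exactly under proportionality. Integrating gives $\ge\Vlow(P)$. Equality of the integrals forces pointwise equality $P_X$-a.e., because the integrand difference is nonnegative and, under (R6), the integrals are finite.
\end{itemize}

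\emph{The sandwich~\eqref{eq:achieve-sandwich}.} This follows by combining the attained value with~\eqref{eq:ulb-inf} and Lemma~\ref{rem:vmin-w-invariant}, which makes the infimum the same for every measurable $w$.

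\emph{Main obstacle.} The delicate step is global integrability in (B) under arbitrary one-wrong pairs. At $P'$ the working propensities may approach zero, and the slot value $u_{P'}$ may be far from $u_P$, so integrability is not automatic. We address this by dominating every term by $\mathcal E_{P,P'}$ before applying conditional Fubini. The other point to check is that the oracle map is measurable as a function of $(x,k)$ once it is extended off the true active set, which follows from (R1).
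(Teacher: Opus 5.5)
Your proposal is correct and follows essentially the paper's argument: the paper obtains (B) by specializing its conditional-bias identity \eqref{eq:R2-product-pointwise} under the envelope (face term zero at the true active set, product term zero when either nuisance component is correct), which is exactly your two-case computation, and then takes (A) from Theorem~\ref{thm:achieve} and finishes with the same completing-the-square comparison against the harmonic value together with Lemma~\ref{rem:vmin-w-invariant}. One small logical point in (i): Assumption~\ref{ass:oracle-maxarm} holds because the oracle map lies in $\IFRL$, has evaluation $\phi_P^{\bm\lambda^\ast}$, and has finite variance under (R5)--(R6); that its variance then matches the bound is the consequence, not the reason.
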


Corollary~\ref{cor:achieve-global} separates exact robustness on $\cP$ from learnability on $\cP_{\mathrm{ach}}$, which need not be closed under conditional-law substitutions. Part~(i) allows a different optimal map for each law, with quantifier order $\forall P\,\exists f_P$. Part~(ii) supplies one adaptive rule, with order $\exists f\,\forall P$ on the subclass where the variance learner is consistent. An oracle map fixed for one law need not be optimal at another.

The oracle-weighted member in~(i) is an element of $\TRL$, the analogue of the estimator of \cite{XuGuo} that uses the optimal tie weight directly; it is infeasible without knowledge of $\sigma^2(\cdot;P)$, which is what part~(ii) remedies.  By Lemma~\ref{lem:func-eq-maxarm}, the infimum in~\eqref{eq:achieve-sandwich} runs over all full-slot maps satisfying condition~(B) on $\cP$. The affine form in Lemma~\ref{lem:func-eq-maxarm} identifies the active-arm structure, so the infimum is not restricted to the weighted family.

Theorem~\ref{thm:achieve} links the estimator to the variance bound through three sources of error: empirical fluctuation, nuisance bias, and active-set error. In particular, the exact conditional bias identity removes every contribution from an error in only one nuisance component, leaving a product of errors from different components; the margin condition controls active-set recovery; and the conditional variance identity gives the stated limit.

\subsection{Delimitation of the exact theory}\label{sec:delimitation}

The explicit variance formulas and the attaining estimator use affine transformations of the arm means. Proposition~\ref{prop:affine-necessary} explains when exact robustness requires this restriction. On a positive-probability region with one optimal arm and a nondegenerate reachable interval of its mean, a nonaffine link makes condition~(B) impossible to satisfy. When several branches remain tied under the allowed perturbations, the constraint instead involves their joint changes, and nonlinear links can satisfy it. The proposition therefore distinguishes the strict-branch restriction from the more general behavior at persistent ties.

\begin{proposition}[Necessity of affineness for exact unbiasedness]
\label{prop:affine-necessary}
Adopt the max-arm structure of Definition~\ref{def:max-arm} with a scalar identifying score $h_a$ (Definition~\ref{def:ident-score}), and suppose Assumptions~\ref{ass:rich} and~\ref{ass:armwise} and Assumption~\ref{ass:meas-main} hold.  Suppose there is a set $\cX_0\subseteq\cX$ with $P_X(\cX_0)>0$ such that for each $x\in\cX_0$:
\begin{enumerate}[wide=0pt,label={\textup{(\roman*)}},itemsep=1pt]
\item $x$ lies in the strict region, with active arm $a^\ast(x)$ ($g(x,a^\ast,u_{P,a^\ast}(x))>g(x,a,u_{P,a}(x))$ for $a\neq a^\ast$); and
\item there is a nondegenerate convex set $J(x)\subseteq\cU_0$ such that, for every $m\in J(x)$,
\[
\bm\delta_m\eqdef\{m-u_{P,a^\ast}(x)\}\bm e_{a^\ast}\in D_x,
\]
and the mean-shifting law $Q_{x,\bm\delta_m}$ of Assumption~\ref{ass:armwise}~(X3) changes only the arm-$a^\ast$ mean, preserving every other arm mean, the remaining primary components, and $\kappa(P)(x)$; the model is closed under conditional mixtures of these mean-shifted laws; and
\item $g(x,a^\ast(x),\cdot)$ is continuously differentiable and fails to be affine on $J(x)$.
\end{enumerate}
Then $\IFRL=\varnothing$ and consequently $\TRL(\Psi;\eta,\kappa,\Wext)=\varnothing$.
\end{proposition}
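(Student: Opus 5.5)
We argue by contradiction. Suppose some $f\in\IFRL$ exists, fix a measurable $w$, and derive an identity along the reachable interval $J(x)$ that forces $g(x,a^\ast,\cdot)$ to be affine there. This contradicts (iii) on a set of positive $P_X$-measure.

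\textbf{Step 1: local identities.} By Theorem~\ref{thm:ulb}(i), restricting to $\cX_0$ via (M1), we have $\E_{Q_x^P}[f^P_w(x,\cdot)]=\psi\{x,u_P(x)\}=g(x,a^\ast,u_{P,a^\ast}(x))$ for $P_X$-a.e.\ $x\in\cX_0$. This holds at the law $P$ and equally at every law $P_m$ obtained by gluing the mean-shifted kernels $Q_{x,\bm\delta_{m(x)}}$ for a measurable selection $m(x)\in J(x)$; by (X3) and localized kernel substitution, $P_m\in\cP$. At $P_m$ the arm-$a^\ast$ mean becomes $m(x)$. The other arm means, the propensities and $\kappa$ are unchanged, and strictness persists because $\kappa$ is preserved. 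The target integrand at $x$ is therefore $g(x,a^\ast,m(x))$.

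\textbf{Step 2: an identity at a fixed evaluation.} Next we use exact robustness (B) with $P_m$ as the truth and $P'=P$ as the working law. Only component $j=1$ (the arm means) differs, so this is an allowed substitution, and (B) together with a localized version via (M1) gives
\[
\E_{Q_{x,\bm\delta_m}}\bigl[f\bigl(x,W,\eta(P)(x),\kappa(P)(x),w(x)\bigr)\bigr]=g\bigl(x,a^\ast,m\bigr)
\]
for all $m\in J(x)$, simultaneously off one null set. The integrand on the left is the \emph{fixed} function $f^P_w(x,\cdot)$, independent of $m$. Securing this simultaneity over a continuum of $m$ is the delicate measure-theoretic point. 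I would obtain it as in the proof of Theorem~\ref{thm:ulb}(ii): restrict to a countable dense set of $m$ values and use measurable selections, then extend to all of $J(x)$ by continuity of both sides in $m$, using the mixture closure below.

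\textbf{Step 3: linearity in $m$ (the main step).} By the closure of the model under conditional mixtures, $\alpha Q_{x,\bm\delta_{m_1}}+(1-\alpha)Q_{x,\bm\delta_{m_2}}$ is an admissible mean-shifted law. It has arm probabilities $\pi^P(x)$, arm-$a^\ast$ mean $\alpha m_1+(1-\alpha)m_2$ (the mean is linear in the law because $h_{a^\ast}$ identifies it), and the other arm means unchanged. Applying Step~2 to the mixture, the left side is linear in the mixing weight while the right side is $g(x,a^\ast,\alpha m_1+(1-\alpha)m_2)$. This yields
\[
g\bigl(x,a^\ast,\alpha m_1+(1-\alpha)m_2\bigr)=\alpha\, g(x,a^\ast,m_1)+(1-\alpha)\,g(x,a^\ast,m_2),
\]
so $g(x,a^\ast,\cdot)$ is affine on the convex set $J(x)$. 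This contradicts (iii) for every $x\in\cX_0$, a set of positive measure. Hence $\IFRL=\varnothing$. Since membership in $\TRL$ requires a score map in $\IFRL$ (condition (B) of Definition~\ref{def:ralu}), $\TRL=\varnothing$ as well.

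The obstacle I expect is Step~2's requirement that the mixture laws, not only the pure shifts, realize as laws in $\cP$ with the same primary components other than $\eta_1$. Hypothesis (ii) supplies this closure directly, and the arm-$a^\ast$ mean of a mixture follows from linearity of expectation. Alternatively, the $C^1$ hypothesis would let one differentiate along the path $\bm r_x(t)$ and compare slopes as in Lemma~\ref{lem:func-eq-maxarm}. The mixture argument avoids this and needs only continuity.
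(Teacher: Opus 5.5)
Your proof is correct in substance, but it reaches the contradiction by a different route from the paper's. The paper first identifies the frozen arm-$a^\ast$ section. Fixed-mean completeness (X1) and Lemma~\ref{lem:moment-affine} give $f^P_w(x;a^\ast,\cdot)=c(x)+\beta_{a^\ast}(x)h_{a^\ast}$. The bridge identity of Corollary~\ref{cor:matched-arm}, with this representation transported to $Q_{x,\bm\delta_m}$ by absolute continuity, then turns your Step~2 identity into $\pi^P_{a^\ast}(x)\beta_{a^\ast}(x)\{m-u_{P,a^\ast}(x)\}=g(x,a^\ast,m)-g(x,a^\ast,u_{P,a^\ast}(x))$, which is affine in $m$.

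You never identify the section. Instead you combine linearity of $Q\mapsto\E_Q[f^P_w(x,\cdot)]$ in the integrating law with the mixture-closure clause of (ii). Two facts are used implicitly. Equal arm probabilities make the arm-$a^\ast$ law of the mixture the mixture of the arm laws. And $\kappa$ is preserved at the mixture because, by~\eqref{eq:kappa-flat}, it is a function of $\eta$, which coincides with that of $Q_{x,\bm\delta_m}$ at the mixed mean $\alpha m_1+(1-\alpha)m_2\in J(x)$. This is the affine-expectation versus nonaffine-target obstruction the paper itself uses for $L^p$ calibration in Section~\ref{sec:lp-empty} and in Proposition~\ref{prop:nie-two-wrong-ceiling}.

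Your route avoids (X1), the moment-problem lemma and the absolute continuity in (X3). It needs no $C^1$ hypothesis once the identity holds for all $\alpha\in[0,1]$. It also explains why the mixture clause appears in the hypotheses; the paper's written proof never invokes it. The paper's route uses only the standing assumptions and pure shifts. It also produces the same secant identity that fixes $\beta_{a^\ast}=g'_{a^\ast}/\pi^P_{a^\ast}$ in Lemma~\ref{lem:func-eq-maxarm} and Corollary~\ref{cor:finite-path-strict}, so the proposition sits inside the general rigidity machinery.

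Two measure-theoretic steps need repair. First, ``continuity of both sides in $m$'' is not available on the left. Nothing makes $m\mapsto\E_{Q_{x,\bm\delta_m}}[f^P_w(x,\cdot)]$ continuous a priori, since the kernel attached to each $m$ is arbitrary. A countable dense family of $m$-values would also require measurable selections from $J(x)$, whose measurability in $x$ is not assumed. Use instead Lemma~\ref{lem:matched-bridge}: it applies Lemma~\ref{lem:violation-selection} to the Borel graph of (X3) and gives your Step~2 identity for every $\bm\delta\in D_x$ off one null set.

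Second, Step~3 needs the same simultaneity for the mixture kernels, that is, over triples $(m_1,m_2,\alpha)$. Run Lemma~\ref{lem:violation-selection} on the product of the (X3) graph with itself and with $[0,1]$, or separately for each rational $\alpha$. Read the mixture-closure hypothesis as including cost-localized gluing. If you use only rational $\alpha$, the continuity of $g$ from (iii) is then genuinely needed, since Jensen's equation with rational weights has discontinuous solutions.
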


Proposition~\ref{prop:affine-necessary} shows that the affine-link requirement is necessary for a nonempty exact class on a positive-probability reachable strict branch. For $|\cA|=1$, it applies to the scalar target $\E_{P_X}[g\{X,a_0,u_{P,a_0}(X)\}]$. Only the model-reachable range matters: the link can be nonlinear elsewhere. Along a persistent tie, the exact perturbation identity remains the governing condition; for affine links it reduces to the single linear constraint in~\eqref{eq:lem-kink}.

For example, with a single Bernoulli outcome and target $\Psi(P)=\{\E_PY\}^2$, declaring only the mean and no structural information would require $(1-p)f(0,e)+pf(1,e)=p^2$ for every $p$. The left side is affine in $p$, so no exactly robust map exists on an interval of means. Replacing exact robustness by first-order orthogonality would define a different class and a separate variance-minimization problem.


\subsection{Beyond arm partition: correlated candidate scores}\label{sec:cov-extended}

The harmonic form of Theorem~\ref{thm:kink} uses residuals $\1\{A=a\}\{h_a-u_{P,a}\}$ whose conditional cross-covariances vanish for distinct arms. When candidate scores are all observed on the same residual data, their full covariance matrix determines the corresponding variance minimization. We develop this shared-data counterpart using a fixed nuisance declaration; its rigidity argument differs from the arm-partition argument because the observation supports and admissible nuisance substitutions differ.

\begin{definition}[Min-of-linear structure]\label{def:min-of-linear}
The functional $\Psi$ has \emph{min-of-linear structure} if $\psi(x,u)=\min_{k=1,\dots,K}u_k$, and there exist measurable identifying scores $h_k:\cX\times\cW\to\R$ such that
\begin{equation}\label{eq:min-lin-id}
u_{P,k}(x)=\E_{Q_x^P}[h_k(x;W)]\qquad\text{for }k=1,\dots,K,
\end{equation}
with finite second moments under $Q_x^P$.
\end{definition}

Throughout this subsection, fix a declaration $(\eta,\kappa,\Wext)$ as in Definition~\ref{def:nuisance}, with arbitrary component count $d$. One component, indexed by $j_\circ$, determines the mean vector through a measurable map $H_{j_\circ}:\cN_{0,j_\circ}\to\R^K$, so that $H_{j_\circ}\{\eta_{j_\circ}(P)(x)\}=\bm u_P(x)$. The class $\IFRL$ and its floor $\Vmin$ refer to this declaration, including substitutions in every one of its components. The perturbation conditions below hold the other components and the structural value fixed while varying this mean component.

A canonical declaration is $d=1$, $\eta_1(P)=\bm u_P$, $\kappa(P)(x)=\argmin_l u_{P,l}(x)$, and $\Wext$ the Polish space of positive-semidefinite $K\times K$ matrices used as working covariances. Here $H_1$ is the identity and the structural value is the active set itself. The explicit weighted construction below also permits a more detailed structural value $k$ from which a measurable map $\cA^\ast(k)$ recovers the active set. The lower-bound argument applies to any fixed declaration satisfying the stated perturbation conditions. A max-of-linear target is handled by negating the scores and the target.

\begin{definition}[Conditional covariance and active set]\label{def:cov-active}
For a min-of-linear structure, define the conditional covariance matrix
\begin{equation}\label{eq:Sigma-def}
\bm\Sigma(x;P)\eqdef\Cov_{Q_x^P}\!\bigl(\bm h(x;W)\bigr)
\in\R^{K\times K},\qquad
\bm h(x;W)\eqdef(h_1(x;W),\dots,h_K(x;W))^\top,
\end{equation}
and the active set
\begin{equation}\label{eq:Aast-min}
\cA^\ast(x;P)\eqdef\bigl\{k\in\{1,\dots,K\}:u_{P,k}(x)=\min_l u_{P,l}(x)\bigr\}.
\end{equation}
\end{definition}

\begin{assumption}[Shared-score perturbation conditions]
\label{ass:shared-perturb}
For the fixed declaration and index $j_\circ$ above, every $P\in\cP$ satisfies the following for $P_X$-a.e.\ $x$.
\begin{enumerate}[wide=0pt,label={(COV\arabic*)},itemsep=2pt]
\item Fixed-mean completeness. For every $\epsilon\in(0,1)$ and every finitely supported probability law $\nu$ on $\supp Q_x^P$ with $\E_\nu[\bm h(x;W)]=\bm u_P(x)$, the law $(1-\epsilon)Q_x^P+\epsilon\nu$ belongs to $\mathfrak Q_x^P$.

\item Structure-preserving directions. There is a correspondence $x\mapsto D_x\subseteq\operatorname{range}\bm\Sigma(x;P)$ with Borel graph and $\bm0\in D_x$. Writing
\[
T_0(x)\eqdef\{\bm\delta\in\operatorname{range}\bm\Sigma(x;P):
\bm\delta_{\cA^\ast(x;P)}=\bm0\},
\]
and
\[
T_{\cA^\ast}(x)\eqdef\{\bm\delta\in\operatorname{range}
\bm\Sigma(x;P):\delta_i=\delta_j\text{ for all }
i,j\in\cA^\ast(x;P)\},
\]
$D_x$ contains a relative neighbourhood of $\bm0$ in $T_0(x)$ and a relative neighbourhood of $\bm0$ in $T_{\cA^\ast}(x)$ on which the inactive coordinates remain strictly inactive.

\item Mean-shifting conditional laws. For every $\bm\delta\in D_x$ there are a conditional law $Q_{x,\bm\delta}$ and a compatible value $e_{j_\circ}(x,\bm\delta)\in\cN_{0,j_\circ}$ such that
\[
Q_{x,\bm\delta}\ll Q_x^P,\qquad
\E_{Q_{x,\bm\delta}}[\bm h(x;W)]=\bm u_P(x)+\bm\delta,
\qquad Q_{x,\bm0}=Q_x^P,
\]
and $H_{j_\circ}\{e_{j_\circ}(x,\bm\delta)\}=\bm u_P(x)+\bm\delta$, with $e_{j_\circ}(x,\bm0)=\eta_{j_\circ}(P)(x)$. The correspondence $(x,\bm\delta)\mapsto(Q_{x,\bm\delta},e_{j_\circ}(x,\bm\delta))$ has Borel graph and finite model cost in the sense of Assumption~\ref{ass:meas-main}. Each universally measurable selection with integrable cost defines a law $P_{\bm\delta}\in\cP$ with marginal $P_X$, primary component $\eta_{j_\circ}(P_{\bm\delta})(x)=e_{j_\circ}(x,\bm\delta)$, and all other primary components and $\kappa$ equal to those under $P$. Selections of nonintegrable cost are first restricted to sets of bounded cost, with the baseline kernel and nuisance value elsewhere. The fixed-mean kernels in \textup{(COV1)} satisfy the same requirement.
\end{enumerate}
\end{assumption}

Lemma~\ref{lem:func-eq-cov} establishes the following shared-score rigidity property under Assumptions~\ref{ass:rich}, \ref{ass:meas-main}, and~\ref{ass:shared-perturb}: every admissible evaluation has the form $\alpha+\bm\beta^\top\bm h$ outside one map-dependent null set. The coefficient vector is defined modulo $\ker\bm\Sigma(x;P)$, with a corresponding adjustment of the intercept. When the active covariance block is positive definite, there is an active-supported representative with coefficients summing to one. Its conditional variance is $\bm\beta^\top\bm\Sigma\bm\beta$.

For a declaration with $\cA^\ast\{\kappa(P)(x)\}=\cA^\ast(x;P)$, the full-domain weighted rule is especially simple. For a measurable, possibly signed weight map with support on $\cA^\ast(k)$ and $\sum_l\omega_l(x,k,v)=1$, define
\begin{equation}\label{eq:cov-full-main}
f_{\rm cov}^{\omega}(x,W,e,k,v)
\eqdef\sum_{l\in\cA^\ast(k)}\omega_l(x,k,v)\,h_l(x;W).
\end{equation}
This is the weighted mean plug-in plus its residual correction: the two occurrences of $H_{j_\circ}(e_{j_\circ})$ cancel. At true structural values its conditional expectation is $\min_l u_{P,l}(x)$, independently of all primary inputs. To obtain a member of $\IFRL$, these identities must also meet the integrability requirement in Definition~\ref{def:ralu}. A sufficient envelope is $\sup_{x,k,v}\|\omega(x,k,v)\|_1\le L<\infty$ and $\E_P[\max_l|h_l(X;W)|]<\infty$ for every $P\in\cP$; another full-domain envelope may be used instead. A cap establishes unrestricted optimality only if it contains the oracle weights.

When $\bm\Sigma_{\cA^\ast(x)}(x;P)$ is positive definite, the unit-sum quadratic program has oracle weights
\begin{equation}\label{eq:cov-lambda}
\bm\lambda^\ast(x)=
\frac{\bm\Sigma_{\cA^\ast(x)}(x;P)^{-1}\,\bm1_{\cA^\ast(x)}}
{\bm1_{\cA^\ast(x)}^\top\,\bm\Sigma_{\cA^\ast(x)}(x;P)^{-1}\,\bm1_{\cA^\ast(x)}},
\end{equation}
where $\bm1_{\cA^\ast(x)}$ is the all-ones vector of length $|\cA^\ast(x)|$ and the weights are zero off the active set. Denote the proposed oracle evaluation by
\begin{equation}\label{eq:cov-IF}
\phi_{P,{\rm cov}}^\ast(x,W)=\min_l u_{P,l}(x)
+\sum_{k\in\cA^\ast(x)}\lambda_k^\ast(x)\bigl[h_k(x;W)-u_{P,k}(x)\bigr].
\end{equation}

\begin{theorem}[Covariance-extended closed form]\label{thm:cov-kink}
For the fixed declaration above, assume Definition~\ref{def:min-of-linear} and Assumptions~\ref{ass:rich}, \ref{ass:meas-main}, and~\ref{ass:shared-perturb}. Fix $P\in\cP$ and suppose $\bm\Sigma_{\cA^\ast(x)}(x;P)$ is positive definite for $P_X$-a.e.\ $x$.
\begin{enumerate}[wide=0pt,label={\textup{(\roman*)}},itemsep=2pt]
\item For $P_X$-a.e.\ $x$,
\begin{equation}\label{eq:cov-lower}
\Vmin(x;P)\ge
\frac{1}{\bm1_{\cA^\ast(x)}^\top\,\bm\Sigma_{\cA^\ast(x)}(x;P)^{-1}\,\bm1_{\cA^\ast(x)}}.
\end{equation}
\item If one full-domain map $f^\ast\in\IFRL$ and one measurable auxiliary map $w_\ast$ satisfy $(f^\ast)^P_{w_\ast}=\phi_{P,{\rm cov}}^\ast$ $P$-a.s.\ with finite variance under $P$, then, for $P_X$-a.e.\ $x$,
\begin{equation}\label{eq:cov-Vmin}
\Vmin(x;P)=
\frac{1}{\bm1_{\cA^\ast(x)}^\top\,\bm\Sigma_{\cA^\ast(x)}(x;P)^{-1}\,\bm1_{\cA^\ast(x)}}.
\end{equation}
The oracle evaluation attains the conditional floor and the total variance floor $\Vlow(P)$.
\end{enumerate}
\end{theorem}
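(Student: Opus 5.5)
The plan mirrors the proof of Theorem~\ref{thm:kink}, with the shared-score rigidity of Lemma~\ref{lem:func-eq-cov} in place of Lemma~\ref{lem:func-eq-maxarm}. Fix $f\in\IFRL$ with $\Var_P[f^P_w]<\infty$. By Theorem~\ref{thm:ulb}(ii) the combined identity~\eqref{eq:combined-pt} holds outside one $P_X$-null set. Lemma~\ref{lem:func-eq-cov} then gives $f^P_w(x,W)=\alpha(x)+\bm\beta(x)^\top\bm h(x;W)$ $Q_x^P$-a.s.\ outside a further null set. The rigidity uses fixed-mean completeness (COV1): the expectation of $f^P_w$ is unchanged under all mixtures preserving $\E\bm h$, so $f^P_w$ lies in the span of $\{1,h_1,\dots,h_K\}$ on the support.

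The first key step is to pin down $\bm\beta$ modulo $\ker\bm\Sigma(x;P)$. I will use the mean-shifting laws of (COV3) with the compatible nuisance value $e_{j_\circ}(x,\bm\delta)$. Exact robustness applied to $P_{\bm\delta}$ with the other components and $\kappa$ unchanged, evaluated against the slot at the truth $\eta(P)$, gives
\[
\alpha+\bm\beta^\top(\bm u_P+\bm\delta)=\min_l(u_{P,l}+\delta_l),
\]
after localizing by cost. This is a substitution in component $j_\circ$ in the other direction: the law varies and the slot stays at the truth.

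Taking $\bm\delta$ in a neighbourhood of $\bm0$ in $T_0(x)$, the right side stays constant. Hence $\bm\beta^\top\bm\delta=0$ for all $\bm\delta\in T_0(x)$, so $\bm\beta$ is orthogonal to $T_0$ within $\operatorname{range}\bm\Sigma$. Taking $\bm\delta\in T_{\cA^\ast}(x)$ with common active shift $t$, the right side moves by $t$, so $\bm\beta^\top\bm\delta=t$.

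Modulo $\ker\bm\Sigma$, these two facts select an active-supported representative $\tilde{\bm\beta}$ with $\bm1^\top\tilde{\bm\beta}_{\cA^\ast}=1$. Positive definiteness of $\bm\Sigma_{\cA^\ast}$ is what makes the active coordinates of range vectors free enough to force this. I expect this linear-algebra reduction, handling range versus kernel correctly, to be the main obstacle. I would first check that $\bm\beta-\tilde{\bm\beta}\in\ker\bm\Sigma$ by showing it annihilates $\operatorname{range}\bm\Sigma=T_0\oplus$ (a complement spanned inside $T_{\cA^\ast}$). The paper states this representative exists in Lemma~\ref{lem:func-eq-cov}, so I may simply invoke it.

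Then $v_f(x)=\bm\beta^\top\bm\Sigma\bm\beta=\tilde{\bm\beta}_{\cA^\ast}^\top\bm\Sigma_{\cA^\ast}\tilde{\bm\beta}_{\cA^\ast}$. Minimizing over the unit-sum hyperplane by Lagrange multipliers (or by Cauchy--Schwarz in the $\bm\Sigma_{\cA^\ast}$ inner product) gives the lower bound $(\bm1^\top\bm\Sigma_{\cA^\ast}^{-1}\bm1)^{-1}$, attained at~\eqref{eq:cov-lambda}. Since this holds $P_X$-a.e.\ for each $f$ and the right side is a measurable bound common to the family, the essential-infimum remark following Definition~\ref{def:vmin} yields~(i).

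For~(ii), the hypothesized $f^\ast$ has evaluation $\phi^\ast_{P,{\rm cov}}$. Its conditional variance is $\bm\lambda^{\ast\top}\bm\Sigma_{\cA^\ast}\bm\lambda^\ast$, which equals the bound by direct computation, so $\Vmin\le$ bound and hence equality holds. Integrating and applying Theorem~\ref{thm:ulb}(i), the total variance of $\phi^\ast_{P,{\rm cov}}$ equals $\Vlow(P)$.
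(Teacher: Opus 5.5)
Your proposal is correct and follows the paper's proof: take the active-supported, unit-sum representative from Lemma~\ref{lem:func-eq-cov}, note that $v_f=\bm b_{\cA^\ast}^\top\bm\Sigma_{\cA^\ast}\bm b_{\cA^\ast}$, minimize over the unit-sum hyperplane to bound every finite-variance member (hence $\Vmin$), and use the realized oracle evaluation for the reverse inequality and the total floor.

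The only slip is in your optional self-check. For $|\cA^\ast(x)|\ge2$, $\operatorname{range}\bm\Sigma$ is not $T_0$ plus a complement taken inside $T_{\cA^\ast}$: that sum is just $T_{\cA^\ast}$, which has codimension $|\cA^\ast(x)|-1$ in the range. The representative must instead come from factoring $\bm\delta\mapsto\bm\beta^\top\bm\delta$ through the restriction $\bm\delta\mapsto\bm\delta_{\cA^\ast}$, which maps $\operatorname{range}\bm\Sigma$ onto $\R^{|\cA^\ast(x)|}$ because $\bm\Sigma_{\cA^\ast}$ is positive definite. This is exactly what the lemma's proof does, so invoking the lemma, as you propose, is the right move.
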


Theorem~\ref{thm:cov-kink} identifies the conditional covariance matrix as the quantity governing the efficiency comparison. Equality additionally requires the global integrability of an oracle rule such as~\eqref{eq:cov-full-main}; writing down its evaluation at $P$ alone does not establish this. Equal weighting gives conditional variance $|\cA^\ast|^{-2}\bm1^\top\bm\Sigma_{\cA^\ast}\bm1$, which is no smaller than the quadratic-program value in~\eqref{eq:cov-Vmin}. Equality holds exactly when $\bm\Sigma_{\cA^\ast}\bm1$ is proportional to $\bm1$. Equal weighting therefore minimizes variance in the unit-sum family precisely under this covariance condition. The Balke--Pearl application can yield a strict improvement, while the two active $L^1$ calibration scores cancel and make equal weighting optimal at its kink.

For a rank-deficient active block, the quadratic-program value is $(\bm1^\top\bm\Sigma_{\cA^\ast}^{+}\bm1)^{-1}$ when $\bm1$ belongs to its range and is zero otherwise; the latter branch uses a unit-sum vector in the kernel. Under the declaration and perturbation conditions of Theorem~\ref{thm:cov-kink}, with positive semidefiniteness replacing positive definiteness, this program value equals the RALU floor if its measurable minimizing evaluation is the truth evaluation of one full-domain map in $\IFRL$ and has finite variance under $P$. On the strict region $\{|\cA^\ast(x)|=1\}$, the positive-definite formula reduces to $\Vmin(x;P)=\Sigma_{k^\ast k^\ast}(x;P)$ under the equality conditions of Theorem~\ref{thm:cov-kink}. Proposition~\ref{prop:cov-singular} proves the singular extension. Covariance affects the optimum only when several scores are active.

\section{A bridge to classical efficiency}\label{sec:bridge}

We now connect the variational bound to classical semiparametric efficiency. At a tie, a perturbation that moves the optimal branches apart can prevent pathwise differentiability. Along directions in which these branches have the same first-order change, however, the target has a linear derivative. Conditional branch gradients represent that derivative, and projection onto the tangent space of feasible perturbations gives the associated classical efficiency bound. In the canonical models below, this bound equals the variational optimum. We work at a law $P$: the projection result needs square-integrable branch gradients, and its statistical interpretation uses the expansions and feasible local experiments specified next.

Write $O=(X,W)$ for the observation and use $J$ for the number of branches in this section. In the max-arm setting, $J=|\cA|$ and $b_a(P)(x)=g\{x,a,u_{P,a}(x)\}$, with the arms relabeled as $1,\ldots,J$ when needed. For the min-of-linear setting, apply this section to the transformed target $\widetilde\Psi=-\Psi$ and branches $b_j(P)=-u_{P,j}$; their conditional gradients are $-(h_j-u_{P,j})$, and the resulting target gradient is negated again to recover the gradient for $\Psi$. Hence the variance bound is unchanged by this transformation. Within the section, $\Psi$ denotes the target in max form.

Set $m_P(x)=\max_{j\le J}b_j(P)(x)$, $q_P(X)=m_P(X)-\Psi(P)$, and $\cA^*(x)=\argmax_j b_j(P)(x)$. For every law $R$ in the model the target is
\begin{equation}\label{eq:bridge-target}
\Psi(R)=\E_R\{m_R(X)\},\qquad m_R(x)=\max_{j\le J}b_j(R)(x),
\end{equation}
so that along a path both the covariate marginal and the branches change.  Consider paths $dP_t=(1+ts)\,dP$ with bounded mean-zero $s$.
\begin{enumerate}[wide=0pt,label={(G\arabic*)},itemsep=2pt]
\item $q_P\in L^2(P)$, and there are $\zeta_j\in L^2(P)$ with $\E_P[\zeta_j\mid X]=0$, $j\le J$; write $\zeta=(\zeta_1,\ldots,\zeta_J)^\top$. The linear space of first-order active-branch constraints is
\begin{equation}\label{eq:bridge-H}
 \mathcal H=\left\{s\in L_0^2(P):
 \E_P[(\zeta_i-\zeta_j)s\mid X]=0
 \text{ on }\{i,j\in\cA^*(X)\},\ i,j\le J\right\}.
\end{equation}
Here $L_0^2(P)=\{s\in L^2(P):\E_Ps=0\}$; $\Pi_{\mathcal H}$ denotes orthogonal projection in $L^2(P)$.
\item On the tilts considered in \textup{(G3)},
\begin{equation}\label{eq:bridge-branch-expansion}
 b_j(P_t)=b_j(P)+t\E_P[\zeta_j s\mid X]+o_{L^1(P_X)}(|t|),
 \qquad dP_t=(1+ts)\,dP.
\end{equation}
\item A linear space $\mathcal S$ of bounded scores in the constraint space $\mathcal H$ of \eqref{eq:bridge-H} is specified such that, for every finite subset $s_1,\ldots,s_\ell\in\mathcal S$, the law $dP_t=(1+\sum_l t_ls_l)dP$ belongs to the model for all sufficiently small $t\in\R^\ell$ and the branch expansion holds with remainder $o_{L^1(P_X)}(\|t\|)$.
\end{enumerate}

The identity that links this space to efficiency is the following. For a bounded score $s\in\mathcal H$ satisfying~(G2), the two-sided derivative of $t\mapsto\Psi(P_t)$ at $t=0$ exists and equals
\begin{equation}\label{eq:bridge-derivative}
\frac{d}{dt}\Psi(P_t)\Big|_{t=0}
=\E_P[q_P(X)\,s]+\E_P\bigl[\E_P[\zeta_{j_0}s\mid X]\bigr]
=\E_P\bigl[(q_P+\zeta_{j_0})\,s\bigr],
\end{equation}
where $j_0(X)$ is any measurable selection from $\cA^*(X)$.  The first term is the change of the covariate marginal, whose density ratio along the path is exactly $1+t\E_P[s\mid X]$; the second is the common first-order change of the active branches, which does not depend on the selected branch precisely because $s\in\mathcal H$.  For a feasible bounded tilt satisfying~\eqref{eq:bridge-branch-expansion} whose score lies outside $\mathcal H$, the one-sided derivatives are $\E_P[q_Ps]+\E_P[\max_{j\in\cA^*(X)}\E_P[\zeta_js\mid X]]$ from the right and the corresponding minimum from the left, which differ.

Among the feasible bounded tilts satisfying~\eqref{eq:bridge-branch-expansion}, the two-sided derivative therefore exists exactly for those with score in $\mathcal H$.  By~\eqref{eq:bridge-derivative}, every $D_\lambda=q_P+\lambda^\top\zeta$ with unit-sum active weights and $D_\lambda\in L^2(P)$ is a gradient of $\Psi$ relative to $\mathcal H$, and the gradient that lies in $\mathcal H$ itself is their common projection.

Let $\bm\Gamma(X)=\E_P[\zeta\zeta^\top\mid X]$ and $\bm\Gamma_{\rm act}(X)=\bm\Gamma_{\cA^*(X)}(X)$. This is the covariance of the conditional branch gradients; for the fixed shared scores of Section~\ref{sec:cov-extended}, it equals $\bm\Sigma(X;P)$, while for max-arm gradients it includes the inverse propensity factors.  Choose measurable unit-sum active weights $\lambda^*$ minimizing $\lambda^\top\bm\Gamma_{\rm act}\lambda$, and set $D^*=q_P+\lambda^{*\top}\zeta$.  Weights outside the active set are zero.  The notation allows signed weights and singular $\bm\Gamma_{\rm act}$.  The statistical experiment is the one specified in (G3), with tangent closure $\mathcal T=\overline{\mathcal S}^{\,L^2(P)}$.

\begin{theorem}[Branch-gradient bridge]\label{thm:bridge}
Assume \textup{(G1)}.
\begin{enumerate}[wide=0pt,label={\textup{(\roman*)}},itemsep=2pt]
\item \textup{Projection.}  For $j_0(X)=\min\cA^*(X)$, $\Pi_{\mathcal H}(q_P+\zeta_{j_0})=D^*$; the same projection holds for every square-integrable active unit-sum combination.
\item \textup{Value.} The squared norm is
\begin{equation}\label{eq:bridge-value}
\begin{aligned}
 \|D^*\|_2^2&=\Var_P\{m_P(X)\}+\E_P[v_{\rm br}(\bm\Gamma_{\rm act}(X))],\\
 v_{\rm br}(M)&=\begin{cases}
 (\1^\top M^+\1)^{-1},&\1\in\operatorname{range}(M),\\
 0,&\1\notin\operatorname{range}(M),
 \end{cases}
\end{aligned}
\end{equation}
where $M$ is positive semidefinite.
\item \textup{Classical interpretation.}  Under \textup{(G2)} and \textup{(G3)}, the canonical gradient in the specified local experiments is $\Pi_{\mathcal T}D^*$.  If $D^*\in\mathcal T$, it is $D^*$, and the convolution lower bound for regular estimators is $\|D^*\|_2^2$.
\end{enumerate}
\end{theorem}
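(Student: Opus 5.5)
The plan is to prove the three parts in order. Part (i) reduces to two membership facts: $D^*\in\mathcal H$, and $D_\lambda-D^*\in\mathcal H^\perp$ for every square-integrable active unit-sum combination $D_\lambda=q_P+\lambda^\top\zeta$. Orthogonal projection is characterized by these two facts. Taking $\lambda=\bm e_{j_0}$ with $j_0(X)=\min\cA^*(X)$ gives the displayed statement, since $D_{\bm e_{j_0}}=q_P+\zeta_{j_0}\in L^2(P)$ by (G1).

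\emph{Step 1: measurable optimal weights and integrability of $D^*$.} On each of the finitely many active sets $S$, the unit-sum quadratic program in $\lambda_S$ is convex. I will select a minimizer measurably, for instance as the minimum-norm solution written with the Moore--Penrose pseudoinverse. This selection is Borel in $\bm\Gamma_{\rm act}(X)$, because the pseudoinverse is Borel on the positive semidefinite cone.

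The first-order conditions give
\[
\bm\Gamma_{\rm act}\lambda^*_{S}=\nu(X)\1_S .
\]
Here $\nu=(\1^\top\bm\Gamma_{\rm act}^+\1)^{-1}$ when $\1\in\operatorname{range}\bm\Gamma_{\rm act}$. Otherwise a unit-sum kernel vector exists, and then $\nu=0$.

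Square-integrability of $D^*$ will follow from comparison with the feasible point $\bm e_{j_0}$:
\[
\E_P[(\lambda^{*\top}\zeta)^2\mid X]=\lambda^{*\top}\bm\Gamma\lambda^*\le\bm\Gamma_{j_0j_0}(X),
\]
and $\E\bm\Gamma_{j_0j_0}=\E\zeta_{j_0}^2<\infty$. I expect this step to be the main technical obstacle. It requires both the measurable selection and a square-integrability argument that survives singular and signed weights.

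\emph{Step 2: $D^*\in\mathcal H$ and the orthogonality.} Since $\E_P[\zeta\mid X]=0$, we have $\E D^*=0$ and $\E_P[(\zeta_i-\zeta_j)q_P\mid X]=0$. On active pairs $i,j$,
\[
\E_P[(\zeta_i-\zeta_j)\lambda^{*\top}\zeta\mid X]=(\bm\Gamma\lambda^*)_i-(\bm\Gamma\lambda^*)_j=\nu-\nu=0
\]
by the first-order conditions, so $D^*\in\mathcal H$.

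For orthogonality, any two unit-sum active vectors differ by a vector $\delta(X)$ that is supported on $\cA^*(X)$ and sums to zero. Such a $\delta$ can be written as $\sum_{i\ne j_0}\delta_i(X)(\zeta_i-\zeta_{j_0})$, with each $i$ active. For bounded coefficients $c(X)$ and any $s\in\mathcal H$,
\[
\E[c(X)(\zeta_i-\zeta_{j_0})s]=\E[c(X)\,\E[(\zeta_i-\zeta_{j_0})s\mid X]]=0.
\]
For unbounded coefficients, I will truncate to $\{|\delta|\le m\}$ and pass to the limit. This is legitimate because $D_\lambda-D^*\in L^2$, so dominated convergence in $L^2$ applies. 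Together these give $D_\lambda-D^*\perp\mathcal H$, which completes (i).

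\emph{Step 3: the value in (ii).} Expanding $\|D^*\|_2^2$ by conditioning on $X$, the cross term vanishes because $\E[\zeta\mid X]=0$. Also $\E q_P^2=\Var_P\{m_P(X)\}$, since $\E m_P=\Psi(P)$. The remaining term is $\E[\lambda^{*\top}\bm\Gamma_{\rm act}\lambda^*]=\E[\nu(X)]$, and $\nu=v_{\rm br}(\bm\Gamma_{\rm act})$ by the program solution in Step 1, in both the range and kernel cases.

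\emph{Step 4: the classical interpretation in (iii).} Take $s\in\mathcal S\subseteq\mathcal H$. By (G2) and \eqref{eq:bridge-derivative}, the derivative along the path is $\E[(q_P+\zeta_{j_0})s]$. By (i), this equals $\E[D^*s]$. Finite-dimensional submodels from (G3) extend the same linear representation to $\mathcal S$, and by continuity to its closure $\mathcal T$.

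Hence $D^*$ is a gradient of $\Psi$ relative to the tangent set, and the canonical gradient is its projection $\Pi_{\mathcal T}D^*$. When $D^*\in\mathcal T$, the canonical gradient is $D^*$ itself. The convolution theorem of \citet{BKRW1993} then gives the lower bound $\|D^*\|_2^2$ for regular estimators.
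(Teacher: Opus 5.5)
Your proposal is correct, and for (i) and (ii) it is essentially the paper's argument. Both use the same pseudoinverse/kernel choice of $\lambda^*$ with $\bm\Gamma_{\rm act}\lambda^*=v_{\rm br}(\bm\Gamma_{\rm act})\1$. Both get square integrability by comparing with a single active coordinate, and $D^*\in\mathcal H$ from that first-order condition. Orthogonality comes from writing $D_\lambda-D^*$ as a combination of active contrasts $\zeta_i-\zeta_{j_0}$ and truncating the coefficients, and the value from conditioning on $X$.

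Part (iii) is where the routes differ. You take \eqref{eq:bridge-derivative} as given and appeal to the abstract convolution theorem for a parameter pathwise differentiable relative to the linear tangent set $\mathcal S$. The paper instead proves that derivative inside this proof and then works with the finite-dimensional experiments of (G3) directly.

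The paper's derivative argument sets $\dot b_j(s)=\E_P[\zeta_js\mid X]$, $H_s=\max_j|\dot b_j(s)|$, and $\delta_+(X)$ equal to the smallest inactive gap. It bounds the error of the linearized maximum by $2|t|H_s\1\{\delta_+\le 2|t|H_s\}$. This is $o_{L^1}(|t|)$ by dominated convergence because $\delta_+>0$ pointwise, so no margin rate is needed. The Lipschitz property of the maximum and the exact covariate density ratio $1+t\E_P[s\mid X]$ finish the step, with the $o(\|t\|)$ analogue for multiparameter tilts. The paper then computes the finite-experiment bound $v^\top I^+v=\|\Pi_{\operatorname{span}(s_1,\ldots,s_d)}D^*\|_2^2$ from local asymptotic normality and lets the spans increase to $\mathcal T$.

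Granting the derivative identity, your route is shorter and complete. Bounded tilts are differentiable in quadratic mean, and one-dimensional paths with scores $h^\top s\in\mathcal S$ already cover the local parameters $h/\sqrt n$. The paper's route is self-contained and is where the nonsmooth maximum is actually handled. So the derivative step is the piece of (iii) you have cited rather than proved.
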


Theorem~\ref{thm:bridge} identifies the covariance-minimizing combination as the projection of any active branch gradient. Square integrability suffices for this projection; neither the branch gradients nor the minimizing weights need be bounded. Bounded scores are dense in $\mathcal H$ under the conditional constraints, so $\mathcal T=\mathcal H$ when their tilts are feasible. In a constrained model, the feasible tangent space can be smaller. Equality of active-branch derivatives is a condition at $t=0$; the affine max-arm paths below also preserve the active set at nonzero $t$.

For the affine max-arm problem, the branch gradients are
\begin{equation}\label{eq:bridge-maxarm-gradient}
 \zeta_a(O)=\frac{\1\{A=a\}g'_a(X)}{\pi_a^P(X)}
 \{h_a(\widetilde W)-u_{P,a}(X)\}.
\end{equation}
For a binary tie with $g'_a\equiv1$, so that $\zeta_a=\1\{A=a\}\{Y-\mu_a(X)\}/\pi_a(X)$, the active block is $\bm\Gamma_{\rm act}=\operatorname{diag}(\sigma_1^2/\pi_1,\sigma_0^2/\pi_0)$, and the constraint defining $\mathcal H$ reads $\E_P[\zeta_1s\mid X]=\E_P[\zeta_0s\mid X]$, that is, $\dot\mu_1(x)=\dot\mu_0(x)$: on the tie set the two arms carry information about one common perturbation of the tied mean. Weighting them by $\pi_a/\sigma_a^2$ gives $\lambda^*$ and $v_{\rm br}(\bm\Gamma_{\rm act})=(\pi_1/\sigma_1^2+\pi_0/\sigma_0^2)^{-1}$, the expression in~\eqref{eq:intro-binary}; Theorem~\ref{thm:bridge} extends this to signed weights and singular blocks.

Here a direct link with exact robustness is available, through the following closure condition on the model.
\begin{enumerate}[wide=0pt,label={(L)},itemsep=2pt]
\item Call a score $s=\alpha(X)+r(O)$ \emph{admissible} if $\alpha$ is bounded and mean zero, $r$ is bounded, and \textup{(L1)}~$\E_P[r\mid X,A]=0$; \textup{(L2)}~$d_a^r(X)\eqdef\E_P[\zeta_ar\mid X]$ takes a common value over $a\in\cA^*(X)$; and \textup{(L3)}~$r$ vanishes outside a covariate set on which $\max_a\E_P[|\zeta_a|\mid X]$ is bounded and the smallest strictly positive inactive gap $\min_{a\notin\cA^*(X)}\{m_P(X)-b_a(P)(X)\}$ is bounded away from zero (an empty minimum being $+\infty$).  For every finite family $s_1,\dots,s_\ell$ of admissible scores there is a neighbourhood of $0\in\R^\ell$ on which the law $dP_t=(1+\sum_{l=1}^{\ell}t_ls_l)\,dP$ belongs to $\cP$.
\end{enumerate}
Write $\mathcal S_0$ for the linear space of admissible scores in \textup{(L)}, and define $\mathcal H_0=\{s\in\mathcal H:\E_P[s\mid X,A]=\E_P[s\mid X]\}$. The bounded-tilt experiment generated by $\mathcal S_0$ uses the finite-dimensional paths specified in \textup{(L)}. These paths keep the propensity and active set fixed and vary only the primary mean component. Condition \textup{(L)} holds in the bounded-outcome model of Section~\ref{sec:standing-model}. Corollary~\ref{cor:bridge-maxarm} identifies the tangent closure and efficiency bound of this experiment.

For a fixed measurable auxiliary map $w:\cX\to\Wext$, write $f_P=f_w^P$. The following corollary uses the local paths in \textup{(L)} rather than the completeness and bridge-reachability assumptions of Section~\ref{sec:func-eq}.

\begin{corollary}[Equality for affine max-arm models]\label{cor:bridge-maxarm}
Fix $P\in\cP$ and the max-arm declaration of Section~\ref{sec:max-arm}, with identifying scores~\eqref{eq:ident-score}. Suppose $g(x,a,u)=g'_a(x)u+\rho(x,a)$, $g'_a(x)\ne0$, $\pi_a^P(x)>0$ for all arms, and $0<\sigma_a^2(x;P)<\infty$ on the active set, for $P_X$-a.e.\ $x$. Assume $q_P\in L^2(P)$ and $\zeta_a$ in~\eqref{eq:bridge-maxarm-gradient} belongs to $L^2(P)$ for every arm. Assume condition~\textup{(L)} and the marginal-reweighting condition \textup{(M1)} of Assumption~\ref{ass:rich}.
\begin{enumerate}[wide=0pt,label={\textup{(\roman*)}},itemsep=2pt]
\item Every $f\in\IFRL$ and measurable $w:\cX\to\Wext$ satisfy, for $P_X$-a.e.\ $x$ and allowing infinite conditional variance,
\[
 \Var_P(f_P\mid X)\ge
 \left\{\sum_{a\in\cA^*(X)}
 \frac{\pi_a^P(X)}{(g'_a(X))^2\sigma_a^2(X;P)}\right\}^{-1}.
\]
If one member of $\IFRL$ realizes the evaluation $\Psi(P)+D^*$, then the conditional essential infimum of the members' variances and the integrated variational bound are attained, and the latter equals~\eqref{eq:bridge-value}.
\item The bounded-tilt experiment generated by $\mathcal S_0$ has tangent closure $\mathcal H_0$ and canonical gradient $D^*$. Its convolution lower bound is $\|D^*\|_2^2$.
\item An estimator that is asymptotically linear at $P$ with a square-integrable truth evaluation from $\IFRL$ is regular in this experiment.
\end{enumerate}
\end{corollary}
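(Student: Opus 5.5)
For part (i), we use the conditional-mean identity of Theorem~\ref{thm:ulb}(i), which requires only (M1), and derive the slope constraint from the local paths in (L) rather than from Lemma~\ref{lem:func-eq-maxarm}. Fix $f\in\IFRL$ and $w$, and let $f_P=f^P_w$. Take an admissible score $s=r(O)$ with $\alpha=0$, and consider the law $P_t$ with $dP_t=(1+tr)\,dP$. By (L1) this law keeps $P_X$ and the propensities. By (L2) and (L3), the arm means shift by $t\,d^r(X)\pi_a/g'_a$, which is a common branch shift on the active set, so the active set is preserved for small $t$. Hence $P_t\in\cP$ shares $\eta_2$ and $\kappa$ with $P$.

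Condition (B) is applied with $P'=P$ evaluated under the integrating law $P_t$, and with $j=1$ so that the mean slot is held at the truth of $P$ (equivalently, $P$ and $P_t$ differ only in $\eta_1$). This gives $\E_{P_t}[f(\ldots,\eta(P),\kappa(P),w)]=\Psi(P_t)$. Both sides are affine in $t$ up to $o(t)$, and comparing derivatives with \eqref{eq:bridge-derivative} yields
\[
\E_P[f_P\,r]=\E_P[\zeta_{j_0}r]\qquad\text{for all admissible } r.
\]
Localizing $r$ with bounded multipliers of $X$, the conditional version holds: $\E_P[(f_P-\Psi(P)-q_P-\zeta_{j_0})r\mid X]=0$ for every bounded $r$ satisfying (L1)--(L3). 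Write $f_P-\E_P[f_P\mid X,A]=\varepsilon$, and $\zeta_{j_0}=\sum_a\lambda_a\zeta_a$ for any unit-sum active $\lambda$. Then $\varepsilon-\zeta$-combinations are orthogonal, conditionally on $X$, to the space $\{r:\E[r\mid X,A]=0,\ d^r_a \text{ common on active arms}\}$. This space is the orthocomplement of $\mathrm{span}\{\zeta_a-\zeta_b:a,b\text{ active}\}$ within the mean-zero-given-$(X,A)$ residuals.

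It follows that $\varepsilon=\sum_{a}\beta_a\zeta_a$ (conditionally) with active $\beta$ summing to one. The coefficients on inactive arms contribute nonnegatively, and the bound is not lowered by them. A truncation argument extends the conclusion from bounded $r$ to $L^2$, using (L3) to exhaust covariate sets and allowing infinite variance otherwise. Therefore
\[
\Var_P(f_P\mid X)\ge\Var(\varepsilon\mid X)\ge\min_{\mathbf 1^\top\beta=1}\beta^\top\bm\Gamma_{\rm act}\beta .
\]
Here $\bm\Gamma_{\rm act}$ is diagonal with entries $(g'_a)^2\sigma_a^2/\pi_a$, so the minimum is the harmonic value. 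If a member realizes $\Psi(P)+D^*$, its conditional variance equals this value, so the essential infimum and $\Vlow(P)$ are attained, and the identification with \eqref{eq:bridge-value} follows from Theorem~\ref{thm:bridge}(ii).

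For part (ii), we compute the closure of $\mathcal S_0$. Scores $\alpha(X)$ give all of $L^2_0(P_X)$. The residual parts $r$ are dense in $\{r\in L^2:\E[r\mid X,A]=0,\ d^r\text{ common on active}\}$; truncation plus a conditional projection correcting (L2) keeps boundedness, and (L3) is met by exhaustion since the gap is positive off ties and $\E[|\zeta_a|\mid X]<\infty$ a.s. Their sum is exactly $\mathcal H_0$. Next we check that $D^*\in\mathcal H_0$. First, $\E[D^*\mid X,A]=q_P=\E[D^*\mid X]$. Second, $\E[(\zeta_i-\zeta_j)D^*\mid X]=(\bm\Gamma\lambda^*)_i-(\bm\Gamma\lambda^*)_j=0$ by the KKT conditions for the quadratic program. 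Hence Theorem~\ref{thm:bridge}(iii), with $\mathcal T=\mathcal H_0$, gives canonical gradient $D^*$ and bound $\|D^*\|^2$. Conditions (G2) and (G3) hold because branch values are linear in $t$ exactly along these tilts.

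For part (iii), take an estimator with expansion $\mathbb P_nf_P+o_P(n^{-1/2})$. Under $P_{t/\sqrt n}$, Le Cam's third lemma shows that its limit shifts by $\E[f_Ps]$. By the identity derived in (i) together with the $\alpha$-part given by Theorem~\ref{thm:ulb}(i), this shift equals $\E[D^*s]=\dot\Psi(s)$, which is regularity. The main obstacle is the density and orthogonality step in (i)/(ii): showing that bounded scores satisfying (L3) suffice to pin $\varepsilon$ to the span of the $\zeta_a$ when conditional variances may be infinite. We would handle this by working on sets $\{\E[|\zeta|\mid X]\le m,\ \Delta_+\ge1/m\}$ and letting $m\to\infty$.
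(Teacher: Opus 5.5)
Your parts (ii) and (iii) follow the paper. They use density of the admissible scores in $\mathcal H_0$, the KKT check $D^*\in\mathcal H_0$, and Theorem~\ref{thm:bridge}(iii). Regularity then comes from Le Cam's third lemma together with the identity $\E[f_Ps]=\E[D^*s]$, which (B) gives along mean-only, exactly $\kappa$-preserving tilts.

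For (i) you take a genuinely different route. You use the whole family of residual tilts to identify the within-$(X,A)$ residual $\varepsilon$ of every admissible evaluation as a unit-sum active combination $\sum_a\beta_a\zeta_a$, via a double-orthocomplement argument. You then minimize $\beta^\top\bm\Gamma_{\rm act}\beta$. The paper instead tests against a single direction:
\begin{itemize}
\item It approximates $\phi(X)\1_{F_m}r^*$, with $F_m=\{\Var(f_P\mid X)\le m\}$, by bounded admissible scores. Here $r^*$ satisfies the constraints because $\E[\zeta_ar^*\mid X]=c(X)$ for every active $a$.
\item This gives $\E[(f_P-m_P)r^*\mid X]=c(X)$ on $F_m$.
\item Conditional Cauchy--Schwarz then yields $\Var(f_P\mid X)\ge c(X)$ directly.
\end{itemize}
In the paper the quadratic program is solved implicitly through KKT, and no measurable conditional projection is needed. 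Your route costs more but gives more: a rigidity statement under (L) alone. Within each arm the residual is affine in $h_a$, and inactive-arm residuals vanish outright, because (L2) leaves inactive loadings free. So your remark that inactive coefficients ``contribute nonnegatively'' is unnecessary.

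The step you flag as the main obstacle is where the sketch fails as written. ``Truncation plus a conditional projection correcting (L2)'' does not preserve boundedness. The correction is $\sum_k c_k(X)Z_k$, where $Z$ collects the arm indicators and the active contrasts $\zeta_a-\zeta_b$. These contrasts are unbounded in general, and the coefficients $C(X)^+\E[Zt\mid X]$ can be unbounded in $X$ where the conditional Gram matrix $C$ is nearly singular. Restricting to $\{\max_a\E[|\zeta_a|\mid X]\le m,\ \Delta_+\ge1/m\}$ controls neither problem.

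The missing device is the one in Lemma~\ref{lem:sbridge-density}:
\begin{itemize}
\item Correct with the truncated vector $Z_j=Z\1\{\|Z\|\le j\}$ and its truncated Gram matrix $C_j$. Since $\E[ZZ_j^\top\mid X]=C_j$, the constraints hold exactly.
\item Work on the sets where $\operatorname{rank}C_j=\operatorname{rank}C$ and $\|C_j^+\|\le j$. These sets exhaust $\cX$ and keep the correction bounded.
\end{itemize}

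In your version of (i) you also need a second localization, to $F_m$. You must localize to $F_m$ before approximating the projection of $\varepsilon-\zeta_{j_0}$ onto the constraint space. When $\Var(f_P\mid X)$ is finite pointwise but not integrable, the tested object is otherwise not in $L^2(P)$. The passage from bounded $r$ to the $L^2$ limit then fails, and exhausting over the (L3) sets alone does not repair this.

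There are also two small slips:
\begin{itemize}
\item Under a residual tilt with $\alpha=0$, the arm mean moves by $t\,d^r_a/g'_a$, not $t\,d^r\pi_a/g'_a$. Only the former makes the branch shift $t\,d^r_a$ common on the active set.
\item With $\alpha\ne0$, the branch change is $t\,d_b/\{1+t\alpha(X)\}$, which is not exactly linear in $t$. It is linear up to an $O(t^2)$ remainder in $L^1(P_X)$, which suffices for (G2)--(G3).
\end{itemize}
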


Corollary~\ref{cor:bridge-maxarm} equates the variational and classical bounds in an experiment that holds the propensity and active set fixed. Its conditional lower bound applies to every admissible evaluation, including those with inactive-arm residuals or unequal arm-level intercepts. Thus the local experiment supplies a lower bound for the full class, rather than only for the displayed weighted family.

Corollary~\ref{cor:bridge-maxarm} concerns the affine max-arm structure.  For other declarations, Theorem~\ref{thm:bridge} identifies the canonical gradient in the feasible experiment as $\Pi_{\mathcal T}D^*$ under its stated path conditions, while the exact-class floor depends on the finite-substitution constraints and on realization; in constrained persistent-tie models it can be strictly larger than $\|D^*\|_2^2$.

For a general declaration, the regularity conclusion holds when feasible, exactly $\kappa$-preserving, one-component substitution scores span the stated tangent space. For Balke--Pearl branches, the centered doubly robust signals give the $\zeta_j$ in Theorem~\ref{thm:bridge}, with the classical interpretation determined by the feasible paths in the declared cell model. Section~\ref{sec:card-data} uses their unit-sum combinations for the empirical variance comparison. For a minimum, both the branches and their conditional gradients are negated, as specified at the start of this section.

\section{Numerical studies}\label{sec:numerics}

The numerical study evaluates the efficiency gain from adapting the weights of tied treatment arms. Our primary comparison is between the proposed RALU estimator and the nonparametric softmax estimator of \citet{Whitehouse2025Softmax}. Their Theorem~3.3 identifies the equal-weight asymptotic variance, and the comparison after Corollary~\ref{rem:otr-recovery-main} predicts a strict gain for RALU when tied arms carry unequal information. An additional equal-weight estimator uses the same estimated active set as RALU to isolate the effect of variance-adaptive weighting. The adaptive procedure of \citet{XuGuo} provides an optimality benchmark in the homoscedastic binary control.

\subsection{Design and analytical comparison}\label{sec:num-design}

Let $\cA=\{0,1\}$.  Conditional on $X$ and $A=a$, let $Y\sim\cN\{\mu_a(X),\sigma_a^2\}$.  Set
\[
\mu_0(x)=1,\qquad \mu_1(x)=1+1.2(x-q)_+,
\qquad \sigma_0^2=1/4,\qquad \sigma_1^2=1.
\]
Given $X=x$, let $A$ be Bernoulli with $\pi_1(x)=0.40+0.20x$ and $\pi_0(x)=1-\pi_1(x)$.  The covariate law has density
\[
 f_q(x)=\1\{0\le x<q\}
 +\frac{2(x-q)}{1-q}\1\{q\le x\le1\}.
\]
Thus the arms are tied on $\{X\le q\}$, with probability $q$, and the positive treatment-effect gap has quadratic margin near zero.  We consider $q=0.30$ and the higher-tie-mass value $q=0.80$. These designs combine positive-probability treatment ties with unequal arm information, the structure for which the theory predicts a strict RALU efficiency gain.  An information-equality control retains the $q=0.80$ outcome functions and covariate law but sets $\pi_1\equiv1/2$ and $\sigma_0^2=\sigma_1^2=1$.

Write $v_a(x)=\sigma_a^2/\pi_a(x)$.  On a binary tie, the RALU and Whitehouse limiting conditional residual variances are
\[
 V_{\rm R}(x)=\frac{v_0(x)v_1(x)}{v_0(x)+v_1(x)},\qquad
 V_{\rm eq}(x)=\frac{v_0(x)+v_1(x)}4,
\]
so that
\begin{equation}\label{eq:num-binary-gap}
 V_{\rm eq}(x)-V_{\rm R}(x)
 =\frac{\{v_0(x)-v_1(x)\}^2}{4\{v_0(x)+v_1(x)\}}\geq0.
\end{equation}
The inequality is strict on any positive-probability tie subset where $v_0\ne v_1$, and equality holds exactly when the two inverse-information terms agree.  Let $\mathcal V_{\rm R}$ and $\mathcal V_{\rm eq}$ denote the corresponding total variances after adding the common variance of the conditional target and integrating over $X$.  The equal-weight limiting representation has total variance $\mathcal V_{\rm eq}$. The ratio $\mathcal V_{\rm eq}/\mathcal V_{\rm R}$ is the theoretical benchmark in Table~\ref{tab:otr-efficiency}.

\begin{table}[!htb]
\centering
\small
\caption{Whitehouse--RALU variance comparison at $n=64{,}000$ with $1{,}000$ paired replications. The limit ratio is $\mathcal V_{\rm eq}/\mathcal V_{\rm R}$. The empirical columns compare the implemented Whitehouse estimator (W) and the additional equal-weight estimator (eq) with RALU (R), with paired $95\%$ Monte Carlo intervals. The final column gives coverage of nominal $95\%$ intervals for the unsmoothed optimal value.}
\label{tab:otr-efficiency}
\begin{tabular}{@{}lcccc@{}}
\toprule
design & limit ratio & $\widehat V_{\rm W}/\widehat V_{\rm R}$ & $\widehat V_{\rm eq}/\widehat V_{\rm R}$ & coverage R/W\\
\midrule
$q=.30$, unequal information & 1.065739 & 1.077 $[1.041,1.116]$ & 1.061 $[1.030,1.094]$ & 0.951/0.948\\
$q=.80$, unequal information & 1.309629 & 1.390 $[1.299,1.483]$ & 1.315 $[1.245,1.390]$ & 0.939/0.936\\
$q=.80$, information equality & 1.000000 & 1.139 $[1.099,1.183]$ & 1.0007 $[0.9997,1.0018]$ & 0.931/0.921\\
Xu--Guo control, $q=.80$ & 1.248895 & 1.509 $[1.400,1.631]$ & 1.254 $[1.183,1.326]$ & 0.952/0.924\\
\bottomrule
\end{tabular}
\end{table}

\subsection{Paired implementation and interpretation}\label{sec:num-results}

The oracle experiment inserts the true active set and nuisances and isolates the variance difference in~\eqref{eq:num-binary-gap}. All three feasible estimators use the same observations, two-fold partition, outcome regressions, and propensity estimates. RALU and the additional equal-weight estimator also share the tolerance active set and conditional-variance estimates. Their common learner fits each arm regression separately over $\{1,(X-q)_+\}$ and the propensity over $\{1,X\}$, with a constant residual variance within each arm. The RALU active-set tolerance is $1.2n^{-1/4}$.

For the Whitehouse estimator, write
\begin{equation}\label{eq:num-whitehouse-smoothing}
\begin{aligned}
 w_a^\beta(\bm u)&=\frac{\exp(\beta u_a)}{\sum_b\exp(\beta u_b)},
 &s_\beta(\bm u)&=\sum_a w_a^\beta(\bm u)u_a,\\
 d_a^\beta(\bm u)&=w_a^\beta(\bm u)\bigl[1+\beta\{u_a-s_\beta(\bm u)\}\bigr].
\end{aligned}
\end{equation}
Following Proposition~3.2 and equation~(4) of \citet{Whitehouse2025Softmax}, each held-out observation contributes the score
\begin{equation}\label{eq:num-whitehouse-score}
 s_{\beta_n}\{\widehat{\bm\mu}(X)\}
 +\sum_{a=0}^1 d_a^{\beta_n}\{\widehat{\bm\mu}(X)\}
 \frac{\1\{A=a\}}{\widehat\pi_a(X)}\{Y-\widehat\mu_a(X)\}.
\end{equation}
We average these scores over the held-out folds and use their empirical variance for the standard error. The smoothing sequence is $\beta_n=1.5n^{1/6}\log\log n$, following the growth rule in their Section~5.2, equation~(11), with margin exponent two. The derivative coefficients in~\eqref{eq:num-whitehouse-smoothing} converge to equal weights on the truly optimal arms and zero on the others. With the root-$n$ nuisance fits and finite Gaussian moments in this panel, $\beta_n/\sqrt n\to0$ controls the nuisance remainder and $\sqrt n\beta_n^{-3}\to0$ controls the smoothing bias, giving the equal-weight asymptotic variance used above.

In the terms of Definition~\ref{def:ralu}, the learnable subclass $\cP_{\mathrm{ach}}$ in this study consists of laws with the displayed covariate density, a linear propensity $\pi_1(x)=\alpha_0+\alpha_1x$ taking values in the learner's clipping interval $[0.05,0.95]$ on $[0,1]$, and arm regressions $\mu_a(x)=\beta_0+\beta_{1,a}(x-q)_+$ with a common intercept and $\beta_{1,1}\ne\beta_{1,0}$. The conditional outcome laws are Gaussian with one of the variance pairs used in the four designs. This subclass includes the two unequal-information designs and the two controls.

The common-intercept specification makes the treatment effect $(\beta_{1,1}-\beta_{1,0})(x-q)_+$, so $P_X(0<\Delta_+\le t)$ is proportional to $t^2$ and the margin exponent is $\gamma=2$. Least-squares fits over $\{1,(X-q)_+\}$ and $\{1,X\}$ converge in supremum norm at rate $n^{-1/2}$. Consequently $\tau_n=1.2n^{-1/4}$ lies in the window $\|\widehat g-g\|_\infty\ll\tau_n\ll n^{-1/6}$ of Assumption~\ref{ass:achieve-reg}. Regressions with distinct intercepts can instead cross where the covariate density is positive and have margin exponent one; the common-intercept restriction specifies the subclass for which the stated tolerance is justified.

We use $n=4{,}000$, $16{,}000$, and $64{,}000$, with $2{,}000$, $2{,}000$, and $1{,}000$ Monte Carlo replications, respectively.  The RALU, Whitehouse, and additional equal-weight comparisons reuse the same observations and fold assignment.  We report $n$ times the empirical variance, root-$n$ scaled bias, estimated-standard-error calibration, and Wald coverage; paired resampling quantifies Monte Carlo uncertainty in empirical variance ratios.  These diagnostics measure finite-sample approximation to the exact benchmarks.

At $n=64{,}000$, the empirical Whitehouse/RALU variance ratios are $1.077$ for $q=.30$ and $1.390$ for $q=.80$, with paired Monte Carlo intervals $[1.041,1.116]$ and $[1.299,1.483]$. The corresponding RALU/Whitehouse coverage pairs are $.951/.948$ and $.939/.936$. Their root-$n$ scaled biases are $-.021/-.121$ and $-.183/-.294$, respectively. The direct comparison therefore shows lower variance for RALU in both unequal-information designs, with similar coverage. The larger gain at $q=.80$ is consistent with the greater probability of a treatment tie.

The additional equal-weight/RALU ratios are $1.061$ and $1.315$ in these two designs, close to the asymptotic ratios $1.065739$ and $1.309629$. This comparison isolates the gain from weighting the active arms by their conditional information. In the information-equality control the theoretical ratio is one and the additional equal-weight/RALU ratio is $1.0007$. The implemented Whitehouse/RALU ratio is $1.139$ at this sample size, with coverage $.921$ for Whitehouse and $.931$ for RALU; finite-sample smoothing and nuisance estimation affect this ratio even when the limiting representations coincide.

The last row is the Xu--Guo positive control. Their adaptive procedure uses the same simulated observations with its prescribed nested sample splits and role-specific nuisance fits.  Under homoscedasticity, their boundary weight $t_0(x)=\pi_1(x)$ equals the RALU optimizer.  With $q=0.80$ and $\pi_1=0.75$, the RALU and Xu--Guo limiting representations therefore share the exact variance $1.0714026667$, while equal-face weighting has variance $1.3380693333$.  The Xu--Guo procedure therefore serves as a positive control: both estimators target the same optimum, and the comparison checks that the feasible RALU estimator reaches it. At $n=64{,}000$, the end-to-end Xu--Guo estimator has scaled variance $1.126$, root-$n$ bias $-0.011$, coverage $.951$, and mean selected bandwidth $0.126$. Its finite-sample variance ratio relative to feasible RALU is $1.101\,[1.058,1.147]$.  The identity $t_0=\lambda_1^\ast$ establishes their common asymptotic benchmark; the reported ratio describes their finite-sample variances.

The numerical results connect the theoretical comparison with the implemented Whitehouse estimator and show how the gain changes with the probability and information structure of treatment ties. We next examine covariance weighting in an observational Balke--Pearl application, where both the active set and its score covariance are learned from data.

\section{Empirical application: covariate-assisted Balke--Pearl bounds}
\label{sec:card}
\label{sec:card-data}

We analyze $3{,}010$ observations from the National Longitudinal Survey of Young Men, introduced by \citet{Card1995}, using the processed replication data of \citet{Wang2017NLSData} and the observational instrumental-variable design of \citet{levis2025covariate}.  The instrument indicates proximity to a four-year college, the exposure indicates education beyond high school, and the binary outcome indicates whether 1976 hourly wage exceeds the fixed full-cohort cutoff of $537.5$ cents.  This cutoff, equal to the median in the processed analysis sample, was fixed before nuisance fitting and cross-fitting.  We retain the supplied survey weights and adjust for age, parental education, southern and metropolitan residence, race, IQ, and the corresponding missingness indicators.  The weighted first stage is $0.118$.  The targets are the survey-weighted covariate-assisted lower and upper Balke--Pearl endpoints, respectively the maximum and minimum of eight conditional branches before covariate aggregation.  Their causal interpretation relies on consistency, conditional instrument independence, exclusion, and positivity; monotonicity is not imposed.

The theory is instantiated as follows.  Write $p_{ay\mid z}(x)=P(A=a,Y=y\mid X=x,Z=z)$ for the conditional cell probabilities and $\rho_z(x)=P(Z=z\mid X=x)$ for the instrument propensity.  Each upper-endpoint branch is an affine function of the cells, $U_k(x)=\sum_{a,y,z}c^{(k)}_{ayz}\,p_{ay\mid z}(x)$ with integer coefficients; for instance $U_2=p_{00\mid1}+p_{11\mid1}$ and $U_1=-p_{01\mid0}+p_{00\mid1}+p_{01\mid1}+p_{11\mid1}$, and the target is $\E_{P_X}[\min_kU_k(X)]$, the lower endpoint being handled by negation.  The full-slot score for branch $k$ is the doubly robust
\begin{equation}\label{eq:card-dr-score}
\varphi_k(O;p',\rho')=U_k(p')+\sum_z\frac{\1\{Z=z\}}{\rho'_z}
\sum_{a,y}c^{(k)}_{ayz}\bigl\{\1\{A=a,Y=y\}-p'_{ay\mid z}\bigr\},
\end{equation}
whose conditional bias is $\sum_z(1-\rho_z/\rho'_z)\{M_{kz}(p')-M_{kz}(p)\}$ with $M_{kz}(p)=\sum_{a,y}c^{(k)}_{ayz}p_{ay\mid z}$; it vanishes when either slot is correct, which is condition~(B) with $d=2$.

At the true nuisance values, the centered residuals $\varphi_k^P-U_k=\sum_z\1\{Z=z\}\{c^{(k)}_{AYz}-M_{kz}\}/\rho_z$ play the role of the centered identifying scores $h_k-u_{P,k}$ in Definition~\ref{def:min-of-linear} and the branch gradients $\zeta_k$ in Section~\ref{sec:bridge}. Their conditional covariance matrix has entries
\[
\widetilde{\bm\Sigma}_{kl}(x)
=\sum_z\frac{1}{\rho_z(x)}
\left\{\sum_{a,y}p_{ay\mid z}(x)c^{(k)}_{ayz}c^{(l)}_{ayz}
-M_{kz}(x)M_{lz}(x)\right\}.
\]
This matrix determines the variance-minimizing weights on the estimated active set $\widehat S$. If $\widehat{\bm\Sigma}_{\widehat S}$ is nonsingular, the weights are
\[
\widehat{\bm\lambda}
=\frac{\widehat{\bm\Sigma}_{\widehat S}^{-1}\1}
{\1^\top\widehat{\bm\Sigma}_{\widehat S}^{-1}\1}.
\]
For a rank-deficient matrix, Section~\ref{sec:cov-extended} gives two cases: normalized pseudoinverse weights when $\1$ lies in the matrix range, and a unit-sum vector in its kernel otherwise. In either case, the weights can be negative.

For these nuisance-dependent branch scores, the exact robustness identity is given by~\eqref{eq:card-dr-score}. We minimize conditional variance over their unit-sum weighted combinations. The covariance program has the same form as in Theorem~\ref{thm:cov-kink}, while its robustness justification uses the displayed doubly robust identity in place of the fixed identifying-score assumption of that theorem. Proposition~\ref{prop:bp-rigidity} proves the corresponding lower bound, and Theorems~\ref{thm:bp-achieve} and~\ref{thm:bp-stable-rank} give the attainment conditions.

The analysis protocol fixes $20$ stratified five-fold cross-fitting partitions.  Survey-weighted ridge multinomial regressions estimate the four conditional $(A,Y)$ cell probabilities within each instrument level, and a weighted ridge logistic regression estimates the instrument propensity.  The estimated active set uses tolerance $\tau_n=0.25n^{-1/4}=0.0338$.  For each split, we use the full estimated conditional covariance matrix and compare three scores: RALU weighting, equal weighting on exactly the same estimated face, and hard selection of one estimated active branch.  All three methods use the same observations, folds, and off-fold nuisance predictions. By the complete score we mean the plug-in branch value together with its doubly robust correction; both enter the reported variance calculation.

We report estimated variances of the complete scores evaluated at the fitted nuisances. Their asymptotic interpretation additionally requires control of nuisance-product bias and active-set estimation error, together with convergence of the complete score in $L^2$. These are learning conditions of the procedure, separate from the fitted variance comparison. To stabilize the weights, the implementation projects onto $\{\lambda:\bm1^\top\lambda=1,\ \|\lambda\|_1\le5\}$ and uses equal weights if the covariance calculation fails its numerical guard. Neither adjustment was activated in the primary weighted analysis on any of the $20$ splits at either endpoint. The largest recorded lower-endpoint $\ell^1$ norm was $3.64$, so the primary analysis used the unconstrained covariance-minimizing weights.

\begin{table}[!htb]
\centering
\small
\caption{Card--NLSYM application.  Method columns report the componentwise median point estimate, with the median estimated standard error in parentheses, across $20$ cross-fitting splits.  The variance ratio is computed within each split and then summarized by its median; the percentage is $100\{1-\widehat V_{\rm R}/\widehat V_{\rm eq}\}$.}
\label{tab:card-results}
\begin{tabular}{@{}lcccc@{}}
\toprule
endpoint & RALU & equal face & hard branch
& $\widehat V_{\rm eq}/\widehat V_{\rm R}$ (reduction)\\
\midrule
lower & $-0.3960\ (0.0203)$ & $-0.4034\ (0.0217)$
      & $-0.4007\ (0.0235)$ & $1.1486\ (12.94\%)$\\
upper & $\phantom{-}0.5233\ (0.0213)$ & $\phantom{-}0.5283\ (0.0233)$
      & $\phantom{-}0.5257\ (0.0257)$ & $1.1979\ (16.52\%)$\\
\bottomrule
\end{tabular}
\end{table}

At the primary tolerance, the estimated active set contains more than one branch for a median $53.2\%$ of the weighted observations at the lower endpoint and $66.7\%$ at the upper endpoint.  The median variance ratios in Table~\ref{tab:card-results} correspond to estimated variance reductions of $12.94\%$ and $16.52\%$, respectively.  RALU has lower estimated complete-score variance in all $20$ splits for both endpoints. Across the lower-endpoint splits, the reduction ranges from $7.75\%$ to $22.16\%$; across the upper-endpoint splits, it ranges from $13.12\%$ to $23.11\%$.  For the lower endpoint, the direction also persists in all $60$ split-by-tolerance comparisons and throughout the unweighted ridge and weighted boosting sensitivity analyses.

\begin{figure}[t]
\centering
\includegraphics[width=.96\linewidth]{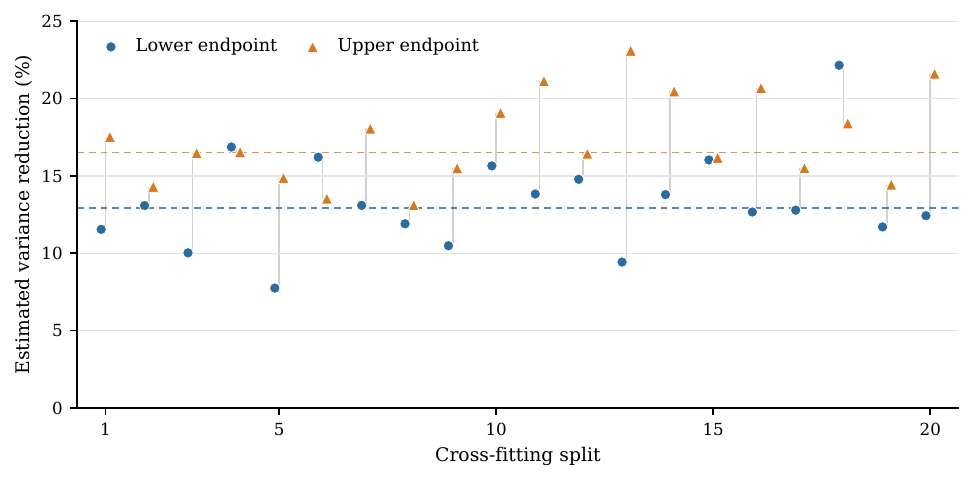}
\caption{Estimated RALU variance reduction across the $20$ cross-fitting splits in the Card--NLSYM application.  Each point is $100\{1-\widehat V_{\rm R}/\widehat V_{\rm eq}\}$ from one shared five-fold partition at the primary active-set tolerance; dashed lines mark the endpoint medians.  Every split favors RALU.}
\label{fig:card-variance-reduction}
\end{figure}

The application provides a real-data counterpart to the correlated-score theory.  When several Balke--Pearl branches enter the estimated active set and carry unequal information, adapting their weights to the full covariance structure produces a material estimated score-variance gain over the equal-weight rule.

\section{Discussion}\label{sec:disc}

This paper formulates efficient inference as the design of a minimum-variance asymptotic linear representation subject to robustness and unbiasedness constraints. For marginal-integral functionals, exact unbiasedness under nuisance substitutions determines the feasible score maps without requiring pathwise differentiability. The conditional-variance reduction identifies their exact variance infimum. Under the stated structural and perturbation conditions, oracle realization gives sharp formulas for the principal piecewise-affine structures, and the learning and margin conditions yield feasible cross-fitted attainment in the max-arm subclass. The distinction between the algebraic infimum, a minimizing representation, and an attaining estimator is part of the variational formulation.

The resulting efficiency geometry is explicit.  Conditionally separated identifying scores lead to a harmonic variance formula, whereas jointly observed scores lead to a covariance-weighted quadratic program, with ordinary inverse and Moore--Penrose formulas under the stated positive-definite and singular-covariance conditions. The binary optimal-treatment formula agrees exactly with the bound of \citet{XuGuo}.  Their two-parameter adaptive smoothing construction separates the smoothing scale needed for validity from the asymmetric boundary weight that determines efficiency; the latter is exactly the binary active-set weight optimized by the RALU quadratic program.  The present theory extends this variational principle to multiple treatment arms, general maximum-type scores, covariate-aggregated Balke--Pearl bounds, $L^1$ calibration error, and cross-world mediation.  For static optimal-treatment values with finite asymptotic variances, it also answers the efficiency question raised by \citet{Whitehouse2025Softmax}: RALU has strictly smaller asymptotic variance than their nonparametric softmax estimator when arms with unequal conditional information are tied on a set of positive probability.

The static and dynamic treatment-regime developments in \citet{XuGuo} belong to the same RALU program. The broader marginal-integral formulation also accommodates targets such as cross-world mediation, where the conditional summary contains both outcome and mediator components. Each application requires its own nuisance declaration: the robustness identities and learning conditions must be checked for that declaration before asserting efficiency or attainment.

The bridge theorem explains when the same geometry is also classical. On feasible local experiments that preserve active branches to first order, the optimal residual is the projection of a conditional branch gradient.  In affine max-arm models, exactly tie-preserving paths make this interpretation compatible with the finite substitution identities. The resulting interpretation complements the exact finite-substitution comparison.

The Card--NLSYM application shows that this covariance geometry has visible consequences in data.  Relative to equal weighting on the same estimated active set, RALU reduces the median estimated variance by $12.94\%$ for the lower Balke--Pearl endpoint and $16.52\%$ for the upper endpoint, and every repeated cross-fitting split favors RALU. The estimated score-variance improvement therefore also appears in the empirical application.

Exact robustness is substantive.  On sufficiently rich regions with a unique active branch, it forces the relevant link to be affine.  On persistent tie regions, it instead imposes a finite-dimensional linear system that can admit nonlinear links.  The bound is also indexed by the declared nuisance factorization and robustness pattern.  These features identify precisely which asymptotically linear estimators are compared and explain how the theory remains meaningful when the derivative-based construction is unavailable.

Important next directions include local-uniform versions of the RALU bound, robustness patterns beyond one misspecified component, attainment at unstable covariance-rank boundaries without spectral separation, and continuous actions.  Replacing exact robustness by first-order orthogonality would define a different estimator class and requires a separate lower-bound and attainment analysis.

\clearpage
\part*{Proofs and additional results}
\setcounter{section}{0}
\renewcommand{\thesection}{S.\arabic{section}}
\renewcommand{\theHsection}{supp.\arabic{section}}

Sections~\ref{ssec:ulb-proof}--\ref{ssec:achieve-proof} prove the variance reduction, functional-form, and attainment results. Sections~\ref{ssec:model-verification} and~\ref{ssec:further-apps} verify the model conditions and develop the applications, and Section~\ref{ssec:bridge} proves the connection to classical efficiency. Sections~\ref{ssec:num-checks} and~\ref{ssec:card-details} give additional numerical and empirical details; Section~\ref{ssec:limitations} discusses the scope of the results.

\section{Conditional-variance reduction and the exact infimum}\label{ssec:ulb-proof}

We prove Theorem~\ref{thm:ulb}, Lemma~\ref{rem:vmin-w-invariant}, and Corollary~\ref{cor:est-level}. We first recall Assumption~\ref{ass:meas-main} and specify the selection graphs used in the proofs.  Throughout this section $\cP$ is the rich model used in condition (B). The family $\IFRL$ contains all measurable full-slot maps satisfying that condition.  An estimator need only satisfy asymptotic linearity on $\cP_{\mathrm{ach}}\subseteq\cP$; no learning assumption is used here.

\begin{assumption}[Measurability and selection graphs]\label{ass:meas}
The spaces of observations and slot values are Polish.  The graph $\{(x,Q):Q\in\mathfrak K_x\}$ in (M2), the local nuisance readouts, the local target integrand, and its finite model cost $C(x,Q)$ are Borel.  We require
\[
 |\psi\{x,u_{\rm loc}(x,Q)\}|\le C(x,Q),\qquad
 \int C(x,Q_x^P)\,dP_X(x)<\infty\quad(P\in\cP).
\]
Here $u_{\rm loc}$ is the conditional-law readout of the primary integrand.  The following names record the three selection operations.
\begin{enumerate}[wide=0pt,itemsep=2pt]
\item[\textup{(Sel-slot)}] A slot value is selected together with an admissible witness kernel $R$.  For index $j$, the graph is
\[
 \{(x,v,R):R\in\mathfrak K_x,\
   \eta_{{\rm loc},l}(x,R)=\eta_l(P)(x)\ (l\ne j),\
   v=\eta_{{\rm loc},j}(x,R)\}.
\]
It is Borel, and its model cost is $C(x,R)$.  Using the witness coordinate avoids requiring that the projection onto $(x,v)$ be Borel.
\item[\textup{(Sel-law-graph)}] The compatible graph is
\[
 \mathfrak Q_x^{P,\mathrm{loc}}
 =\mathfrak Q_x^P
 =\{Q\in\mathfrak K_x:
       \eta_{\rm loc}(x,Q)=\eta(P)(x),\
       \kappa_{\rm loc}(x,Q)=\kappa(P)(x)\}.
\]
It is Borel by the preceding readout assumptions.
\item[\textup{(Sel-law-glue)}] Every selection with integrable model cost glues as in (M2).  A field $R$ of finite pointwise cost can always be used on $\{C(x,R_x)\le m\}$ and set equal to $Q_x^P$ elsewhere.  For finitely many jointly selected kernels use the sum of their costs.  Each resulting law belongs to $\cP$.
\end{enumerate}
Selections are understood in the $P_X$-completion; a Borel version may be used before forming a probability kernel.
\end{assumption}

The model cost determines which kernel substitutions produce laws in $\cP$. Integrability of a candidate score at each such law then follows from (B). The next lemma uses these two facts to obtain conditional identities simultaneously over an entire kernel graph.

\Needspace{15\baselineskip}
\begin{lemma}[Violation selection]\label{lem:violation-selection}
Let $\mu=P_X$, let $S$ be Polish, and let $\mathcal G\subseteq\cX\times S$ be a Borel graph with sections $\mathfrak S_x$.  Let $c:\mathcal G\to[0,\infty)$ be a finite Borel model cost.  Suppose a baseline selection $s_0$ satisfies $\int c(x,s_0(x))\,d\mu(x)<\infty$, and that every selection obtained by using an arbitrary selector on a set where its cost is bounded, and $s_0$ elsewhere, satisfies the required model-closure property. Assume:
\begin{enumerate}[wide=0pt,label={\textup{(VS\arabic*)}},itemsep=1pt]
\item The baseline is available on a $\mu$-full set.
\item A Borel envelope $I:\mathcal G\to[0,\infty]$ satisfies $I(x,s(x))<\infty$ almost surely for each gluable selection $s$.
\item A Borel gap $\gamma:\mathcal G_{\rm fin}\to\R$, where $\mathcal G_{\rm fin}=\{I<\infty\}$, satisfies $\gamma(x,s(x))=0$ almost surely for each gluable selection.
\end{enumerate}
Then outside one $\mu$-null set, independent of $s\in\mathfrak S_x$, every fibre element satisfies $I(x,s)<\infty$ and $\gamma(x,s)=0$.
\end{lemma}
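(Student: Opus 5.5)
The argument is by contradiction, using measurable selection of violations. Define the bad set
\[
B=\{x:\exists s\in\mathfrak S_x \text{ with } I(x,s)=\infty\ \text{or}\ (I(x,s)<\infty,\ \gamma(x,s)\neq0)\}.
\]
This is the projection onto $\cX$ of the Borel set
\[
\mathcal G_{\rm bad}=\{I=\infty\}\cup\{(x,s)\in\mathcal G_{\rm fin}:\gamma(x,s)\ne0\}\subseteq\mathcal G .
\]
Since $\mathcal G_{\rm bad}$ is Borel in the Polish space $\cX\times S$, its projection $B$ is analytic and hence universally measurable. The conclusion is exactly that $\mu^\ast(B)=0$, with the exceptional null set taken to be a Borel null set containing $B$ (together with the null set from (VS1)). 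This null set is manifestly independent of $s$, which is the point of the lemma.

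Suppose instead that $\mu(B)>0$. By the Jankov--von Neumann uniformization theorem there is a universally measurable map $\sigma:B\to S$ with $(x,\sigma(x))\in\mathcal G_{\rm bad}$ for every $x\in B$. In the $\mu$-completion, $\sigma$ agrees $\mu$-a.e.\ with a Borel map. Because $c$ is finite on $\mathcal G$, the sets
\[
B_m=\{x\in B:c(x,\sigma(x))\le m\}
\]
increase to $B$, so $\mu(B_m)>0$ for some $m$. Define the glued selection $s=\sigma$ on $B_m$ and $s=s_0$ elsewhere; on the complement this uses (VS1), which guarantees the baseline is available a.e. Its cost is at most $m+c(x,s_0(x))$, which is integrable. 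So by the closure hypothesis, $s$ is a gluable selection. This localization to bounded cost is the same device as the localized kernel substitution after Assumption~\ref{ass:meas-main}.

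Now apply the hypotheses to $s$. By (VS2), $I(x,s(x))<\infty$ for a.e.\ $x$. Hence a.e.\ on $B_m$ the point $(x,\sigma(x))$ lies in $\mathcal G_{\rm fin}\cap\mathcal G_{\rm bad}$, which means $\gamma(x,\sigma(x))\neq0$. By (VS3), $\gamma(x,s(x))=0$ a.e., and in particular a.e.\ on $B_m$. These two conclusions conflict on the positive-measure set $B_m$, so $\mu(B)=0$. Splitting $\mathcal G_{\rm bad}$ into its two pieces and running the same argument separately on each gives the slightly cleaner bookkeeping, but the combined version above works as written.

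I expect the main obstacle to be the measurability details rather than the combinatorics. One must ensure that a universally measurable selector may be replaced by a Borel version without destroying the violation property on a positive-measure subset. This holds because modifications occur only on a $\mu$-null set, and both $I$ and $\gamma$ are Borel on the graph. One must also check that the hypotheses (VS2)--(VS3) are indeed stated for selections understood in the completion, as Assumption~\ref{ass:meas} permits. The other point to verify is that the glued selection lies in $\mathcal G$ everywhere; this follows from $\sigma(x)\in\mathfrak S_x$ on $B_m$ and from (VS1) off $B_m$.
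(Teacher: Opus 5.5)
Your proof is correct and follows essentially the paper's argument. The shared steps are Jankov--von Neumann selection on the analytic projection of the violation graph, localization to bounded cost, gluing with the baseline $s_0$, and a contradiction with (VS2)/(VS3). The only differences are bookkeeping. You uniformize the whole bad set once and localize afterward through $c(x,\sigma(x))\le m$. The paper instead builds the cost bound, and the quantized gap $\varepsilon\gamma\ge k^{-1}$, into countably many Borel graphs before projecting. Both routes yield one exceptional null set independent of the fibre element.
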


\begin{proof}
Discard the fixed null set where the baseline is unavailable. For each integer $m\ge1$ the graph $\{(x,s):I(x,s)=\infty,\ c(x,s)\le m\}$ is Borel. If its analytic projection had positive $\mu$-measure, Jankov--von Neumann selection \cite{Kechris1995} would give a selector there; extend it by $s_0$ elsewhere.  Its cost is integrable and the localized gluing assumption applies, contradicting (VS2). All these projections are null.  Since $c$ is finite at every fibre element, their countable union covers every possible infinite-envelope violation.

For $k,m\ge1$ and $\varepsilon\in\{-1,+1\}$ apply the same argument to the Borel graph
\[
 \{(x,s):I(x,s)<\infty,\ 
        \varepsilon\gamma(x,s)\ge k^{-1},\ c(x,s)\le m\}.
\]
A positive-measure projection would give a gluable selection contradicting (VS3).  The countable union of all these null projections and the preceding domain projections is the required exceptional set.
\end{proof}

The first stage handles possible failures of conditional integrability. Truncating the model cost produces an admissible law even when the selected score has an infinite absolute conditional integral. Condition (B) supplies the contradiction, so the same argument covers both integrability failures and nonzero expectation gaps.

For reference set
\[
 F_{j,v}^{P,w}(x,z)=
 f\bigl(x,z,\eta_1(P)(x),\ldots,v,\ldots,\eta_d(P)(x),
          \kappa(P)(x),w(x)\bigr).
\]
All envelope integrals below are Borel by measurability of integrals of nonnegative Borel functions against a variable probability law. The four applications use the following graphs and costs.
\begin{enumerate}[wide=0pt,label={(VS-\alph*)},itemsep=3pt]
\item VS-law. Select $Q\in\mathfrak Q_x^P$, with cost $C(x,Q)$, baseline $Q_x^P$, and $I_{\rm law}(x,Q)=\int|f_w^P(x,z)|\,dQ(z)$. For a cost-localized selection, (M2) gives a compatible law $P''$. Condition (B) at $(P'',P'')$ implies $\int I_{\rm law}(x,Q_x)\,dP_X(x)<\infty$.

\item VS-slot. Select $(v,R)$ from (Sel-slot), with cost $C(x,R)$ and baseline $(\eta_j(P)(x),Q_x^P)$. Use $I_{\rm slot}(x,v,R)=\int|F_{j,v}^{P,w}(x,z)|\,dQ_x^P(z)$. After cost localization, $R$ glues to a same-marginal witness $P^\dagger$, and (B) at $(P,P^\dagger)$ implies integrability.

\item VS-pair. Select $(Q,v,R)$ jointly from the compatible-law and slot-witness graphs, with cost $C(x,Q)+C(x,R)$ and envelope
\[
 I_{\rm pair}(x,Q,v,R)=\int|F_{j,v}^{P,w}(x,z)|\,dQ(z).
\]
Localize both coordinates on the same bounded-cost set. They give actual laws $P''$ and $P^\dagger$, respectively, and (B) at $(P'',P^\dagger)$ implies integrability.

\item VS-bridge. Select a direction and its matched kernel bundle from the relevant (X3) or (COV3) graph. Its cost is the sum of $C$ over the generated kernels, and its envelope is $\int|f_w^P(x,z)|\,dQ_{x,\bm\delta}(z)$. The matched ambient law is formed only after cost localization; (B) at that law and the frozen working law supplies integrability. The kernels, nuisance values, and structural values must be selected as one bundle.
\end{enumerate}
In every case, including additional kernels used in a rigidity argument, the stipulated graph has finite model cost. Localization makes this cost integrable before the corresponding law is formed.

The compatible fibre is defined by the local graph in Assumption~\ref{ass:meas}. Locality identifies the nuisance and structural values of each substituted law on the bounded-cost set where it differs from the baseline. Lemma~\ref{lem:violation-selection} then supplies one exceptional set for all fibre elements, on both atomic and atomless covariate spaces.

For concreteness, let $\cX=[0,1]$, $A\in\{1,\ldots,K\}$, and $0\le Y\le1$. Let $\cP$ consist of all joint laws with $\pi_a(x)\ge\epsilon>0$ and positive conditional arm variances, for a fixed $\epsilon<1/K$. The local graph is Borel and the constant cost $C=2$ suffices for the target $\E\max_a\E[Y\mid X,A=a]$. Marginal restrictions and every admissible kernel substitution stay in this model. With regression and propensity as the two primary components and the active set as structural component, define
\[
 f_{\lambda}(x,a,y;u,\pi,k)
 =\sum_{b\in k}\lambda_b(x,k)
       \left\{u_b+
       \frac{\mathbf1\{a=b\}}{\pi_b}(y-u_b)\right\}.
\]
Here $\lambda_b(x,k)\ge0$ and $\sum_{b\in k}\lambda_b(x,k)=1$. Choose conditional versions satisfying the model's range constraints at every $x$, with a fixed admissible value on conditional null sets. At admissible inputs, $|f_\lambda|\le1+1/\epsilon$. Its conditional expectation at working values $(u,\pi)$ and true active set $k=\cA^\ast(x;P)$ is
\[
 \E_{Q_x^P}[f_\lambda(x,A,Y;u,\pi,k)]
 =\sum_{b\in k}\lambda_b(x,k)
       \left\{u_b+\frac{\pi_b^P(x)}{\pi_b}
                    \bigl(u_{P,b}(x)-u_b\bigr)\right\}.
\]
If $u=\bm u_P(x)$ or $\pi=\bm\pi^P(x)$, this equals $\sum_{b\in k}\lambda_b(x,k)u_{P,b}(x)=\max_bu_{P,b}(x)$. The bound and this identity prove (B). Define the rule to be zero outside the admissible input range, a Borel set. This gives a measurable full-slot member of $\IFRL$.

A simple $\cP_{\mathrm{ach}}$ consists of the laws in this rich model whose propensities and Bernoulli arm means are constant in $x$, with arm means in $[\delta,1-\delta]$ for fixed $\delta>0$. It includes continuous covariate distributions. Clipped empirical arm means and proportions are root-$n$ consistent on each training fold.  The tolerance $\tau_n=n^{-1/3}$ dominates their supnorm errors, while every positive branch gap at a fixed law is separated from zero, so the positive-gap margin condition holds with any exponent.  The construction in the attainment theorem therefore supplies a common estimator on this subclass. The example supplies both a comparison model supporting the required substitutions and a subclass on which one estimator attains the bound.

Unbounded outcomes also admit a nonempty comparison class.  For the max-of-means target, take conditional arm means in a fixed interval $(-M,M)$, propensities at least $\epsilon$, finite positive conditional second moments, and cost $C(x,Q)=1+\sum_a\E_Q[|Y|\mid A=a]$ with finite marginal integral. The simplex rule above obeys $|f|\le M+(|Y|+M)/\epsilon$ at every admissible one-wrong evaluation, so (B) is globally integrable.  Fixed-mean finite contaminations and bounded mean tilts remain locally available.  Finite-cost substitution therefore permits unbounded scores while excluding noise fields whose absolute first moment is not integrable across covariates.

\begin{lemma}[Auxiliary-input invariance]\label{lem:auxiliary-invariance}
For every $P\in\cP$ and measurable $w,w':\cX\to\Wext$, the conditional-variance competitor families at $w$ and $w'$ coincide in $L^0(P_X)$, and therefore so do their essential infima.
\end{lemma}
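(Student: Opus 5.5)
The plan is a direct transfer argument: every competitor at $w$ is rebuilt as a competitor at $w'$ with the same truth evaluation, and then the roles of $w$ and $w'$ are exchanged. The key observation is that Definition~\ref{def:ralu}(B) quantifies over \emph{every} measurable auxiliary map. A score map may therefore discard its auxiliary slot and read a fixed auxiliary function of the covariate instead, without leaving $\IFRL$.

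\emph{Step 1 (construction).} Fix $f\in\IFRL$ with $\Var_P[f^P_w]<\infty$. Define the full-slot map
\[
\tilde f(x,z,e,k,v)\eqdef f\bigl(x,z,e,k,w(x)\bigr),
\]
which ignores its last argument. This map is measurable on the domain~\eqref{eq:f-domain}. The reason is that $(x,z,e,k,v)\mapsto(x,z,e,k,w(x))$ is measurable into the Polish product, and $f$ is measurable there. If $w$ is only universally measurable, I would first replace it by a Borel version agreeing $P_X$-a.e. This changes $f^P_w$ only on a $P$-null set.

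\emph{Step 2 (membership).} Let $P,P'\in\cP$ satisfy~\eqref{eq:agree-others} for some $j$, and let $w''$ be any measurable auxiliary map. Then $\tilde f(X,W,\eta(P')(X),\kappa(P)(X),w''(X))=f(X,W,\eta(P')(X),\kappa(P)(X),w(X))$ identically. Applying condition~(B) for $f$ with the auxiliary map $w$ gives both the $L^1(P)$ integrability and the unbiasedness~\eqref{eq:MR-unbias}. Hence $\tilde f\in\IFRL$.

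\emph{Step 3 (same evaluation).} By construction, $\tilde f^P_{w'}(x,W)=f(x,W,\eta(P)(x),\kappa(P)(x),w(x))=f^P_w(x,W)$ pointwise. Consequently $\Var_P[\tilde f^P_{w'}]<\infty$, and the conditional-variance functions $v_{\tilde f}$ at $w'$ and $v_f$ at $w$ coincide. This shows $\mathcal V_{P,w}\subseteq\mathcal V_{P,w'}$. The symmetric argument, with $w'$ hard-wired into the auxiliary slot, gives the reverse inclusion, so the two families coincide.

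\emph{Step 4 (conclusion).} Equal families have equal essential infima in $L^0(P_X)$. This covers the empty-family convention $\Vmin=+\infty$, since one family is empty exactly when the other is. The first term of $\Vlow$ in~\eqref{eq:Vlow} does not involve $w$, so $\Vlow(P,w)=\Vlow(P,w')$. This proves Lemma~\ref{lem:auxiliary-invariance}, and with it Lemma~\ref{rem:vmin-w-invariant}.

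I expect no step to be a genuine obstacle. The only point needing care is the bookkeeping in Step~2: (B) must be checked for arbitrary auxiliary inputs $w''$ and for all admissible pairs $(P,P')$, and not merely at $P$. This is exactly what the hard-wiring construction guarantees.
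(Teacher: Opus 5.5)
Your argument is correct, but it takes a different route from the paper. The paper does not discard the auxiliary slot. It builds the pointwise involution $m_x$ that swaps $w(x)$ and $w'(x)$ and fixes all other values, checks joint measurability of $(x,v)\mapsto m_x(v)$ via the Borel diagonal of $\Wext$, and sets $\widetilde f(x,z,e,k,v)=f(x,z,e,k,m_x(v))$. Since $m_x\circ m_x=\mathrm{id}$, this gives a bijection between the two finite-variance competitor families.

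Your hard-wiring $\tilde f(x,z,e,k,v)=f(x,z,e,k,w(x))$ is simpler. Its measurability is immediate, and (B) transfers verbatim because the evaluated random variable is literally the same for every auxiliary input $w''$. Two inclusions then suffice for equality of the families and of their essential infima, including the empty-family case.

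What the swap buys in addition is that $\widetilde f$ still reads its auxiliary input. An estimator represented by $f$ on $\cP_{\mathrm{ach}}$ with law-specific auxiliaries $w_{f,Q}$ therefore remains represented by $\widetilde f$ with $w_{\widetilde f,Q}=m_x(w_{f,Q})$. Your $\tilde f$ freezes $w$ at every law, so it generally loses that asymptotic-linear representation. That extra conclusion is not part of the lemma's statement, so your proof fully establishes what is claimed.
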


\begin{proof}
Let $m_x$ interchange $w(x)$ and $w'(x)$ while fixing other values. The map $(x,v)\mapsto m_x(v)$ is jointly measurable because the diagonal of the Polish space $\Wext$ is Borel. For $f\in\IFRL$ set
\[
 \widetilde f(x,z,e,k,v)=f(x,z,e,k,m_x(v)).
\]
For every measurable auxiliary field $a(x)$, the composition $m_x(a(x))$ is measurable. Thus (B) for $f$, including its integrability requirement, gives (B) for $\widetilde f$. Moreover,
\[
 \widetilde f^P_{w'}=f^P_w,\qquad m_x(m_x(v))=v.
\]
The transformation is therefore a bijection between the finite-variance competitor families, which have the same essential infimum in $L^0(P_X)$.

If $f$ represents an estimator on $\cP_{\mathrm{ach}}$, define $w_{\widetilde f,Q}(x)=m_x(w_{f,Q}(x))$ at each $Q$ in that subclass. Then
\[
 \begin{aligned}
 &\widetilde f(x,z,\eta(Q)(x),\kappa(Q)(x),w_{\widetilde f,Q}(x))\\
 &\qquad=f(x,z,\eta(Q)(x),\kappa(Q)(x),w_{f,Q}(x)),
 \end{aligned}
\]
so the same estimator retains its asymptotic-linear representation.
\end{proof}

\begin{proof}[Conditional identities and variance inequality in Theorem~\ref{thm:ulb}]
Fix $P\in\cP$, a measurable $w$, and $f\in\IFRL$. The conditional-mean and variance arguments in Steps~1 and~3 use (M1). Steps~2 and~2$'$ additionally use (M2)--(M3) and Assumption~\ref{ass:meas-main}.

Step 1: Conditional unbiasedness at the truth. Condition (B) at $(P,P)$ gives $f_w^P\in L^1(P)$ and
\begin{equation}\label{eq:f-mean-truth}
 \E_P[f_w^P]=\Psi(P).
\end{equation}
Let
\begin{equation}\label{eq:Delta-def}
 \Delta(x)=\E_{Q_x^P}[f_w^P(x,\cdot)]
              -\psi\{x,u_P(x)\}.
\end{equation}
Both terms are integrable.  For every $A$ with $P_X(A)>0$, (M1) preserves the nuisance and structural values on $A$, and the unchanged conditional kernel preserves the target integrand. Applying (B) at $(P|_A,P|_A)$ therefore gives
\begin{equation}\label{eq:f-mean-restr}
 \int_A\Delta(x)\,dP_X(x)=0.
\end{equation}
Taking $A=\{\Delta>\varepsilon\}$ or $\{\Delta<-\varepsilon\}$ shows
\begin{equation}\label{eq:pt-unbias-truth}
 \E_{Q_x^P}[f_w^P(x,\cdot)]
 =\psi\{x,u_P(x)\}
 =\psib(x,\eta(P)(x),\kappa(P)(x))
 \quad P_X\text{-a.s.}
\end{equation}

Step 2: One-slot substitution. First fix a cost-localized slot-witness selection $(v(x),R_x)$. Its glued witness $P^\dagger$ has the same marginal as $P$ and agrees in the other nuisance blocks.  Condition (B) at $(P,P^\dagger)$ gives
\begin{equation}\label{eq:f-mean-Pdagger}
 \E_P[F_{j,v(X)}^{P,w}(X,W)]=\Psi(P),
 \qquad F_{j,v(X)}^{P,w}\in L^1(P).
\end{equation}
Apply the same condition at $(P|_A,P^\dagger|_A)$ for arbitrary $A$ with positive marginal mass.  Both restricted laws exist by (M1), because their marginals agree.  The argument of Step 1 gives
\begin{equation}\label{eq:pt-adm-result}
 \E_{Q_x^P}[F_{j,v(x)}^{P,w}(x,\cdot)]
 =\psib(x,\eta(P)(x),\kappa(P)(x))
 \quad P_X\text{-a.s.}
\end{equation}
Now apply Lemma~\ref{lem:violation-selection} with the VS-slot graph, cost, and envelope.  The gap is the difference in \eqref{eq:pt-adm-result}.  Condition (B) gives (VS2), and the localized identity gives (VS3).  The conclusion holds outside one null set simultaneously for every local value having a witness kernel.  This proves condition (I) of pointwise admissibility. To relate this local statement to substitutions realized by whole laws, define
\begin{equation}\label{eq:NjP}
\cN_j^P\eqdef\bigl\{\eta_j(P'):P'\in\cP,\ P'_X=P_X,\
\eta_l(P')=\eta_l(P)\ P_X\text{-a.s. for }l\ne j\bigr\}.
\end{equation}
Every field in $\cN_j^P$ has a realizing law with the same marginal. The local constraint applies wherever that law's conditional kernel has the stipulated nuisance values; it does not constrain arbitrary null-set versions of the field.

Step 2$'$: Compatible-law variation. For a cost-localized selection $Q_x\in\mathfrak Q_x^P$, let $P''$ be its glued law.  Its nuisance and structural readouts agree with those of $P$.  Condition (B) at $(P'',P'')$ and Step 1 at $P''$ give
\begin{equation}\label{eq:locality-selection}
 \E_{Q_x}[f_w^P(x,\cdot)]
 =\psib(x,\eta(P)(x),\kappa(P)(x))
 \quad P_X\text{-a.s.}
\end{equation}
Apply the violation-selection lemma with VS-law.  We conclude that
\begin{equation}\label{eq:LVI-target}
 \E_Q[f_w^P(x,\cdot)]
 =\psib(x,\eta(P)(x),\kappa(P)(x))
 \quad\text{for every }Q\in\mathfrak Q_x^P
\end{equation}
outside one $P_X$-null set.  This proves condition (II). The argument includes conditional-integrability failures through the domain stage of the lemma and does not use truth-law variance.

Step 3: Variance decomposition. Write $m(x)=\psi\{x,u_P(x)\}$ and $\mu=\Psi(P)$. Step~1 and (B) give $m(X)=\E_P[f_w^P\mid X]$ with $f_w^P,m\in L^1(P)$. For almost every $x$, centering by this finite conditional mean gives
\[
 \E_{Q_x^P}\!\left[(f_w^P(x,W)-\mu)^2\right]
 =
 \E_{Q_x^P}\!\left[(f_w^P(x,W)-m(x))^2\right]
 +(m(x)-\mu)^2.
\]
The identity holds in $[0,\infty]$: the cross term has conditional expectation zero and is absolutely integrable, since $f_w^P(x,\cdot)$ is conditionally integrable. Integrating the nonnegative terms proves
\begin{equation}\label{eq:LTV}
 \Var_P(f_w^P)
 =\Var_{P_X}[\psi\{X,u_P(X)\}]
     +\E_{P_X}\!\left[\Var_{Q_X^P}\{f_w^P(X,\cdot)\}\right].
\end{equation}
If $\Var_P(f_w^P)<\infty$, its conditional variance belongs to the defining competitor family and is at least $\Vmin(\cdot;P,w)$ almost surely. If $\Var_P(f_w^P)=\infty$, the lower-bound inequality holds in the extended-real order. Thus $\Var_P(f_w^P)\ge\Vlow(P,w)$ in both cases. Lemma~\ref{lem:auxiliary-invariance} gives independence of $w$.
\end{proof}

The conditional identities yield the following combined pointwise constraint.

\begin{corollary}[Combined pointwise constraint]\label{cor:combined-pt}
Assume (M1)--(M3) and Assumption~\ref{ass:meas-main}. Fix $f\in\IFRL$, $P\in\cP$, and a measurable $w$. Outside one $P_X$-null set, for every $j$, every $Q\in\mathfrak Q_x^P$, and every local slot value $v$ with a witness $R\in\mathfrak K_x$ agreeing off $j$, the following integral is absolutely finite and
\begin{equation}\label{eq:full-pt-constraint}
 \E_Q[F_{j,v}^{P,w}(x,\cdot)]
 =\psib(x,\eta(P)(x),\kappa(P)(x)).
\end{equation}
\end{corollary}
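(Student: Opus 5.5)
The plan is to run the same argument as Step 2 of the proof of Theorem~\ref{thm:ulb}, with the integrating kernel and the substituted slot value now selected jointly; this is exactly the VS-pair construction. Fix $j$, and take a cost-localized joint selection $x\mapsto(Q_x,v(x),R_x)$ with $Q_x\in\mathfrak Q_x^P$ and $(v(x),R_x)$ in the (Sel-slot) graph. Localize both kernels on a common set $A_m=\{C(x,Q_x)+C(x,R_x)\le m\}$ and use the baselines $Q_x^P$ and $(\eta_j(P)(x),Q_x^P)$ elsewhere. By (M2), this produces two laws $P''$ and $P^\dagger$ in $\cP$, both with marginal $P_X$. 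By (M3), $P''$ has the same primary and structural readouts as $P$. The law $P^\dagger$ agrees with $P$, and hence with $P''$, in every primary component $l\ne j$, and its $j$th component equals $v(x)$ on $A_m$.

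Next apply condition (B) to the pair $(P'',P^\dagger)$. Its structural input is $\kappa(P'')=\kappa(P)$, and its integrand is $F_{j,v(X)}^{P,w}$ integrated under $P''$. This gives absolute integrability and $\E_{P''}[F^{P,w}_{j,v(X)}]=\Psi(P'')$. Restrictions $P''|_A$ and $P^\dagger|_A$ exist by (M1), and they still agree off $j$ because their marginals coincide. Applying (B) to these restricted pairs and arguing as in Step~1, using $\Psi(P''|_A)$ together with the locality of the integrand, yields
\[
\E_{Q_x}[F^{P,w}_{j,v(x)}(x,\cdot)]=\psib(x,\eta(P)(x),\kappa(P)(x))
\]
for $P_X$-a.e.\ $x$. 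Here the right side is the $P''$-integrand, which equals the $P$-integrand by (M3).

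To obtain one exceptional set for all $(Q,v,R)$ at once, invoke Lemma~\ref{lem:violation-selection} on the Borel graph formed as the fibre product of (Sel-law-graph) and (Sel-slot). Use the cost $C(x,Q)+C(x,R)$, the envelope $I_{\rm pair}$, and the gap $\gamma$ given by the difference above. Condition (VS2) follows from the integrability just obtained, applied to every cost-localized selection, and (VS3) follows from the localized identity. Taking the union over the finitely many indices $j$ gives the single null set asserted in the statement.

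The main obstacle is the measurability bookkeeping in this last step. One must check that the joint graph is Borel, that $I_{\rm pair}$ and $\gamma$ are Borel on $\{I_{\rm pair}<\infty\}$, and that localizing both coordinates on the same bounded-cost set really produces admissible laws. This is why the witness coordinate $R$ is carried along: it avoids needing the projection onto $(x,v)$ to be Borel.
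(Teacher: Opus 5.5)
Your proposal is correct and follows essentially the same route as the paper's proof. Both select the compatible law and the witnessed slot value jointly from the VS-pair graph, localize both coordinates on one bounded-cost set to obtain $P''$ and $P^\dagger$, and apply (B) at $(P'',P^\dagger)$ and at its marginal restrictions to get the conditional identity for each selection; both then invoke Lemma~\ref{lem:violation-selection} with cost $C(x,Q)+C(x,R)$ and envelope $I_{\rm pair}$ to get one exceptional set, and finally take the finite union over $j$. You simply make explicit the readout, integrability and locality bookkeeping that the paper's proof leaves implicit.
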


\begin{proof}
Use the VS-pair graph, retaining the witness coordinate $R$. For each cost-localized joint selection, the two glued laws $P''$ and $P^\dagger$ have the same marginal and agree off $j$. Condition (B) at $(P'',P^\dagger)$ gives integrability and the mean identity.  Repeating it after simultaneous marginal restriction gives \eqref{eq:full-pt-constraint} almost surely for that selection. The violation-selection lemma makes integrability and the identity simultaneous over the entire joint graph.  Selecting law and slot together avoids an uncountable union of exceptional sets.  Finally take the finite union over $j$.
\end{proof}

\begin{proof}[Proof of Corollary~\ref{cor:est-level}]
Fix $P\in\cP_{\mathrm{ach}}$ and the auxiliary map appearing in (A). Condition (B) gives $\E_P[f^P_{w_f}]=\Psi(P)$. With $V=\Var_P[f^P_{w_f}]<\infty$, asymptotic linearity and the i.i.d.\ central limit theorem yield
\begin{align*}
 Z_n&\eqdef\sqrt n\{T_n-\Psi(P)\}\\
 &=n^{-1/2}\sum_{i=1}^n\{f^P_{w_f}(X_i,W_i)-\Psi(P)\}+o_P(1)
 \ \Rightarrow\ \mathcal N(0,V).
\end{align*}
Theorem~\ref{thm:ulb} gives $V\ge\Vlow(P)$. If $\{Z_n^2\}$ is uniformly integrable, convergence in distribution to $Z\sim\mathcal N(0,V)$ implies
\[
 \E_P[Z_n^2]\longrightarrow \E[Z^2]=V,\qquad
 \E_P[Z_n]\longrightarrow \E[Z]=0.
\]
Hence $\Var_P(\sqrt n\,T_n)=\E_P[Z_n^2]-(\E_P[Z_n])^2\to V$ \cite{Billingsley1995}.
\end{proof}

Finite patching also gives equality at the infimum.  Theorem~\ref{thm:ulb} states equality between the infimum of the variance over $\IFRL$ and $\Vlow(P,w)$.  Step~3 above proves the inequality $\ge$.  The reverse inequality rests on the following closure property of the algebraic class.

\begin{lemma}[Finite patching preserves (B)]\label{lem:patching}
Let $f,g\in\IFRL$ and let $A\subseteq\cX$ be measurable.  Under \textup{(M1)}, the full-slot map
\[
h(x,z,e,k,v)\eqdef\1_A(x)\,f(x,z,e,k,v)+\1_{A^c}(x)\,g(x,z,e,k,v)
\]
belongs to $\IFRL$.
\end{lemma}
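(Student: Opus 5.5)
The plan is to check measurability and then condition (B) of Definition~\ref{def:ralu} for $h$ directly, reducing both the integrability and the mean identity to those of $f$ and $g$ on suitably restricted laws. Measurability is immediate: $h$ is a sum of products of measurable functions with $\1_A(x)$, a Borel function of the first coordinate. Now fix $P,P'\in\cP$, a measurable auxiliary map $w$, and an index $j$ for which \eqref{eq:agree-others} holds. Write
\[
H(X,W)=h\bigl(X,W,\eta(P')(X),\kappa(P)(X),w(X)\bigr),
\]
and let $F$ and $G$ be the analogous evaluations of $f$ and $g$. Then $H=\1_A(X)F+\1_{A^c}(X)G$ pointwise.

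For integrability, (B) for $f$ and for $g$ at the pair $(P,P')$ gives $F,G\in L^1(P)$. Hence $|H|\le|F|+|G|\in L^1(P)$, and no localization is needed at this step.

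For the mean identity, linearity gives
\[
\E_P[H]=\E_P[\1_A F]+\E_P[\1_{A^c}G].
\]
I would compute each piece separately. If $P_X(A)=0$ the first term vanishes, and the second equals $\E_P[G]=\Psi(P)$ by (B) for $g$, since $\1_{A^c}=1$ $P_X$-a.s.; the case $P_X(A^c)=0$ is symmetric. In the main case $0<P_X(A)<1$, (M1) supplies the restricted laws $P|_A$ and $P|_{A^c}$. I would also form $P'|_A$ and $P'|_{A^c}$, but only when $P'_X(A)>0$; otherwise a substitute is needed, discussed below. These restricted laws keep the nuisance and structural readouts on the restricting set, so the pair $(P|_A,P'|_A)$ still satisfies \eqref{eq:agree-others} for the same $j$ almost surely under $(P|_A)_X$. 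Condition (B) for $f$ at this pair, together with the unchanged conditional kernel of $P|_A$, gives $\E_P[\1_A F]=P_X(A)\,\Psi(P|_A)$. Likewise $\E_P[\1_{A^c}G]=P_X(A^c)\,\Psi(P|_{A^c})$. The marginal-integral representation \eqref{eq:mi-rep}, with $u_{P|_A}=u_P$ on $A$ by locality of the summary through $Q_x^P$, yields
\[
P_X(A)\,\Psi(P|_A)+P_X(A^c)\,\Psi(P|_{A^c})=\E_{P_X}\bigl[\psi(X,u_P(X))\bigr]=\Psi(P),
\]
which completes the identity.

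The main obstacle is that (B) concerns the working law $P'$ only through the field $\eta(P')(x)$ evaluated under $P_X$, while restricting $P'$ to $A$ requires $P'_X(A)>0$, and $P'$ may have a different marginal. I would sidestep this by working with Step~1 of the proof of Theorem~\ref{thm:ulb} in its slot form. The argument there, applied to the pair $(P,P')$, shows that
\[
\E_{Q_x^P}\bigl[F(x,\cdot)\bigr]=\psi\{x,u_P(x)\}\qquad P_X\text{-a.e.}
\]
This uses (B) at the pairs $(P|_B,P'')$, where $P''$ is any law in $\cP$ realizing the same working field on $B$. The simplest choice is $P'|_B$ when $P'_X(B)>0$. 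When $P'_X(B)=0$, only the field on $P_X$-non-null sets matters, and I would argue that the restricted pair is then not needed. The reason is that (B) imposes its integral constraint with expectation under the first law, and the working field enters only through its values $P_X$-a.e.

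If this realizability point fails in general, the fallback is to adopt the convention of \eqref{eq:NjP} and Step~2, namely realizing working fields by same-marginal laws. Under that convention $P'_X=P_X$, and the restriction is always available.

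Granting the a.e. conditional identity for $F$ and for $G$, integrating $\1_A F+\1_{A^c}G$ conditionally on $X$ gives $\E_P[H]=\E_{P_X}[\psi(X,u_P(X))]=\Psi(P)$. This proves (B) for $h$, so $h\in\IFRL$.
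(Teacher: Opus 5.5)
Your integrability step, the degenerate cases $P_X(A)\in\{0,1\}$, and the final bookkeeping through \eqref{eq:mi-rep} are fine. The gap is in how you treat the working law.

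You restrict $P'$ to $A$, which forces you to require $P'_X(A)>0$. For the remaining case you give only a sketch (``I would argue that the restricted pair is then not needed''), followed by a fallback that considers only working laws with $P'_X=P_X$. That fallback does not prove the lemma. Membership in $\IFRL$ requires (B) for \emph{every} pair $P,P'\in\cP$ satisfying \eqref{eq:agree-others}, with no condition on $P'_X$. Checking it only for same-marginal working laws establishes a strictly weaker property.

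Even in your main case there is a problem. Replacing $P'$ by $P'|_A$ changes the evaluated working field to $\eta(P'|_A)$. Under an a.e.\ reading of (M1), applying it to $P'$ controls this field on $A$ only up to $P'_X$-null sets. Your expectation, however, is taken under $P_X$, which may charge those sets. The detour through the conditional identity of Theorem~\ref{thm:ulb} does not help: obtaining that identity for $F$ with an arbitrary working law needs (B) at pairs $(P|_B,\cdot)$, so it runs into the same issue.

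The missing observation is that nothing on the working side needs restricting. Because (B) holds for all pairs in $\cP$, apply it to $(P|_A,P')$ with $P'$ left unchanged. By (M1), $\eta(P|_A)=\eta(P)$ and $\kappa(P|_A)=\kappa(P)$ on $A$. Hence \eqref{eq:agree-others} holds $(P|_A)_X$-a.s., and the evaluation under $P|_A$ uses exactly the field $\eta(P')$ and, a.s., the structural value $\kappa(P)$. Condition (B) then gives $\E_{P|_A}[F]=\Psi(P|_A)$. Since the conditional kernel is unchanged, this means $\E_P[\1_AF]=P_X(A)\Psi(P|_A)=\int_A\psi\{x,u_P(x)\}\,dP_X(x)$.

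Doing the same for $g$ on $A^c$ and adding finishes the proof. There is no case analysis on $P'_X$ and no appeal to same-marginal realizations; this is the paper's argument. Your remark that the working field enters only through its $P_X$-a.e.\ values is the right intuition, but it has to be turned into this concrete choice of pair.
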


\begin{proof}
Fix $j$, a pair $R,R'\in\cP$ satisfying $\eta_l(R')=\eta_l(R)$ $R_X$-a.s.\ for $l\ne j$, and a measurable auxiliary map $w$. Denote evaluations at $(\eta(R')(x),\kappa(R)(x),w(x))$ by $f_{R'},g_{R'},h_{R'}$. Condition (B) gives $f_{R'},g_{R'}\in L^1(R)$, so $h_{R'}=\1_Af_{R'}+\1_{A^c}g_{R'}$ is integrable.

If $R_X(A)>0$, (M1) puts $R|_A$ in $\cP$ and preserves the nuisance and structural values on $A$. Consequently $(R|_A,R')$ still satisfies the agreement condition in (B). Keeping the working law $R'$ unchanged, we obtain
\begin{align*}
 \E_R[\1_Af_{R'}]
 &=R_X(A)\E_{R|_A}[f_{R'}]\\
 &=R_X(A)\Psi(R|_A)
 =\int_A\psi\{x,u_R(x)\}\,dR_X(x).
\end{align*}
Both sides are zero if $R_X(A)=0$. Applying the same argument to $g$ on $A^c$ gives
\[
 \E_R[h_{R'}]
 =\int_A\psi\{x,u_R(x)\}\,dR_X(x)
  +\int_{A^c}\psi\{x,u_R(x)\}\,dR_X(x)
 =\Psi(R).
\]
Thus the measurable full-slot map $h$ satisfies (B).
\end{proof}

\begin{proposition}[The floor is the algebraic infimum]\label{prop:inf-equality}
Under \textup{(M1)}, for every $P\in\cP$ and measurable $w$,
\[
\inf_{f\in\IFRL}\Var_P\bigl[f^P_w\bigr]=\Vlow(P,w)
\]
in the extended-real sense.
\end{proposition}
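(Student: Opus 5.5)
The inequality $\inf_{f\in\IFRL}\Var_P[f^P_w]\ge\Vlow(P,w)$ is already contained in Step~3 of the proof of Theorem~\ref{thm:ulb}. There, \eqref{eq:LTV} together with the definition of $\Vmin$ gives $\Var_P(f^P_w)\ge\Vlow(P,w)$ for each $f\in\IFRL$ in the extended-real order. So the plan concerns only the reverse inequality.

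\emph{Trivial cases.} If the competitor family $\mathcal V_{P,w}$ of Definition~\ref{def:vmin} is empty, then $\Vmin=+\infty$ and $\Vlow(P,w)=+\infty$, so the reverse inequality is immediate. The same holds whenever $\Vlow(P,w)=+\infty$. Otherwise there is some $f_1\in\IFRL$ with $\Var_P[(f_1)^P_w]<\infty$.

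\emph{Countable approximating sequence.} By the standard property of essential infima in $L^0(P_X)$ recalled after \eqref{eq:vmin}, there is a countable subfamily $f_1,f_2,\ldots\in\IFRL$, each with finite variance, such that $\Vmin(\cdot;P,w)=\inf_k v_{f_k}$ holds $P_X$-a.e. Here $v_{f_k}(x)=\Var_{Q_x^P}[(f_k)^P_w(x,\cdot)]$, and we may include the $f_1$ above in this subfamily. Fix Borel versions of $v_{f_1},\dots,v_{f_N}$. Define the measurable sets
\[
A_k^N=\{x:\ v_{f_k}(x)=\min_{l\le N}v_{f_l}(x),\ v_{f_l}(x)>v_{f_k}(x)\ \text{for } l<k\},
\]
which form a finite partition of $\cX$ by first minimizing index. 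Set $h_N=\sum_{k\le N}\1_{A_k^N}f_k$. Applying Lemma~\ref{lem:patching} $N-1$ times (patch $f_1$ with $f_2$ on a set, then patch the result with $f_3$, and so on) shows $h_N\in\IFRL$; only (M1) is used.

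\emph{Variance of the patched map and passage to the limit.} The truth evaluation satisfies $(h_N)^P_w(x,\cdot)=(f_k)^P_w(x,\cdot)$ for $x\in A_k^N$. Conditional variances are therefore computed piecewise, so $v_{h_N}=\min_{l\le N}v_{f_l}\le v_{f_1}$ $P_X$-a.e. By the decomposition \eqref{eq:LTV}, which holds for every member of $\IFRL$,
\[
\Var_P[(h_N)^P_w]=\Var_{P_X}[\psi\{X,u_P(X)\}]+\E_{P_X}\bigl[\min_{l\le N}v_{f_l}(X)\bigr],
\]
and this quantity is finite. As $N\to\infty$, the sequence $\min_{l\le N}v_{f_l}$ decreases to $\Vmin$ and is dominated by the integrable $v_{f_1}$. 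Dominated (equivalently monotone) convergence then gives $\Var_P[(h_N)^P_w]\downarrow\Vlow(P,w)$, which proves that the infimum is at most $\Vlow(P,w)$.

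\emph{Main obstacle.} I expect the main difficulty to be the measurability bookkeeping rather than anything else. One must choose versions of the conditional variances so that the partition sets $A_k^N$ are Borel. One must also check that the conditional variance of the patched evaluation is exactly the pointwise minimum on a common full-measure set. This is why I use a finite patch at each stage $N$ and pass to the limit only at the level of numbers: infinite patching would require a countable version of Lemma~\ref{lem:patching} together with an integrability argument for (B), and that is avoided entirely.
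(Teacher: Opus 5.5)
Your proposal is correct and follows essentially the same route as the paper. Both arguments take a countable subfamily realizing the essential infimum and use finite patching via Lemma~\ref{lem:patching} to produce members of $\IFRL$ whose conditional variance is the running minimum $\min_{l\le N}v_{f_l}$; both then apply the total-variance decomposition with the common conditional mean from Step~1, use dominated convergence with envelope $v_{f_1}$, and take the lower bound from Step~3. The only difference is cosmetic: you build $h_N$ directly from a first-minimizing-index partition, whereas the paper patches recursively via $h_n=\1_{A_n}h_{n-1}+\1_{A_n^c}f_n$ with $A_n=\{v_{h_{n-1}}\le v_{f_n}\}$.
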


\begin{proof}
If no member of $\IFRL$ has finite variance at $(P,w)$, both sides are $+\infty$ by Definition~\ref{def:vmin}. Otherwise choose $f_1,f_2,\ldots\in\IFRL$ with finite variances whose conditional-variance functions realize the essential infimum:
\[
 \inf_{j\ge1}v_{f_j}(x)=\Vmin(x;P,w)\qquad P_X\text{-a.s.}
\]
Fix Borel versions of these functions. Let $h_1=f_1$ and, recursively, set
\[
 A_n=\{v_{h_{n-1}}\le v_{f_n}\},\qquad
 h_n=\1_{A_n}h_{n-1}+\1_{A_n^c}f_n.
\]
Lemma~\ref{lem:patching} gives $h_n\in\IFRL$. Its truth evaluation is square-integrable because it selects between two square-integrable evaluations. On each of the two sets it agrees with the selected map, so
\[
 v_{h_n}=\min(v_{h_{n-1}},v_{f_n})
        =m_n\eqdef\min_{j\le n}v_{f_j}\qquad P_X\text{-a.s.}
\]
The conditional mean is the common function $\psi\{x,u_P(x)\}$ by Step~1. Thus
\[
 \Var_P[h^P_{n,w}]
 =\Var_{P_X}[\psi\{X,u_P(X)\}]+\E_{P_X}[m_n].
\]
Finally, $m_n\downarrow\Vmin(\cdot;P,w)$ almost surely and $0\le m_n\le v_{f_1}\in L^1(P_X)$. Dominated convergence gives
\[
 \lim_{n\to\infty}\Var_P[h^P_{n,w}]
 =\Var_{P_X}[\psi\{X,u_P(X)\}]
   +\E_{P_X}[\Vmin(X;P,w)]
 =\Vlow(P,w).
\]
This proves the upper inequality for the infimum; Step~3 proves the lower inequality.
\end{proof}

Proposition~\ref{prop:inf-equality} constructs a sequence of admissible maps whose variances approach $\Vlow(P,w)$. The closed-form theorems below give conditions for a minimizing map, and the attainment results construct an estimator with the corresponding asymptotic variance on $\cP_{\mathrm{ach}}$.

\section{Proofs of the rigidity lemmas}\label{ssec:rigidity}
Every rigidity result in this section is a statement about an algebraic score map $f\in\IFRL$ satisfying the exact identity~(B) on the comparison model $\cP$.  No estimator or asymptotic-linear expansion is used in these proofs.  Model-admissible perturbations are localized by the finite-cost convention of~(M2) and Assumption~\ref{ass:meas}; condition~(B) then supplies the integrability of the selected score evaluations. The rigidity lemmas rest on the following moment-problem reduction.

\begin{lemma}[Moment-problem affine reduction]\label{lem:moment-affine}
Let $\cZ$ be a Polish space, $\bm h=(h_1,\dots,h_K):\cZ\to\R^K$ measurable, and $Q$ a probability measure on $\cZ$ with $\E_Q[\lVert\bm h\rVert^2]<\infty$, mean $\bm m^\ast\eqdef\E_Q[\bm h]$, and covariance $\bm\Sigma\eqdef\Cov_Q[\bm h]$.  Let $\phi:\cZ\to\R$ be a fixed measurable representative, and let $\mathfrak F$ be a family of probability measures containing $Q$ such that $\bm h$ and $\phi$ are integrable under every $Q'\in\mathfrak F$.  Assume one of the following two alternatives.
\begin{enumerate}[wide=0pt,label={\textup{(\alph*)}},itemsep=2pt]
\item \emph{Mean-varying form.}~The family $\mathfrak F$ is closed under finite mixtures and contains, for every $z\in\supp Q$, the spikes $(1-\epsilon)Q+\epsilon\delta_z$ for all sufficiently small $\epsilon>0$.  Moreover, $\E_{Q'}[\phi]$ depends on $Q'\in\mathfrak F$ only through $\E_{Q'}[\bm h]$.
\item \emph{Fixed-mean form.}~For every finitely supported probability measure $\nu$ on $\supp Q$ with $\E_\nu[\bm h]=\bm m^\ast$ and every $\epsilon\in(0,1)$, the contamination $(1-\epsilon)Q+\epsilon\nu$ belongs to $\mathfrak F$. Moreover, $\E_{Q'}[\phi]$ is constant over $\{Q'\in\mathfrak F:\E_{Q'}[\bm h]=\bm m^\ast\}$.
\end{enumerate}
Then there exist $\alpha\in\R$ and $\bm\beta\in\R^K$ such that
\begin{equation}\label{eq:moment-affine}
\phi(z)=\alpha+\bm\beta^{\!\top}\bm h(z)
\qquad Q\text{-a.s.}
\end{equation}
and $\bm\beta$ is unique modulo $\ker\bm\Sigma$.  In particular, if $\bm\Sigma$ is nonsingular (e.g.\ when $K=1$ and $\Var_Q[h]>0$), then $\bm\beta$ is unique. Under alternative~\textup{(b)}, existence of the affine representation already holds with only $\E_Q[\|\bm h\|]<\infty$; the statements involving $\bm\Sigma$ retain the finite-second-moment hypothesis.
\end{lemma}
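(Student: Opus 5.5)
The plan is to reduce alternative (a) to alternative (b), and then prove (b) by finite-dimensional linear algebra over finitely supported contaminations.

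\emph{Reduction of (a) to (b).} Take a finitely supported $\nu=\sum_{i\le N}p_i\delta_{z_i}$ on $\supp Q$ with $\E_\nu[\bm h]=\bm m^\ast$. For small $\epsilon$, each spike $(1-\epsilon)Q+\epsilon\delta_{z_i}$ lies in $\mathfrak F$. The $p$-mixture of these spikes is $(1-\epsilon)Q+\epsilon\nu$, which lies in $\mathfrak F$ by mixture closure. Its $\bm h$-mean is $\bm m^\ast$, the same as that of $Q$. Since $\E_{Q'}[\phi]$ depends on $Q'$ only through its $\bm h$-mean, the fixed-mean hypothesis of (b) holds, at least for small $\epsilon$. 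That is all the argument below uses.

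\emph{Pointwise identity under (b).} By linearity of $\nu\mapsto\E_{(1-\epsilon)Q+\epsilon\nu}[\phi]$, the hypothesis gives $\E_\nu[\phi]=c\eqdef\E_Q[\phi]$ for every finitely supported probability $\nu$ on $\supp Q$ with mean $\bm m^\ast$. Only $\E_Q\|\bm h\|<\infty$ is needed to define $\bm m^\ast$. Now fix finitely many points $z_1,\dots,z_N$ carrying such a $\nu$ with all $p_i>0$. The admissible weight vectors form the polytope
\[
\Big\{p\ge0:\ \textstyle\sum_i p_i=1,\ \sum_i p_i\bm h(z_i)=\bm m^\ast\Big\}.
\]
Because the base point has strictly positive weights, this polytope contains a relatively open subset of the affine space defined by the two equality constraints. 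The linear map $p\mapsto\sum_i p_i\phi(z_i)$ is constant on that open set, so the vector $(\phi(z_i))_i$ lies in the row space of the constraint matrix with rows $\bm 1$ and $(h_k(z_i))_i$. Hence $\phi(z_i)=\alpha+\bm\beta^\top\bm h(z_i)$ for all $i$.

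To globalize, I will fix one such finite configuration whose points $\bm h(z_i)$ affinely span $L\eqdef\operatorname{aff}\bm h(Z_0)$, where $Z_0\subseteq\supp Q$ is a suitable $Q$-full set. The pair $(\alpha,\bm\beta)$ is then determined as an affine function on $L$. Any further $z\in Z_0$ can be added to the configuration with positive weight while keeping the mean $\bm m^\ast$, and the same affine function must fit it. This yields \eqref{eq:moment-affine} on $Z_0$, hence $Q$-a.s.

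\emph{Main obstacle.} The key geometric step is to show that $\bm m^\ast$ lies in the relative interior of $\operatorname{conv}\bm h(Z_0)$. This is what allows every point to be given positive weight in a mean-$\bm m^\ast$ combination. I will argue by separation. Suppose a linear functional $l$ satisfies $l(\bm h-\bm m^\ast)\ge0$ on the relevant points and is not identically zero on $L$. Since $\E_Q[l(\bm h-\bm m^\ast)]=0$, we get $l(\bm h-\bm m^\ast)=0$ $Q$-a.s. Because $\bm h$ need not be continuous, this equality holds only off a $Q$-null set rather than on all of $\supp Q$. I will therefore choose $Z_0$ as $\supp Q$ minus a null set on which $\bm h$ leaves the affine hull of its essential range, obtained via a countable dense family of rational functionals $l$. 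On $Z_0$ the relative-interior property then holds, and an application of Carathéodory's theorem supplies the finite spanning configuration.

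\emph{Uniqueness.} Suppose two representations hold, and let $\bm\beta'$ be the difference of their slopes. Then $\bm\beta'^\top(\bm h-\bm m^\ast)=0$ $Q$-a.s., because the intercept difference must equal $-\bm\beta'^\top\bm m^\ast$ after taking expectations. This is equivalent to $\bm\beta'^\top\bm\Sigma\bm\beta'=0$, which holds if and only if $\bm\beta'\in\ker\bm\Sigma$. This step uses the finite second moment. Nonsingularity of $\bm\Sigma$, in particular $K=1$ with $\Var_Q[h]>0$, gives uniqueness.
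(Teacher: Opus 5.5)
Your proof is correct. For alternative (b) it follows essentially the paper's argument. You use the same $Q$-full set on which $\bm h$ lies in the affine hull $A$ of its essential range, the same separation argument placing $\bm m^\ast$ in the relative interior of $\operatorname{conv}\bm h(Z_0)$, and the same interpolation from affinely spanning anchor points. The only difference is packaging. The paper shows that every finitely supported signed measure with zero mass and zero score mean annihilates $\phi$, by topping up its Jordan parts with an interior probability $\rho$. You instead use the row-space characterization on a positive-weight configuration and enlarge it one point at a time. These are equivalent.

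The genuine difference is alternative (a). The paper does not reduce (a) to (b). It observes that mixture closure together with mean-dependence makes $\bm m\mapsto\E_{Q'}[\phi]$ an affine function on the convex set of attainable means. It extends this function to $\R^K$ and reads off $\phi(z)$ from the single spike at $z$. That route needs no convex geometry and gives the identity at every $z\in\supp Q$, not only $Q$-a.s. Your reduction is valid, and you correctly note that one $\epsilon$ suffices. It gives a single unified argument, at the cost of routing (a) through the relative-interior step and an exceptional null set. The paper's proof also appends a measurable-coefficient construction ($\theta=G^+b$). That construction is not part of the statement but is what later fibrewise applications use.

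Two small precision points.
\begin{enumerate}
\item The null set you remove is simply $\bm h^{-1}(A^c)$. It is $Q$-null because the support of $\bm h_\#Q$ carries full mass, so no countable family of functionals is needed.
\item Carath\'eodory alone does not give a positive-weight configuration that also affinely spans $L$. You get one by taking an affine basis with barycenter $b$, using the relative-interior property to find $t>0$ with $\bm m^\ast+t(\bm m^\ast-b)\in\operatorname{conv}\bm h(Z_0)$, and then writing $\bm m^\ast$ as a strict convex combination of that point and $b$.
\end{enumerate}
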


\begin{proof}[Proof of Lemma~\ref{lem:moment-affine}]
Under alternative~\textup{(a)}, define $L(\bm m)=\E_{Q'}\phi$ whenever $\E_{Q'}\bm h=\bm m$. The hypothesis makes $L$ well-defined on the convex set $M$ of attainable means, and mixture closure gives
\[
L\{\lambda\bm m_1+(1-\lambda)\bm m_2\}
=\lambda L(\bm m_1)+(1-\lambda)L(\bm m_2),\qquad 0\le\lambda\le1.
\]
Thus $L$ extends to an affine function $\alpha+\bm\beta^\top\bm m$ on $\R^K$. For $z\in\supp Q$, apply this identity to $Q'_z=(1-\epsilon)Q+\epsilon\delta_z$ and subtract the identity at $Q$:
\[
\epsilon\{\phi(z)-\E_Q\phi\}
=\epsilon\bm\beta^\top\{\bm h(z)-\bm m^\ast\}.
\]
Division by $\epsilon$ proves~\eqref{eq:moment-affine}.

Under alternative~\textup{(b)}, let $A$ be the affine hull of the essential range of $\bm h$, and choose a $Q$-full measurable $S\subseteq\supp Q$ with $\bm h(S)\subseteq A$. Put $C=\operatorname{conv}\bm h(S)$. Then $\bm m^\ast\in\operatorname{ri}C$: otherwise a supporting affine functional, nonconstant on $A$, would satisfy $\ell(\bm h)-\ell(\bm m^\ast)\ge0$ almost surely with expectation zero. It would therefore vanish almost surely, contradicting the definition of $A$. Here we use $\operatorname{ri}\overline C=\operatorname{ri}C$ for finite-dimensional convex sets.

Let $\mu$ be a finitely supported signed measure on $S$ with
\[
\mu(S)=0,\qquad \int\bm h\,d\mu=\bm0.
\]
If $\mu\ne0$, its Jordan parts have a common mass $c>0$ and a common normalized score mean $\bm r$. Because $\bm m^\ast\in\operatorname{ri}C$, choose $\delta>0$ and a finitely supported probability measure $\rho$ on $S$ with $\E_\rho\bm h=\bm m^\ast+\delta(\bm m^\ast-\bm r)$. With $d=c/\delta$, both
\[
\nu=\frac{\mu^++d\rho}{c+d},\qquad
\nu'=\frac{\mu^-+d\rho}{c+d}
\]
have score mean $\bm m^\ast$. Constancy of the expectations under their contaminations with $Q$ implies $\E_\nu\phi=\E_{\nu'}\phi$, hence $\int\phi\,d\mu=0$.

Choose anchor points $z_0,\ldots,z_r\in S$ whose score values affinely span $A$. For every $z\in S$, write $\bm h(z)=\sum_{i=0}^r a_i\bm h(z_i)$ with $\sum_i a_i=1$. The signed measure $\delta_z-\sum_i a_i\delta_{z_i}$ has zero mass and score mean, so the preceding identity gives $\phi(z)=\sum_i a_i\phi(z_i)$. Interpolating the anchor values proves the affine representation on $S$. This argument requires only first moments.

When second moments exist, two affine representations differ by $\gamma_0+\bm\gamma^\top\bm h=0$ almost surely. Therefore $\bm\gamma^\top\bm\Sigma\bm\gamma=0$, equivalently $\bm\gamma\in\ker\bm\Sigma$, and $\gamma_0=-\bm\gamma^\top\bm m^\ast$. This proves precisely the stated uniqueness modulo the kernel.

For a measurable family of conditional laws, a measurable coefficient choice follows from bounded normalization. Set $z=(1,\bm h^\top)^\top$, $v=z/(1+\|\bm h\|)$, and
\[
G=\E_Q[vv^\top],\qquad
b=\E_Q\!\left[v\,\frac{\phi}{1+\|\bm h\|}\right],
\qquad \theta=G^+b.
\]
These expectations exist because $v$ is bounded and $\phi\in L^1(Q)$. If $\theta_0$ is any affine representation, then $b=G\theta_0$, so $\theta-\theta_0\in\ker G$ and $z^\top\theta=\phi$ almost surely. Conditional integration and Borel measurability of the Moore--Penrose inverse give the asserted measurable coefficients.
\end{proof}
\begin{proof}[Proof of Lemma~\ref{lem:func-eq-maxarm}]
Throughout, abbreviate $u_a^\ast\eqdef u_{P,a}(x)$, $\pi_a\eqdef \pi_a^P(x)$, $f_a(\widetilde w;s)\eqdef f(x,(a,\widetilde w); s,\eta_2(P)(x),\kappa(P)(x),w(x))$ for a slot vector $s=(s_a)_{a\in\cA}$, and write $\cA^\ast=\cA^{\mathrm{act}}(x;P)$, $f_a\eqdef f_a(\cdot;u^\ast)$.

Corollary~\ref{cor:combined-pt} and Lemma~\ref{lem:matched-bridge} give the fixed-mean and mean-shifting identities outside one $P_X$-null set for this map. Their proofs use the model-cost localization in (M2). Fix $x$ outside that set. The absolute continuity in (X3) allows the affine representation under $Q_x^P$ to be integrated against every bridge kernel.

Step 0 (Fixed-mean affineness on each arm). Assumption~\ref{ass:armwise}\textup{(X1)} permits fixed-mean contamination of each arm-conditional law while the arm probabilities and all other arm laws remain fixed. The combined pointwise identity therefore makes the expectation of the arm section constant over these contaminations. We apply the scalar fixed-mean reduction separately to each arm.

Lemma~\ref{lem:moment-affine} in its fixed-mean form, applied with $Q=Q_{x,a}^P$ and $K=1$, gives measurable $c_a(x)$, $\beta_a(x)$ with
\begin{equation}\label{eq:lem-form-step0}
f^P_w(x;a,\widetilde w)=c_a(x)+\beta_a(x)\,h_a(\widetilde w)
\qquad Q_{x,a}^P\text{-a.s.}
\end{equation}
On an inactive arm with $\sigma_a^2(x;P)=0$, this coefficient is not unique; we choose the representative $\beta_a(x)=0$.  Positivity on the active arms makes their coefficients unique, which is all the active-slope program below requires.

Step 1 (Bridge identity). Let $\bm\delta\in D_x$ be an admissible direction of Assumption~\ref{ass:armwise}~(X2). Corollary~\ref{cor:matched-arm} (the arm-partition coefficient form of Lemma~\ref{lem:matched-bridge}), instantiated with the arm-mean component as the single disagreeing one and with the affine representation supplied by Step~0, gives
\begin{equation}\label{eq:maxarm-matched}
\sum_{a\in\cA}\pi_a^P(x)\,\beta_a(x)\,\delta_a
=\max_a g\bigl(x,a,u_{P,a}(x)+\delta_a\bigr)-\max_a g\bigl(x,a,u_{P,a}(x)\bigr),
\end{equation}
at $P_X$-a.e.\ $x$ and simultaneously for every $\bm\delta\in D_x$. Every constraint below is an instance of~\eqref{eq:maxarm-matched}; none requires leaving the $\kappa$-stratum, and no ambient configuration with a different active set is invoked.

Step 2 (Reachable inactive arms). For $a\notin\cA^\ast(x;P)$ whose coordinate direction is reachable, take $\bm\delta=t\bm e_a$ with $|t|$ below the gap to the active value, admissible by~(X2).  The right-hand side of~\eqref{eq:maxarm-matched} is zero, so $\beta_a(x)=0$.  For an inactive arm whose mean direction is \emph{not} reachable no constraint on $\beta_a(x)$ follows from this identity.  The variance decomposition below discards the corresponding contribution because it is nonnegative.

Step 3 (Active arms). Let $\bm r_x(t)$ be the tie-preserving path of~(X2), which keeps the inactive coordinates fixed and moves every active arm value by the common amount $t$. Then~\eqref{eq:maxarm-matched} reads $\sum_{a\in\cA^\ast(x)}\pi_a^P(x)\beta_a(x)\,r_{x,a}(t)=t$ for all small $t$, an exact perturbation identity valid for $C^1$ links (Remark~\ref{rem:c1-retied}); under the affine hypothesis of the statement, $r_{x,a}(t)=t/g_a^\prime(x)$ on the reachable neighbourhood, whence
\begin{equation}\label{eq:lem-kink-step3}
\sum_{a\in\cA^\ast(x;P)}\pi_a^P(x)\,
\frac{\beta_a(x)}{g_a^\prime(x)}=1 .
\end{equation}
The strict region is the special case $|\cA^\ast(x;P)|=1$, giving $\beta_{a^\ast}(x)=g_{a^\ast}^\prime(x)/\pi_{a^\ast}^P(x)$; no separate argument, and in particular no perturbation into a singleton configuration, is needed.

\medskip
The pointwise unbiasedness identity gives $\sum_a\pi_a^P(x)\,\E_{Q_{x,a}^P}[\beta_ah_a+c_a]=\psib(x)$, as required.
\end{proof}

\begin{remark}[$C^1$ links and the exact perturbation identity]\label{rem:c1-retied}
For merely $C^1$ active links the tie-preserving constraint of Step~3 is the exact perturbation identity $\sum_{a\in\cA^\ast(x)}\pi_a^P(x)\beta_a(x)\,r_{x,a}(t)=t$ for all small $t$, with $r_{x,a}(t)=g_a^{-1}\{g_a(x,a,u_{P,a}(x))+t\}-u_{P,a}(x)$; differentiating at $t=0$ yields the active-slope constraint in the main paper as a \emph{necessary} condition.  For nonlinear links the derivative condition is not a characterization and the residual degree count fails: with $\pi=(\tfrac12,\tfrac12)$, $u=(0,0)$, $g_1(m)=m$, $g_2(m)=m+m^2$, the vector $\bm\beta=(1,1)$ satisfies \eqref{eq:lem-kink} but violates the exact perturbation identity at $t=0.1$ (left side $0.0958039892\ne0.1$), whereas $\bm\beta=(2,0)$ satisfies it exactly for every small $t$; the exact solution set there is a single point, not a line.  In the strict region a nondegenerate bridge-reachable mean window forces the active link to be affine on that window, by the single-arm instance of the identity (Proposition~\ref{prop:affine-necessary}, localized), so the affine hypothesis of the lemma concerns reachable neighbourhoods rather than adding a modelling restriction; in the kink region, movement along the tie-preserving locus alone forces no per-arm affineness, as the example shows.
\end{remark}

Write $\cA^\ast(x)=\cA^\ast(x;P)$.  At $P_X$-a.e.\ $x$, let the tie-preserving bridge have a nondegenerate displacement window $T_x=(-\varepsilon_x,\varepsilon_x)$, with $\pi_a^P(x)>0$ and $0<\sigma_a^2(x;P)<\infty$ for every $a\in\cA^\ast(x)$, and write
\begin{equation}\label{eq:retied-displacement}
r_{x,a}(t)\eqdef g_a^{-1}\bigl(g_a(x,a,u_{P,a}(x))+t\bigr)-u_{P,a}(x),
\qquad a\in\cA^\ast,\ t\in T_x,
\end{equation}
for the arm-mean displacement that raises every active link value by $t$.  The tie-preserving path is jointly measurable on the variable-domain graph
\begin{equation}\label{eq:bridge-graph}
\mathcal G\eqdef\{(x,t):|t|<\varepsilon_x\},
\qquad
x\mapsto\varepsilon_x\ \text{measurable},
\end{equation}
which is Borel. We use a measurable choice of local inverse branch for which $t\mapsto r_{x,a}(t)$ is continuous. Assumption~\ref{ass:armwise}\textup{(X3)} supplies a Borel graph of compatible direction--kernel pairs. Kernel choices are retained as coordinates in this graph when applying Lemma~\ref{lem:matched-bridge}; a jointly Borel choice of a kernel for every $(x,t)$ is not required.

\begin{theorem}[Exact perturbation restriction and the pointwise floor at a tie]
\label{thm:finite-path}
Adopt the max-arm structure of Definition~\ref{def:max-arm} with scalar identifying scores and fix $P\in\cP$ and a measurable auxiliary map $w:\cX\to\Wext$.  Assume Assumptions~\ref{ass:rich}, \ref{ass:armwise}, and~\ref{ass:meas}, and the tie-preserving bridge setup above. Let $f\in\IFRL$ be one global algebraic score map.  Then there is a $P_X$-null set $N_f$, which may depend on $f$, such that at every $x\notin N_f$ the function $f^P_w(x,\cdot)$ is armwise affine, $f^P_w(x,(a,\widetilde w))=c_a(x)+\beta_a(x)h_a$, its slopes vanish on every bridge-reachable inactive arm, and the active slopes satisfy the \emph{exact perturbation equation}
\begin{equation}\label{eq:finite-path}
\sum_{a\in\cA^\ast}\pi_a^P(x)\,\beta_a(x)\,r_{x,a}(t)\;=\;t
\qquad\text{for every }t\in T_x .
\end{equation}
Write $\cB(x)\subseteq\R^{|\cA^\ast(x)|}$ for the set of active slope vectors satisfying~\eqref{eq:finite-path}.  Inactive slopes may be set to zero when minimizing variance.  Then:
\begin{enumerate}[wide=0pt,label={\textup{(\roman*)}},itemsep=2pt]
\item if $P_X\{x:\cB(x)=\varnothing\}>0$ then $\IFRL=\varnothing$;
\item every $f\in\IFRL$ with finite truth-evaluation variance satisfies $\Var_{Q_x^P}[f^P_w(x,\cdot)]\ge H(x;P)$ at $P_X$-a.e.\ $x$, and therefore, taking the essential infimum over the class,
\begin{equation}\label{eq:finite-path-vmin-lower}
\Vmin(\cdot\,;P)\;\ge\;H(\cdot\,;P)\qquad\text{in }L^0(P_X),
\end{equation}
where
\begin{equation}\label{eq:finite-path-floor}
H(x;P)\eqdef\inf_{\bm\beta\in\cB(x)}
\sum_{a\in\cA^\ast(x)}\pi_a^P(x)\,\sigma_a^2(x;P)\,\beta_a^2 ,
\end{equation}
with $\inf\varnothing=+\infty$;
\item \emph{(closed form)} let $\bm c(t)\eqdef(\pi_a^P(x)r_{x,a}(t))_{a\in\cA^\ast}$ and let $q$ be the dimension of $\operatorname{span}\{\bm c(t):t\in T_x\}$. Choose $t_1,\dots,t_q\in T_x$ with $\bm C\eqdef[\bm c(t_1),\dots,\bm c(t_q)]^{\!\top}$ of full row rank and put $\bm b\eqdef(t_1,\dots,t_q)^{\!\top}$.  If some $t\in T_x$ has $(\bm c(t),t)$ outside the row space of $[\bm C\ \bm b]$, then $\cB(x)=\varnothing$.  Otherwise $\cB(x)=\{\bm\beta:\bm C\bm\beta=\bm b\}$ and, with $\bm D\eqdef\operatorname{diag}(\pi_a^P\sigma_a^2)_{a\in\cA^\ast}$,
\begin{equation}\label{eq:finite-path-closed}
\bm\beta^\ast(x)=\bm D^{-1}\bm C^{\!\top}
\bigl(\bm C\bm D^{-1}\bm C^{\!\top}\bigr)^{-1}\bm b,
\qquad
H(x;P)=\bm b^{\!\top}\bigl(\bm C\bm D^{-1}\bm C^{\!\top}\bigr)^{-1}\bm b ,
\end{equation}
with the convention $\inf\varnothing=+\infty$ in~\eqref{eq:finite-path-floor}.
\end{enumerate}
\end{theorem}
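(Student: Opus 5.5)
The proof will reuse the machinery from the proof of Lemma~\ref{lem:func-eq-maxarm}, stopping before the affine-link hypothesis is used. Fix $f\in\IFRL$, $P$ and $w$.

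\textbf{Step 1 (common null set).} Corollary~\ref{cor:combined-pt} and the matched-bridge identity (Lemma~\ref{lem:matched-bridge}, in its arm-partition form Corollary~\ref{cor:matched-arm}) supply one $P_X$-null set $N_f$. Outside $N_f$ two things hold simultaneously. First, the fixed-mean contaminations of (X1) leave every arm-section expectation unchanged. Second, \eqref{eq:maxarm-matched} holds for every admissible direction in $D_x$.

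Simultaneity over the uncountable window $T_x$ is the delicate point. I would get it by applying Lemma~\ref{lem:violation-selection} on the Borel graph $\mathcal G$ of \eqref{eq:bridge-graph}, enlarged by the (X3) kernel coordinates. The gap function is the difference of the two sides of \eqref{eq:maxarm-matched} along $\bm r_x(t)$. Each cost-localized selection glues to a law in $\cP$ that differs from $P$ only in the mean slot. Condition (B) then forces the gap to vanish almost surely for that selection, so the lemma gives a single exceptional set. As a fallback, the identity is continuous in $t$, because $r_{x,a}$ is continuous and $\ll Q_x^P$ with integrable affine representations; so it also suffices to work with rational $t$ and pass to the limit.

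\textbf{Step 2 (form of $f$).} Outside $N_f$, Lemma~\ref{lem:moment-affine}(b), applied armwise with $K=1$, gives the armwise affine form. The active slopes are unique because $\sigma_a^2>0$ there. Step~2 of the proof of Lemma~\ref{lem:func-eq-maxarm} kills the reachable inactive slopes. Along the tie-preserving path the inactive coordinates are zero and every active value rises by $t$, so the right-hand side of \eqref{eq:maxarm-matched} equals $t$. This is exactly \eqref{eq:finite-path}, and in particular $\bm\beta_{\cA^\ast}(x)\in\cB(x)$.

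\textbf{Step 3 (parts (i) and (ii)).} Part (i) follows at once: if $\cB(x)=\varnothing$ on a set of positive measure, no $f$ can satisfy Step~2, so $\IFRL=\varnothing$.

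For part (ii), take $f$ with finite variance. The within/between decomposition \eqref{eq:within-between} is nonnegative termwise, so $\Var_{Q_x^P}[f^P_w]\ge\sum_{a\in\cA^\ast}\pi_a\sigma_a^2\beta_a^2\ge H(x;P)$. The inactive-slope terms and the intercept dispersion are simply dropped. Since $H$ is a measurable lower bound for the whole competitor family, the definition of the essential infimum (see the remark after Definition~\ref{def:vmin}) yields \eqref{eq:finite-path-vmin-lower}. I would also need measurability of $H$, which I would obtain from part (iii) once the rank data are chosen measurably, or via a measurable-selection argument as in Lemma~\ref{lem:violation-selection}.

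\textbf{Step 4 (part (iii)).} This is finite-dimensional linear algebra. Equation \eqref{eq:finite-path} says the linear functional $\bm\beta\mapsto\bm c(t)^\top\bm\beta$ equals $t$ for every $t$. Since $\bm c(t)$ always lies in the row space of $\bm C$, write $\bm c(t)=\bm C^\top\bm y(t)$. On $\{\bm C\bm\beta=\bm b\}$ we then have $\bm c(t)^\top\bm\beta=\bm y(t)^\top\bm b$, so the solution set is nonempty and equal to $\{\bm C\bm\beta=\bm b\}$ exactly when $t=\bm y(t)^\top\bm b$ for all $t$. That condition is the stated row-space condition on $(\bm c(t),t)$; when it fails, $\cB(x)=\varnothing$.

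The system $\bm C\bm\beta=\bm b$ is consistent because $\bm C$ has full row rank. Minimizing $\bm\beta^\top\bm D\bm\beta$ subject to it, with $\bm D\succ0$, gives by Lagrange multipliers the minimizer $\bm\beta^\ast=\bm D^{-1}\bm C^\top(\bm C\bm D^{-1}\bm C^\top)^{-1}\bm b$ and the value $\bm b^\top(\bm C\bm D^{-1}\bm C^\top)^{-1}\bm b$, which is \eqref{eq:finite-path-closed}.

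I expect Step~1, making the bridge identity hold simultaneously over the whole window, to be the main obstacle.
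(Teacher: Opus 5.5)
Your proposal is correct and matches the paper's proof essentially step for step. It uses armwise fixed-mean affineness from Lemma~\ref{lem:moment-affine}, whose $L^1$ existence clause covers inactive arms with possibly infinite second moments. The arm-partition bridge identity of Corollary~\ref{cor:matched-arm} is made simultaneous via the null-set argument of Lemma~\ref{lem:matched-bridge} together with the rational-$t$ reduction. Parts (i)--(ii) then follow from the armwise within/between variance decomposition, and part (iii) from equality-constrained least squares.

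The paper is more direct only on the measurability of $H$. Continuity of $t\mapsto r_{x,a}(t)$ makes the graph of $\cB$ a countable intersection of Borel sets. Hence $H$ is the value of a Borel-graph program, and no measurable choice of the rank data $t_1,\dots,t_q$ is needed.
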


\begin{remark}[Role of the global identity]
The theorem concerns a full-slot map in $\IFRL$, rather than only a section admissible at one law.  Indeed, $f^P_w(x,\cdot)\equiv\psib(x,\eta(P)(x),\kappa(P)(x))$ is pointwise admissible with zero conditional variance and zero slopes, but it violates~\eqref{eq:finite-path} at every nonzero displacement.  The cross-law identities in~(B) supply the missing restrictions.
\end{remark}

\begin{remark}[Measurability of the exact perturbation program]
Continuity of $t\mapsto r_{x,a}(t)$ makes \eqref{eq:finite-path} on $T_x$ equivalent to its restriction to the rational points of $T_x$.  Hence $\{(x,\bm\beta):\bm\beta\in\cB(x)\}$ is a countable intersection of Borel sets, and $H(\cdot\,;P)$ is measurable as the value of a Borel-graph program, using its $P_X$-completed version.
\end{remark}

\begin{proof}
The bridge identity first turns each tie-preserving perturbation into a linear equation for the active slopes.  We then use that equation to characterize feasibility, obtain the conditional quadratic program, and solve its finite-rank form.

Both ingredients use the global identity~(B), hence apply to every $f\in\IFRL$.  Apply the fixed-mean form of Lemma~\ref{lem:moment-affine} separately to every arm, as in Step~0 of the max-arm rigidity proof.  Identifying scores have finite first moments, so the lemma's $L^1$ existence statement covers inactive arms even when their second moments are infinite.  The tie setup supplies the finite positive second moments needed for the active variance program.  The lemma also supplies measurable coefficients. Corollary~\ref{cor:matched-arm}, applied first to reachable inactive directions, makes those inactive slopes vanish.  Applied along the tie-preserving bridge, it then gives, for each $t\in T_x$, the matched identity whose left side is $\sum_{a\in\cA^\ast}\pi_a^P\beta_a r_{x,a}(t)$ and whose right side is the target increment $t$; this is~\eqref{eq:finite-path}.  Each invocation excludes a $P_X$-null set depending on $f$ and on the displacement; the simultaneous-null-set argument of Lemma~\ref{lem:matched-bridge}, together with the countable-rational reduction above, collects them into the single $N_f$ of the statement. The affine specialization substitutes $r_{x,a}(t)=t/g_a^\prime$; here~\eqref{eq:finite-path} is retained in full.

For~(i), $\cB(x)=\varnothing$ says no slope vector is compatible with the identity; if this happens on a set of positive $P_X$-measure then no map in $\IFRL$ can exist, since by the display its evaluation $f^P_w$ would have to supply such a vector at $P_X$-a.e.\ $x$. The conclusion is about the class and not about pointwise admissible sections, of which there may be many; the constant section is one. For~(ii), the displayed variance decomposition is the armwise conditional decomposition, the second term being the $\pi^P$-variance of the arm-level means of $f^P_w$, which is nonnegative and vanishes when the intercepts are chosen to equalize them, as in Theorem~\ref{thm:kink}; slopes on non-reachable inactive arms only add further nonnegative terms.  Inactive contributions are kept as variances of the corresponding sections, interpreted in $[0,\infty]$, and need no second-moment assumption on their identifying scores.

For~(iii), \eqref{eq:finite-path} is a linear system in $\bm\beta$ indexed by $t$, so its solution set is either empty or the affine subspace $\{\bm C\bm\beta=\bm b\}$ determined by any $q$ independent rows, the consistency of the remaining rows being exactly the row-space condition stated.  On that subspace the program~\eqref{eq:finite-path-floor} is the equality-constrained least-squares problem $\min\bm\beta^{\!\top}\bm D\bm\beta$ subject to $\bm C\bm\beta=\bm b$ with $\bm D\succ0$ by positivity of $\pi_a^P$ and $\sigma_a^2$, whose solution and value are~\eqref{eq:finite-path-closed}.
\end{proof}

\begin{corollary}[Equality under oracle realization]
\label{cor:finite-path-attain}
Under the conditions of Theorem~\ref{thm:finite-path}, suppose there is a full-slot map $f^\star\in\IFRL$ with finite truth-evaluation variance, $\Var_P[(f^\star)^P_w]<\infty$.  Suppose its active slopes form a measurable selection $\bm\beta^\star(x)\in\argmin_{\bm\beta\in\cB(x)} \sum_{a\in\cA^\ast(x)}\pi_a^P\sigma_a^2\beta_a^2$ at $P_X$-a.e.\ $x$, whose slopes vanish on the inactive arms, and whose intercepts equalize the arm-level means so that the second term of the variance decomposition vanishes.  Then
\[
\Vmin(\cdot\,;P)=H(\cdot\,;P)\quad\text{in }L^0(P_X),
\]
attained by $f^\star$.
\end{corollary}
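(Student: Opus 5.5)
The plan is to sandwich $\Vmin(\cdot;P)$ between $H(\cdot;P)$ from below and the conditional variance of $f^\star$ from above, and then invoke the essential-infimum characterization of Definition~\ref{def:vmin}, as the paper does after that definition. The lower inequality $\Vmin(\cdot;P)\ge H(\cdot;P)$ in $L^0(P_X)$ is exactly Theorem~\ref{thm:finite-path}\textup{(ii)}, so only the reverse inequality needs work. For it, I will show that the particular competitor $f^\star$ satisfies $v_{f^\star}=H$ $P_X$-a.e. Since $f^\star\in\IFRL$ has $\Var_P[(f^\star)^P_w]<\infty$, its conditional-variance function lies in $\mathcal V_{P,w}$, and therefore $\Vmin\le v_{f^\star}$ a.e.

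To compute $v_{f^\star}$, I first apply the rigidity part of Theorem~\ref{thm:finite-path} to $f^\star$ itself. Outside a null set $N_{f^\star}$, this gives the armwise affine form $(f^\star)^P_w(x,(a,\widetilde w))=c_a(x)+\beta^\star_a(x)h_a(\widetilde w)$. Next I use the within-$x$ law of total variance over the arm indicator, as in~\eqref{eq:within-between}:
\[
v_{f^\star}(x)=\sum_{a}\pi_a^P\,(\beta^\star_a)^2\sigma_a^2+\sum_a\pi_a^P\,(r_a-\psib)^2,
\qquad r_a=\beta^\star_a u_{P,a}+c_a .
\]
The overall conditional mean here equals $\psib$ by Theorem~\ref{thm:ulb}\textup{(i)}.

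The three hypotheses on $f^\star$ then remove everything except the active quadratic form. The equalized intercepts kill the second sum. The vanishing inactive slopes kill the inactive terms of the first sum, and the finiteness of each term follows from finite total variance. What remains is $\sum_{a\in\cA^\ast(x)}\pi_a^P\sigma_a^2(\beta^\star_a)^2$, which equals $H(x;P)$ because $\bm\beta^\star(x)$ is a minimizer over $\cB(x)$ in~\eqref{eq:finite-path-floor}. Here $\bm\beta^\star(x)\in\cB(x)$ is automatic, since it is the slope vector of an element of $\IFRL$, and the hypothesis adds only that it is an argmin.

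Combining the bounds, $H\le\Vmin\le v_{f^\star}=H$ a.e., which gives equality in $L^0(P_X)$. The attainment claim follows because $v_{f^\star}$ equals the essential infimum. Integrating and using~\eqref{eq:ulb-bound}, $\Var_P[(f^\star)^P_w]=\Vlow(P)$. Lemma~\ref{rem:vmin-w-invariant} removes the dependence on $w$.

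The main obstacle is a null-set and version issue rather than any computation. The slopes named in the hypothesis have to coincide, off one null set, with the unique active coefficients from the fixed-mean reduction; that uniqueness comes from $\sigma_a^2>0$ on active arms via Lemma~\ref{lem:moment-affine}. The inactive-arm representative convention ($\beta_a=0$ when $\sigma_a^2=0$) must also not change the variance. Both points are handled by taking the union of $N_{f^\star}$ with the exceptional sets in the hypothesis, so the argument takes place on a single full-measure set.
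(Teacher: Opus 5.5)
Your proposal is correct and follows the paper's own argument. The lower bound $\Vmin\ge H$ comes from Theorem~\ref{thm:finite-path}\textup{(ii)}, and the reverse inequality comes from placing the finite-variance map $f^\star$ in the competitor family and checking, via the armwise within/between decomposition, that equalized intercepts, vanishing inactive slopes, and minimizing active slopes leave exactly $H(x;P)$ as its conditional variance, with your null-set bookkeeping and the integration to $\Vlow(P)$ simply making explicit what the paper's short proof leaves implicit.
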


\begin{proof}
Theorem~\ref{thm:finite-path} gives $\Vmin\ge H$. The proposed map has conditional variance $H$: its active slopes minimize the quadratic form, and its inactive and between-arm variance terms vanish. Its finite total variance places it in the family defining $\Vmin$, giving the reverse inequality.
\end{proof}

A measurable solution of the conditional quadratic program need not extend to a full-slot map satisfying~(B) on $\cP$.  Conversely, an exactly robust map need not minimize that program.  These are the two algebraic requirements in the corollary.  Statistical attainment is a separate conclusion: on a learnable subclass $\cP_{\mathrm{ach}}\subseteq\cP$, one fixed estimator must have the attaining truth evaluation in its asymptotic-linear expansion at every law of that subclass.  The subclass need not be closed under the perturbations used to prove the lower bound.  Theorem~\ref{thm:achieve} supplies such a construction for affine links under its learning conditions.

\begin{corollary}[Affine links: the harmonic formula is the case $q=1$]
\label{cor:finite-path-affine}
If the active links are affine on the reachable window, so that $r_{x,a}(t)=t/g_a^\prime(x)$, then $q=1$, $\cB(x)$ is the single linear constraint $\sum_{a\in\cA^\ast}\pi_a^P\beta_a/g_a^\prime=1$ with $|\cA^\ast|-1$ residual degrees of freedom, and~\eqref{eq:finite-path-closed} reduces to
\[
\beta_a^\ast=\frac{H}{g_a^\prime\sigma_a^2},
\qquad
H(x;P)=\Bigl(\sum_{a\in\cA^\ast}
\frac{\pi_a^P(x)}{g_a^\prime(x)^2\,\sigma_a^2(x;P)}\Bigr)^{\!-1},
\]
the harmonic value of Theorem~\ref{thm:kink}.  The closed form of the affine theory is therefore the affine case of this program, not a separate result.
\end{corollary}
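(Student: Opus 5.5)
The plan is to specialize Theorem~\ref{thm:finite-path}\textup{(iii)} to affine active links and then solve a one-row constrained least-squares problem by hand. When every active link is affine on the reachable window, the local inverse in \eqref{eq:retied-displacement} is explicit: $g_a^{-1}(g_a(x,a,u_{P,a}(x))+t)=u_{P,a}(x)+t/g_a^\prime(x)$. Hence $r_{x,a}(t)=t/g_a^\prime(x)$ for every $t\in T_x$. This gives $\bm c(t)=t\,\bm\gamma$ with $\bm\gamma\eqdef(\pi_a^P(x)/g_a^\prime(x))_{a\in\cA^\ast}$. Since $\pi_a^P>0$ and $g_a^\prime\neq0$, the vector $\bm\gamma$ is nonzero, and $T_x$ is a nondegenerate interval containing nonzero points, so the span of $\{\bm c(t)\}$ is one-dimensional and $q=1$.

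Next I would check the row-space condition of Theorem~\ref{thm:finite-path}\textup{(iii)}. Fix any $t_1\neq0$ in $T_x$, so that $\bm C=t_1\bm\gamma^\top$ and $\bm b=t_1$. Every augmented row is $(\bm c(t),t)=t(\bm\gamma,1)=(t/t_1)(\bm c(t_1),t_1)$, which lies in the row space. Therefore $\cB(x)$ is nonempty and equals $\{\bm\beta:\bm\gamma^\top\bm\beta=1\}$. This is the single linear constraint $\sum_{a\in\cA^\ast}\pi_a^P\beta_a/g_a^\prime=1$ of \eqref{eq:lem-kink}. A nonzero single row leaves $|\cA^\ast|-1$ residual degrees of freedom.

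Finally I would substitute into \eqref{eq:finite-path-closed} with $\bm D=\operatorname{diag}(\pi_a^P\sigma_a^2)$. The scalar $t_1$ cancels between $\bm C$ and $\bm b$, leaving $\bm\beta^\ast=\bm D^{-1}\bm\gamma/(\bm\gamma^\top\bm D^{-1}\bm\gamma)$ and $H=(\bm\gamma^\top\bm D^{-1}\bm\gamma)^{-1}$. Computing the quadratic form gives $\bm\gamma^\top\bm D^{-1}\bm\gamma=\sum_a(\pi_a^P/g_a^\prime)^2/(\pi_a^P\sigma_a^2)=\sum_a\pi_a^P/\{(g_a^\prime)^2\sigma_a^2\}$, which is the harmonic value of Theorem~\ref{thm:kink}. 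Coordinatewise, $\beta_a^\ast=H\,(\pi_a^P/g_a^\prime)/(\pi_a^P\sigma_a^2)=H/(g_a^\prime\sigma_a^2)$. As a consistency check, the implied weights $\lambda_a=\pi_a^P\beta_a^\ast/g_a^\prime$ match \eqref{eq:lambda-opt-oracle}.

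I expect no step to be a real obstacle. The only point needing care is confirming that $q=1$ rather than $q=0$, which uses the nondegeneracy of $T_x$ and the nonvanishing of $\bm\gamma$. The remaining steps are direct substitution.
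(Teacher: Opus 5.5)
Your proposal is correct and matches the paper's intended route. The paper gives no separate proof, but the corollary is stated as the specialization of Theorem~\ref{thm:finite-path}(iii): substituting $r_{x,a}(t)=t/g_a^\prime$ makes every row a multiple of $\bm\gamma=(\pi_a^P/g_a^\prime)_a$, which gives $q=1$, and the one-row equality-constrained least-squares formula then collapses to the harmonic value and slopes, exactly as you compute, including the check that $q=1$ rather than $0$ and the cancellation of $t_1$.
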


\begin{corollary}[Strict branch: affineness is forced]
\label{cor:finite-path-strict}
If $\cA^\ast(x)=\{a^\ast\}$ is a singleton, \eqref{eq:finite-path} reads $\pi_{a^\ast}^P\beta_{a^\ast}r_{x,a^\ast}(t)=t$ for every $t\in T_x$, which forces $r_{x,a^\ast}$ to be linear on $T_x$, that is, $g(x,a^\ast,\cdot)$ affine on the bridge-reachable mean window, with $\beta_{a^\ast}=g_{a^\ast}^\prime/\pi_{a^\ast}^P$ uniquely determined. This is the pointwise content of Proposition~\ref{prop:affine-necessary}, recovered from the same equation rather than from a separate argument.
\end{corollary}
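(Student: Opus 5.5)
The plan is to read the corollary directly off the exact perturbation equation \eqref{eq:finite-path} of Theorem~\ref{thm:finite-path}, specialised to a singleton active set. The steps are as follows.

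First, fix $x$ outside the null set $N_f$ of Theorem~\ref{thm:finite-path}, with $\cA^\ast(x)=\{a^\ast\}$. At such a point, every $f\in\IFRL$ has an armwise affine evaluation. Its single active slope $\beta_{a^\ast}(x)$ is unique, because $\sigma_{a^\ast}^2>0$ and Lemma~\ref{lem:moment-affine} give uniqueness modulo a trivial kernel. That slope satisfies $\pi_{a^\ast}^P(x)\beta_{a^\ast}(x)r_{x,a^\ast}(t)=t$ for all $t\in T_x$.

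Second, pin down the slope. Since $T_x$ is a nondegenerate window, pick any $t\neq0$. This gives $\pi_{a^\ast}^P\beta_{a^\ast}\neq0$, and hence $r_{x,a^\ast}(t)=t/(\pi_{a^\ast}^P\beta_{a^\ast})$ is linear in $t$ on the whole window. Using $r(t)=g_{a^\ast}^{-1}(g_{a^\ast}(u^\ast)+t)-u^\ast$ from \eqref{eq:retied-displacement}, linearity of $r$ means the local inverse of $g(x,a^\ast,\cdot)$ is affine on $g(u^\ast)+T_x$. Inverting, $g(x,a^\ast,\cdot)$ is affine on the image window $u^\ast+r(T_x)$. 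This image is exactly the bridge-reachable mean window, because $r$ is continuous and strictly monotone there. Differentiating at $t=0$ gives $r'(0)=1/g_{a^\ast}^\prime$, which is well defined since $g'\neq0$. Hence $\beta_{a^\ast}=g'_{a^\ast}/\pi_{a^\ast}^P$, and this value is forced for every member of the class.

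Third, link the result to Proposition~\ref{prop:affine-necessary}. If the link fails to be affine on the window, then $\cB(x)=\varnothing$. If this happens on a set of positive $P_X$-measure, part (i) of Theorem~\ref{thm:finite-path} gives $\IFRL=\varnothing$, which is the pointwise content of the proposition.

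The main obstacle is the identification in the second step: matching the domain on which linearity of $r$ holds with the "bridge-reachable mean window", and ensuring that the chosen local inverse branch is the continuous one. I would handle this by using the continuity and monotonicity of $r$, which come from $C^1$ with $g'\ne0$ and the inverse function theorem. Measurability and null-set issues are already absorbed into $N_f$, so no further selection argument is needed.
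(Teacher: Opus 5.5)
Your proposal is correct and follows the paper's own route. The paper obtains the corollary by specializing the exact perturbation equation~\eqref{eq:finite-path} of Theorem~\ref{thm:finite-path} to a singleton active set and states this in one line; your steps spell that line out, namely a nonzero $\pi_{a^\ast}^P\beta_{a^\ast}$ from any $t\neq0$, affineness via $g\bigl(x,a^\ast,u_{P,a^\ast}(x)+r_{x,a^\ast}(t)\bigr)=g\bigl(x,a^\ast,u_{P,a^\ast}(x)\bigr)+t$, the slope from the derivative at $t=0$, and the link to Proposition~\ref{prop:affine-necessary} through part~(i) of the theorem.
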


\begin{remark}[At a persistent tie the class need not be empty, and the
exact-class floor can exceed the first-order Cram\'er--Rao value]\label{rem:persistent-tie} Affineness is forced on strict branches; it is not forced along a tie manifold.  The following model realizes the tied nonlinear branch.  Take $\cX$ a point, $\pi=(\tfrac12,\tfrac12)$, arm means $u_1(m)=m+m^2$ and $u_2(m)=m$ for $m$ in a neighbourhood of the origin, and links $g_1(u)=u$, $g_2(u)=u+u^2$.  Then $g_1(u_1)=g_2(u_2)=m+m^2$ for \emph{every} $m$: the two arms are tied along the entire model path, the second link is nonaffine on the reachable set, and the target is $m+m^2$. The sequence $T_n=\mathbb P_n[2\cdot\1\{A=1\}Y]$ is exactly unbiased at every law of this model; its slot rule reads no slot, so Definition~\ref{def:ralu}~(B) holds at every one-wrong configuration, and (A) holds because the sequence is its own empirical mean.  The class is therefore nonempty at a law whose active link is nonlinear on the reachable set, so the strict-branch necessity does not extend to tie manifolds.  Here $q=2$ and $\cB(x)$ is the single point $\bm\beta=(2,0)$.  Both benchmarks must be evaluated at this law's own arm variances, not at a normalization: with $Y\in\{-1,1\}$ and conditional means $u_a$, $\sigma_a^2=1-u_a^2$, so at $m=0.15$ one has $\sigma_1^2=0.97024375$ and $\sigma_2^2=0.9775$, and
\[
H=1.9404875,
\qquad
V_{\mathrm{CR}}=1.222490243 .
\]
Here $V_{\mathrm{CR}}$ is the first-order Cram\'er--Rao variance in this regular one-parameter model.  Indeed, its information for $m$ is $I(m)=\tfrac12(1+2m)^2/\sigma_1^2+\tfrac12/\sigma_2^2$ and the target derivative is $1+2m$, so $V_{\mathrm{CR}}=(1+2m)^2/I(m)$.  It equals the first-order harmonic program, whereas~(B) imposes the entire finite-displacement equation. The difference $H-V_{\mathrm{CR}}>0$ is therefore a cost of exact finite-substitution robustness; it is not a contradiction with classical efficiency.  The first-order minimizing section has no exact-robust realization, so the tangent projection does not identify the exact-robust floor in this example.  The exact floor is attained in the sense of Corollary~\ref{cor:finite-path-attain}, but not by the empirical rule exhibited above: $H$ bounds the \emph{total} conditional variance $\sum_a\pi_a\beta_a^2\sigma_a^2+\Var_{a\sim\pi}\bigl[\E[F\mid A=a]\bigr]$, not its slope part alone.  The sequence $\mathbb P_n[2\cdot\1\{A=1\}Y]$ has the unique feasible slope $(2,0)$ and hence within-arm variance $1.94048750$, but its two arm-level means differ, contributing a between-arm term $0.02975625$ and a total of $1.97024375$.  Replace it by the arm-one augmented inverse-probability-weighted (AIPW) rule
\begin{equation}\label{eq:tie-aipw-member}
F\eqdef u_1+\frac{\1\{A=1\}}{\pi_1}\bigl(Y-u_1\bigr),
\end{equation}
which satisfies exact unbiasedness on every one-wrong face, carries the same unique feasible slope $(1/\pi_1,0)=(2,0)$, and has $\E[F\mid A=1]=\E[F\mid A=2]=u_1$, so its between-arm term vanishes and its total variance is exactly $H=1.94048750$.  The hypotheses of Corollary~\ref{cor:finite-path-attain} therefore hold at this law. For an actual estimator, use the arm-one sample mean (with an arbitrary bounded value if that arm is unobserved).  Since $\pi_1=1/2$, the zero-count event has exponentially small probability and the ratio expansion gives influence function $F-u_1$ at every law of this bounded parametric model.
\end{remark}

\begin{remark}\label{rem:lem-vs-XuGuo}
For binary OTR with $g(x,a,u_a)=u_a$ (so $g_a^\prime\equiv 1$), \eqref{eq:lem-strict} gives $\beta_{a^\ast}=1/\pi_{a^\ast}$ on the optimal arm and $\beta_{1-a^\ast}=0$ on the suboptimal arm, recovering the AIPW form found in the supplement of~\cite{XuGuo}. In the kink region, \eqref{eq:lem-kink} reduces to $\sum_a\pi_a^P(x)\,\beta_a=1$, matching the slope constraint derived in the supplement of \cite{XuGuo}.  Their smoothing scale $h_n$ and boundary-asymmetry parameter $t_0$ play mathematically different roles.  The scale $h_n$ controls approximation error and validity, while $t_0$ selects the active-face coordinate and hence the first-order variance.  At a binary tie, identifying $t_0=\lambda_1$ turns their adaptive choice into the oracle weight in the weighted family in Section~\ref{sec:weighted-family}; under conditional homoscedasticity this gives $t_0^\ast=\pi_1$, and under heteroscedasticity it gives~\eqref{eq:xuguo-t0-opt}.  The extra asymmetry parameter, absent from symmetric one-parameter softmax smoothing, is what permits the boundary weight to be optimized without altering the validity role of $h_n$.

The max-arm proof derives the slope constraint by shifting the arm means while fixing the working inputs. True conditional variances then enter the quadratic minimization, which yields~\eqref{eq:xuguo-t0-opt} in the binary heteroscedastic case.
\end{remark}

\begin{remark}[Comparison with Lemma~\ref{lem:func-eq-maxarm} and use
in Theorem~\ref{thm:cov-kink}]\label{rem:lem-cov-vs-maxarm} Lemma~\ref{lem:func-eq-cov} parallels Lemma~\ref{lem:func-eq-maxarm} but is adapted to vector-valued shared-data scores. Two structural differences determine the covariance formula. First, in the arm-partition setting of Lemma~\ref{lem:func-eq-maxarm}, the centered residuals $\1\{A=a\}(h_a-u_{P,a})$ have zero conditional cross-covariance: their products vanish for distinct arms and each has conditional mean zero. Their covariance matrix is therefore diagonal, and the active slopes satisfy $\sum_a\pi_a^P\beta_a/g_a^\prime=1$. In the shared-data setting, $\bm\Sigma$ may have rank strictly less than $K$. For example, the eight Balke--Pearl branch means are affine functions of six free conditional cell probabilities (Section~\ref{sec:bp-bound}). Coefficients of a fixed affine representation are then determined only modulo $\ker(\bm\Sigma)$. This ambiguity leaves $\bm\beta^\top\bm\Sigma\bm\beta$ unchanged, and the singular variance formula uses the Moore--Penrose inverse (Section~\ref{sec:cov-singular}).

Second, Lemma~\ref{lem:func-eq-cov}'s proof perturbs the mean of the integrating measure (via $P^\dagger$), so that the constraint on $\bm\beta$ at the truth slot is obtained from a single algebraic subtraction at the truth slot, with no slot-continuity hypothesis on $\bm\beta(\cdot)$; the proof of Lemma~\ref{lem:func-eq-maxarm} follows the same mean-perturbation architecture.
\end{remark}

The next lemma supplies the shared-score functional form used in Theorem~\ref{thm:cov-kink}.

\begin{lemma}[Bridge identity for jointly selected paths]\label{lem:matched-bridge}
Fix $P\in\cP$, a measurable $w$, and a map $f\in\IFRL$. Assume Assumptions~\ref{ass:rich} and~\ref{ass:meas}. Let $D_x$ and the compatible mean-shifting kernels be supplied by Assumption~\ref{ass:armwise}\textup{(X2)--(X3)} or Assumption~\ref{ass:shared-perturb}\textup{(COV2)--(COV3)}. Retain the direction, kernel, and any required nuisance value together as coordinates of the Borel graph in that assumption. Write $Q_{x,\bm\delta}$ for any kernel in the corresponding section. Its finite model cost is $C(x,Q_{x,\bm\delta})$, independent of $f$.

Outside one $P_X$-null set, the following integrals are absolutely finite and the identity holds simultaneously for every $\bm\delta\in D_x$ and every such compatible kernel:
\begin{equation}\label{eq:matched-bridge}
\E_{Q_{x,\bm\delta}}\bigl[f^P_w(x,\cdot)\bigr]
-\E_{Q_x^P}\bigl[f^P_w(x,\cdot)\bigr]
=\psib\bigl(x;\bm u_P(x)+\bm\delta,\kappa(P)(x)\bigr)
-\psib\bigl(x;\bm u_P(x),\kappa(P)(x)\bigr).
\end{equation}
The shorthand on the right denotes the target integrand with the mean-determining primary component varied and all other primary components held at their values under $P$. The left-hand side is an expectation difference; its coefficient forms appear in Corollaries~\ref{cor:matched-shared} and~\ref{cor:matched-arm}.
\end{lemma}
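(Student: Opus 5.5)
The argument runs the same two-stage scheme as in the proof of Theorem~\ref{thm:ulb}: first prove \eqref{eq:matched-bridge} almost surely for each fixed cost-localized selection, then make it simultaneous with Lemma~\ref{lem:violation-selection} applied to the VS-bridge graph. The key observation is that the bridge kernel $Q_{x,\bm\delta}$ changes only the mean-determining primary component (arm means $\eta_1$, or $\eta_{j_\circ}$ in the shared case). It leaves every other primary component and $\kappa$ at their $P$-values. So the law $P_{\bm\delta}$ glued from a selection, and the law $P$ itself, differ in exactly one primary component.

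\textbf{Step 1 (fixed selection).} Fix a universally measurable selection $x\mapsto(\bm\delta(x),Q_{x,\bm\delta(x)},e(x))$ from the Borel graph in (X3) or (COV3). Localize it to $A_m=\{C(x,Q_{x,\bm\delta(x)})\le m\}$, using the baseline direction $\bm 0$ elsewhere. Gluing then gives a law $P_{\bm\delta}\in\cP$ with marginal $P_X$. By locality (M3), its nuisance readouts are $\eta(P_{\bm\delta})=(\ldots,e,\ldots)$ with the other components equal to those of $P$, and $\kappa(P_{\bm\delta})=\kappa(P)$.

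Now apply (B) with integrating law $P_{\bm\delta}$ and working law $P$. The pair agrees off $j_\circ$, and the structural input is $\kappa(P_{\bm\delta})=\kappa(P)$. The evaluated map is therefore exactly $f^P_w$, integrable under $P_{\bm\delta}$, with
\[
\E_{P_{\bm\delta}}[f^P_w]=\Psi(P_{\bm\delta})=\E_{P_X}\bigl[\psib(X,\eta(P_{\bm\delta})(X),\kappa(P)(X))\bigr].
\]
Repeating this at $(P_{\bm\delta}|_A,P|_A)$ for every $A$ with positive mass, which (M1) permits because the marginals agree, and arguing as in Step~1 of the proof of Theorem~\ref{thm:ulb} gives
\[
\E_{Q_{x,\bm\delta(x)}}[f^P_w(x,\cdot)]=\psib(x;\bm u_P(x)+\bm\delta(x),\kappa(P)(x))\qquad P_X\text{-a.s.}
\]
On $A_m^c$ the identity is the truth identity. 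Subtracting \eqref{eq:pt-unbias-truth} gives \eqref{eq:matched-bridge} for this selection. Here I would use the pointwise identification \eqref{eq:pt-id} at $P_{\bm\delta}$ to write its integrand as $\psib$ evaluated at the shifted mean component, which is what the shorthand on the right-hand side denotes.

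\textbf{Step 2 (simultaneity).} Apply Lemma~\ref{lem:violation-selection} to the VS-bridge graph, whose points are tuples $(x,\bm\delta,Q,e)$, with the following choices:
\begin{itemize}
\item cost $C(x,Q)$ and baseline $(\bm 0,Q_x^P,\eta_{j_\circ}(P)(x))$;
\item envelope $I=\int|f^P_w(x,z)|\,dQ(z)$;
\item gap equal to the difference of the two sides of the non-subtracted identity.
\end{itemize}
Step~1 verifies (VS2) and (VS3) for every gluable selection. Both the envelope and the gap are Borel on the graph, by measurability of integrals of Borel functions against a variable kernel and Borel measurability of $\psib$ composed with the readouts. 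The lemma then gives one null set outside which every direction and every compatible kernel in the section satisfies absolute integrability and the identity. Taking the union with the null set from \eqref{eq:pt-unbias-truth} finishes the proof.

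\textbf{Main obstacle.} The delicate point is keeping the direction, kernel, and nuisance value together as coordinates of one Borel graph. The required gluing property is stated for bundles, so a jointly Borel kernel for each $\bm\delta$ is unavailable. Jankov--von Neumann selection must therefore be performed on the full bundle graph, and the identification of $\eta(P_{\bm\delta})$ must be checked only on the bounded-cost set where the glued law differs from $P$.
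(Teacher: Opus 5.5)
Your proposal is correct and follows the paper's proof essentially step for step. You localize a jointly selected (direction, kernel, nuisance) bundle to a bounded-cost set, glue it via (M2), and apply (B) with ambient law $P_{\bm\delta}$ and working law $P$, together with marginal restriction under (M1), to obtain the conditional identity almost surely; you then make it simultaneous with Lemma~\ref{lem:violation-selection} on the joint graph with envelope $\int|f^P_w(x,z)|\,dQ(z)$. The only cosmetic difference is that you run the violation-selection argument on the unsubtracted identity and then intersect with the full set from \eqref{eq:pt-unbias-truth}, whereas the paper builds the truth term directly into its gap $\Gamma$; the two are equivalent.
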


\begin{proof}
A jointly selected direction, kernel, and nuisance value form one compatible perturbation. Select this entire tuple from the Borel graph in the relevant bridge assumption. Localize it to $A_m=\{x:C(x,Q_{x,\bm\delta(x)})\le m\}$ and use the baseline tuple on $A_m^c$. The selected field is universally measurable and has a Borel version in the $P_X$-completion. Its model cost is integrable, so (M2) gives an ambient law $P_{\bm\delta}\in\cP$ with marginal $P_X$. By the bridge assumption, this law changes only the designated primary component and preserves $\kappa(P)$.

Apply condition~(B) with ambient law $P_{\bm\delta}$ and working law $P$. It gives integrability of $f_w^P$ under the selected kernel and the target mean under $P_{\bm\delta}$. Apply the same identity after restricting both laws to any covariate set of positive marginal probability, using (M1). The argument in Step~1 of Section~\ref{ssec:ulb-proof} gives the corresponding conditional mean identity almost surely. Subtracting the truth identity yields the gap
\begin{equation}\label{eq:mb-gap}
\begin{aligned}
\Gamma(x,\bm\delta,Q)
&\eqdef\E_Q\bigl[f^P_w(x,\cdot)\bigr]-\E_{Q_x^P}\bigl[f^P_w(x,\cdot)\bigr]\\
&\quad-\bigl\{\psib(x;\bm u_P(x)+\bm\delta,\kappa(P)(x))\\
&\hspace{5em}-\psib(x;\bm u_P(x),\kappa(P)(x))\bigr\}.
\end{aligned}
\end{equation}
It is zero almost surely for each localized joint selection.

Apply Lemma~\ref{lem:violation-selection} to the joint graph, with cost $C(x,Q)$, baseline the truth tuple, and envelope $I(x,\bm\delta,Q)=\int|f_w^P(x,z)|\,dQ(z)$. Condition~(B) gives (VS2), and the conditional identity above gives (VS3). The lemma first makes every integral finite outside one null set and then gives $\Gamma=0$ there for every compatible tuple. This proves~\eqref{eq:matched-bridge} with the required simultaneous quantifiers. In particular, the argument uses the Borel correspondence supplied by the main assumptions and does not assume a jointly Borel selector on the full direction graph.
\end{proof}

\begin{corollary}[Shared-score coefficient form]\label{cor:matched-shared}
In the min-of-linear (shared-score) structure, if in addition $f^P_w(x,\cdot)=\alpha(x)+\bm\beta(x)^{\!\top}\bm h(x;\cdot)$ holds $Q_x^P$-a.s., then, since $Q_{x,\bm\delta}\ll Q_x^P$ carries the representation and $\E_{Q_{x,\bm\delta}}[\bm h]=\bm u_P+\bm\delta$, the left side of~\eqref{eq:matched-bridge} equals $\bm\beta(x)^{\!\top}\bm\delta$:
\begin{equation}\label{eq:matched-shared}
\bm\beta(x)^{\!\top}\bm\delta
=\psib\bigl(x;\bm u_P(x)+\bm\delta,\kappa(P)(x)\bigr)
-\psib\bigl(x;\bm u_P(x),\kappa(P)(x)\bigr).
\end{equation}
The value does not depend on the choice of coefficient vector: two choices differ by an element of $\ker\bm\Sigma(x;P)$, and $\bm\delta$ is reachable, hence in $\operatorname{range}\bm\Sigma(x;P)$.
\end{corollary}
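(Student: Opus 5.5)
The proof substitutes the affine representation into the left side of the bridge identity~\eqref{eq:matched-bridge} from Lemma~\ref{lem:matched-bridge} and then checks that the resulting coefficient form does not depend on which affine representation we chose. Fix $x$ outside the union of two $P_X$-null sets: the null set of Lemma~\ref{lem:matched-bridge}, and the null set off which $f^P_w(x,\cdot)=\alpha(x)+\bm\beta(x)^\top\bm h(x;\cdot)$ holds $Q_x^P$-a.s. This union is still null, and Lemma~\ref{lem:matched-bridge} gives its identity simultaneously for all $\bm\delta\in D_x$ and all compatible kernels. So we need no further selection argument.

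\textbf{Step 1: transfer the representation to the perturbed law.} By (COV3), $Q_{x,\bm\delta}\ll Q_x^P$. The set where the affine representation fails is $Q_x^P$-null, hence also $Q_{x,\bm\delta}$-null. So the representation holds $Q_{x,\bm\delta}$-a.s. The scores $\bm h(x;\cdot)$ are integrable under $Q_{x,\bm\delta}$, since (COV3) prescribes their mean $\bm u_P(x)+\bm\delta$. Linearity of expectation then gives
\[
\E_{Q_{x,\bm\delta}}[f^P_w]=\alpha+\bm\beta^\top(\bm u_P+\bm\delta),\qquad \E_{Q_x^P}[f^P_w]=\alpha+\bm\beta^\top\bm u_P .
\]
Subtracting the two, the left side of~\eqref{eq:matched-bridge} equals $\bm\beta^\top\bm\delta$. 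This proves~\eqref{eq:matched-shared}.

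\textbf{Step 2: independence of the coefficient choice.} Suppose two representations $(\alpha,\bm\beta)$ and $(\alpha',\bm\beta')$ both hold $Q_x^P$-a.s. Then $(\alpha-\alpha')+(\bm\beta-\bm\beta')^\top\bm h=0$ almost surely. As in the uniqueness part of Lemma~\ref{lem:moment-affine}, taking the variance gives $(\bm\beta-\bm\beta')^\top\bm\Sigma(\bm\beta-\bm\beta')=0$. Since $\bm\Sigma$ is positive semidefinite, this means $\bm\gamma\eqdef\bm\beta-\bm\beta'\in\ker\bm\Sigma(x;P)$. By (COV2), $D_x\subseteq\operatorname{range}\bm\Sigma(x;P)$. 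Because $\bm\Sigma$ is symmetric, $\operatorname{range}\bm\Sigma=(\ker\bm\Sigma)^\perp$, so $\bm\gamma^\top\bm\delta=0$. Hence $\bm\beta^\top\bm\delta=\bm\beta'^\top\bm\delta$.

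I expect no real obstacle. The one point that needs care is the integrability in Step 1: we must use absolute continuity to move the a.s.\ representation from $Q_x^P$ to the perturbed law. We must also check that Lemma~\ref{lem:matched-bridge} already supplies absolute finiteness of $\E_{Q_{x,\bm\delta}}|f^P_w|$, so the subtraction is legitimate.
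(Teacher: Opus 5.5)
Your proposal is correct and follows the paper's own argument. Absolute continuity $Q_{x,\bm\delta}\ll Q_x^P$ transports the affine representation, the prescribed score means give $\bm\beta^{\top}\bm\delta$ after subtraction, and independence of the representative follows from $\ker\bm\Sigma(x;P)\perp\operatorname{range}\bm\Sigma(x;P)\supseteq D_x$. (Independence is also immediate from the identity itself, since any two valid representations yield the same right-hand side.)
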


\begin{corollary}[Arm-partition coefficient form]\label{cor:matched-arm}
In the max-arm (arm-partition) structure with $W=(A,\widetilde W)$, arm probabilities $\pi_a^P(x)$ preserved by the bridge, and $f^P_w(x,(a,\widetilde w))=c_a(x)+\beta_a(x)\,h_a(x;\widetilde w)$ holding $Q_x^P$-a.s., the left side of~\eqref{eq:matched-bridge} equals $\sum_a\pi_a^P(x)\beta_a(x)\delta_a$, the arm weights entering through the arm-partition expectation $\E_{Q_x}[\,\cdot\,]=\sum_a\pi_a^P(x)\,\E_{Q_{x,a}}[\,\cdot\,]$:
\begin{equation}\label{eq:matched-arm}
\sum_{a\in\cA}\pi_a^P(x)\,\beta_a(x)\,\delta_a
=\psib\bigl(x;\bm u_P(x)+\bm\delta,\kappa(P)(x)\bigr)
-\psib\bigl(x;\bm u_P(x),\kappa(P)(x)\bigr).
\end{equation}
The base lemma carries no arm weights and no coefficients; both belong to the representation, not to the bridge.
\end{corollary}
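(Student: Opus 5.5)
The identity is a direct evaluation of the left side of \eqref{eq:matched-bridge} using the armwise affine representation. I will fix $x$ outside the null set supplied by Lemma~\ref{lem:matched-bridge}, and also outside the null set on which the affine representation of $f^P_w(x,\cdot)$ fails, and then compute both expectations on that side separately.

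\textbf{Step 1: transfer the representation.} By (X3), $Q_{x,\bm\delta}\ll Q_x^P$. Because its arm probabilities are $\pi^P(x)$, this absolute continuity passes to the arm sections: for every $a$ with $\pi_a^P(x)>0$, $Q_{x,\bm\delta,a}\ll Q^P_{x,a}$. The relation $f^P_w(x,(a,\widetilde w))=c_a(x)+\beta_a(x)h_a(\widetilde w)$ holds $Q^P_{x,a}$-a.s., so it also holds $Q_{x,\bm\delta,a}$-a.s.

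\textbf{Step 2: integrate armwise.} I disintegrate by arm, as in (K2):
\[
\E_{Q_{x,\bm\delta}}[f^P_w(x,\cdot)]=\sum_a\pi_a^P(x)\{c_a(x)+\beta_a(x)\E_{Q_{x,\bm\delta,a}}[h_a]\}.
\]
By (X3), $\E_{Q_{x,\bm\delta,a}}[h_a]=u_{P,a}(x)+\delta_a$. The same computation at $\bm\delta=\bm0$ gives the baseline term. The $h_a$ are integrable under these laws by the finiteness assertion of Lemma~\ref{lem:matched-bridge} together with the identifying-score moments. Subtracting the two expressions cancels the intercepts and leaves $\sum_a\pi_a^P(x)\beta_a(x)\delta_a$. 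Substituting this into \eqref{eq:matched-bridge} yields \eqref{eq:matched-arm}.

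\textbf{Main obstacle.} The only delicate point is quantifier management. The affine representation's null set depends on $f$, and the bridge identity's null set must not depend on $\bm\delta$. Lemma~\ref{lem:matched-bridge} already makes the latter simultaneous over all compatible $(\bm\delta,Q)$, so taking the union of just these two null sets suffices. I would also remark that arms with $\pi_a^P(x)=0$ contribute zero regardless of how $\beta_a$ is chosen, so non-uniqueness of the coefficients does not affect the identity.
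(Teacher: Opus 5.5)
Your proposal is correct and follows the paper's own reasoning. The paper gives that reasoning inline in the corollary, in parallel with Corollary~\ref{cor:matched-shared}: absolute continuity $Q_{x,\bm\delta}\ll Q_x^P$ carries the armwise affine representation to the bridge kernel. The preserved arm probabilities then split the expectation as $\sum_a\pi_a^P(x)\{c_a(x)+\beta_a(x)\E_{Q_{x,\bm\delta,a}}[h_a]\}$. The mean shift $\bm u_P(x)+\bm\delta$, with the intercepts cancelling against the baseline, gives \eqref{eq:matched-arm}. Your extra care in passing absolute continuity to the arm sections and in taking the union of the two $x$-null sets makes explicit what the paper leaves implicit.
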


\begin{lemma}[Min-of-linear functional form]\label{lem:func-eq-cov}
Assume the min-of-linear structure of Definition~\ref{def:min-of-linear} with vector identifying score $\bm h(x;\cdot)=(h_1(x;\cdot),\dots,h_K(x;\cdot))^\top$.  Suppose Assumptions~\ref{ass:rich}, \ref{ass:meas}, and~\ref{ass:shared-perturb} hold.  Fix $P\in\cP$, a measurable $w$, and a map $f\in\IFRL$. Then there are measurable maps $\alpha:\cX\to\R$ and $\bm\beta:\cX\to\R^K$ and a single $P_X$-null set $N_{f,P,w}$ such that, for every $x\notin N_{f,P,w}$, the following representation and coefficient restrictions hold. The matched mean identities used to obtain the restrictions hold simultaneously for every admissible bridge kernel.
\begin{equation}\label{eq:lemcov-form}
f^P_w(x;W)=\alpha(x)+\bm\beta(x)^{\!\top}\bm h(x;W)
\qquad Q_x^P\text{-a.s.}
\end{equation}
The affine random variable is unique $Q_x^P$-a.s., and its parametrizations are related by
\begin{equation}\label{eq:lemcov-reparam}
(\alpha(x),\bm\beta(x))\;\mapsto\;
\bigl(\alpha(x)-\E_{Q_x^P}\!\bigl[\bm\gamma^{\!\top}\bm h(x;\cdot)\bigr],\;
\bm\beta(x)+\bm\gamma\bigr),\qquad
\bm\gamma\in\ker\bm\Sigma(x;P),
\end{equation}
so the class in $\R^K/\ker\bm\Sigma(x;P)$, its induced linear functional on $\operatorname{range}\bm\Sigma(x;P)$, and $\bm\beta^{\!\top}\bm\Sigma(x;P)\bm\beta$ are invariant.
\begin{enumerate}[wide=0pt,label={\textup{(\arabic*)}},itemsep=2pt]
\item \textbf{Active-supported representative.}~There is a representative $\bm b(x)=(\bm b_{\cA^\ast(x)}(x),\bm 0)$ supported on the active set and an adjusted intercept
\begin{equation}\label{eq:lemcov-alpha-b}
\alpha_b(x)\eqdef\alpha(x)+
  \{\bm\beta(x)-\bm b(x)\}^{\!\top}\bm u_P(x)
\end{equation}
such that $f_w^P(x;W)=\alpha_b(x)+\bm b(x)^{\!\top}\bm h(x;W)$ $Q_x^P$-a.s.  This pair induces the same linear functional on $\operatorname{range}\bm\Sigma(x;P)$ as every admissible coefficient vector.  It is unique precisely when $\ker\bm\Sigma(x;P)\cap\{\bm\gamma:\bm\gamma_{\cA^{\ast c}}=\bm 0\} =\{\bm 0\}$, which holds when $\bm\Sigma_{\cA^\ast(x)}$ is positive-definite.
\item \textbf{Normalization.}~If there is a reachable direction $\bm\delta\in\operatorname{range}\bm\Sigma(x;P)$ with $\bm\delta_{\cA^\ast(x)}=t\bm 1$ for some $t\ne0$, then
\begin{equation}\label{eq:lemcov-unit-sum}
\bm 1_{\cA^\ast(x)}^{\!\top}\bm b_{\cA^\ast(x)}(x)=1 .
\end{equation}
Such a direction exists whenever $\bm\Sigma_{\cA^\ast(x)}$ is positive-definite, and more generally whenever $\bm 1\in\operatorname{range}(\bm\Sigma_{\cA^\ast(x)})$; take $\bm v_{\cA^\ast}=\bm\Sigma_{\cA^\ast}^{+}\bm 1$, $\bm v=\bm 0$ off $\cA^\ast$, and $\bm\delta=t\,\bm\Sigma(x;P)\bm v$.
\item \textbf{Strict region.}~If $\cA^\ast(x;P)=\{k^\ast\}$ and the normalizing direction of~(2) exists, then $b_{k^\ast}(x)=1$.
\end{enumerate}
The adjusted intercept satisfies $\alpha_b(x)+\bm b_{\cA^\ast(x)}(x)^{\!\top} \bm u_{P,\cA^\ast(x)}(x)=\min_l u_{P,l}(x)$ for the active-supported representative.
\end{lemma}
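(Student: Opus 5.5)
The argument has three stages: an affine representation from fixed-mean completeness, a reduction to an active-supported coefficient vector, and coefficient restrictions from the bridge identity. Fix $P$, $w$ and $f\in\IFRL$.

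\emph{Affine representation.} Corollary~\ref{cor:combined-pt} applies to the fixed-mean contaminations of \textup{(COV1)}, which lie in $\mathfrak Q_x^P$. Outside one $P_X$-null set it shows that $\E_{Q'}[f^P_w(x,\cdot)]$ is constant, equal to $\psib$, over every $Q'=(1-\epsilon)Q_x^P+\epsilon\nu$ with $\E_\nu[\bm h]=\bm u_P(x)$. Lemma~\ref{lem:moment-affine}\textup{(b)} then applies with $Q=Q_x^P$ and $\phi=f^P_w(x,\cdot)$. It gives~\eqref{eq:lemcov-form}, uniqueness of $\bm\beta$ modulo $\ker\bm\Sigma(x;P)$, and measurable coefficients via the bounded-normalization choice $\theta=G^+b$. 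The reparametrization~\eqref{eq:lemcov-reparam} is immediate. If $\bm\gamma\in\ker\bm\Sigma$, then $\bm\gamma^\top\bm h=\bm\gamma^\top\bm u_P$ holds $Q_x^P$-a.s., so the intercept shifts by exactly that constant. The affine random variable, the functional $\bm\delta\mapsto\bm\beta^\top\bm\delta$ on $\operatorname{range}\bm\Sigma$, and $\bm\beta^\top\bm\Sigma\bm\beta$ are therefore invariant.

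\emph{Active-supported representative.} Fix $x$ outside the common null set supplied by Lemma~\ref{lem:matched-bridge}. Apply Corollary~\ref{cor:matched-shared} to directions $\bm\delta$ in the relative neighbourhood of $\bm0$ in $T_0(x)$ given by \textup{(COV2)}. These directions move only inactive means and keep them inactive, so the minimum is unchanged and $\bm\beta^\top\bm\delta=0$. By linearity this extends to all of $T_0(x)$. Hence $\bm\beta$ annihilates $\operatorname{range}\bm\Sigma\cap\{\bm\delta_{\cA^\ast}=\bm0\}$. I then need $\bm b$ supported on $\cA^\ast$ with $\bm\beta-\bm b\in\ker\bm\Sigma$. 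This is equivalent to $\bm\beta\in\ker\bm\Sigma+\{\bm\gamma:\bm\gamma_{\cA^{\ast c}}=\bm0\}$. Taking orthogonal complements, that sum has complement $\operatorname{range}\bm\Sigma\cap\{\bm\delta_{\cA^\ast}=\bm0\}=T_0(x)$, because $\bm\Sigma$ is symmetric. The annihilation just shown therefore yields $\bm b$. The intercept~\eqref{eq:lemcov-alpha-b} follows because $(\bm\beta-\bm b)^\top\bm h=(\bm\beta-\bm b)^\top\bm u_P$ holds $Q_x^P$-a.s. Uniqueness of $\bm b$ holds exactly when no nonzero active-supported vector lies in $\ker\bm\Sigma$. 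Positive definiteness of $\bm\Sigma_{\cA^\ast}$ gives this, since $\bm\gamma^\top\bm\Sigma\bm\gamma=\bm\gamma_{\cA^\ast}^\top\bm\Sigma_{\cA^\ast}\bm\gamma_{\cA^\ast}$.

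\emph{Normalization.} Take a direction $\bm\delta\in T_{\cA^\ast}(x)$ with $\bm\delta_{\cA^\ast}=t\bm1$, small enough to lie in $D_x$ and keep inactive coordinates strictly inactive. Then the minimum shifts by exactly $t$, and~\eqref{eq:matched-shared} gives $t=\bm b^\top\bm\delta=t\,\bm1^\top\bm b_{\cA^\ast}$, which proves~\eqref{eq:lemcov-unit-sum}. For existence, set $\bm v_{\cA^\ast}=\bm\Sigma_{\cA^\ast}^+\bm1$ and $\bm v=\bm0$ off $\cA^\ast$, and put $\bm\delta=t\bm\Sigma\bm v$. Its active block is $t\bm\Sigma_{\cA^\ast}\bm\Sigma_{\cA^\ast}^+\bm1=t\bm1$ when $\bm1\in\operatorname{range}\bm\Sigma_{\cA^\ast}$, and it lies in $\operatorname{range}\bm\Sigma$ by construction. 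The strict region is the case $|\cA^\ast|=1$. The final intercept identity combines $\E_{Q_x^P}[f^P_w]=\psib=\min_l u_{P,l}$ from Theorem~\ref{thm:ulb}(i) with $\E[\bm b^\top\bm h]=\bm b_{\cA^\ast}^\top\bm u_{P,\cA^\ast}$.

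I expect the main obstacle to be the measurability and null-set bookkeeping. Specifically, $\bm b$ must be chosen measurably in $x$, for instance via a Moore--Penrose projection of $\bm\beta$ onto the active coordinates along $\ker\bm\Sigma$. Also, the relative-neighbourhood statements of \textup{(COV2)} must be used through the simultaneous exceptional set of Lemma~\ref{lem:matched-bridge}, not separately for each direction.
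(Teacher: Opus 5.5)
Your proposal is correct and follows essentially the same route as the paper. Both arguments obtain fixed-mean affineness from (COV1) and Lemma~\ref{lem:moment-affine}(b), show from the bridge identity that $\bm\beta$ annihilates $T_0(x)$, extract an active-supported representative with $\bm\beta-\bm b\in\ker\bm\Sigma$, and normalize along $\bm\delta=t\bm\Sigma\bm v$. Your orthogonal-complement step is equivalent to the paper's factorization through the active-coordinate restriction, and for measurability the paper makes your projection explicit as $\bm b_{\cA^\ast}=\bm\Sigma_{:,\cA^\ast}^{+}\bm\Sigma\bm\beta$.
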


\begin{remark}[Null sets and coefficient representatives]
The null-set qualification is essential.  If $P_X$ is atomless, $f_w^P$ can be redefined at one point by a nonaffine function without changing its global expectations.  The lemma therefore gives one exceptional set that is uniform over the fibres used in the downstream essential infimum and quadratic program.

The normalization in~\eqref{eq:lemcov-unit-sum} concerns the selected active-supported representative, not every representative of the quotient class.  For example, if $\bm h=(Z_1,Z_2,Z_1+c)$ for independent standardized $Z_1,Z_2$ and $\cA^\ast=\{1,2\}$, then $\ker\bm\Sigma=\operatorname{span}\{(1,0,-1)\}$ and $\bm\Sigma_{\cA^\ast}=\bm I_2$, while $(\tfrac12,\tfrac12,0)$ and $(\tfrac32,\tfrac12,-1)$ represent the same section and have active sums $1$ and $2$.  If no direction with $\bm\delta_{\cA^\ast}=t\bm1$, $t\ne0$, is reachable, no unit-sum conclusion follows.  At a tie, for instance, $\bm h(W)=(W,-W)$ with Rademacher $W$ has $\operatorname{range}\bm\Sigma=\operatorname{span}\{(1,-1)\}$ and no nonzero tie-preserving direction.
\end{remark}

\begin{proof}[Proof of Lemma~\ref{lem:func-eq-cov}]
Fix $P,w,f$. Corollary~\ref{cor:combined-pt} and Lemma~\ref{lem:matched-bridge} make the fixed-mean and bridge identities simultaneous outside one $P_X$-null set. Fix $x$ outside that set and abbreviate $Q=Q_x^P$, $S=\cA^\ast(x;P)$, $\bm u=\bm u_P(x)$, and $\bm\Sigma=\bm\Sigma(x;P)$.

Step 1 (Affine representation). By (COV1), every fixed-mean finite contamination belongs to the compatible fibre. Its expectation of $f_w^P(x,\cdot)$ is therefore $\min_lu_l$. Lemma~\ref{lem:moment-affine} gives measurable coefficients with $f_w^P=\alpha+\bm\beta^\top\bm h$ $Q$-almost surely. The same lemma gives~\eqref{eq:lemcov-reparam}. In particular, this representation is conditionally square-integrable because $\bm h\in L^2(Q)$, even if its variance integrated over $x$ is infinite.

Step 2 (Mean-shift identity). For every $\bm\delta\in D_x$, (COV3) supplies $Q_{x,\bm\delta}\ll Q$ with score mean $\bm u+\bm\delta$. Absolute continuity transfers the affine representation to this kernel. Subtracting the truth identity from the matched bridge identity yields
\begin{equation}\label{eq:lemcov-master-subtraction}
\bm\beta(x)^{\!\top}\bm\delta
=\min_l\,(u_l+\delta_l)-\min_lu_l.
\end{equation}
For $\bm\delta$ in a sufficiently small neighbourhood of zero in $T_0(x)$, the right side is zero. Scaling any vector of $T_0(x)$ into that neighbourhood shows that $\bm\beta^\top\bm\delta=0$ on all of $T_0(x)$.

Step 3 (Active-supported coefficients). Let $E=\operatorname{range}\bm\Sigma$ and let $R:E\to\R^{|S|}$ restrict a vector to its active coordinates. Then $\ker R=T_0(x)$. Step 2 implies that the functional $\bm\delta\mapsto\bm\beta^\top\bm\delta$ factors through $R$: if $R\bm\delta=R\bm\delta'$, their difference lies in $T_0(x)$ and the functional takes the same value. Extend the induced linear functional on $R(E)$ to $\R^{|S|}$, and represent it by $\bm b_S$. Extending $\bm b_S$ by zero gives
\[
(\bm\beta-\bm b)^\top\bm\delta=0\quad(\bm\delta\in E),
\qquad \bm\beta-\bm b\in E^\perp=\ker\bm\Sigma.
\]
Consequently $\alpha_b=\alpha+(\bm\beta-\bm b)^\top\bm u$ gives the same affine random variable, proving~\eqref{eq:lemcov-alpha-b}.

A measurable representative is obtained explicitly. Put $\bm c(x)=\Cov_{Q_x^P}\{\bm h,f_w^P\}=\bm\Sigma\bm\beta$. The preceding factorization gives $\bm c\in\operatorname{range}(\bm\Sigma_{:,S})$, so define
\begin{equation}\label{eq:canon-selector}
\bm b_S(x)=\bm\Sigma_{:,S}(x;P)^+\bm c(x),
\qquad \bm b_k(x)=0\quad(k\notin S).
\end{equation}
This solves $\bm\Sigma_{:,S}\bm b_S=\bm c$ and hence has the required equivalence class. Conditional moments and pseudoinversion are Borel, and there are finitely many active-set strata, so the choice is measurable. Two active-supported representatives differ by a kernel vector supported on $S$. This proves the uniqueness criterion in (1); positive definiteness of $\bm\Sigma_S$ makes that intersection trivial.

Step 4 (Normalization and intercept). Suppose $\bm1\in\operatorname{range}\bm\Sigma_S$. Set $\bm v_S=\bm\Sigma_S^+\bm1$ and $\bm v=0$ off $S$. Then $(\bm\Sigma\bm v)_S=\bm1$, and (COV2) makes $\bm\delta=t\bm\Sigma\bm v$ admissible for sufficiently small nonzero $t$. Equation~\eqref{eq:lemcov-master-subtraction} gives
\[
t=\bm\beta^\top\bm\delta=\bm b^\top\bm\delta
=t\bm1^\top\bm b_S.
\]
Thus $\bm1^\top\bm b_S=1$, with $b_{k^\ast}=1$ in the singleton case. The same argument applies to any reachable direction whose active block is $t\bm1$. If there is no such direction, the lemma imposes no normalization. Finally, $\E_Qf_w^P=\min_lu_l$ gives $\alpha_b+\bm b_S^\top\bm u_S=\min_lu_l$.
\end{proof}

\begin{remark}[Uniqueness of the active-supported coefficients]\label{rem:lemcov-moment-problem}
For two active-supported representatives of the same evaluation, their difference $\bm\gamma$ satisfies $\bm\gamma\in\ker\bm\Sigma$ and $\bm\gamma_{S^c}=0$. Hence
\[
0=\bm\gamma^\top\bm\Sigma\bm\gamma
=\bm\gamma_S^\top\bm\Sigma_S\bm\gamma_S.
\]
A positive-definite active block therefore makes this representative unique for the given evaluation. The example in the preceding null-set remark shows why coefficients with nonzero inactive coordinates can still represent that evaluation. Proposition~\ref{prop:cov-singular} handles a singular active block.
\end{remark}

\begin{corollary}[Single-arm functional form]\label{cor:func-eq-single}
Assume the marginal-integral setup with $d=1$, $\eta_1(P)=u_P$, and a scalar identifying score $h:\cX\times\cW\to\R$ satisfying $u_P(x)=\E_{Q_x^P}[h(x;W)]$. Suppose Assumptions~\ref{ass:rich} and~\ref{ass:meas} hold. Assume the scalar fixed-mean completeness condition: for every $\epsilon\in(0,1)$, each contamination $(1-\epsilon)Q_x^P+\epsilon\nu$, where $\nu$ is finitely supported on $\supp Q_x^P$ and $\E_\nu[h]=u_P(x)$, belongs to $\mathfrak Q_x^P$. Let $D_x$ have Borel graph and contain zero. For every $\delta\in D_x$, assume there is an admissible kernel $Q\ll Q_x^P$ with $\E_Q[h]=u_P(x)+\delta$ and unchanged structural value. These direction--kernel pairs must have Borel graph and finite model cost, with the truth kernel at zero. Thus (M2) applies to each cost-localized joint selection. These are the scalar fixed-mean and mean-shifting conditions, without restricting the scalar target integrand to be linear.  Fix $P$, a measurable $w$, and one map $f\in\IFRL$.  Then for $P_X$-a.e.\ $x$,
\begin{equation}\label{eq:cor-single-form}
f^P_w(x;W)=\alpha(x)+\beta(x)\,h(x;W)\qquad Q_x^P\text{-a.s.,}
\end{equation}
for measurable $\alpha(x),\beta(x)\in\R$, the pair being unique when $\Var_{Q_x^P}(h)>0$.  The remaining conclusions are stated in increasing order of the reachability conditions they require.

\begin{enumerate}[wide=0pt,label={\textup{(\roman*)}},itemsep=2pt]
\item \emph{Secant representation, unconditionally.}~For every reachable $\delta\in D_x$,
\begin{equation}\label{eq:cor-single-secant}
\beta(x)\,\delta=\psib(x,u_P(x)+\delta,\kappa(P)(x))
-\psib(x,u_P(x),\kappa(P)(x)),
\end{equation}
an exact finite difference with no limit taken and no differentiability used.
\item \emph{Derivative, given a nonzero reachable sequence.}~If there exist $\delta_n\in D_x\setminus\{0\}$ with $\delta_n\to0$, and $\psib(x,\cdot,\kappa(P)(x))$ is differentiable at $u_P(x)$, then dividing~\eqref{eq:cor-single-secant} by $\delta_n$ gives
\begin{equation}\label{eq:cor-single-slope}
\beta(x)=\partial_u\psib(x,u,\kappa(P)(x))\Bigm|_{u=u_P(x)} .
\end{equation}
\item \emph{Local affineness, given a reachable interval.}~If $D_x$ contains an interval around $0$, then~\eqref{eq:cor-single-secant} holds on it and $\psib(x,\cdot,\kappa(P)(x))$ is affine there, with slope $\beta(x)$.
\end{enumerate}

The reachable-sequence condition in (ii) concerns the comparison model. Positive conditional variance makes the slope of a given affine evaluation unique, but does not supply mean-shifting laws. If $D_x=\{0\}$, the secant identity places no further restriction on that slope.
\end{corollary}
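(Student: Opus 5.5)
The plan is to follow the template already used for Lemma~\ref{lem:func-eq-maxarm} and Lemma~\ref{lem:func-eq-cov}, specialized to the scalar case with $d=1$.

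\textbf{Null set.} Fix $P$, $w$ and $f\in\IFRL$. Corollary~\ref{cor:combined-pt} covers the fixed-mean contaminations, all of which lie in $\mathfrak Q_x^P$ by hypothesis. Lemma~\ref{lem:matched-bridge} covers the direction--kernel pairs, whose Borel graph and finite cost make the cost-localized joint selections glue under (M2). Together they give one $P_X$-null set outside which two statements hold simultaneously:
\begin{itemize}
\item $\E_{Q'}[f^P_w(x,\cdot)]=\psib(x,u_P(x),\kappa(P)(x))$ for every fixed-mean contamination $Q'$;
\item for every $\delta\in D_x$ and every compatible kernel $Q\ll Q_x^P$ with mean $u_P(x)+\delta$, the matched identity \eqref{eq:matched-bridge} holds, with absolutely convergent integrals.
\end{itemize}
Here the mean-determining component is $\eta_1$ itself, so $H$ is the identity.

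\textbf{Affine form.} Fix $x$ outside that null set. Apply Lemma~\ref{lem:moment-affine}\textup{(b)} with $K=1$, $Q=Q_x^P$ and $\phi=f^P_w(x,\cdot)$; the first-moment version of alternative (b) suffices. This gives \eqref{eq:cor-single-form} with measurable coefficients via the bounded-normalization selector at the end of that proof. Uniqueness of the pair when $\Var_{Q_x^P}(h)>0$ is the kernel statement in that lemma, since $\ker\Sigma=\{0\}$.

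\textbf{Secant identity (i).} Take $\delta\in D_x$ and its kernel $Q\ll Q_x^P$. Absolute continuity transfers the $Q_x^P$-a.s. representation to $Q$. The left side of \eqref{eq:matched-bridge} is therefore $\alpha+\beta(u_P+\delta)-(\alpha+\beta u_P)=\beta\delta$, which is the scalar analogue of Corollary~\ref{cor:matched-shared}. The right side is the finite difference of $\psib$ with the structural value held fixed, as the bridge hypothesis requires. This yields \eqref{eq:cor-single-secant}.

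\textbf{Parts (ii) and (iii).} Part (ii) follows by dividing \eqref{eq:cor-single-secant} by $\delta_n\neq0$ and letting $n\to\infty$, using differentiability of $\psib$ at $u_P(x)$. For part (iii), \eqref{eq:cor-single-secant} holds for every $\delta$ in the interval, so $\psib(x,u_P(x)+\delta,\kappa)=\psib(x,u_P(x),\kappa)+\beta\delta$ there. This is affine with slope $\beta(x)$. The closing remark follows because when $D_x=\{0\}$ the only instance of \eqref{eq:cor-single-secant} is $0=0$.

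\textbf{Main obstacle.} I expect the main obstacle to be the uniform null set. The secant identities must hold at every reachable $\delta$, possibly uncountably many, from one exceptional set that depends only on $(f,P,w)$. The approach is to retain the kernel as a coordinate of the Borel graph and to invoke Lemma~\ref{lem:violation-selection} through Lemma~\ref{lem:matched-bridge}, rather than taking a union over $\delta$. The same device is needed to combine the fixed-mean and bridge null sets into one.
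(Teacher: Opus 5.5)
Your proposal is correct and follows the paper's proof step for step. You get fixed-mean affineness from Lemma~\ref{lem:moment-affine}(b) and a single exceptional set from the joint-graph argument of Lemma~\ref{lem:matched-bridge}; absolute continuity then transfers the affine form to each bridge kernel to give the secant identity, and (ii)--(iii) follow by division and direct reading. One small repair is needed for uniqueness: argue directly that two representations with different slopes would force $h$ to be $Q_x^P$-a.s.\ constant, because the $\ker\bm\Sigma$ statement of Lemma~\ref{lem:moment-affine} presupposes $h\in L^2(Q_x^P)$, which the corollary does not assume ($\Var_{Q_x^P}(h)>0$ allows the value $+\infty$).
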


\begin{proof}
The scalar fixed-mean completeness condition and Lemma~\ref{lem:moment-affine} give the measurable affine representation~\eqref{eq:cor-single-form}. If $h$ is not almost surely constant, its slope and intercept are unique.

For a cost-localized joint selection $(\delta(x),Q_x)$, (M2) supplies a law $P_\delta$ with the perturbed mean and unchanged structural value. Since $d=1$, condition~(B) applies to the ambient law $P_\delta$ with working law $P$. Marginal restriction under (M1) and the joint-graph argument of Lemma~\ref{lem:matched-bridge} give, outside one null set, $\E_Q[f_w^P(x,\cdot)]=\psib(x,u_P(x)+\delta,\kappa(P)(x))$ for every admissible pair. Absolute continuity carries the affine representation from $Q_x^P$ to $Q$. Subtracting its expectation at the truth gives~\eqref{eq:cor-single-secant}.

Dividing this finite-difference identity by $\delta_n$ and taking the stated limit proves~\eqref{eq:cor-single-slope}. If the identity holds on an interval of displacements, it directly expresses the target integrand as an affine function on the corresponding mean interval. Only the target integrand is differentiated; no smoothness of $f$ is required.
\end{proof}
\section{Optimal weights and the singular covariance extension}\label{ssec:cov-proof}

\begin{proof}[Proof of Theorem~\ref{thm:kink}]
Fix $P$, a measurable $w$, and a map $f\in\IFRL$ with finite truth-evaluation variance. We first bound its conditional variance outside the exceptional set in Lemma~\ref{lem:func-eq-maxarm}, then use oracle realization for equality.

Define the measurable function
\begin{equation}\label{eq:vmin-pt-prob}
H(x)\eqdef\Bigl(\sum_{a\in\cA^\ast(x)}
\frac{\pi_a^P(x)}{(g_a^\prime(x))^2\sigma_a^2(x;P)}\Bigr)^{-1}.
\end{equation}
The inequality $v_f\ge H$ for each finite-variance member makes $H$ a lower bound for the essential infimum. A finite-variance oracle member with conditional variance $H$ then gives equality.

Fix $x$ outside the map-dependent exceptional set for the following calculations.

Step (i): functional form. Lemma~\ref{lem:func-eq-maxarm} (proved in Section~\ref{ssec:rigidity}) establishes that the evaluation at the true nuisance values $f_x^a(\widetilde w)\eqdef f^P_w(x,(a,\widetilde w))$ has the affine form
\begin{equation}\label{eq:f-form-active}
f_x^a(\widetilde w)\;=\;c_a(x)+\beta_a(x)\,h_a(\widetilde w)
\qquad\text{$Q_{x,a}^P$-a.s.,}
\end{equation}
with $\beta_a(x)=0$ for every locally reachable $a\notin\cA^\ast(x)$ (unreachable inactive arms carry unrestricted slopes, discarded below as a nonnegative variance contribution), and the slopes $(\beta_a)_{a\in\cA^\ast(x)}$ on the active set constrained as follows:

\begin{itemize}[leftmargin=2em,itemsep=0pt]
\item Strict region $(|\cA^\ast(x)|=1$, optimal arm $a^\ast$): $\beta_{a^\ast}(x)=g_{a^\ast}^\prime(x)/\pi_{a^\ast}^P(x)$ is uniquely determined by \eqref{eq:lem-strict}.

\item Kink region $(|\cA^\ast(x)|\geq 2)$: the active-arm slopes satisfy \eqref{eq:lem-kink},
\begin{equation}\label{eq:slope-constraint-recall}
\sum_{a\in\cA^\ast(x)}\pi_a^P(x)\cdot\frac{\beta_a(x)}{g_a^\prime(x)}=1,
\end{equation}
yielding $|\cA^\ast(x)|-1$ residual degrees of freedom to be optimized in Step~(iii).
\end{itemize}

The intercepts $c_a(x)$ satisfy the conditional unbiasedness identity
\begin{equation}\label{eq:alpha-constraint-step-i}
\sum_{a\in\cA}\pi_a^P(x)\bigl[c_a(x)+\beta_a(x)\,u_{P,a}(x)\bigr]
=\psib(x,\eta(P)(x),\kappa(P)(x)),
\end{equation}
by~\eqref{eq:pt-unbias-truth} and $\E_{Q_{x,a}^P}h_a=u_{P,a}(x)$.

Step (ii): Variance decomposition. Condition on the observed arm in the affine representation~\eqref{eq:f-form-active}.

Using the disintegration $Q_x^P=\sum_a\pi_a^P(x)\,Q_{x,a}^P$, the law of total variance applied conditionally to $A$ gives
\begin{equation}\label{eq:var-decomp-arm}
\Var_{Q_x^P}\!\bigl[f^P_w(x,\cdot)\bigr]
=\sum_{a\in\cA}\pi_a^P(x)\,\Var_{Q_{x,a}^P}\!\bigl[f_x^a(\widetilde W)\bigr]
+\Var_{a\sim\pi^P(x)}\!\bigl[\E_{Q_{x,a}^P}[f_x^a(\widetilde W)]\bigr].
\end{equation}
For every arm, the affine representation gives
\begin{equation}\label{eq:var-active}
\Var_{Q_{x,a}^P}\!\bigl[f_x^a(\widetilde W)\bigr]
=\beta_a(x)^2\,\sigma_a^2(x;P).
\end{equation}
Consequently, the first term of~\eqref{eq:var-decomp-arm} is
\[
\sum_{a\in\cA}\pi_a^P(x)\,\beta_a(x)^2\,\sigma_a^2(x;P),
\]
and is bounded below by the active-arm contribution
\begin{equation}\label{eq:var-active-sum}
\sum_{a\in\cA^\ast(x)}\pi_a^P(x)\,\beta_a(x)^2\,\sigma_a^2(x;P).
\end{equation}
The omitted terms are precisely the nonnegative within-arm variances from unreachable inactive arms.  The conditional arm means are $\E_{Q_{x,a}^P}[f_x^a]=c_a(x)+\beta_a(x)u_{P,a}(x)$, so the second term of~\eqref{eq:var-decomp-arm} is also nonnegative.  Thus every admissible evaluation satisfies
\begin{equation}\label{eq:var-step-ii}
\Var_{Q_x^P}\!\bigl[f^P_w(x,\cdot)\bigr]
\geq\sum_{a\in\cA^\ast(x)}\pi_a^P(x)\,\beta_a(x)^2\,\sigma_a^2(x;P).
\end{equation}

The weighted family attains equality at the minimizing active slopes: it sets every inactive slope to zero and chooses
\begin{equation}\label{eq:alpha-choice}
c_a(x)=\psib(x,\eta(P)(x),\kappa(P)(x))-\beta_a(x)u_{P,a}(x)
\end{equation}
for every arm.  With this choice all conditional arm means equal $\psib(x,\eta(P)(x),\kappa(P)(x))$, and hence
\begin{equation}\label{eq:cross-arm-zero}
\Var_{a\sim\pi^P(x)}\!\bigl[\E_{Q_{x,a}^P}[f_x^a]\bigr]=0,
\end{equation}
while the inactive within-arm term vanishes.  The pointwise minimization therefore reduces to the active-arm slope parameters $(\beta_a)_{a\in\cA^\ast(x)}$, subject to the slope constraint~\eqref{eq:slope-constraint-recall} from Lemma~\ref{lem:func-eq-maxarm}.

Step (iii): Variance optimization. In the strict region $(|\cA^\ast(x)|=1)$, Lemma~\ref{lem:func-eq-maxarm} fixes the slope uniquely as $\beta_{a^\ast}=g_{a^\ast}^\prime/\pi_{a^\ast}^P$. Substituting into~\eqref{eq:var-step-ii} gives
\begin{equation}\label{eq:vmin-strict-deriv}
\Var_{Q_x^P}\!\bigl[f^P_w(x,\cdot)\bigr]
\geq\pi_{a^\ast}^P(x)\cdot\biggl(\frac{g_{a^\ast}^\prime(x)}{\pi_{a^\ast}^P(x)}\biggr)^{\!2}
\sigma_{a^\ast}^2(x;P)
=\frac{(g_{a^\ast}^\prime(x))^2\,\sigma_{a^\ast}^2(x;P)}{\pi_{a^\ast}^P(x)}.
\end{equation}
This is the singleton case of~\eqref{eq:vmin-harmonic}; the oracle map has weight one on the active arm and intercepts~\eqref{eq:alpha-choice}.

In the kink region $(|\cA^\ast(x)|\geq 2)$, Lemma~\ref{lem:func-eq-maxarm}'s constraint~\eqref{eq:slope-constraint-recall} suggests the natural reparametrization
\begin{equation}\label{eq:reparam-lambda}
\lambda_a(x)\eqdef\frac{\pi_a^P(x)\,\beta_a(x)}{g_a^\prime(x)}
\qquad\text{for }a\in\cA^\ast(x),
\end{equation}
under which~\eqref{eq:slope-constraint-recall} becomes $\sum_{a\in\cA^\ast(x)}\lambda_a(x)=1$. Inverting: $\beta_a=g_a^\prime\,\lambda_a/\pi_a^P$. The active lower-bound term in \eqref{eq:var-step-ii} becomes
\begin{equation}\label{eq:var-weighted}
V_{\mathrm{act}}(x;P,\bm\lambda)
\eqdef\sum_{a\in\cA^\ast(x)}\frac{(g_a^\prime(x))^2\,\lambda_a(x)^2\,\sigma_a^2(x;P)}
{\pi_a^P(x)}.
\end{equation}
The decomposition~\eqref{eq:var-decomp-arm} shows that every admissible evaluation has variance at least $V_{\mathrm{act}}$. The inactive-arm and between-arm contributions vanish for the oracle evaluation.

Write $v_a=(g_a^\prime)^2\sigma_a^2/\pi_a^P>0$ on the active set. Cauchy--Schwarz gives
\[
1=\left(\sum_a\lambda_a\right)^2
\le\left(\sum_av_a\lambda_a^2\right)\left(\sum_av_a^{-1}\right),
\]
where these sums run over active arms. Equality holds precisely at $\lambda_a^\ast=H/v_a$, which gives~\eqref{eq:lambda-opt-oracle} and~\eqref{eq:var-oracle-value}. Therefore $v_f\ge H$ almost surely for each finite-variance member, and $\Vmin\ge H$.

Under Assumption~\ref{ass:oracle-maxarm}, the oracle evaluation comes from a map in $\IFRL$ and has finite total variance. It therefore belongs to the defining competitor family, and its conditional variance $H$ gives $\Vmin\le H$. Total variance then gives the asserted total floor. All oracle coefficients are measurable by their formulas.

Replacing $\sigma_a^2$ by $\widehat\sigma_a^2$ in the same minimization gives~\eqref{eq:lambda-opt-working}. Substitution into~\eqref{eq:var-weighted} gives~\eqref{eq:vmin-closed}. Since $v_a\lambda_a^\ast=H$ and both weight vectors sum to one,
\[
V_{\widehat\lambda}-H
=\sum_{a\in\cA^\ast}v_a(\widehat\lambda_a-\lambda_a^\ast)^2.
\]
Equality is equivalent to $\widehat\lambda=\lambda^\ast$, or to a common proportionality factor between $\widehat\sigma_a^2$ and $\sigma_a^2$ on the active set.
\end{proof}

\begin{remark}[Membership for the covariance family]\label{rem:cov-membership}
For the fixed declaration of Section~\ref{sec:cov-extended}, write $\bm m_e=H_{j_\circ}(e_{j_\circ})$ for the mean vector determined by the primary input $e\in\cN_0$. For structural input $k\in\cK_0$, the covariance-weighted rule satisfies
\begin{equation}\label{eq:cov-IF-slot}
\begin{aligned}
f^{\omega}(x,W,e,k,v)
&\eqdef\sum_{l\in\cA^\ast(k)}\omega_l(x,k)\,m_{e,l}\\
&\quad+\sum_{l\in\cA^\ast(k)}\omega_l(x,k)
  \bigl(h_l(x;W)-m_{e,l}\bigr)\\
&=\sum_{l\in\cA^\ast(k)}\omega_l(x,k)\,h_l(x;W),
\end{aligned}
\end{equation}
the plug-in and its correction cancelling exactly because the data are shared (no propensity denominators); $f^{\omega}$ therefore reads only the structural slot (for working-covariance weights, $\omega$ may also read the auxiliary slot as in Remark~\ref{rem:cov-wv}; the verifications require $\sum_{l\in\cA^\ast(k)}\omega_l(x,k,v)=1$ at every nonempty structural value $k$).  Integrability is a separate part of condition~(B).  A sufficient full-slot domain is
\[
\sup_{x,k,v}\|\bm\omega(x,k,v)\|_1\le L<\infty,
\qquad \E_P\!\left[\max_{l\le K}|h_l(X;W)|\right]<\infty
\quad\text{for every }P\in\cP.
\]
Then $|f^\omega|\le L\max_l|h_l|$ for every auxiliary section, including at all one-wrong substitutions.  Once this L1 requirement holds, the expectation identity in~(B) follows: for any admissible pair with the mean component misspecified, $\E_P[f^{\omega}(\cdot,\eta(P'),\kappa(P),\cdot)] =\E_{P_X}\!\bigl[\sum_{l\in\cA^\ast(X)}\omega_l(X)\,u_{P,l}(X)\bigr] =\E_{P_X}[\min_l u_{P,l}(X)]=\Psi(P)$, all active-arm means being tied on the active set.  Thus this is one map in $\IFRL$ on the rich comparison model.  The cap is sufficient, not necessary: any other full-slot envelope establishing all these L1 requirements also suffices.  To identify the unrestricted optimum using a capped family, the minimizing oracle weights must lie within that cap.

For asymptotic linearity, use fixed balanced folds and the estimated set
\[
\widehat\cA_{\tau_n}(x)=\{l:\widehat u_l(x)-\min_b\widehat u_b(x)\le\tau_n\}.
\]
At the evaluating law, assume $\max_l\E_P[h_l(X;W)^2\mid X]\le C$, the min-of-linear version of the positive-gap margin condition (R2), and, foldwise,
\[
\|\widehat{\bm u}-\bm u_P\|_\infty=o_P(\tau_n),\qquad
\sqrt n\tau_n^{1+\gamma}\longrightarrow0.
\]
Write $\widehat\omega_k(X)$ for the fitted unit-sum weights, supported on $\widehat\cA_{\tau_n}^{(-k)}(X)$, and $\omega_0(X)$ for nonrandom unit-sum limiting weights supported on $\cA^*(X;P)$. Assume $\sup_x\|\widehat\omega_k(x)\|_1=O_P(1)$, $\sup_x\|\omega_0(x)\|_1<\infty$, and $\|\1_{H_k^c}(\widehat\omega_k-\omega_0)\|_{L^2(P_X)}=o_P(1)$, where $H_k=\{\widehat\cA_{\tau_n}^{(-k)}\ne\cA^*\}$ and the vector norm inside $L^2$ is Euclidean. Fixed weight maps satisfy the last condition with zero error.

Set $f_0=\omega_0^\top\bm h$ and $D_k=(\widehat\omega_k-\omega_0)^\top\bm h$. For $T_n^\omega=\sum_k(n_k/n)\mathbb P_{n,k}(\widehat\omega_k^\top\bm h)$, the exact decomposition is
\[
T_n^\omega-\mathbb P_nf_0
=\sum_k\frac{n_k}{n}(\mathbb P_{n,k}-P)D_k
+\sum_k\frac{n_k}{n}PD_k.
\]
On events where the fitted weight norms are bounded, the conditional second-moment assumption gives
\[
PD_k^2\le C'\{\|\1_{H_k^c}(\widehat\omega_k-\omega_0)\|_{L^2(P_X)}^2+P_X(H_k)\}=o_P(1).
\]
Conditional Chebyshev bounds the empirical sum by $o_P(n^{-1/2})$. The min-of-linear version of Lemma~\ref{lem:face-recovery} gives $P_X(H_k)=O_P(\tau_n^\gamma)$ and, from the signed-weight bound~\eqref{eq:R2-face-bound}, $|PD_k|=O_P(\tau_n^{1+\gamma})=o_P(n^{-1/2})$. Therefore $T_n^\omega=\mathbb P_nf_0+o_P(n^{-1/2})$.

For estimated covariance weights, a sufficient condition for the coefficient convergence just used is
\[
\bigl\|\|\widehat{\bm\Sigma}-\bm\Sigma\|_{\mathrm{op}}\bigr\|_{L^2(P_X)}=o_P(1),
\]
together with eigenvalues in a fixed interval $[c,C]\subset(0,\infty)$ for the candidate covariance blocks. Matrix inversion and normalization are uniformly Lipschitz on this domain. A globally integrable map with oracle limiting weights then attains~\eqref{eq:cov-Vmin}. Using the same learners and tolerance at every law gives one estimator on the corresponding learnable subclass. Proposition~\ref{prop:cov-singular} supplies the algebraic optimum for singular blocks. The same feasible argument applies when the chosen coefficients satisfy the displayed norm and convergence conditions.
\end{remark}

\begin{remark}[Working-variance extension]\label{rem:cov-wv}
Theorem~\ref{thm:cov-kink} expresses the bound in the true covariance $\bm\Sigma$; the oracle weights $\bm\lambda^\ast= \bm\Sigma_{\cA^\ast}^{-1}\bm 1/\bm 1^\top\bm\Sigma_{\cA^\ast}^{-1}\bm 1$ attain it. With a positive-definite working covariance $\widehat{\bm\Sigma}$, whether estimated or prespecified, the analyst uses weights $\widehat{\bm\lambda}=\widehat{\bm\Sigma}_{\cA^\ast}^{-1}\bm 1/\bm 1^\top \widehat{\bm\Sigma}_{\cA^\ast}^{-1}\bm 1$, and the resulting variance under truth is
\begin{equation}\label{eq:cov-V-working}
\Var_{Q_x^P}\bigl[f^P_w\bigr]=\widehat{\bm\lambda}^\top\,\bm\Sigma_{\cA^\ast(x)}\,\widehat{\bm\lambda}
=\frac{\bm 1^\top\widehat{\bm\Sigma}_{\cA^\ast}^{-1}\bm\Sigma_{\cA^\ast}\widehat{\bm\Sigma}_{\cA^\ast}^{-1}\bm 1}
{\bigl(\bm 1^\top\widehat{\bm\Sigma}_{\cA^\ast}^{-1}\bm 1\bigr)^2},
\end{equation}
which equals the variance floor in~\eqref{eq:cov-Vmin} if and only if $\widehat{\bm\Sigma}_{\cA^\ast}^{-1}\bm 1\propto\bm\Sigma_{\cA^\ast}^{-1}\bm 1$ (in particular when $\widehat{\bm\Sigma}\propto\bm\Sigma$), and strictly exceeds it otherwise, by the uniqueness of the minimizer of the underlying quadratic program. This is the covariance-version of the ``adaptive variance weighting'' principle.
\end{remark}

\begin{proof}[Proof of Theorem~\ref{thm:cov-kink}]
Lemma~\ref{lem:func-eq-cov} first reduces $f^P_w$ to an affine function of the shared score vector.  We minimize the resulting covariance quadratic form under the unit-sum constraint to obtain the lower bound.  Under the additional oracle-realizability condition, the minimizing evaluation belongs to the comparison class and gives equality.

Step (i): functional form via Lemma~\ref{lem:func-eq-cov}. Choose the canonical active-supported representative supplied by Lemma~\ref{lem:func-eq-cov}.  Then $f^P_w$ satisfies
\begin{equation}\label{eq:cov-affine}
f^P_w(x;W)=\alpha(x)+\bm b(x)^{\!\top}\bm h(x;W)
\qquad Q_x^P\text{-a.s.,}
\end{equation}
with $b_k(x)=0$ for $k\notin\cA^\ast(x)$ and the active-supported sub-vector $\bm b_{\cA^\ast(x)}(x)$ satisfying the unit-sum identity $\bm 1_{\cA^\ast(x)}^{\!\top}\bm b_{\cA^\ast(x)}(x)=1$ (modulo $\ker\bm\Sigma_{\cA^\ast(x)}$, which is trivial under the positive-definiteness hypothesis).  The intercept is fixed by
\begin{equation}\label{eq:cov-pt-unbias}
\alpha(x)+\bm b_{\cA^\ast(x)}(x)^{\!\top}\bm u_{P,\cA^\ast(x)}(x)
=\min_l u_{P,l}(x).
\end{equation}

Step (ii): variance. The conditional variance is
\begin{equation}\label{eq:cov-var}
\Var_{Q_x^P}\!\bigl[f^P_w(x;W)\bigr]
=\bm b_{\cA^\ast(x)}(x)^{\!\top}\,\bm\Sigma_{\cA^\ast(x)}(x;P)\,
\bm b_{\cA^\ast(x)}(x),
\end{equation}
since $b_k(x)=0$ off active and only active-set components contribute to the quadratic form.

Step (iii): Lagrangian minimization. We minimize $\bm b^\top\bm\Sigma_{\cA^\ast}\bm b$ over $\bm b\in\R^{|\cA^\ast(x)|}$ subject to $\bm 1^\top\bm b=1$. By the method of Lagrange multipliers, $\nabla_{\bm b}(\bm b^\top\bm\Sigma_{\cA^\ast}\bm b-2\mu(\bm 1^\top\bm b-1))=0$ gives $\bm\Sigma_{\cA^\ast}\bm b=\mu\bm 1$, so
\begin{equation}\label{eq:cov-beta-opt}
\bm b^\ast=\mu\,\bm\Sigma_{\cA^\ast(x)}^{-1}\bm 1,
\qquad\mu=\frac{1}{\bm 1^\top\bm\Sigma_{\cA^\ast(x)}^{-1}\bm 1}
\quad(\text{from }\bm 1^\top\bm b=1).
\end{equation}
This is \eqref{eq:cov-lambda} (with $\bm\lambda^\ast=\bm b^\ast$, the identification of slopes with weights here because of the unit-sum constraint). The constrained minimum is
\begin{equation*}
(\bm b^\ast)^\top\bm\Sigma_{\cA^\ast}\bm b^\ast
=\mu^2\bm 1^\top\bm\Sigma_{\cA^\ast}^{-1}\bm\Sigma_{\cA^\ast}\bm\Sigma_{\cA^\ast}^{-1}\bm 1
=\mu^2\bm 1^\top\bm\Sigma_{\cA^\ast}^{-1}\bm 1
=\mu=\frac{1}{\bm 1^\top\bm\Sigma_{\cA^\ast}^{-1}\bm 1}.
\end{equation*}
Every admissible finite-variance evaluation satisfies the same constraint, so this value is a lower bound for $\Vmin(x;P)$.

Under the additional oracle-realizability condition for equality, the intercept $\alpha^\ast(x)$ is fixed by~\eqref{eq:cov-pt-unbias}: $\alpha^\ast(x)=\min_l u_{P,l}(x)-\bm b^{\ast\top}\bm u_{P,\cA^\ast(x)}(x)$. Substituting into~\eqref{eq:cov-affine} and rewriting in centered form yields \eqref{eq:cov-IF}.  This computes the infimum over the affine class pinned by Lemma~\ref{lem:func-eq-cov}.  The oracle-realizability condition supplies a map in $\IFRL$ with this truth evaluation and finite $P$-variance; hence it belongs to the competitor family defining $\Vmin$ and gives equality with the constrained minimum.  Remark~\ref{rem:cov-membership} gives sufficient envelope conditions for this map membership and states separately the learning conditions for estimator attainment.
\end{proof}

\begin{corollary}[Barycentric and covariance-optimal active-face weights]
\label{cor:barycentric-covariance}
Under the hypotheses of Theorem~\ref{thm:cov-kink}, including its oracle-realizability condition for equality, for $P_X$-a.e.\ $x$ write $S=\cA^\ast(x)$ and $m=|S|$.  The barycentric coefficient $\bm\lambda^{\rm bar}=m^{-1}\bm 1_S$ has conditional variance
\begin{equation}\label{eq:barycentric-cov-var}
V_{\rm bar}(x)=m^{-2}\bm 1_S^\top\bm\Sigma_S(x;P)\bm 1_S.
\end{equation}
The covariance-optimal RALU member satisfies
\begin{equation}\label{eq:barycentric-cov-gap}
V_{\rm bar}(x)-\Vmin(x;P)
=\bigl(\bm\lambda^{\rm bar}-\bm\lambda^\ast\bigr)^\top
\bm\Sigma_S(x;P)
\bigl(\bm\lambda^{\rm bar}-\bm\lambda^\ast\bigr)\ge0.
\end{equation}
Equality holds if and only if $\bm\Sigma_S(x;P)\bm 1_S=c(x)\bm 1_S$ for some scalar $c(x)$. Thus equal active-face weighting is efficient exactly when all active scores have the same covariance row sum.
\end{corollary}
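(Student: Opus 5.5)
The plan is to treat this as a finite-dimensional quadratic identity at each fixed $x$. Throughout we work outside the $P_X$-null set on which Theorem~\ref{thm:cov-kink} holds. Write $\bm\Sigma_S=\bm\Sigma_S(x;P)$, which is positive definite by hypothesis, and $\bm\lambda^\ast=\bm\Sigma_S^{-1}\bm1_S/(\bm1_S^\top\bm\Sigma_S^{-1}\bm1_S)$ as in~\eqref{eq:cov-lambda}. Under oracle realizability, Theorem~\ref{thm:cov-kink}(ii) gives $\Vmin(x;P)=(\bm1_S^\top\bm\Sigma_S^{-1}\bm1_S)^{-1}=\bm\lambda^{\ast\top}\bm\Sigma_S\bm\lambda^\ast$.

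The barycentric variance~\eqref{eq:barycentric-cov-var} is direct. The barycentric member $f^{\omega}_{\rm cov}$ of~\eqref{cor:barycentric-covariance}, i.e.\ the rule~\eqref{eq:cov-full-main} with $\omega=m^{-1}\bm1$ on the active set, has truth evaluation $m^{-1}\bm1_S^\top\bm h_S$. Its conditional variance is therefore $\bm\lambda^{\rm bar\top}\bm\Sigma_S\bm\lambda^{\rm bar}=m^{-2}\bm1_S^\top\bm\Sigma_S\bm1_S$. Membership of this map in the class is not needed; only the value of the quadratic form is.

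For the gap~\eqref{eq:barycentric-cov-gap}, expand $\bm\lambda^{\rm bar}=\bm\lambda^\ast+\bm d$, where $\bm1_S^\top\bm d=0$ because both vectors have unit sum. Then
\[
\bm\lambda^{\rm bar\top}\bm\Sigma_S\bm\lambda^{\rm bar}=\bm\lambda^{\ast\top}\bm\Sigma_S\bm\lambda^\ast+2\bm d^\top\bm\Sigma_S\bm\lambda^\ast+\bm d^\top\bm\Sigma_S\bm d .
\]
The cross term vanishes, since $\bm\Sigma_S\bm\lambda^\ast=\mu\bm1_S$ (the Lagrange condition from the proof of Theorem~\ref{thm:cov-kink}) and $\bm d^\top\bm1_S=0$. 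This yields the displayed identity, and it is nonnegative because $\bm\Sigma_S\succeq0$.

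For the equality characterization, positive definiteness makes $\bm d^\top\bm\Sigma_S\bm d=0$ equivalent to $\bm d=0$, that is, to $\bm\lambda^{\rm bar}=\bm\lambda^\ast$. This holds iff $\bm\Sigma_S^{-1}\bm1_S\propto\bm1_S$, which is equivalent to $\bm\Sigma_S\bm1_S=c\bm1_S$. Here $c\neq0$, since $c=m^{-1}\bm1^\top\bm\Sigma_S\bm1>0$, so the inversion step is legitimate. The only real obstacle is bookkeeping: the identity must be stated on one common null set, and the barycentric variance must be read as the quadratic form rather than as a claim about class membership. The main content is the cross-term cancellation.
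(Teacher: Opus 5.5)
Your proposal is correct and follows the paper's proof essentially step for step: both expand the quadratic form about $\bm\lambda^\ast$, eliminate the cross term using the first-order condition $\bm\Sigma_S\bm\lambda^\ast=\mu\bm 1_S$ together with $\bm 1_S^\top(\bm\lambda^{\rm bar}-\bm\lambda^\ast)=0$, and then use positive definiteness to reduce equality to $\bm\lambda^{\rm bar}=\bm\lambda^\ast$, which is equivalent to the row-sum condition. The only slip is cosmetic: your reference for the barycentric rule should point to the weighted rule \eqref{eq:cov-full-main}, not to the corollary's own label.
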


\begin{proof}
Both $\bm\lambda^{\rm bar}$ and $\bm\lambda^\ast$ have unit sum. The first-order condition for the constrained quadratic program gives $\bm\Sigma_S\bm\lambda^\ast=\mu\bm 1_S$.  Expanding the quadratic at $\bm\lambda^\ast$ therefore eliminates the cross term and yields \eqref{eq:barycentric-cov-gap}.  Positive definiteness makes equality equivalent to $\bm\lambda^{\rm bar}=\bm\lambda^\ast$.  The formula for $\bm\lambda^\ast$ then gives the stated row-sum condition, and the converse follows from the same first-order condition.
\end{proof}

\begin{remark}[Consistency with the arm-partition variance formula]\label{rem:thm-kink-corollary}
Under arm-partition (Definition~\ref{def:max-arm}) with identifying score $h_k(x;W)=g_k^\prime(x)\cdot\1\{A=k\}\cdot\bigl(h_k(x;\widetilde W)-u_{P,k}(x)\bigr)/\pi_k^P(x) +\text{const}$, the covariance matrix $\bm\Sigma$ is diagonal with entries
\begin{equation*}
\Sigma_{kk}(x;P)=\frac{(g_k^\prime(x))^2\sigma_k^2(x;P)}{\pi_k^P(x)}
\quad(k\in\cA^\ast(x)).
\end{equation*}
The matrix inverse $\bm\Sigma_{\cA^\ast}^{-1}$ is then the diagonal of reciprocals, and $\bm 1^\top\bm\Sigma_{\cA^\ast}^{-1}\bm 1= \sum_{k\in\cA^\ast}\pi_k^P/[(g_k^\prime)^2\sigma_k^2]$, recovering \eqref{eq:vmin-harmonic}. The \emph{values} therefore coincide, the simplification arising entirely from the diagonality of $\bm\Sigma$ under the centered arm-partition embedding.  This is a consistency relation rather than a logical corollary: the two theorems quantify over classes with different declared nuisances (the arm-partition class also carries propensity robustness) and different section structures (per-arm intercepts versus a single affine form in $\bm h$), so the agreement of the two formulas is a consistency check across structures rather than a reduction; cf.\ the remark following Lemma~\ref{lem:func-eq-cov}.
\end{remark}

\begin{remark}[Singular-locus parametrization]\label{rem:lemcov-singular}
At the singular locus $\bm 1\in\ker\bm\Sigma_{\cA^\ast(x)}(x;P)$ (e.g., when the active scores satisfy $h_{k_1}+\dots+h_{k_{|\cA^\ast|}}\equiv c$ $Q_x^P$-a.s.), the normalizing direction of Lemma~\ref{lem:func-eq-cov}~(2) does not exist, since $\bm 1\notin\operatorname{range}(\bm\Sigma_{\cA^\ast})$, and no unit-sum identity is asserted; adding $\gamma\bm1$ to $\bm b_{\cA^\ast}$ leaves the variance unchanged, and the corresponding constant shift is absorbed into the intercept to preserve the same evaluation.  In this regime, the identified object is the equivalence class $\bm\beta+\ker\bm\Sigma_{\cA^\ast}$, and the variance functional $\bm\beta^{\!\top}\bm\Sigma\bm\beta$ collapses to a degenerate quadratic form.  Proposition~\ref{prop:cov-singular} identifies $\Vmin$ in the positive-semidefinite regime under its oracle-realization condition.
\end{remark}

\subsection{Singular extension via Moore--Penrose pseudo-inverse}\label{sec:cov-singular}

The positive-definiteness assumption on $\bm\Sigma_{\cA^\ast(x)}$ in Theorem~\ref{thm:cov-kink} fails when the active-arm scores satisfy an algebraic constraint forcing a linear combination $\bm\lambda^\top\bm h_{\cA^\ast}(x;W)$ to be deterministic under $Q_x^P$ for some non-zero $\bm\lambda$. The Balke--Pearl problem provides a concrete example (Section~\ref{sec:bp-degenerate}) where vanishing observed cells make distinct bound expressions coincide as random variables, producing a rank-deficient active covariance.

The extension is via the Moore--Penrose pseudo-inverse $\bm\Sigma_{\cA^\ast}^+$. Recall that $\bm\Sigma_{\cA^\ast}^+\bm\Sigma_{\cA^\ast}\bm\Sigma_{\cA^\ast}^+=\bm\Sigma_{\cA^\ast}^+$, and $\bm\Sigma_{\cA^\ast}^+\bm 1$ is the minimum-norm solution to $\bm\Sigma_{\cA^\ast}\bm\beta=\bm 1$ when one exists, that is, when $\bm 1\in\operatorname{range}(\bm\Sigma_{\cA^\ast})$, and equals $\bm 0$ when $\bm 1\in\ker(\bm\Sigma_{\cA^\ast})$.

\begin{proposition}[Singular extension]\label{prop:cov-singular}
Adopt the setup of Theorem~\ref{thm:cov-kink} but allow $\bm\Sigma_{\cA^\ast(x)}$ to be merely positive-semidefinite, and assume, as in that theorem, only \emph{oracle realization}: the measurable minimizing evaluation specified below is the truth evaluation of one full-slot map in $\IFRL$ and has finite variance under $P$.  No realization of every signed unit-sum weight field is required.  Then at $P_X$-a.e.\ $x$ the essential infimum of Definition~\ref{def:vmin} equals
\begin{equation}\label{eq:cov-Vmin-sing}
\Vmin(x;P)=\begin{cases}
\dfrac{1}{\bm 1^\top\bm\Sigma_{\cA^\ast(x)}^+\bm 1}
&\text{if }\bm 1\in\operatorname{range}\bigl(\bm\Sigma_{\cA^\ast(x)}\bigr),\\[3pt]
0&\text{if }\bm 1\notin\operatorname{range}\bigl(\bm\Sigma_{\cA^\ast(x)}\bigr),
\end{cases}
\end{equation}
and the infimum is attained by the covariance-weighted family with a measurable minimizing weight map. In the first regime the denominator is strictly positive and the minimizer is unique modulo $\ker\bm\Sigma_{\cA^\ast}$. These kernel shifts leave the centered influence representation unchanged.  Moreover $\Vmin(x;P)=0$ if and only if $\ker\bm\Sigma_{\cA^\ast(x)}\not\subseteq\{\bm 1\}^\perp$, equivalently if and only if some weight vector $\bm\lambda^\ast$ with $\bm 1^\top\bm\lambda^\ast=1$ makes $\bm\lambda^{\ast\top}\bm h_{\cA^\ast}(x;W)$ deterministic under $Q_x^P$; when $\bm 1\in\ker\bm\Sigma_{\cA^\ast(x)}$ this holds with $\bm\lambda^\ast=\bm 1/|\cA^\ast(x)|$.
\end{proposition}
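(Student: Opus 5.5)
The plan is to follow the proof of Theorem~\ref{thm:cov-kink} step for step, changing only the finite-dimensional quadratic program. For a finite-variance member $f\in\IFRL$, Lemma~\ref{lem:func-eq-cov} gives, outside one map-dependent null set, the active-supported representative $f^P_w=\alpha_b+\bm b^\top\bm h$. Its conditional variance is $\bm b_S^\top\bm\Sigma_S\bm b_S$, where $S=\cA^\ast(x)$. This quadratic form is invariant under shifts of $\bm b_S$ by $\ker\bm\Sigma_S$, because $\bm b$ is supported on $S$.

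I would split into the two regimes according to whether $\bm1\in\operatorname{range}\bm\Sigma_S$.

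In the first regime, part (2) of Lemma~\ref{lem:func-eq-cov} supplies a normalizing direction, so $\bm1^\top\bm b_S=1$. I would then compute the minimum of $\bm b^\top\bm\Sigma_S\bm b$ subject to $\bm1^\top\bm b=1$. Write $\bm1=\bm\Sigma_S\bm v$ with $\bm v=\bm\Sigma_S^+\bm1$. Cauchy--Schwarz in the semi-inner product $\langle\bm a,\bm c\rangle=\bm a^\top\bm\Sigma_S\bm c$ gives
\[
1=(\bm b^\top\bm\Sigma_S\bm v)^2\le(\bm b^\top\bm\Sigma_S\bm b)(\bm1^\top\bm\Sigma_S^+\bm1).
\]
The denominator is positive because $\bm1\neq\bm0$ lies in the range of $\bm\Sigma_S$. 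Equality holds at $\bm b^\ast=\bm\Sigma_S^+\bm1/(\bm1^\top\bm\Sigma_S^+\bm1)$. Any other equality case differs from $\bm b^\ast$ by an element of $\ker\bm\Sigma_S$: the equality case forces $\bm\Sigma_S\bm b\propto\bm1$. Such shifts change $\bm b^\top\bm h$ only by a $Q_x^P$-a.s.\ constant, which the intercept absorbs, so the centered representation is unchanged.

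In the second regime the lower bound $0$ is trivial. For the minimizer, decompose $\bm1=\bm r+\bm k$ with $\bm r\in\operatorname{range}\bm\Sigma_S$ and $\bm0\neq\bm k\in\ker\bm\Sigma_S$. Then $\bm\lambda=\bm k/\|\bm k\|^2$ is a unit-sum kernel vector with zero variance. When $\bm1\in\ker\bm\Sigma_S$ this is $\bm1/|S|$. This case analysis also proves the stated equivalence: $\Vmin=0$ iff some unit-sum vector lies in the kernel. That holds iff $\ker\bm\Sigma_S\not\subseteq\bm1^\perp$, which by $\operatorname{range}=\ker^\perp$ is iff $\bm1\notin\operatorname{range}\bm\Sigma_S$. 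A kernel vector is the same thing as a deterministic linear combination $\bm\lambda^\top\bm h_S$ under $Q_x^P$.

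For the upper bound, I need a measurable minimizing weight map. On each of the finitely many active-set strata, I would define the weights through Borel formulas using the Moore--Penrose inverse and the kernel projection $I-\bm\Sigma_S^+\bm\Sigma_S$, applied to $\bm1$. The regime indicator $\{\bm\Sigma_S\bm\Sigma_S^+\bm1=\bm1\}$ is also Borel. Oracle realization then places this evaluation in the finite-variance competitor family, which gives $\Vmin\le$ the program value. Combined with the lower bound, this yields \eqref{eq:cov-Vmin-sing}.

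I expect the main obstacle to be the lower bound in the second regime on the sets where no normalizing direction exists. Lemma~\ref{lem:func-eq-cov} then asserts no unit-sum constraint, and the bound $0$ holds trivially. The harder point is that the first regime needs the unit-sum normalization to hold simultaneously on the whole measurable set where $\bm1\in\operatorname{range}\bm\Sigma_S$, for an arbitrary member $f$. I would take this from (COV2) via Lemma~\ref{lem:func-eq-cov}(2), checking that the direction $t\bm\Sigma\bm v$ is reachable there.
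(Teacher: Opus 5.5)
Your proposal is correct and follows the paper's own proof. It uses the same range/kernel case split and the same Cauchy--Schwarz bound in the $\bm\Sigma$-semi-inner product through the normalizing direction $t\bm\Sigma\bm v$ with $\bm v_{\cA^\ast}=\bm\Sigma_{\cA^\ast}^+\bm 1$. It then takes the kernel weight $(\bm I-\bm\Sigma\bm\Sigma^+)\bm 1/\|(\bm I-\bm\Sigma\bm\Sigma^+)\bm 1\|^2$ and invokes oracle realization for the upper bound. The one cosmetic difference is that you go through the active-supported representative and its unit sum from Lemma~\ref{lem:func-eq-cov}(2), whereas the paper applies the mean-shift identity directly to an arbitrary coefficient vector $\bm\beta$ to obtain $\bm\beta^\top\bm\Sigma\bm v=1$; the two routes are equivalent.
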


\begin{proof}
Value of the constrained program. Decompose $\bm 1=\bm 1_r+\bm 1_0$ with $\bm 1_r\in\operatorname{range}(\bm\Sigma_{\cA^\ast})$ and $\bm 1_0\in\ker(\bm\Sigma_{\cA^\ast})$, an orthogonal decomposition by symmetry, and write $\bm\lambda=\bm\lambda_r+\bm\lambda_0$ accordingly; the objective of $\min\{\bm\lambda^\top\bm\Sigma_{\cA^\ast}\bm\lambda: \bm 1^\top\bm\lambda=1\}$ is $\bm\lambda_r^\top\bm\Sigma_{\cA^\ast}\bm\lambda_r$ and the constraint reads $\bm 1_r^\top\bm\lambda_r+\bm 1_0^\top\bm\lambda_0=1$.  If $\bm 1\notin\operatorname{range}(\bm\Sigma_{\cA^\ast})$, then $\bm 1_0\ne\bm 0$, and $\bm\lambda_r=\bm 0$, $\bm\lambda_0=\bm 1_0/\|\bm 1_0\|^2$ attain value $0$.  If $\bm 1\in\operatorname{range}(\bm\Sigma_{\cA^\ast})$, kernel components affect neither objective nor constraint, the problem restricts to the range where $\bm\Sigma_{\cA^\ast}$ is positive-definite, and the stationarity system yields $\bm\lambda_r^\ast=\bm\Sigma_{\cA^\ast}^{+}\bm 1\big/ (\bm 1^\top\bm\Sigma_{\cA^\ast}^{+}\bm 1)$ with value $1/(\bm 1^\top\bm\Sigma_{\cA^\ast}^{+}\bm 1)$, strictly positive since $\bm 1^\top\bm\Sigma_{\cA^\ast}^{+}\bm 1=0$ would force $\bm 1\perp\operatorname{range}(\bm\Sigma_{\cA^\ast})\ni\bm 1$.

Lower bound by the $\bm\Sigma$-semi-inner-product. Fix one map $f\in\IFRL$ with finite variance at the evaluating law. By Lemma~\ref{lem:func-eq-cov}, there is an $f$-dependent $P_X$-null set; fix $x$ outside it with $f^P_w(x,\cdot)\in L^2(Q_x^P)$.  Then $f^P_w(x,\cdot)=\alpha+\bm\beta^{\!\top}\bm h$ $Q_x^P$-a.s.\ for some $\bm\beta$, and $\Var_{Q_x^P}[f^P_w(x,\cdot)]=\bm\beta^{\!\top}\bm\Sigma\bm\beta$ with $\bm\Sigma\eqdef\bm\Sigma(x;P)$.  We argue directly on $\bm\beta$, with no choice of representative; all quantities below are invariant under $\bm\beta\mapsto\bm\beta+\bm\gamma$, $\bm\gamma\in\ker\bm\Sigma$.

Assume $\bm 1\in\operatorname{range}(\bm\Sigma_{\cA^\ast})$ and set $\bm v_{\cA^\ast}\eqdef\bm\Sigma_{\cA^\ast}^{+}\bm 1$, $\bm v=\bm 0$ off $\cA^\ast$.  Then $(\bm\Sigma\bm v)_{\cA^\ast} =\bm\Sigma_{\cA^\ast}\bm\Sigma_{\cA^\ast}^{+}\bm 1=\bm 1$, while the inactive block of $t\,\bm\Sigma\bm v$ is $O(t)$, so for $t>0$ small the direction $\bm\delta\eqdef t\,\bm\Sigma\bm v$ lies in $\operatorname{range}\bm\Sigma$, preserves the active face, and is admissible by Assumption~\ref{ass:shared-perturb}~(COV2). Its active block being $t\bm 1$, the master subtraction~\eqref{eq:lemcov-master-subtraction} gives
\begin{equation}\label{eq:cs-normalisation}
\bm\beta^{\!\top}\bm\Sigma\bm v=1 .
\end{equation}
Cauchy--Schwarz for the positive semidefinite form $\langle\bm a,\bm b\rangle_{\bm\Sigma}\eqdef\bm a^{\!\top}\bm\Sigma\bm b$ now yields
\[
1=\bigl(\bm\beta^{\!\top}\bm\Sigma\bm v\bigr)^2
\le\bigl(\bm\beta^{\!\top}\bm\Sigma\bm\beta\bigr)
\bigl(\bm v^{\!\top}\bm\Sigma\bm v\bigr),
\qquad
\bm v^{\!\top}\bm\Sigma\bm v
=\bm 1^{\!\top}\bm\Sigma_{\cA^\ast}^{+}\bm 1 ,
\]
whence $\Var_{Q_x^P}[f^P_w(x,\cdot)]\ge(\bm 1^{\!\top} \bm\Sigma_{\cA^\ast}^{+}\bm 1)^{-1}$, with equality for $\bm\beta=\bm v/(\bm v^{\!\top}\bm\Sigma\bm v)$.  If instead $\bm 1\notin\operatorname{range}(\bm\Sigma_{\cA^\ast})$ the program value is $0$ and the lower bound is the nonnegativity of a variance; no claim is made in that case that every admissible active block is feasible for a unit-sum program.

Attainment. The covariance-weighted family of Remark~\ref{rem:cov-membership}, with the measurable weight map $\bm\lambda^\ast(x)=\bm\Sigma_{\cA^\ast}^{+}\bm 1\big/ (\bm 1^\top\bm\Sigma_{\cA^\ast}^{+}\bm 1)$ in the first regime and $\bm\lambda^\ast(x)=(\bm I-\bm\Sigma_{\cA^\ast} \bm\Sigma_{\cA^\ast}^{+})\bm 1\big/ \|(\bm I-\bm\Sigma_{\cA^\ast}\bm\Sigma_{\cA^\ast}^{+})\bm 1\|^2$ in the second (the pseudo-inverse is a Borel function of the matrix entries), is the truth evaluation of a map in $\IFRL$ by the proposition's oracle-realization hypothesis.  Full-slot integrability is part of that hypothesis, not a consequence of unit sum; sufficient envelope conditions appear in Remark~\ref{rem:cov-membership}. The finite-variance hypothesis puts this evaluation in the competitor family defining $\Vmin$.  Its evaluation at the true nuisance values has conditional variance equal to the displayed value.  Exhibiting this member therefore bounds the infimum \emph{above} by the displayed value; together with the lower bound for every finite-variance member of $\IFRL$, this proves~\eqref{eq:cov-Vmin-sing}.

Zero-variance characterization. $\Vmin(x;P)=0$ holds if and only if some feasible $\bm\lambda$ lies in $\ker\bm\Sigma_{\cA^\ast}$, i.e.\ some $\bm k\in\ker\bm\Sigma_{\cA^\ast}$ has $\bm 1^\top\bm k\ne 0$, which is the condition $\ker\bm\Sigma_{\cA^\ast}\not\subseteq\{\bm 1\}^\perp$, equivalently $\bm 1_0\ne\bm 0$, equivalently $\bm 1\notin\operatorname{range}(\bm\Sigma_{\cA^\ast})$; determinism of $\bm\lambda^{\ast\top}\bm h_{\cA^\ast}$ under $Q_x^P$ is $\bm\lambda^\ast\in\ker\bm\Sigma_{\cA^\ast}$ with unit sum.
\end{proof}

\begin{remark}[$\Vmin=0$ as ``deterministic recovery'']\label{rem:Vmin-zero}
The interpretation of $\Vmin(x;P)=0$ is that, at the specific distribution $P$ and point $x$, some unit-sum combination of the arm-scores is an exactly deterministic function of $W$ conditionally on $X=x$, so the conditional target is recovered without conditional variance \emph{at that single $x$}; the variance of an estimator of $\Psi$ retains the marginal term $\Var_{P_X}[\psib]$, and estimator attainment is a separate question. Its $P_X$-mass is determined by the structure of the model, and may be zero, positive or one; there is no generic-position argument to be had here, because $P$-space carries no relevant measure, and continuity of the distribution of $X$ does not by itself make the singular locus $P_X$-null.  The $L^1$ calibration of Section~\ref{sec:lp1} is the extreme case: if $\E_P[Y\mid X]=g(X)$ identically, the kink set is all of $\cX$, the active block is
$\sigma_Y^2\bigl(\begin{smallmatrix}1&-1\\-1&1\end{smallmatrix}\bigr)$
everywhere, and the singular locus has $P_X$-mass one, so $\Vlow(P)$ is $0$ rather than being unaffected.  At the opposite extreme, when $\bm\Sigma_{\cA^\ast(x)}$ is nonsingular at $P_X$-a.e.\ $x$ the locus is empty.  The singular extension is therefore not a technicality that integrates away; it is needed exactly when the model's identifying scores are linearly dependent on the truth support.
\end{remark}

\Needspace{19\baselineskip}
\section{Estimator attainment and strict-branch affineness}\label{ssec:achieve-proof}
The tolerance rule has the following deterministic property. All statements in the lemma are understood $P_X$-almost surely, consistently with the essential supremum in Assumption~\ref{ass:achieve-reg}.

\begin{lemma}[Active-set recovery]\label{lem:face-recovery}
Let $E_n=\{\|\widehat g-g\|_\infty\le\tau_n/2\}$. On $E_n$:
\begin{enumerate}[wide=0pt,label={\textup{(\roman*)}},itemsep=2pt]
\item $\cA^*(x;P)\subseteq\widehat\cA_{\tau_n}(x)$;
\item every retained inactive arm has gap at most $2\tau_n$, and hence
\[
P_X\{\widehat\cA_{\tau_n}(X)\ne\cA^*(X;P)\}\le C(2\tau_n)^\gamma;
\]
\item for any unit-sum weights on $S=\widehat\cA_{\tau_n}(x)$, including signed weights,
\begin{equation}\label{eq:R2-face-bound}
\left|\sum_{a\in S}\widehat\omega_a\{g_a(x)-\psib(x)\}\right|
\le 2\tau_n\|\widehat\omega\|_1
\1\{0<\Delta_+(x)\le2\tau_n\}.
\end{equation}
\end{enumerate}
\end{lemma}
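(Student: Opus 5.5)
The plan is to argue pointwise at a fixed $x$ where the essential-supremum bound holds, so that $|\widehat g_a(x)-g_a(x)|\le\tau_n/2$ for every arm on $E_n$. All three parts then follow from the triangle inequality applied to the fitted and true branch values, together with the tie structure $g_a(x)=\psib(x)=\max_b g_b(x)$ for $a\in\cA^*(x;P)$. Throughout, write $\widehat M(x)=\max_b\widehat g_b(x)$. The first step is the comparison $|\widehat M(x)-\psib(x)|\le\tau_n/2$, which holds because the maximum is $1$-Lipschitz in the sup norm.

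For (i), I take an active arm $a$. Then $\widehat g_a\ge g_a-\tau_n/2=\psib-\tau_n/2\ge\widehat M-\tau_n$, so $\widehat M-\widehat g_a\le\tau_n$, and hence $a\in\widehat\cA_{\tau_n}(x)$.

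For (ii), I take a retained inactive arm $a$. Its gap satisfies $\psib-g_a\le(\widehat M+\tau_n/2)-(\widehat g_a-\tau_n/2)\le\tau_n+\tau_n=2\tau_n$. Since an inactive arm has a strictly positive gap, this gives $0<\Delta_+(x)\le\psib-g_a\le2\tau_n$. By (i) the set $\widehat\cA_{\tau_n}(x)$ always contains $\cA^*(x;P)$, so the two sets differ only if some inactive arm is retained. Therefore $\{\widehat\cA_{\tau_n}\ne\cA^*\}\subseteq\{0<\Delta_+\le2\tau_n\}$. The margin condition (R2) with $t=2\tau_n$, valid once $\tau_n$ is small, then yields $P_X\{\widehat\cA_{\tau_n}(X)\ne\cA^*(X;P)\}\le C(2\tau_n)^\gamma$.

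For (iii), active arms contribute $g_a-\psib=0$ to the sum, so only retained inactive arms matter. Each of these has $|g_a-\psib|\le2\tau_n$ by (ii). Moreover, if no inactive arm is retained the sum vanishes, while if one is retained then the indicator $\1\{0<\Delta_+(x)\le2\tau_n\}$ equals one. In the latter case, bounding the sum by $\sum_a|\widehat\omega_a|\,2\tau_n\le2\tau_n\|\widehat\omega\|_1$ gives \eqref{eq:R2-face-bound} for signed weights as well. The unit-sum property is not actually needed here.

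The main point of care, rather than an obstacle, is the almost-sure bookkeeping. The sup-norm bound holds only $P_X$-a.e., so all of these pointwise statements hold outside one null set, and the probability bound in (ii) is unaffected by that null set. The other thing to check is that "gap" for a retained inactive arm is at least $\Delta_+(x)$, which is what places the event inside the margin set. No earlier result is needed beyond Assumption~\ref{ass:achieve-reg}(R2).
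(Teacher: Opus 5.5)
Your proposal is correct and follows the paper's own argument: bound $\max_b\widehat g_b-\widehat g_a$ for active arms by twice the sup-norm error to get (i), use the tolerance rule to show every retained arm has true gap at most $2\tau_n$ so that an inactive retained arm forces $0<\Delta_+\le2\tau_n$ and (R2) gives (ii), then sum the gap bound against $|\widehat\omega_a|$ for (iii). The only cosmetic difference is that you pass through $|\widehat M-\psib|\le\tau_n/2$, whereas the paper writes the bounds directly in terms of $e(x)=\max_a|\widehat g_a(x)-g_a(x)|$; your remark that the unit-sum property is not used in (iii) is also consistent with the paper's proof.
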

\begin{proof}
Write $e(x)=\max_a|\widehat g_a(x)-g_a(x)|$. For an active arm $a$, $\max_b\widehat g_b-\widehat g_a\le2e\le\tau_n$. For any retained arm, the defining inequality gives
\[
0\le\psib-g_a\le\tau_n+2e\le2\tau_n.
\]
Thus an inactive retained arm implies $0<\Delta_+\le2\tau_n$. The margin condition gives (ii); summing the last gap inequality against $|\widehat\omega_a|$ gives (iii), with zero contribution if all retained arms are active.
\end{proof}

\begin{proof}[Proof of Theorem~\ref{thm:achieve} and Corollary~\ref{cor:achieve-global}]
Fix the evaluating law $P$, and set $f_0=(f^\omega)^P_w$ with $w=\widehat\sigma^2_{\infty,P}$. For fold $k$, write $\widehat f_k$ for the fitted evaluation in~\eqref{eq:Tnstar-def}, $D_k=\widehat f_k-f_0$, $n_k=|I_k|$, and $\mathbb P_{n,k}$ for the empirical measure on $I_k$. Then
\begin{equation}\label{eq:R_n-decomp}
T_n^\omega-\mathbb P_nf_0
=\underbrace{\sum_{k=1}^{\mathcal K}\frac{n_k}{n}(\mathbb P_{n,k}-P)D_k}_{R_{1,n}}
+\underbrace{\sum_{k=1}^{\mathcal K}\frac{n_k}{n}PD_k}_{R_{2,n}}.
\end{equation}
Every bound below holds conditionally on the training sample of the indicated fold; the fixed number of folds permits taking their maximum.

We first bound the conditional second moment of $D_k$. Suppress $k$, put $\varepsilon_a=h_a(\widetilde W)-u_{P,a}(X)$, $\Delta_{1,a}=\widehat\eta_{1,a}-u_{P,a}$ and $\Delta_{2,a}=\widehat\eta_{2,a}-\pi_a^P$, and extend fitted and limiting weights by zero outside their respective active sets. Define
\[
B=\sum_a\widehat\omega_a(g_a-\psib),\qquad
\widehat c_a=\frac{g_a'\widehat\omega_a}{\widehat\eta_{2,a}},\qquad
c_a=\frac{g_a'\bar\omega_a}{\pi_a^P}.
\]
Affineness gives the exact decomposition
\[
D_k=B+\sum_a\1\{A=a\}(\widehat c_a-c_a)\varepsilon_a
+\sum_a g_a'\widehat\omega_a\Delta_{1,a}
\left(1-\frac{\1\{A=a\}}{\widehat\eta_{2,a}}\right).
\]
Writing $H=\{\widehat\cA_{\tau_n}\ne\cA^*\}$, the tolerance rule gives $B^2\le2\tau_n^2\1_H+8e^2$, where $e\le C\max_a|\Delta_{1,a}|$. On $H^c$, the working-variance weights are uniformly Lipschitz in their propensity and variance inputs by (R6); fixed weights agree exactly there. On $H$, all weights and inverse propensity factors remain bounded. Consequently, for $\tau_n\le1$,
\begin{align}
PD_k^2
&\le C\left\{P_XB^2+\sum_a\|\widehat c_a-c_a\|_2^2
+\sum_a\|\Delta_{1,a}\|_2^2\right\}\notag\\
&\le C\left\{\|\widehat\eta-\eta(P)\|_2^2
+\|\widehat\sigma^2-\widehat\sigma^2_{\infty,P}\|_2^2
+P_X(H)\right\}=o_P(1).
\label{eq:R1-var-bound}
\end{align}
The variance-input term is omitted for fixed weights. The last equality follows from (R3), $P(E_n)\to1$, and Lemma~\ref{lem:face-recovery}. If $\mathcal G_k$ is the training-data $\sigma$-field for fold $k$, conditional independence gives, for each $\epsilon>0$,
\[
P\left(\sqrt{n_k}|(\mathbb P_{n,k}-P)D_k|>\epsilon\mid\mathcal G_k\right)
\le\min\{1,PD_k^2/\epsilon^2\}.
\]
The right side tends to zero in probability and is bounded by one, so its expectation tends to zero. Balanced fold sizes and a union bound over $k$ yield $R_{1,n}=o_P(n^{-1/2})$.

For the conditional bias, direct integration of the same decomposition gives
\begin{align}
\E_P[\widehat f_k\mid X,\mathcal G_k]-\psib
&=\sum_{a\in S}\widehat\omega_a(g_a-\psib)
+\sum_{a\in S}\widehat\omega_ag_a'
\frac{\Delta_{1,a}\Delta_{2,a}}{\widehat\eta_{2,a}},
\qquad S=\widehat\cA_{\tau_n}.
\label{eq:R2-product-pointwise}
\end{align}
The fitted weights are simplex weights in this theorem. Lemma~\ref{lem:face-recovery} and (R2) therefore bound the absolute integral of the first sum, on $E_n$, by $C\tau_n^{1+\gamma}$. For the second sum, overlap and Cauchy--Schwarz give
\begin{equation}\label{eq:R2-eta-bound}
\left|P_X\sum_{a\in S}\widehat\omega_ag_a'
\frac{\Delta_{1,a}\Delta_{2,a}}{\widehat\eta_{2,a}}\right|
\le C\sum_a\|\Delta_{1,a}\|_2\|\Delta_{2,a}\|_2
=o_P(n^{-1/2}).
\end{equation}
Assumption (R3) imposes the tolerance window
\begin{equation}\label{eq:tau-window}
\|\widehat g-g\|_\infty=o_P(\tau_n),\qquad
\tau_n=o\{n^{-1/[2(1+\gamma)]}\}.
\end{equation}
It follows that $R_{2,n}=o_P(n^{-1/2})$, and~\eqref{eq:R_n-decomp} proves
\begin{equation}\label{eq:AL-target}
T_n^\omega=\mathbb P_n(f^\omega)^P_w+o_P(n^{-1/2}).
\end{equation}

For global robustness, fix $P,P'\in\cP$ agreeing in one of the two nuisance components and fix any measurable auxiliary input. The envelope~\eqref{eq:maxarm-onewrong-envelope} ensures absolute integrability. At the true active set, the first sum in~\eqref{eq:R2-product-pointwise} is zero. Taking $\Delta_{1,a}=u_{P',a}-u_{P,a}$ and $\Delta_{2,a}=\pi_a^{P'}-\pi_a^P$, agreement in either component also makes every product zero. Thus
\begin{equation}\label{eq:fstar-MR}
\E_P[f^\omega\{X,W;\eta(P')(X),\kappa(P)(X),w(X)\}]=\Psi(P),
\end{equation}
which proves $f^\omega\in\IFRL$. The common learning rules give~\eqref{eq:AL-target} at every $P\in\cP_{\mathrm{ach}}$, proving the class-membership assertion.

At the true inputs, conditional centering and the fact that exactly one arm is observed imply
\[
\Var_P(f_0)=\Var_{P_X}(\psib)+
\E_{P_X}\left[\sum_{a\in\cA^*}
\frac{(g_a')^2\bar\omega_a^2\sigma_a^2}{\pi_a^P}\right].
\]
This is~\eqref{eq:achieve-eq}, and it is finite under (R5)--(R6). For $d_a=(g_a')^2\sigma_a^2/\pi_a^P$ on the active set, the minimizing weights satisfy $d_a\lambda_a^*=c$ with $c=(\sum_{a\in\cA^*}d_a^{-1})^{-1}$. Every unit-sum vector obeys
\[
\sum_{a\in\cA^*}d_a\bar\omega_a^2-c
=\sum_{a\in\cA^*}d_a(\bar\omega_a-\lambda_a^*)^2\ge0.
\]
Equality holds precisely at $\bar\omega=\lambda^*$, since the active variances are positive. For the working-variance rule, this is equivalent to $\widehat\sigma^2_{\infty,P,a}=c_P(X)\sigma_a^2(X;P)$ on the active set, for a measurable $c_P(X)>0$. Fixing the oracle weights at one law, or using a variance learner with the true variance as its limit, therefore realizes the harmonic value. The lower bound of Theorem~\ref{thm:kink} gives~\eqref{eq:achieve-sandwich}; auxiliary invariance of the infimum follows from Lemma~\ref{rem:vmin-w-invariant}. This proves all three attainment assertions.

Finally, the i.i.d. central limit theorem for $f_0-\Psi(P)$ and~\eqref{eq:AL-target} give asymptotic normality with variance~\eqref{eq:achieve-eq}. If the stated squared normalized errors are uniformly integrable, their first two moments converge to those of this normal limit. Hence $n\Var_P(T_n^\omega)$ converges to the same value.
\end{proof}
\begin{proof}[Proof of Proposition~\ref{prop:affine-necessary}]
Suppose toward a contradiction that $f\in\IFRL$, and fix a measurable $w$.  By Corollary~\ref{cor:combined-pt}, the combined pointwise constraint holds for $f$ at $P_X$-a.e.\ $x$; fix $x\in\cX_0$ in this full set, and write $a^\ast=a^\ast(x)$, $J=J(x)$, $u^\ast=\bm u_P(x)$.  Step~0 of the proof of Lemma~\ref{lem:func-eq-maxarm} (fixed-mean affineness, Lemma~\ref{lem:moment-affine}) supplies the representation $f^P_w(x;a^\ast,\cdot)=c(x)+\beta_{a^\ast}(x)h_{a^\ast}$ on $\supp Q^P_{x,a^\ast}$.  Corollary~\ref{cor:matched-arm}, applied with $\bm\delta_m=\{m-u_{P,a^\ast}(x)\}\bm e_{a^\ast}$ and with the arm-mean component as the single disagreeing one, then gives, for every $m\in J(x)$,
\[
\pi_{a^\ast}^P(x)\,\beta_{a^\ast}(x)\,\bigl(m-u_{P,a^\ast}(x)\bigr)
=g\bigl(x,a^\ast,m\bigr)-g\bigl(x,a^\ast,u_{P,a^\ast}(x)\bigr).
\]
Thus, for every $m\in J(x)$,
\[
g(x,a^\ast,m)=\pi_{a^\ast}^P(x)\beta_{a^\ast}(x)m+
\{g(x,a^\ast,u_{P,a^\ast})-\pi_{a^\ast}^P(x)\beta_{a^\ast}(x)u_{P,a^\ast}\}.
\]
This contradicts (iii). The pointwise identities used here follow from Assumptions~\ref{ass:rich}, \ref{ass:meas-main}, and~\ref{ass:armwise} by Corollaries~\ref{cor:combined-pt} and~\ref{cor:matched-arm}; their integrability is supplied by (B). Hence the contradiction applies to every member of $\IFRL$, and $\IFRL=\TRL(\Psi;\eta,\kappa,\Wext)=\varnothing$.
\end{proof}

\section{Verification of the model conditions in the applications}\label{ssec:model-verification}
\label{ssec:verification}

This section verifies the max-arm perturbation conditions~(X1)--(X3) and the shared-score perturbation conditions~(COV1)--(COV3) in the two structures that carry the closed forms. Bounded linear tilts give exact mean shifts for arm-specific scores and for finite shared-score models. All kernel replacements follow the model-cost localization convention of Assumption~\ref{ass:meas}.

\begin{proposition}[Max-arm structure with nonparametric outcome
model]\label{prop:verify-maxarm} Let the links be $g(x,a,u)=g_a^\prime(x)u+\rho(x,a)$ with measurable finite coefficients and $g_a^\prime(x)\ne0$. Let $\cP$ contain all max-arm laws with overlap $\pi_a(x)\ge\epsilon_0>0$, finite positive arm-conditional score variances, and finite integrated model cost
\[
C(x,Q)\eqdef 1+\sum_{a\in\cA}\E_{Q_a}
\bigl[|h_a|+|g_a^\prime(x)h_a+\rho(x,a)|\bigr],
\qquad \E_{P_X}[C(X,Q_X^P)]<\infty.
\]
Here $Q_a$ is the arm-$a$ conditional law of $Q$. Assume the kernel graphs satisfy Assumption~\ref{ass:meas-main}. Then conditions~(X1)--(X3) in Assumption~\ref{ass:armwise}, with cost-localized gluing, hold at every such evaluating law, including on positive-mass ties. No exponential-moment condition is needed.
\end{proposition}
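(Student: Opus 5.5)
The plan is to verify (X1)--(X3) by explicit kernel constructions at a fixed evaluating law $P$ and $P_X$-a.e.\ $x$. Each construction has two parts: an algebraic check that nuisances, structure and cost are as required, and a measurability/gluing check that rests on Assumption~\ref{ass:meas-main} and the cost-localization convention of (M2). Nothing beyond first moments of $h_a$ should enter, since the cost is linear in $|h_a|$. This is why no exponential-moment condition is needed.

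\textbf{(X1).} Fix an arm $a$, $\epsilon\in(0,1)$, and a finitely supported $\nu$ on $\supp Q^P_{x,a}$ with $\E_\nu h_a=u_{P,a}(x)$. I would replace $Q^P_{x,a}$ by the mixture $(1-\epsilon)Q^P_{x,a}+\epsilon\nu$ and keep $\pi^P(x)$ and the other arm laws fixed. This changes no arm mean and no propensity, so $\eta$ and $\kappa$ are unchanged. Overlap is untouched. The arm variance stays finite because $\nu$ has finite support. It stays positive because mixing cannot reduce the variance to zero unless both components are the same point mass, which would contradict $\sigma_a^2>0$.

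The cost of the new kernel is at most $C(x,Q^P_x)$ plus $\epsilon\,\E_\nu[\,|h_a|+|g'_ah_a+\rho|\,]$. That extra term is finite pointwise, so localizing to $\{C\le m\}$ gives an integrable cost and (M2) glues it to a law in $\cP$. The Borel graph of the correspondence follows from the Borel readouts: the condition $\E_\nu h_a=u$ and the mixture map are Borel in $(x,\nu,\epsilon)$.

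\textbf{(X2).} I would take $D_x$ to be the set of $\bm\delta$ satisfying two conditions:
\begin{itemize}
\item the active set of $\bm u_P(x)+\bm\delta$ under the affine links equals $\cA^*(x;P)$;
\item $|\delta_a|<\tfrac12$ for all $a$.
\end{itemize}
This set has Borel graph. Every inactive arm is reachable: shifts $t\bm e_a$ with $|t|g'_a$ smaller than the gap $\Delta_+(x)$ keep arm $a$ inactive, so $\mathcal R_P(x)=\cA\setminus\cA^*$. For the tie-preserving path I would set $r_{x,a}(t)=t/g'_a(x)$ on active arms and $0$ on inactive arms. The window is $\epsilon_P(x)=\tfrac12\min\{\Delta_+(x),\min_a|g'_a(x)|\}$, which is measurable and positive, and is $\infty$ capped when every arm is active.

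\textbf{(X3).} For $\bm\delta\in D_x$ I would tilt each arm by a bounded linear factor,
\[
\frac{dQ_{x,\bm\delta,a}}{dQ^P_{x,a}}=1+\theta_a\,\psi_a(h_a),
\]
where $\psi_a$ is a truncated centred score, for example $\psi_a=\operatorname{clip}_M(h_a)-\E\operatorname{clip}_M(h_a)$ with a measurable choice of $M(x)$ making $\Cov(h_a,\psi_a)>0$. Such an $M$ exists because $\sigma_a^2>0$. Solving $\theta_a\Cov(h_a,\psi_a)=\delta_a$ gives a tilt that is a probability density for $|\delta_a|$ small. I would shrink $D_x$ measurably so that this holds, using $|\theta_a|M<1/2$. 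The tilted kernel satisfies $Q_{x,\bm\delta}\ll Q^P_x$, has mean $u_{P,a}+\delta_a$ and unchanged $\pi$, and equals the baseline at $\bm 0$. The density is bounded by $3/2$, so its cost is at most $\tfrac32 C(x,Q^P_x)$, which is already integrable without any localization. Overlap is preserved, and the variances remain finite and positive. Because $\bm\delta\in D_x$, the active set is preserved. At ties, the common shift $t/g'_a$ keeps the arms tied, so positive-mass tie sets cause no difficulty.

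The main obstacle is measurability of the tilt parameters $(M(x),\theta(x,\bm\delta))$ together with the requirement in (X2) that $D_x$ contain the neighbourhood-type directions needed. I would handle this by choosing $M(x)$ as the least integer with $\Cov(h_a,\operatorname{clip}_M h_a)\ge\sigma_a^2/2$, which is Borel by monotone convergence. I would then define $D_x$ as the intersection of the active-set-preserving set with the box $|\delta_a|<\Cov/(4M)$.
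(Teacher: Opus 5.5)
Your proposal is correct and follows essentially the same route as the paper. For (X1) you glue fixed-mean finite contaminations after cost localization; for (X2)--(X3) you use bounded linear tilts by a centred clipped score, with a least-integer truncation level, box radius $\Cov(h_a,\psi_a)/(4M)$, density in $[1/2,3/2]$, cost at most $\tfrac32 C(x,Q_x^P)$ so that no localization is needed, and the active path $t/g_a'(x)$. One bookkeeping fix: after you shrink $D_x$ to that box, the tie-path window must also include $\min_{a\in\cA^\ast(x;P)}|g_a'(x)|\Cov(h_a,\psi_a)/(4M_a)$, as the paper's radius does, not just $\tfrac12\min\{\Delta_+(x),\min_a|g_a'(x)|\}$.
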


\begin{proof}
For~(X1), every mixture $(1-\epsilon)Q_{x,a}^P+\epsilon\nu$ with $\epsilon\in(0,1)$ and finitely supported $\nu$ of the same mean has finite positive conditional variance and finite pointwise cost. A measurable field of these mixtures need not have integrable cost. On the sets where its cost is at most $m$, paste it into the baseline kernel and leave the baseline elsewhere. The resulting law has integrable cost, preserves all means and propensities, and belongs to $\cP$. These sets exhaust the covariate space as $m\to\infty$, giving exactly the localized gluing required for~(X1).

For~(X2)--(X3), fix an arm and write $u=\E_Q h$ for its baseline mean. For a positive integer $M$, define
\[
b_M=\operatorname{clip}(h,-M,M)-\E_Q\operatorname{clip}(h,-M,M),
\qquad v_M=\E_Q[(h-u)b_M].
\]
Finite conditional variance implies $v_M\to\Var_Q(h)>0$. Choose the least integer $M=M(x,a)$ for which $v_M>0$. This is measurable, and $|b_M|\le2M$. For $|\delta|<\varepsilon_a(x)\eqdef v_M/(4M)$, set
\begin{equation}\label{eq:bounded-mean-tilt}
\frac{\mathrm dQ_{x,a}^{\delta}}{\mathrm dQ_{x,a}^P}
=1+\frac{\delta b_M}{v_M}.
\end{equation}
Its density lies between $1/2$ and $3/2$, integrates to one, and gives the exact identity $\E_{Q_{x,a}^{\delta}}h_a=u_{P,a}(x)+\delta$. It preserves support and finite positive conditional variance.

Let $D_x$ consist of the vectors with $|\delta_a|<\varepsilon_a(x)$ whose perturbed arm values have the same active set as the truth. Its graph is Borel. Every inactive-arm coordinate direction is in $D_x$ for sufficiently small displacement. The active path $r_{x,a}(t)=t/g_a^\prime(x)$ for $a\in\cA^\ast(x;P)$, and zero otherwise, lies in $D_x$ whenever
\[
|t|<\tfrac12\min\bigl\{1,\Delta_+(x),
\min_{a\in\cA^\ast(x;P)}\varepsilon_a(x)|g_a^\prime(x)|\bigr\}.
\]
The minimum uses the convention $\Delta_+=\infty$ when all arms are active. This positive measurable radius proves~(X2), including exact preservation of ties.

Finally mix these arm kernels with the unchanged probabilities $\pi^P(x)$ to obtain $Q_{x,\bm\delta}$. This kernel is jointly measurable and its density relative to $Q_x^P$ is between $1/2$ and $3/2$. Thus it preserves the propensity component, changes only the arm means, and has the required active set. Moreover $C(x,Q_{x,\bm\delta})\le(3/2)C(x,Q_x^P)$, so every measurable selection from this bounded-density bridge has integrable model cost and glues directly. This proves~(X3) without asserting gluing for uncontrolled kernel fields.
\end{proof}

\begin{proposition}[Shared-score structure on a finite residual
space]\label{prop:verify-shared} Assume Assumptions~\ref{ass:rich} and~\ref{ass:meas-main}. Let $\cW$ be finite and $\bm h$ a fixed $\R^K$-valued map on $\cW$. Fix $P\in\cP$ with $\supp Q_x^P=\mathcal S$ for $P_X$-almost every $x$. Assume the admissible local graph contains $(1-\epsilon)Q_x^P+\epsilon\nu$ for every $\epsilon\in(0,1)$ and every probability measure $\nu$ on $\mathcal S$ with score mean $\bm u_P(x)$. Suppose the declaration satisfies the following conditions.
\begin{enumerate}[wide=0pt,label={\textup{(V\arabic*)}},itemsep=1pt]
\item \emph{Fibre preservation.}~One of the following, which are the two ways of making the fixed-mean contaminations stay inside the fibre of \eqref{eq:QxP-rich}.
\begin{enumerate}[leftmargin=1.6em,label={\textup{(\alph*)}},itemsep=1pt]
\item \emph{Saturated declaration.}~The bridged component is mean-saturated, $\eta_{j_\circ}(P')=\bm u(P')$, and there is a \emph{single} support-preserving neighbourhood $\cU_x$ of $Q_x^P$ on which every other declared component $\eta_l$, $l\ne j_\circ$, together with $\kappa$, is constant, and which contains the whole contamination segment $(1-\epsilon)Q_x^P+\epsilon\nu$ for \emph{all} $\epsilon\in(0,1)$ and every admissible $\nu$ used below.
\item \emph{Invariant fibre.}~Every support-preserving interior contamination with $\E_{q}\bm h=\bm u_P(x)$ leaves \emph{all} declared components and $\kappa$ unchanged.
\end{enumerate}
\item \emph{Index preservation.}~There is a fixed primary index $j_\circ$ and a measurable readout $H$ with $\bm u(P')=H\{\eta_{j_\circ}(P')\}$, and every score-tilt kernel below changes $\eta_{j_\circ}$ only, leaving $\eta_l$, $l\ne j_\circ$, and $P_X$ unchanged.
\item \emph{Model and structural preservation.} There is a positive measurable radius $\rho(x)$ such that, for every $\bm\delta\in T_{\cA^\ast}(x)$ with $\|\bm\delta\|<\rho(x)$ for which the linear-tilt probabilities below are positive, that kernel belongs to $\mathfrak K_x$ and has structural value $\kappa(P)(x)$.
\end{enumerate}
Then (COV1)--(COV3) hold at $P$, with $D_x$ a relative neighbourhood of $\bm0$ in $T_{\cA^\ast}(x)$. For the canonical declaration $\eta_1=\bm u$ and $\kappa=\argmin_lu_l$, structural preservation in (V3) follows from the active-set restriction and the positive gap to inactive coordinates.
\end{proposition}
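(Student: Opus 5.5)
We verify the three conditions one at a time, following the template of Proposition~\ref{prop:verify-maxarm}. Finiteness of $\cW$ turns every kernel on $\mathcal S$ into a probability vector in the relative interior of the simplex on $\mathcal S$. Every score moment is then a finite sum, and every density with respect to $Q_x^P$ is a bounded ratio of cell probabilities. Measurability is handled by working with the finitely many support strata $\{x:\supp Q_x^P=\mathcal S\}$ and the finitely many active-set strata. On each stratum, the conditional means, the covariance $\bm\Sigma(x;P)$, and its pseudoinverse are Borel functions of finitely many probabilities.

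\textbf{(COV1).} Take a finitely supported $\nu$ on $\mathcal S$ with $\E_\nu\bm h=\bm u_P(x)$ and $\epsilon\in(0,1)$. The contamination $q=(1-\epsilon)Q_x^P+\epsilon\nu$ is admissible by hypothesis. It has full support $\mathcal S$ and score mean $\bm u_P(x)$. Under (V1)(b), every declared component and $\kappa$ are unchanged, so $q\in\mathfrak Q_x^P$ directly. Under (V1)(a), the saturated component $\eta_{j_\circ}(q)=\bm u(q)=\bm u_P(x)$ is unchanged. The whole segment lies in $\cU_x$, where the other components and $\kappa$ are constant, so again $q\in\mathfrak Q_x^P$. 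The cost requirement is met by the localization convention of Assumption~\ref{ass:meas}, and the bounded density $dq/dQ_x^P\le 1+\epsilon/\min_{s\in\mathcal S}Q_x^P(s)$ gives finite pointwise cost.

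\textbf{(COV2)--(COV3).} For $\bm\delta\in\operatorname{range}\bm\Sigma(x;P)$, write $\bm\delta=\bm\Sigma\bm v$ with $\bm v=\bm\Sigma^+\bm\delta$. Define the linear score tilt
\[
\frac{dQ_{x,\bm\delta}}{dQ_x^P}(s)=1+\bm v^\top\{\bm h(s)-\bm u_P(x)\}.
\]
This density integrates to one. Its score mean is $\bm u_P(x)+\bm\Sigma\bm v=\bm u_P(x)+\bm\delta$ exactly, and $Q_{x,\bm 0}=Q_x^P$. Since $\mathcal S$ is finite, $\max_s\|\bm h(s)-\bm u_P(x)\|$ is finite. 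For $\|\bm\delta\|$ below a measurable radius, $\|\bm v\|\le\|\bm\Sigma^+\|_{\mathrm{op}}\|\bm\delta\|$, so the density lies in $[1/2,3/2]$. The tilt is therefore positive and support-preserving, and its cost is at most $(3/2)C(x,Q_x^P)$. Set $D_x$ to be the intersection of $T_{\cA^\ast}(x)\cup T_0(x)$ with the ball of radius $\min\{\rho(x),\text{this radius},\Delta_+(x)/2\}$. This set contains relative neighbourhoods of $\bm0$ in both $T_0(x)$ and $T_{\cA^\ast}(x)$.

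On this set, (V3) places the kernel in $\mathfrak K_x$ with structural value $\kappa(P)(x)$. (V2) says the kernel changes $\eta_{j_\circ}$ only, with value $e_{j_\circ}(x,\bm\delta)$ satisfying $H\{e_{j_\circ}\}=\bm u_P+\bm\delta$. Bounded density means every measurable selection has integrable cost and glues via (M2) to a law $P_{\bm\delta}$ with marginal $P_X$. The Borel graph of direction--kernel--value triples follows because the tilt is a continuous function of $(x\text{-measurable data},\bm\delta)$.

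\textbf{Canonical declaration and main obstacle.} For the canonical declaration $\eta_1=\bm u$ and $\kappa=\argmin_lu_l$, directions in $T_{\cA^\ast}(x)$ keep the active coordinates tied. Any shift of size less than $\Delta_+(x)/2$ keeps the inactive coordinates strictly above the minimum, so $\kappa$ is preserved, and (V3) reduces to the positive-gap statement.

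The step I expect to be delicate is (V3) on $T_0(x)$. The proposition states it on $T_{\cA^\ast}(x)$, but (COV2) also needs a neighbourhood in $T_0(x)$. I would handle this by enlarging the radius argument so that it covers all of $\operatorname{range}\bm\Sigma$ near zero, or by reading (V3) as covering directions with active block constant. The latter includes $T_0$ as the constant-$0$ case, since $T_0(x)\subseteq T_{\cA^\ast}(x)$. That inclusion resolves the issue, and I would state it explicitly.
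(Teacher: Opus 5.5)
Your proposal is correct and follows the paper's proof essentially step for step. It uses the same fibre argument for (COV1), the same linear tilt $q_i=p_i\{1+(\bm h_i-\bm u)^\top\bm\Sigma^+\bm\delta\}$ with density ratio in $[1/2,3/2]$, and the same $D_x$ as a small ball in $T_{\cA^\ast}(x)$, where the inclusion $T_0(x)\subseteq T_{\cA^\ast}(x)$ settles your ``obstacle'' exactly as in the paper. One small slip: the bound $C(x,Q_{x,\bm\delta})\le\tfrac32C(x,Q_x^P)$ holds for costs linear in the kernel, as in Proposition~\ref{prop:verify-maxarm}, but not for the unspecified Borel cost here, so invoke the cost localization already built into (COV3) and (M2) instead, as the paper does.
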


\begin{proof}
For (COV1), each prescribed contamination is admissible and has support $\mathcal S$ because $\epsilon<1$. Under (V1)(a), its bridged component is the unchanged mean vector and the other components and $\kappa$ are constant on the specified neighbourhood. Under (V1)(b), all these equalities hold directly. Thus each contamination belongs to $\mathfrak Q_x^P$.

For mean shifts, write $p_i(x)=Q_x^P(\{w_i\})>0$, $\bm h_i=\bm h(w_i)$, $\bm u=\sum_i p_i\bm h_i$, and $\bm\Sigma=\sum_i p_i(\bm h_i-\bm u)(\bm h_i-\bm u)^\top$. For $\bm\delta\in\operatorname{range}\bm\Sigma$, define
\[
q_i(x,\bm\delta)=p_i(x)\left\{1+(\bm h_i-\bm u)^\top\bm\Sigma^+\bm\delta\right\}.
\]
Direct summation gives
\[
\sum_iq_i=1,\qquad
\sum_iq_i\bm h_i=\bm u+\bm\Sigma\bm\Sigma^+\bm\delta
=\bm u+\bm\delta.
\]
Set $B(x)=\max_i\|\bm\Sigma^+(\bm h_i-\bm u)\|$. If $\|\bm\delta\|<\{2(1+B(x))\}^{-1}$, every density ratio $q_i/p_i$ lies between $1/2$ and $3/2$.

Let $\Delta(x)=\min_{l\notin\cA^\ast(x)}\{u_l-\min_ku_k\}$, with $\Delta=\infty$ if all indices are active. Choose
\[
\varepsilon(x)=\min\left\{1,\rho(x),\frac{1}{2(1+B(x))},\frac{\Delta(x)}4\right\},
\qquad
D_x=\{\bm\delta\in T_{\cA^\ast}(x):\|\bm\delta\|<\varepsilon(x)\}.
\]
This is a Borel graph and contains relative neighbourhoods of zero in both $T_{\cA^\ast}(x)$ and its subspace $T_0(x)$. The common active shift and the gap bound preserve the active set, proving (COV2).

The kernel with masses $q_i$ is jointly Borel, dominated by $Q_x^P$, and has exactly the desired mean. Condition (V2) preserves every primary component other than $j_\circ$, while (V3) preserves $\kappa$ and places the kernel in the admissible graph. Its local nuisance value is Borel by Assumption~\ref{ass:meas-main}. Finite model cost and (M2), after localization if needed, give every required substituted law. This proves (COV3).
\end{proof}

\begin{remark}[Checking the declaration]
For the canonical declaration $d=1$, $\eta_1=\bm u$, and $\kappa=\argmin_lu_l$, fixed-mean contaminations preserve the declaration, and the mean shifts above preserve $\kappa$ on $D_x$. Thus the proposition applies whenever these kernels belong to the comparison model. A more detailed declaration requires its additional components to satisfy (V1)--(V3). For example, with $\mathcal S=\{-1,0,1\}$, $\bm h(W)=(W,W)$, and an additional component $\eta_2(q)=\log\{q(-1)q(1)/q(0)^2\}$, moving from the uniform law to $(0.4,0.2,0.4)$ preserves the score mean but changes $\eta_2$ by $\log4$. Mean preservation alone therefore does not verify (V1).
\end{remark}

The second proposition explains the ``modulo $\ker\bm\Sigma$'' qualifications in Lemma~\ref{lem:func-eq-cov}: the kernel directions are unreachable by every law equivalent to the truth, so no argument based on such laws can identify the corresponding coefficients.

\section{Applications}\label{ssec:further-apps}
\subsection{Recovery of the OTR efficiency bound}\label{sec:otr-recovery}

This subsection proves Corollary~\ref{rem:otr-recovery-main} by specializing the harmonic formula to binary treatments. The calculation also identifies the optimal weights under unequal conditional outcome variances.

Take $\cX=\R^p$, $\cW=\{0,1\}\times\R$, with $W=(A,Y)$. The target is $\Psi(P)=\E[\max(\E_P[Y\mid X,A=0],\E_P[Y\mid X,A=1])]$, with assumptions: SUTVA, no unmeasured confounding, and overlap ($\pi_a(x)\eqdef P(A=a\mid X=x)\in(0,1)$, $P_X$-a.s.).

The marginal-integral representation is $\cU=\R^2$, $\psi(x,u_0,u_1)=\max(u_0,u_1)$, $u_{P,a}(x)=\E_P[Y\mid X=x,A=a]$. The structure is of max-arm type (Definition~\ref{def:max-arm}) with $\cA=\{0,1\}$, $\widetilde\cW=\R$, $\widetilde W=Y$, $g(x,a,u_a)=u_a$ (so $g_a^\prime\equiv 1$), and $h_a(y)=y$.

The primary nuisance has $d=2$: $\eta_1(P)=(\E_P[Y\mid X,A=a])_{a=0,1}$, $\eta_2(P)=\pi_1^P(\cdot)$.  The structural component is the set-valued active set
\[
\kappa(P)(x)=\argmax_{a\in\{0,1\}}u_{P,a}(x).
\]
In the binary problem this active set is in one-to-one correspondence with $\operatorname{sgn}\{u_{P,1}(x)-u_{P,0}(x)\}\in\{-1,0,1\}$, the structural encoding used by \cite{XuGuo}.  The auxiliary working space encodes $\widehat\sigma_a^2(x)$, working variances of $Y$ given $(X,A=a)$.

Under the perturbation conditions of Proposition~\ref{prop:verify-maxarm} and oracle realization in Assumption~\ref{ass:oracle-maxarm}, Theorems~\ref{thm:ulb} and~\ref{thm:kink} give the conditional variance floor
\begin{equation}\label{eq:otr-vmin}
\Vmin(x;P)
=\biggl(\sum_{a\in\cA^\ast(x)}\frac{\pi_a^P(x)}{\sigma_a^2(x;P)}\biggr)^{-1},
\end{equation}
with $\cA^\ast(x)=\argmax_{a\in\{0,1\}}u_{P,a}(x)$. On the strict region ($u_{P,0}(x)\neq u_{P,1}(x)$), $|\cA^\ast(x)|=1$, and the strict-branch formula reduces this to $\sigma_{a^\ast(x)}^2(x;P)/\pi_{a^\ast(x)}^P(x)$, the standard AIPW conditional variance. On the kink set ($u_{P,0}(x)=u_{P,1}(x)$), both arms are active and~\eqref{eq:otr-vmin} gives the harmonic form, recovering precisely the Xu--Guo formula in their Theorem~A.1.

The total floor is
\begin{equation}\label{eq:otr-Vlow}
\Vlow(P)=\Var_{P_X}[\max(u_{P,0},u_{P,1})]
+\E_{P_X}\!\left[\biggl(\sum_{a\in\cA^\ast(X)}
\frac{\pi_a^P(X)}{\sigma_a^2(X;P)}\biggr)^{-1}\right],
\end{equation}
matching the OTR formula in \cite[Supplement, Theorem~A.1]{XuGuo}.

At the correct outcome-regression and propensity-score limits, the binary problem has the same unit-sum constraint and variance-minimization program as in \citet{XuGuo}. Their two-parameter adaptive smoothing and the cross-fitted construction of Theorem~\ref{thm:achieve} attain this common limiting optimum under their respective estimation conditions. Equation~\eqref{eq:otr-vmin} includes unequal conditional outcome variances.

The dynamic optimal-policy value also has a marginal-integral representation.  Let $H_1$ be the baseline history, let $W$ collect the remaining longitudinal trajectory, and let $u_{P,a_1}(H_1)$ be the optimal continuation value following initial action $a_1$, obtained by the usual backward recursion under sequential identification.  Then
\[
\Psi_{\mathrm{dyn}}(P)
=\E_P\!\left[\max_{a_1}u_{P,a_1}(H_1)\right]
\]
has \eqref{eq:mi-rep} with $X=H_1$ and scoring function $\psi(H_1,u)=\max_{a_1}u_{a_1}$.  This establishes representation-level inclusion; the recursive nuisance construction and its RALU attainment argument are those of \cite[Supplement~B]{XuGuo}, not a corollary of the one-stage max-arm theorem above.

Xu and Guo also treat the extension from a single treatment decision to multiple-time-point dynamic treatment regimes.  That development is part of the same RALU program and is not presented here as a new application.  The contribution of the present paper is the general marginal-integral variational theory and the additional max-arm, shared-score, Balke--Pearl, calibration, and mediation developments.

\subsection{Multi-arm optimal treatment values}\label{sec:multi-arm-otr}

The framework's max-arm closed form (Theorem~\ref{thm:kink}) was specialized in Section~\ref{sec:otr-recovery} to binary-action OTR. We verify it also handles the $K$-arm case, which is a useful stress-test of the harmonic-mean structure.

\subsubsection{Setup}\label{sec:multi-arm-setup}
Let $\cA=\{1,\dots,K\}$ for some $K\ge 2$, $\cW=\cA\times\cY$ with $\cY=\R$. Target:
\begin{equation}\label{eq:multi-arm-target}
\Psi(P)=\E_{P_X}\!\bigl[\max_{a\in\cA}u_{P,a}(X)\bigr],
\end{equation}
with $u_{P,a}(x)=\E_P[Y\mid X=x,A=a]$. The max-arm structure (Definition~\ref{def:max-arm}) applies with $g(x,a,u_a)=u_a$ ($g_a^\prime\equiv 1$), $h_a(y)=y$, $\widetilde W=Y$, $\widetilde\cW=\R$.

The active set
\begin{equation}\label{eq:multi-Aast}
\cA^\ast(x;P)=\bigl\{a\in\cA:u_{P,a}(x)=\max_b u_{P,b}(x)\bigr\}
\end{equation}
can have any cardinality from $1$ (a unique optimal arm) up to $K$ (all arms tied).

\subsubsection{Closed-form variance bound}\label{sec:multi-arm-Vlow}
Under the same perturbation and oracle-realization conditions, Theorem~\ref{thm:kink} gives, on each region $\{x:\cA^\ast(x)=S\}$ for nonempty $S\subseteq\cA$, the harmonic form
\begin{equation}\label{eq:multi-arm-Vmin}
\Vmin(x;P)\bigm|_{\cA^\ast(x)=S}
=\biggl(\sum_{a\in S}\frac{\pi_a^P(x)}{\sigma_a^2(x;P)}\biggr)^{\!-1}.
\end{equation}
The total floor is
\begin{equation}\label{eq:multi-arm-Vlow}
\Vlow(P)=\Var_{P_X}\!\bigl[\max_a u_{P,a}(X)\bigr]
+\E_{P_X}\!\left[\biggl(\sum_{a\in\cA^\ast(X)}\frac{\pi_a^P(X)}{\sigma_a^2(X;P)}\biggr)^{\!-1}\right].
\end{equation}

\begin{corollary}[Efficiency comparison with softmax inference]
\label{cor:whitehouse-otr-dominance}
Assume the equality conditions of Theorem~\ref{thm:kink} for $g(x,a,u_a)=u_a$. Let $S(x)=\cA^\ast(x;P)$ and $m(x)=|S(x)|$. Suppose the nonparametric softmax estimator of \citet{Whitehouse2025Softmax} has the asymptotic representation in their Theorem~3.3. Its conditional residual variance is
\begin{equation}\label{eq:whitehouse-otr-var}
V_{\rm W}(x)=\frac{1}{m(x)^2}
\sum_{a\in S(x)}\frac{\sigma_a^2(x;P)}{\pi_a^P(x)}.
\end{equation}
For $P_X$-a.e.\ $x$,
\begin{equation}\label{eq:whitehouse-otr-gap}
V_{\rm W}(x)\ge
\biggl(\sum_{a\in S(x)}
\frac{\pi_a^P(x)}{\sigma_a^2(x;P)}\biggr)^{-1}
=\Vmin(x;P).
\end{equation}
Equality holds if and only if $\sigma_a^2(x;P)/\pi_a^P(x)$ is constant over $a\in S(x)$. If $\max_a u_{P,a}(X)\in L^2(P_X)$ and $\E_{P_X}\{V_{\rm W}(X)\}<\infty$, the asymptotic variance of the softmax estimator is no smaller than~\eqref{eq:multi-arm-Vlow}. The inequality is strict if the equality condition fails on a set of positive $P_X$-probability.
\end{corollary}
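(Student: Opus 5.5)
The argument is pointwise, followed by integration; at each $x$ the statement is the equal-weight versus optimal-weight comparison from Theorem~\ref{thm:kink}. I would first take as given, from the hypothesis on \citet[Theorem~3.3]{Whitehouse2025Softmax}, that the limiting influence representation is the weighted evaluation $\phi_P^{\bm\lambda}$ of \eqref{eq:weighted-eval} with $g_a'\equiv1$ and $\lambda_a(x)=m(x)^{-1}\1\{a\in S(x)\}$. Exactly one arm is observed and all arm-level conditional means equal $\psib(x)$, so the within-arm/between-arm decomposition \eqref{eq:var-decomp-arm} shows that the conditional residual variance is $\sum_{a\in S}\lambda_a^2\sigma_a^2/\pi_a^P$. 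This is \eqref{eq:whitehouse-otr-var}.

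For the pointwise inequality, write $v_a=\sigma_a^2(x;P)/\pi_a^P(x)>0$ for $a\in S(x)$ and $H=(\sum_{a\in S}v_a^{-1})^{-1}$. The identity used at the end of the proof of Theorem~\ref{thm:kink} gives $\sum_a v_a\lambda_a^2-H=\sum_a v_a(\lambda_a-\lambda_a^\ast)^2$ for any unit-sum $\lambda$, where $\lambda_a^\ast=H/v_a$. I would apply it with $\lambda_a=1/m$, which yields
\[
V_{\rm W}(x)-\Vmin(x;P)=\sum_{a\in S(x)}v_a\Bigl(\tfrac1m-\tfrac{H}{v_a}\Bigr)^2\ge0 .
\]
Equality holds iff $v_a=mH$ for every $a\in S$, that is, iff $v_a$ is constant on $S$; in the converse direction a constant $v$ gives $H=v/m$. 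The right-hand equality in \eqref{eq:whitehouse-otr-gap} is \eqref{eq:vmin-harmonic}, and it is available under the assumed equality conditions.

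To integrate, I would express the softmax asymptotic variance as $\Var_{P_X}[\max_a u_{P,a}(X)]+\E_{P_X}[V_{\rm W}(X)]$. Both terms are finite by hypothesis, and the law of total variance applies because the representation is conditionally centered at $\max_a u_{P,a}(x)$. Subtracting \eqref{eq:multi-arm-Vlow} leaves $\E_{P_X}[V_{\rm W}-\Vmin]\ge0$. Finiteness of $\E V_{\rm W}$ makes $\Vmin$ integrable as well, so the subtraction is legitimate. If the equality condition fails on a set of positive probability, then the integrand is strictly positive there, and the difference is strictly positive.

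The main obstacle is the first step: the conditional variance formula for Whitehouse's estimator has to be read off their Theorem~3.3. Concretely, one must check that their limiting derivative coefficients $d_a^\beta$ converge to $1/m$ on $S(x)$ and to $0$ off it, and that no extra variance term is left over. This is exactly the hypothesis in the corollary, so I would invoke it rather than re-derive it; everything after that is the quadratic identity above.
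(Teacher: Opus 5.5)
Your proposal is correct and follows the paper's proof. Both identify the softmax limit as the equal-weight evaluation on the active set and read off its conditional variance from the disjoint arm supports. Both then compare pointwise with the harmonic floor and integrate after adding the common term $\Var_{P_X}[\max_a u_{P,a}(X)]$.

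The only difference is in the pointwise inequality. You use the completed-square identity $\sum_{a\in S}v_a\lambda_a^2-H=\sum_{a\in S}v_a(\lambda_a-\lambda_a^\ast)^2$ from the proof of Theorem~\ref{thm:kink}. The paper applies Cauchy--Schwarz as $(\sum_{a\in S}v_a)(\sum_{a\in S}v_a^{-1})\ge|S|^2$. The two are equivalent, and yours also gives the size of the gap explicitly.
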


\begin{proof}
The limiting representation in \citet[Theorem~3.3]{Whitehouse2025Softmax} is the centered evaluation, at the true nuisance functions, of
\begin{equation}\label{eq:whitehouse-full-slot}
F^{\rm W}(x,a,y;e,q,S)
=\frac{1}{|S|}\sum_{b\in S}
\left[e_b(x)+\frac{\1\{a=b\}}{q_b(x)}\{y-e_b(x)\}\right],
\end{equation}
where $e=(e_a)_a$ and $q=(q_a)_a$ are working outcome regressions and propensities. If $e=u_P$, its conditional expectation equals $|S|^{-1}\sum_{b\in S}u_{P,b}(x)$ for every $q$. If $q=\pi^P$, the same identity holds for every $e$, whenever the terms are integrable. At $S=S(x)$ this average equals $\max_a u_{P,a}(x)$.

Conditional on $X=x$, centered residuals from distinct treatment arms have disjoint support and zero covariance. Equal weighting therefore gives~\eqref{eq:whitehouse-otr-var}. Set $v_a(x)=\sigma_a^2(x;P)/\pi_a^P(x)$. The Cauchy--Schwarz inequality gives
\[
\left(\sum_{a\in S}v_a\right)
\left(\sum_{a\in S}v_a^{-1}\right)\ge |S|^2,
\]
which proves~\eqref{eq:whitehouse-otr-gap}; equality holds exactly when the positive numbers $(v_a)_{a\in S}$ are equal. Both representations have conditional mean $\max_a u_{P,a}(x)$, so total variance adds the same between-$X$ term. Integration proves the total comparison and its strictness criterion.
\end{proof}

Corollary~\ref{cor:whitehouse-otr-dominance} gives a direct asymptotic-variance comparison with the Whitehouse estimator. Section~\ref{ssec:otr-feasible-results} evaluates its softmax implementation alongside the feasible RALU estimator. For the binary optimal-treatment problem, the Xu--Guo boundary-asymmetry parameter is $t_0(x)=\lambda_1(x)$. Minimizing $t_0^2\sigma_1^2/\pi_1+(1-t_0)^2\sigma_0^2/\pi_0$ gives
\begin{equation}\label{eq:xuguo-t0-opt}
t_0^\ast(x)=
\frac{\pi_1^P(x)/\sigma_1^2(x;P)}
{\pi_1^P(x)/\sigma_1^2(x;P)+
 \pi_0^P(x)/\sigma_0^2(x;P)}.
\end{equation}
Under conditional homoscedasticity, this reduces to $t_0^\ast(x)=\pi_1^P(x)$.

If $P_X$ is absolutely continuous and each pairwise mean difference is smooth with a zero set of Lebesgue measure zero, treatment ties have probability zero. Then~\eqref{eq:multi-arm-Vlow} reduces to the usual strict-region integral $\E_{P_X}[\sigma_{a^\ast(X)}^2(X;P)/\pi_{a^\ast(X)}^P(X)]$, plus the variance of the conditional target. Positive-probability ties also occur with continuous covariates when two treatment means agree on a region, as in Section~\ref{sec:num-design}. The harmonic term then contributes directly to the total efficiency bound.

\subsubsection{A three-arm tie}\label{sec:multi-arm-3tie}
At $x$ where three arms tie, $\cA^\ast(x)=\{1,2,3\}$, the formula gives
\begin{equation}\label{eq:multi-arm-3tie-Vmin}
\Vmin(x;P)
=\biggl(\frac{\pi_1}{\sigma_1^2}+\frac{\pi_2}{\sigma_2^2}+\frac{\pi_3}{\sigma_3^2}\biggr)^{-1}.
\end{equation}
This is strictly smaller than any single-arm AIPW variance $\sigma_a^2/\pi_a$, $a=1,2,3$. The optimal weights $\lambda_a^\ast=(\pi_a/\sigma_a^2)/(\sum_b\pi_b/\sigma_b^2)$ weight each active arm in proportion to $\pi_a/\sigma_a^2$.

The minimizing uncentered evaluation on this tie region is
\begin{equation}\label{eq:multi-arm-IF-3tie}
\begin{aligned}
f^\ast(X,A,Y)
&=\max_a u_{P,a}(X)
  +\sum_{a\in\cA^\ast(X)}\lambda_a^\ast(X)\\[-2pt]
&\hspace{5em}\,\frac{\1\{A=a\}}{\pi_a^P(X)}
  \bigl(Y-u_{P,a}(X)\bigr),\\
&\hspace{2em}\text{where }\cA^\ast(X)=\{1,2,3\}.
\end{aligned}
\end{equation}
This is the kink-region restriction;  on the strict region $\{X:\cA^\ast(X)=\{a^\ast(X)\}\}$, the evaluation takes the single-arm AIPW form $\max_a u_{P,a}(X)+\1\{A=a^\ast(X)\}/\pi_{a^\ast}^P(X) (Y-u_{P,a^\ast}(X))$, which is the special case of \eqref{eq:multi-arm-IF-3tie} with $\lambda_{a^\ast}^\ast=1$ and the singleton sum reducing to one term.  The full evaluation is the $\cA^\ast(X)$-dependent piecewise expression with the harmonic-weight prescription~\eqref{eq:multi-arm-Vmin} applied at each $X$.  This has the familiar multi-arm AIPW structure \citep{Cattaneo2010}; the harmonic tie weights are determined by the RALU variance minimization above.

The active set in~\eqref{eq:multi-arm-IF-3tie} can change as $X$ varies. The margin condition in Theorem~\ref{thm:achieve} controls estimation error near these changes by bounding the probability of a small positive gap between optimal and suboptimal arms.

\subsubsection{Verification of the framework conditions}\label{sec:multi-arm-passes}
For multi-arm OTR, exactly one treatment is observed per subject, and each arm summary is a conditional mean. Proposition~\ref{prop:verify-maxarm} verifies the perturbation conditions for any finite arm set. Assumption~\ref{ass:oracle-maxarm} supplies the remaining realization requirement for the exact harmonic formula; the same full nuisance rule and envelope used in the binary case apply with $\cA=\{1,\ldots,K\}$.

\subsection{Calibration error and exact robustness}\label{sec:lp-cal}

Let $g:\cX\to\R$ be a fixed measurable predictor and consider $\Psi_p(P)=\E_{P_X}[|\E_P(Y\mid X)-g(X)|^p]$, $p\ge1$, with $W=Y$ and $u_P(x)=\E_P[Y\mid X=x]$. Under the rich conditional-mean models specified below, $p=1$ admits an exactly robust estimating function when the active set is included in $\kappa$. For $p>1$, declaring the mean as the sole primary component and taking $\kappa$ constant makes the exact class empty on any model with the required nondegenerate mean perturbations.

\subsubsection{Nonlinear powers: emptiness under rich mean perturbations}\label{sec:lp-empty}

For $p>1$, use the declaration $\eta_1(P)(x)=\E_P[Y\mid X=x]$ and constant $\kappa$, and suppose that on a set of positive $P_X$-measure the comparison model permits conditional mixtures of laws with distinct means, together with the measurable localization needed to apply condition~(B) conditionally. The resulting mean segment is nondegenerate. The function $m\mapsto|m-g(x)|^p$ is nonaffine on every such segment, giving the same affine-expectation obstruction as Proposition~\ref{prop:affine-necessary}. The conclusion concerns the model-reachable means for this declaration.

The obstruction can also be seen directly in a mixture-rich conditional model. Fix a working mean and its estimating function, and vary the true conditional law along a mixture of two laws having different means. Its expectation is affine in the mixture parameter, whereas $|u-g(x)|^p$ is strictly convex along the corresponding nonconstant mean segment. With $d=1$ and constant $\kappa$, condition~(B) requires equality throughout that segment, which is impossible. This argument requires no differentiability of the estimating function with respect to its nuisance input.

Exact unbiasedness of the usual first-order correction would require
\begin{equation}\label{eq:lp-taylor-gap}
|m-g|^p+p\,|m-g|^{p-1}\mathrm{sign}(m-g)\,(u_P-m)=|u_P-g|^p
\qquad\text{for all }m,
\end{equation}
which would require the first-order Taylor expansion to be exact. For $p>1$ it is not exact on an open mean interval. A first-order orthogonal estimating function can still be constructed; exact robustness imposes the stronger finite-change identity. For $p=1$, the link is affine on either side of $g$, and including the active set in the structural component yields the construction below.

\subsubsection{Absolute calibration error}\label{sec:lp1}

For $p=1$ write $|m-g(x)|=\max\{m-g(x),\,g(x)-m\}$, a max of two affine links.  The natural marginal-integral structure is therefore the shared-data one of Section~\ref{sec:cov-extended}, with $K=2$ identifying scores read off the \emph{same} observation,
\begin{equation}\label{eq:lp1-scores}
h_+(x;Y)\eqdef Y-g(x),\qquad h_-(x;Y)\eqdef g(x)-Y,
\end{equation}
$m_P(x)\eqdef\E_P[Y\mid X=x]$ and $u_P(x)=(u_{P,+},u_{P,-})(x)=(m_P(x)-g(x),\,g(x)-m_P(x))$, $\psi(x,u)=\max\{u_+,u_-\}$, and structural component $\kappa(P)(x)=\cA^\ast(x;P)\subseteq\{+,-\}$ the set of maximizers.  The kink set is
\begin{equation}\label{eq:lp-kink-set}
\cK_P\eqdef\{x\in\cX:m_P(x)=g(x)\}
=\{x:\cA^\ast(x;P)=\{+,-\}\},
\end{equation}
on which $\kappa$ takes the two-element value; off $\cK_P$ it is a singleton.  Writing $\sigma_Y^2(x;P)\eqdef\Var_{Q_x^P}[Y]$, the score covariance is
\begin{equation}\label{eq:lp1-Sigma}
\bm\Sigma(x;P)=\sigma_Y^2(x;P)
\begin{pmatrix}1&-1\\-1&1\end{pmatrix},
\end{equation}
since $h_-=-h_+$ identically. Use $d=1$ with $\eta_1(P)=m_P$ and the set-valued $\kappa$ above. Assume $\E_P[|Y|+|g(X)|]<\infty$ throughout the comparison model, finite conditional variances as in Definition~\ref{def:min-of-linear}, and the shared-score perturbation conditions of Assumption~\ref{ass:shared-perturb}. A full-domain rule assigns $h_+$ to $\{+\}$, $h_-$ to $\{-\}$, and $(h_++h_-)/2$ to $\{+,-\}$; it is independent of the primary input, integrable, and unbiased at every true structural value. For use of the min-based results in Section~\ref{sec:cov-extended}, negate both scores and the target, and then negate the resulting estimating function. This preserves all variances. At the evaluating law $P$, assume $\E_{P_X}[(m_P(X)-g(X))^2+\1\{X\notin\cK_P\}\sigma_Y^2(X;P)]<\infty$. This is the square-integrability condition for the displayed minimizing evaluation.

Strict region. At $x\notin\cK_P$ the active face is a singleton, $\bm\Sigma_{\cA^\ast}=[\sigma_Y^2(x;P)]$ is a positive scalar when $\sigma^2_Y(x;P)>0$, and Theorem~\ref{thm:cov-kink} gives
\begin{equation}\label{eq:lp1-vmin-strict}
\Vmin(x;P)=\sigma_Y^2(x;P),
\end{equation}
attained by $f^P_w(x,Y)=|m_P(x)-g(x)|+\varsigma(x)\,(Y-m_P(x))$ with $\varsigma(x)=+1$ on $\{m_P>g\}$ and $-1$ on $\{m_P<g\}$. When $\sigma_Y^2(x;P)=0$, this evaluation is constant and variance nonnegativity gives the same formula.

Kink set. At $x\in\cK_P$ both indices are active and $\bm\Sigma_{\cA^\ast}(x;P)$ is the singular matrix in~\eqref{eq:lp1-Sigma}, whose range is contained in $\operatorname{span}\{(1,-1)\}$; hence $\bm 1\notin\operatorname{range}(\bm\Sigma_{\cA^\ast})$ and Proposition~\ref{prop:cov-singular} gives
\begin{equation}\label{eq:lp1-vmin-kink}
\Vmin(x;P)=0\qquad (x\in\cK_P),
\end{equation}
attained by the unit-sum weights $\bm\lambda^\ast=(\tfrac12,\tfrac12)$, for which $\tfrac12h_++\tfrac12h_-\equiv0$ and $f^P_w$ is the constant $\psib(x)=0$.  This is admissible: on the $\kappa$-preserving region the two active values coincide, which forces $m=g(x)$ and hence $\psib=0$ there, so the constant section satisfies the pointwise constraint at every compatible ambient law. The equal-face softmax limit of \citet[Corollary~4.8]{Whitehouse2025Softmax} uses the same $(1/2,1/2)$ coefficients.  Thus the barycentric member is already RALU-optimal for $L^1$ calibration, because the two active residual scores are exact negatives rather than because equal weighting is optimal for every tied maximum.

Consequently
\begin{equation}\label{eq:lp1-Vlow}
\Vlow(P)=\Var_{P_X}\bigl[|m_P(X)-g(X)|\bigr]
+\E_{P_X}\bigl[\1\{X\notin\cK_P\}\,\sigma_Y^2(X;P)\bigr].
\end{equation}

\begin{remark}[Structural information at exact calibration]\label{rem:lp1-kappa}
Consider instead the binary structural value $\varsigma(P)(x)=1$ when $m_P(x)\ge g(x)$ and $-1$ otherwise. At a tie, fixing $\varsigma=1$ permits nearby means above $g(x)$. Under the same conditional-law richness, exact unbiasedness over that half-interval forces the score $Y-g(x)$ and gives conditional variance $\sigma_Y^2(x;P)$. Choosing the negative branch at equality gives the analogous score $g(x)-Y$.

Under the set-valued convention used here, fixing $\kappa(P)(x)=\{+,-\}$ forces $m_P(x)=g(x)$. The target is then zero throughout the compatible conditional model, and the constant-zero score is admissible. Thus the two conventions specify different robustness constraints and different variance bounds at the tie; they agree away from it. This is the dependence on structural information described after Definition~\ref{def:ralu}.
\end{remark}

\begin{remark}[Comparison with $L^2$ calibration]\label{rem:lp-vs-l2}
For $p=2$, the target $\Psi_2(P)=\E_{P_X}[(m_P(X)-g(X))^2]$ has the usual pathwise derivative and a classical efficient influence function whenever the corresponding gradient is square integrable. Under the richness and declaration in Section~\ref{sec:lp-empty}, it nevertheless has no exactly robust map satisfying~(B). Thus pathwise differentiability and exact nuisance-substitution robustness impose different requirements.
\end{remark}

\subsection{Balke--Pearl bounds in instrumental variables}\label{sec:bp-bound}

The branches in this application share their residual data and hence lead to the same covariance quadratic program as the shared-score model.  Their doubly robust signals also depend on nuisance slots. We therefore establish the exact-class lower bound by a separate finite-cell argument and prove estimator attainment below.

\subsubsection{Setup}\label{sec:bp-setup}

Let $\cW=\cZ\times\cA\times\cY$ with $\cZ=\{0,1\}$ (binary IV), $\cA=\{0,1\}$ (binary treatment), $\cY=\{0,1\}$ (binary outcome). Write $W=(Z,A,Y)$. Under standard IV assumptions (random assignment of $Z$, exclusion restriction $Y(a,z)=Y(a)$), the conditional law of $(A,Y)$ given $(Z,X)$ is summarized by the eight probabilities
\begin{equation}\label{eq:bp-pmoments}
p_{ay\mid z}(x)\eqdef P(A=a,Y=y\mid Z=z,X=x),
\quad (a,y,z)\in\{0,1\}^3.
\end{equation}
Together with the IV propensity
\begin{equation}\label{eq:iv-prop}
\rho_z(x)\eqdef P(Z=z\mid X=x),
\end{equation}
these summarize the observed-data distribution conditional on $X$. The comparison model $\cP$ uses one overlap constant $\epsilon>0$: every truth and law-achievable working propensity lies in $[\epsilon,1-\epsilon]$.  Cell vectors lie in the two probability simplices, including their boundaries.  Richness and localized gluing refer to this comparison model; estimation conditions are imposed separately on $\cP_{\mathrm{ach}}\subseteq\cP$.

Applying the sharp bounds of \citet{BalkePearl1997} within each covariate stratum gives bounds on the conditional ATE $\tau(x)\eqdef \E[Y(1)-Y(0)\mid X=x]$ of the form
\begin{equation}\label{eq:bp-bounds}
\mathrm{LB}(x)\le\tau(x)\le\mathrm{UB}(x),
\end{equation}
where $\mathrm{UB}(x)=\min_{k=1,\dots,8}U_k(x)$ and $\mathrm{LB}(x)=\max_{k=1,\dots,8}L_k(x)$, each $U_k, L_k$ being a specific signed sum of $\{p_{ay\mid z}(x)\}$ with integer coefficients. We take the sixteen expressions in the form generated by the response-type linear program rather than by transcription: writing $q(x)$ for the vector of latent response-type probabilities over the sixteen combinations of $(A(0),A(1))$ and $(Y(0),Y(1))$, randomization of $Z$ and the exclusion restriction make the observed cells a linear image $\bm B\,q(x)=p(x)$, and the sharp bounds are the values of the linear programs $\max/\min\{c_{\mathrm{ATE}}^\top q: \bm Bq=p,\ q\ge0\}$. The upper-bound expressions are exactly the vertices of the dual feasible set $\{v:\bm B^\top v\ge c_{\mathrm{ATE}}\}$, taken modulo its one-dimensional lineality space (along which $p^\top v$ is constant because each $Z$-slice of $p$ sums to one).  There are eight such vertices, with coefficient vectors
\begin{equation}\label{eq:bp-U-explicit}
\arraycolsep=3pt
\begin{array}{lll}
U_1=-p_{01\mid0}+p_{00\mid1}+p_{01\mid1}+p_{11\mid1},
&\quad U_2=p_{00\mid1}+p_{11\mid1},\\[2pt]
U_3=p_{01\mid0}+p_{11\mid0}+p_{00\mid1}-p_{01\mid1}+p_{11\mid1},
&\quad U_4=p_{00\mid0}+p_{11\mid0},\\[2pt]
U_5=p_{00\mid0}+p_{01\mid0}+p_{11\mid0}-p_{01\mid1},
&\quad U_6=p_{00\mid0}-p_{01\mid0}+p_{11\mid0}+p_{01\mid1}+p_{11\mid1},\\[2pt]
U_7=p_{00\mid0}+p_{10\mid0}+p_{00\mid1}-p_{10\mid1}+p_{11\mid1},
&\quad U_8=p_{00\mid0}-p_{10\mid0}+p_{11\mid0}+p_{00\mid1}+p_{10\mid1},
\end{array}
\end{equation}
all arguments being evaluated at $x$; the eight lower-bound vectors are obtained in the same way from $-c_{\mathrm{ATE}}$.  No consolidation by symmetry is performed.  The minimum over a strict subset of the eight remains a valid upper bound, being a minimum over fewer valid bounds, but it is not generally \emph{sharp}.  All eight expressions are therefore retained.  As a check, the law with $Z$ randomized, $A=Z$ and $Y=A$ (all units compliers with $Y(1)=1$, $Y(0)=0$, so $\tau\equiv1$) gives $\min_k U_k(x)=1$, as it must.

For the population upper bound, $\mathrm{UB}=\E_{P_X}[\min_k U_k(X)]$. This is \eqref{eq:mi-rep} with $\cU=\R^8$, $u_P(x)=(U_k(x))_{k=1}^8$, and $\psi(x,u)=\min_k u_k$. For the population lower endpoint,
\[
\mathrm{LB}=\E_{P_X}[\max_k L_k(X)]
=-\E_{P_X}[\min_k\{-L_k(X)\}].
\]
Thus the same min-based construction is applied to the eight negated lower-bound expressions, and the resulting score and estimator are multiplied by $-1$ to recover $\mathrm{LB}$.  This final sign change leaves the covariance matrix, optimal weights, and all variance comparisons unchanged.  Only the upper endpoint is developed below.

\subsubsection{Identifying scores with shared residual data}
\label{sec:bp-gap}

Each $U_k(x)$ is a linear functional of $\{p_{ay\mid z}(x)\}$, and the IV propensity $\rho_z(x)=P(Z=z\mid X=x)$ enters any estimating function for it.  Pure inverse-propensity weighting is \emph{not} admissible here: the signal $\sum_{a,y,z}c^{(k)}_{ayz}\1\{(Z,A,Y)=(z,a,y)\}/\rho'_z$ is unbiased for $U_k$ only when $\rho'=\rho$, so it fails condition~(B) in the propensity direction.  The correct full-slot signal is the doubly robust one,
\begin{equation}\label{eq:bp-dr-score}
\varphi_k\bigl(O;p',\rho'\bigr)\eqdef U_k(p')
+\sum_{z}\frac{\1\{Z=z\}}{\rho'_z}
\sum_{a,y}c^{(k)}_{ayz}\bigl\{\1\{A=a,Y=y\}-p'_{ay\mid z}\bigr\},
\end{equation}
whose two primary slots are the cell probabilities $p'$ and the IV propensity $\rho'$.  Writing $M_{kz}(p)\eqdef\sum_{a,y}c^{(k)}_{ayz}p_{ay\mid z}$, a direct computation gives the exact bias identity
\begin{equation}\label{eq:bp-dr-onewrong}
\E_P\bigl[\varphi_k(O;p',\rho')\mid X=x\bigr]-U_k(x)
=\sum_{z}\Bigl(1-\frac{\rho_z}{\rho'_z}\Bigr)
\bigl\{M_{kz}(p')-M_{kz}(p)\bigr\},
\end{equation}
which vanishes identically when $p'=p$ or when $\rho'=\rho$: this is exactly the one-wrong robustness required by Definition~\ref{def:ralu}~(B) with $d=2$, and the joint-error term is again a product, matching~\eqref{eq:R2-product-pointwise}.

At the truth the residual signal is
\begin{equation}\label{eq:bp-truth-residual}
\varphi_k^P-U_k=\sum_z\frac{\1\{Z=z\}}{\rho_z}
\bigl\{c^{(k)}_{AYz}-M_{kz}\bigr\},
\end{equation}
so its covariance matrix is
\begin{equation}\label{eq:bp-Sigma-dr}
\widetilde{\bm\Sigma}_{kl}(x;P)=\sum_z\frac{1}{\rho_z(x)}
\Bigl[\sum_{a,y}p_{ay\mid z}(x)\,c^{(k)}_{ayz}c^{(l)}_{ayz}
-M_{kz}(x)M_{lz}(x)\Bigr].
\end{equation}
Centering within each instrument stratum removes between-$Z$ variation from the residual covariance. Formula~\eqref{eq:bp-Sigma-dr} is used in every variance program below.

\begin{remark}[Variance characterization for nuisance-dependent scores]\label{rem:bp-scope}
The branch scores in~\eqref{eq:bp-dr-score} depend on nuisance values. Proposition~\ref{prop:bp-rigidity} obtains their variance lower bound by varying the conditional cells while freezing the estimating function, and Definition~\ref{def:bp-face-family} supplies an exactly robust map with bounded weights. Together they identify the exact floor whenever the cap contains a minimizing weight. Theorems~\ref{thm:bp-achieve} and~\ref{thm:bp-stable-rank} establish the cross-fitted expansions on positive-definite and separated singular covariance strata. Corollaries~\ref{cor:bp-one-estimator} and~\ref{cor:bp-achieve-global} give one estimator satisfying these expansions throughout a learnable subclass.
\end{remark}

\medskip

Shared-score covariance. In the max-arm specialization, the residual space factorizes as $\cW=\cA\times\widetilde\cW$ with $A$ a discrete arm indicator and $\widetilde W$ having an arm-conditional law $Q_{x,a}^P$. This structure is \emph{essential} to the harmonic formula: the indicators for distinct arms have disjoint supports, so the covariance matrix is block diagonal.

In the Balke--Pearl problem, the candidate ``arms'' are the eight bound indices $k=1,\dots,8$. \emph{They do not partition $\cW$.} Each signal $\varphi_k$ involves $Z,A,Y$ jointly; two different signals share underlying data and have non-zero covariance.  Concretely, $U_1=-p_{01\mid0}+p_{00\mid1}+p_{01\mid1}+p_{11\mid1}$ and $U_2=p_{00\mid1}+p_{11\mid1}$ share the cells $p_{00\mid1}$ and $p_{11\mid1}$, so their residuals are correlated and $\widetilde{\bm\Sigma}$ is not diagonal.

The variance program therefore uses the full conditional covariance matrix. A calculation based only on the diagonal entries yields the same optimum only under the equality criterion given in Section~\ref{sec:bp-sigma}.

\subsubsection{Generalized harmonic form for correlated arm scores}
\label{sec:bp-correct}

The conditional-variance reduction of Theorem~\ref{thm:ulb} applies without an arm partition. For the Balke--Pearl scores, its explicit value comes from a unit-sum quadratic program based on~\eqref{eq:bp-Sigma-dr}. Proposition~\ref{prop:bp-rigidity} establishes the corresponding lower bound for the full exact class.

For Balke--Pearl, minimize the conditional variance of a unit-sum combination of the doubly robust residuals.  The covariance matrix $\widetilde{\bm\Sigma}(x;P)\in\R^{8\times 8}$ is given by~\eqref{eq:bp-Sigma-dr}.  Writing $\cA^\ast(x)$ for the set of indices attaining $\min_kU_k(x)$, define the \emph{algebraic benchmark} $b_{\mathrm{BP}}(x;P)\eqdef \nu\bigl(\widetilde{\bm\Sigma}_{\cA^\ast(x)}(x;P)\bigr)$; its value and minimizer are
\begin{equation}\label{eq:bp-Vmin}
b_{\mathrm{BP}}(x;P)=\frac{1}{\bm 1^{\top}\widetilde{\bm\Sigma}_{\cA^\ast(x)}^{+}\bm 1},
\qquad
\bm\lambda^\ast(x)=\frac{\widetilde{\bm\Sigma}_{\cA^\ast(x)}^{+}\bm 1}
{\bm 1^{\top}\widetilde{\bm\Sigma}_{\cA^\ast(x)}^{+}\bm 1},
\end{equation}
when $\bm 1\in\operatorname{range}(\widetilde{\bm\Sigma}_{\cA^\ast(x)})$, and $b_{\mathrm{BP}}(x;P)=0$ otherwise, as in Proposition~\ref{prop:cov-singular}.  Proposition~\ref{prop:bp-rigidity} gives $\Vmin(x;P)\ge b_{\mathrm{BP}}(x;P)$ under its same-face conditions where the benchmark is positive; on the zero-value region the inequality follows from nonnegativity. A minimizing evaluation of the bounded map in Definition~\ref{def:bp-face-family} gives the reverse inequality. Estimator attainment under the common learning conditions is established in Corollaries~\ref{cor:bp-one-estimator} and~\ref{cor:bp-achieve-global}. The minimizing doubly robust evaluation is
\begin{equation}\label{eq:bp-IF}
f^\ast(x;W)=\min_k u_{P,k}(x)+\sum_{k\in\cA^\ast(x)}\lambda_k^\ast(x)
\bigl[\varphi_k^P(x;W)-u_{P,k}(x)\bigr].
\end{equation}

\subsubsection{Computing \texorpdfstring{$\widetilde{\bm\Sigma}$}{Sigma-tilde} for Balke--Pearl}\label{sec:bp-sigma}

Formula~\eqref{eq:bp-Sigma-dr} is a within-stratum computation: for each $z$, the vector $(c^{(k)}_{AYz})_k$ is centred at $(M_{kz})_k$ and its covariance under $p_{\cdot\mid z}$ is inflated by $1/\rho_z$.  Two properties distinguish~\eqref{eq:bp-Sigma-dr} from the pure-IPW form.

First, distinct bound expressions do \emph{not} have disjoint supports: $U_1$ and $U_2$ of~\eqref{eq:bp-U-explicit} share the cells $p_{00\mid1}$ and $p_{11\mid1}$, so no off-diagonal entry vanishes for that reason, and the off-diagonal structure must be computed rather than argued away.

Second, positive correlation between active scores does not imply that ignoring the correlation understates the floor.  With
$\widetilde{\bm\Sigma}=\bigl(\begin{smallmatrix}100&9\\9&1\end{smallmatrix}\bigr)$
the full program gives $19/83\approx0.229$, whereas treating the block as diagonal and using the harmonic form gives $100/101\approx0.990$, an over- rather than an underestimate.  There is no general ordering.  Write $\bm M\eqdef\widetilde{\bm\Sigma}_{\cA^\ast}$ and $D\eqdef\operatorname{diag}(\bm M)$, and suppose $D$ is positive definite.  Across the positive-semidefinite branches, the full program $\inf_{\bm 1^\top\bm\lambda=1}\bm\lambda^\top\bm M\bm\lambda$ and the diagonal harmonic value $(\bm 1^\top D^{-1}\bm 1)^{-1}$ agree exactly when
\[
\bm 1\in\operatorname{range}(\bm M)
\quad\text{and}\quad
\bm 1^\top\bm M^+\bm 1=\bm 1^\top D^{-1}\bm 1.
\]
If $\bm 1\notin\operatorname{range}(\bm M)$, the full program is zero while the diagonal harmonic value is positive. The arm-partition structure guarantees this equality because its covariance block is diagonal.  Outside that structure either ordering is possible, and off-diagonal cancellations can also produce equality.

\subsubsection{Universal floor for Balke--Pearl}\label{sec:bp-Vlow}

At every law satisfying the hypotheses of Proposition~\ref{prop:bp-rigidity} on the positive-value region, Theorem~\ref{thm:ulb} and the finite-cell rigidity argument below give the following positive-semidefinite-safe value.  For every $m\ge1$ and every symmetric positive-semidefinite matrix $\bm M\in\R^{m\times m}$, define
\begin{equation}\label{eq:bp-nu}
\nu(\bm M)\eqdef
\begin{cases}
\bigl(\bm 1^{\!\top}\bm M^{+}\bm 1\bigr)^{-1},
&\bm 1\in\operatorname{range}(\bm M),\\
0,&\text{otherwise},
\end{cases}
\end{equation}
one has
\begin{equation}\label{eq:bp-Vlow-final}
\Vlow(P)\;\ge\;V_{\mathrm{BP}}(P)\eqdef
\Var_{P_X}\!\bigl[\min_k U_k(X)\bigr]
+\E_{P_X}\!\bigl[b_{\mathrm{BP}}(X;P)\bigr].
\end{equation}
When the bounded map contains an algebraic minimizer, the reverse inequality gives $\Vlow(P)=V_{\mathrm{BP}}(P)$. Theorems~\ref{thm:bp-achieve} and~\ref{thm:bp-stable-rank} establish expansions for the positive-definite and separated singular cases; Corollaries~\ref{cor:bp-one-estimator} and~\ref{cor:bp-achieve-global} give common implementations on learnable subclasses. These learning conditions are separate from the comparison-model conditions used for the lower bound.

Singular active covariance matrices can occur with strictly positive cells. For example, with both $Z$-slices equal to $p_{\cdot\mid z}=(0.1,0.1,0.7,0.1)$ and $\rho_1=0.4$, the branch values are $U=(0.2,0.2,0.3,0.2,0.2,0.3,0.3,0.3)$ and the active set is $\{1,2,4,5\}$. Its covariance eigenvalues are $(0,0.2312,0.6494,1.0361)$, and the pseudoinverse formula gives $\nu=0.16$ with weights $(0.25,0.15,0.35,0.25)$. The range-based definition~\eqref{eq:bp-nu} covers this case as well as positive-definite blocks. Under the stated rigidity and attainment conditions, the result gives efficient inference when several expressions tie; efficiency under a unique active expression is studied by \citet{levis2025covariate}.

\begin{remark}[Consequences of shared residual scores]
\label{rem:bp-gap-importance}
The Balke--Pearl residuals share observations, so their covariance matrix determines the unit-sum variance program. The diagonal harmonic formula is guaranteed by disjoint arm observations. For a general covariance block, its equality with the full program is characterized in Section~\ref{sec:bp-sigma}; either ordering is possible when that criterion fails. Proposition~\ref{prop:bp-rigidity} establishes the lower bound for the nuisance-dependent Balke--Pearl scores, and the bounded full-domain construction gives equality when it contains a minimizing weight.
\end{remark}

\subsubsection{Rigidity for Balke--Pearl: the doubly robust value is the floor}\label{sec:bp-rigidity}

Proposition~\ref{prop:bp-rigidity} establishes the lower bound for the nuisance-dependent Balke--Pearl scores. The finite conditional sample space makes the argument explicit: every admissible evaluation is a unit-sum combination of centered doubly robust branch scores plus a function of $Z$. Within-instrument centering makes that last term orthogonal to the branch scores, so minimizing variance sets it to zero.

\begin{definition}[Balke--Pearl same-face neighbourhood]
\label{def:bp-same-face-neighborhood}
Fix $P_0\in\cP$.  For a nonempty face $S\subseteq\{1,\ldots,8\}$, let $\mathcal H_S$ be the affine manifold of cell vectors in the two $Z$-slice simplices satisfying $(\bm c^{(k)}-\bm c^{(l)})^{\!\top}\bm p=0$ for all $k,l\in S$. A correspondence $x\mapsto\Gamma_x$ is a \emph{gluable same-face neighbourhood} at $P_0$ on a measurable set $G\subseteq\cX$ if its graph is Borel and, for $P_X$-a.e.\ $x\in G$, all its cell probabilities are positive, all inactive inequalities are strict, and
\begin{equation}\label{eq:bp-face-hull}
\Gamma_x\supseteq B_{\epsilon(x)}\{\bm p_0(x)\}
\cap\mathcal H_{\cA^\ast(x;P_0)}
\quad\text{for some }\epsilon(x)>0.
\end{equation}
Moreover, every universally measurable selection $q(x)\in\Gamma_x$ on $G$, extended by $\bm p_0(x)$ outside $G$, with integrable model-admissibility cost must induce a law $P_q\in\cP$ satisfying
\[
(P_q)_X=(P_0)_X,\qquad
\bm p(P_q)=q,\qquad \rho(P_q)=\rho(P_0),\qquad
\kappa(P_q)=\kappa(P_0).
\]
Other selections are localized to bounded-cost sets and joined to the baseline elsewhere, as in Assumption~\ref{ass:rich}~(M2) and Assumption~\ref{ass:meas}.  When no set is specified, $G=\cX$.
\end{definition}

The relative ball in~\eqref{eq:bp-face-hull} is taken in the full same-face affine manifold.  Its tangent space is
\[
T_S=\{\bm\delta:\text{both slice sums vanish and }
(\bm c^{(k)}-\bm c^{(l)})^{\!\top}\bm\delta=0, k,l\in S\}.
\]
A neighbourhood confined to a smaller curve or to an additional instrumental-variable constraint does not supply all directions in $T_S$.

\begin{proposition}[Finite-cell rigidity: lower bound]\label{prop:bp-rigidity}
Fix $P_0\in\cP$ with $0<\rho_z^0(x)<1$ for $z=0,1$ at $P_X$-a.e.\ $x$.  Put $G=\{x:\nu(\widetilde{\bm\Sigma}_{\cA^\ast(x)}(x;P_0))>0\}$ and suppose there exists a gluable same-face neighbourhood on $G$ in the sense of Definition~\ref{def:bp-same-face-neighborhood}. Let $f\in\IFRL$ with $f_w^{P_0}(x,\cdot)\in L^2(Q_x^{P_0})$, and write $S=\cA^\ast(x;P_0)$.  For $P_X$-a.e.\ $x\in G$, there exist $\bm\lambda(x)\in\R^S$ and $h_x:\{0,1\}\to\R$ such that $\bm 1^{\!\top}\bm\lambda=1$, $\E_{P_0}\{h_x(Z)\mid X=x\}=0$, and
\begin{equation}\label{eq:bp-rigid-form}
f^{P_0}_w(x,\cdot)=\psib(x;P_0)+\sum_{k\in\cA^\ast}\lambda_k
\bigl(\varphi_k^{P_0}-U_k(P_0)\bigr)+h_x(Z),
\qquad \sum_{k\in\cA^\ast}\lambda_k=1,
\end{equation}
and consequently
\[
\Var_{Q_x^{P_0}}\bigl[f^{P_0}_w(x,\cdot)\bigr]
=\bm\lambda^{\!\top}\widetilde{\bm\Sigma}_{\cA^\ast}\bm\lambda
+\Var_{Q_x^{P_0}}[h_x(Z)]
\;\ge\;\nu\bigl(\widetilde{\bm\Sigma}_{\cA^\ast}\bigr),
\]
with equality if and only if $h_x=0$ $Q_x^{P_0}$-a.s.\ and $\bm\lambda$ minimizes the unit-sum quadratic program.  On $G^c$ the program value is zero and variance nonnegativity gives the lower bound without positive-cell rigidity.  Hence, on all of $\cX$,
\[
\operatorname*{ess\,inf}_{f\in\IFRL}
\Var_{Q_x^{P_0}}\bigl[f^{P_0}_w(x,\cdot)\bigr]\;\ge\;
\nu\bigl(\widetilde{\bm\Sigma}_{\cA^\ast}\bigr)
\qquad\text{at }P_X\text{-a.e.\ }x,
\]
so $\nu(\widetilde{\bm\Sigma}_S)$ is a lower bound over the full class $\IFRL$ on the comparison model.
\end{proposition}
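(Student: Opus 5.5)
The plan follows the template of Lemma~\ref{lem:func-eq-cov}, adapted to the finite residual space $\cW=\{0,1\}^3$ and to nuisance-dependent branch scores. First, $G$ is measurable, so I restrict to $x\in G$. Fix $f\in\IFRL$ and $w$, and obtain one $P_X$-null set outside which the relevant identities hold simultaneously over all of $\Gamma_x$. This comes from Lemma~\ref{lem:violation-selection} applied to the Borel graph of $\Gamma_x$, with the gluing clause of Definition~\ref{def:bp-same-face-neighborhood} supplying (VS2)--(VS3). As in Lemma~\ref{lem:matched-bridge}, condition~(B) is applied with ambient law $P_q$ and working law $P_0$, after marginal restriction under (M1). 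The cell vector is the single disagreeing component; $\rho$ and $\kappa$ are preserved. This yields, for every $q\in\Gamma_x$,
\[
\E_{q,\rho^0}\bigl[f^{P_0}_w(x,\cdot)\bigr]=\min_kU_k(q),
\]
where the expectation uses the conditional law with instrument probabilities $\rho^0(x)$ and cells $q$.

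Second, I convert this into an algebraic description of the section. Write $\phi(z,a,y)=f^{P_0}_w(x;(z,a,y))$, a vector in $\R^8$. Its expectation under $(q,\rho^0)$ is the linear form $\sum_z\rho_z^0\sum_{a,y}q_{ay\mid z}\phi(z,a,y)$. On the same-face manifold $\mathcal H_S$ the right side equals $U_k(q)=\sum_z\bm c^{(k)\top}_zq_{\cdot\mid z}$ for any fixed $k\in S$. That right side is affine in $q$, because inactive inequalities stay strict on $\Gamma_x$. Since $\Gamma_x$ contains a relative ball in $\mathcal H_S$, the two affine functions agree on $\mathcal H_S$. Their difference therefore annihilates $T_S$, and equality holds at $\bm p_0$. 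By finite-dimensional duality, the annihilator of $T_S$ is spanned by two slice-sum indicators together with the vectors $\bm c^{(k)}-\bm c^{(l)}$ for $k,l\in S$. Hence the rescaled section $\rho^0_z\phi(z,\cdot)$ equals $\bm c^{(k_0)}_z+\sum_{k\in S}\mu_k(\bm c^{(k)}_z-\bm c^{(k_0)}_z)+\gamma_z$. Setting $\lambda_k$ so that $\sum_{k\in S}\lambda_k=1$, this gives $\phi(z,a,y)=\sum_k\lambda_kc^{(k)}_{ayz}/\rho_z^0+\gamma_z/\rho^0_z$. Positivity of all cells ensures that this identity holds $Q_x^{P_0}$-a.s.

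Third, I rewrite this in the form \eqref{eq:bp-rigid-form}. Subtracting $\sum_k\lambda_k(\varphi^{P_0}_k-U_k)$ removes the $c^{(k)}_{AYz}$ terms. What remains depends only on $Z$, which defines $h_x(Z)$ up to a constant. The constant is fixed by the truth identity $\E\phi=\min_kU_k=\psib$, and this forces $\E[h_x(Z)\mid X=x]=0$. For the variance, $\varphi^{P_0}_k-U_k$ is centered within each $Z$-stratum, by \eqref{eq:bp-truth-residual}. It is therefore conditionally orthogonal to any function of $Z$, so the cross term vanishes. The variance then splits as $\bm\lambda^\top\widetilde{\bm\Sigma}_S\bm\lambda+\Var[h_x(Z)]$. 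The unit-sum program is bounded below by $\nu(\widetilde{\bm\Sigma}_S)$, as in the first part of the proof of Proposition~\ref{prop:cov-singular}. Equality requires $h_x=0$ and a program minimizer.

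On $G^c$ the bound $\nu=0$ is trivial. The bound for the essential infimum then follows by Definition~\ref{def:vmin}, since each finite-variance member satisfies it almost everywhere. I expect the main obstacle to be the second step. It requires verifying that the annihilator of $T_S$ is exactly the span described, so that no extra free coefficients appear, and it requires carefully tracking the $1/\rho^0_z$ rescaling. The simultaneous null-set bookkeeping is routine by comparison, given Lemma~\ref{lem:violation-selection}.
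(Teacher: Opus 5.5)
Your proposal is correct and follows essentially the same route as the paper. You apply (B) at $(P_q,P_0)$ with marginal localization to get the frozen-cell identity. Two affine functions agreeing on a relative ball in $\mathcal H_S$ then force $\bm r\circ\bm f\in\bm c^{(k_0)}+T_S^\perp$. Finally, the slice constants are absorbed into a mean-zero $h_x(Z)$ that is orthogonal to the within-$Z$ centered residuals, giving the bound via Proposition~\ref{prop:cov-singular}.

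The only difference is the null-set bookkeeping. The paper uses countably many deterministic perturbations $p_0+t\delta_{S,j}$, with a fixed basis of $T_S$, rational $t$ and integer cost levels, and then extends by affineness. You instead invoke Lemma~\ref{lem:violation-selection} over the whole graph of $\Gamma_x$. Both work; the paper's version avoids measurable selection and is slightly more elementary.
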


\begin{proof}
Fix $f$, $w$ and $P_0$. We first obtain a conditional identity while holding the estimating function fixed at the nuisance values of $P_0$. For each nonempty $S$, choose a basis $\delta_{S,1},\ldots,\delta_{S,d_S}$ of its tangent space $T_S$. There are finitely many such bases because there are finitely many active sets.

For rational $t>0$ and an integer $m$, consider the measurable set on which $x\in G$, $\cA^\ast(x;P_0)=S$, $p_0(x)+t\delta_{S,j}\in\Gamma_x$, and the cost of this replacement kernel is at most $m$. Replace $p_0$ by $p_0+t\delta_{S,j}$ on this set and keep the baseline elsewhere. Definition~\ref{def:bp-same-face-neighborhood} gives a law $P_q\in\cP$ with unchanged propensity and structural value. Applying (B) at $P_q$ with $P_0$ as the substitution law yields
\[
\E_{P_q}f_w^{P_0}=\Psi(P_q).
\]
Marginal restriction as in Assumption~\ref{ass:rich}\textup{(M1)} localizes this equality to $X=x$. The indices $(S,j,t,m)$ are countable, so these identities and the truth-law conditional identity hold outside one $P_X$-null set.

Fix $x\in G$ outside this null set and abbreviate $S=\cA^\ast(x;P_0)$, $p=p_0(x)$ and $r_c=\rho_{z(c)}^0(x)$. The neighbourhood condition supplies a nonzero rational $t$ for each basis direction, and the finite local cost supplies a corresponding $m$. Let $\bm f$ list the eight values of $f_w^{P_0}(x,\cdot)$. Subtracting the truth identity from the perturbed identity and dividing by $t$ gives
\[
\delta_{S,j}^{\!\top}(\bm r\circ\bm f-\bm c^{(k_0)})=0,
\qquad j=1,\ldots,d_S,
\]
where $k_0\in S$. Both the expectation and the common active branch are affine on $\mathcal H_S$. The same identity therefore holds in every tangent direction and implies
\begin{equation}\label{eq:bp-frozen-cell}
\E_{Q_{x,\bm q}}[f_w^{P_0}(x,\cdot)]
=\min_kU_k(\bm q),\qquad \bm q\in\Gamma_x.
\end{equation}

The tangent space and its orthogonal complement are
\begin{align*}
T_S&=\{\delta:\bm1_{z=0}^{\!\top}\delta=
\bm1_{z=1}^{\!\top}\delta=0,\quad
(\bm c^{(k)}-\bm c^{(k_0)})^{\!\top}\delta=0\ (k\in S)\},\\
T_S^\perp&=\operatorname{span}\{\bm1_{z=0},\bm1_{z=1},
\bm c^{(k)}-\bm c^{(k_0)}:k\in S\}.
\end{align*}
Thus $\bm r\circ\bm f\in\bm c^{(k_0)}+T_S^\perp$. The coefficients of the branch vectors in this affine space sum to one. Dividing cellwise by $\bm r$ and using
\[
\varphi_k^{P_0}-U_k(P_0)
=\frac{c^{(k)}_{AYZ}-M_{kZ}(p_0)}{\rho_Z^0}
\]
absorbs the remaining slice constants into a function $h_x(Z)$ and gives~\eqref{eq:bp-rigid-form}. The truth-law conditional mean implies $\E_{P_0}[h_x(Z)\mid X=x]=0$.

Each residual satisfies
\[
\E_{P_0}[\varphi_k^{P_0}-U_k(P_0)\mid X=x,Z]=0.
\]
It is therefore orthogonal to $h_x(Z)$, so
\[
\Var_{Q_x^{P_0}}(f_w^{P_0})
=\bm\lambda^{\!\top}\widetilde{\bm\Sigma}_S\bm\lambda
+\E_{P_0}[h_x(Z)^2\mid X=x].
\]
Proposition~\ref{prop:cov-singular} gives the lower bound and its equality criterion on $G$. On $G^c$ the benchmark is zero and variance nonnegativity completes the proof.
\end{proof}

\begin{remark}[Quadratic-program branches and attainment]
\label{rem:bp-rigidity-branches}
The value in Proposition~\ref{prop:bp-rigidity} is $\nu$ from \eqref{eq:bp-nu}, not the unqualified reciprocal $(\bm 1^{\!\top}\widetilde{\bm\Sigma}_S^+\bm 1)^{-1}$.  If $\bm 1\in\operatorname{range}(\widetilde{\bm\Sigma}_S)$, the minimizing set is
\[
\frac{\widetilde{\bm\Sigma}_S^+\bm 1}
{\bm 1^{\!\top}\widetilde{\bm\Sigma}_S^+\bm 1}
+\{\bm k\in\ker(\widetilde{\bm\Sigma}_S):\bm 1^{\!\top}\bm k=0\}.
\]
If $\bm 1\notin\operatorname{range}(\widetilde{\bm\Sigma}_S)$, it is $\{\bm k\in\ker(\widetilde{\bm\Sigma}_S): \bm 1^{\!\top}\bm k=1\}$ and the value is zero.  For example, at $\widetilde{\bm\Sigma}_S=\operatorname{diag}(1,0)$ the raw reciprocal equals one, whereas the unit-sum vector $(0,1)$ has value zero.

The bounded map of Definition~\ref{def:bp-face-family} achieves the conditional lower bound whenever its population weight belongs to the displayed minimizing set. On a zero-value kernel stratum this uses variance nonnegativity and requires no positive-cell rigidity. The integrated variance still contains $\Var_{P_X}\{\min_kU_k(X)\}$. Estimator attainment follows under the learning and common-rule conditions stated below.
\end{remark}

\begin{definition}[Balke--Pearl face-family slot and continuous cap]
\label{def:bp-face-family}
Use the declared nuisance $\eta=(\bm p,\rho)$, with cell-input space equal to the product of the two probability simplices and propensity-input space equal to the open binary probability simplex. The comparison laws retain the fixed overlap bound of Section~\ref{sec:bp-setup}. For $L\ge1$ and every nonempty $S\subseteq\{1,\ldots,8\}$, define
\begin{equation}\label{eq:bp-aux-family}
\Omega_L(S)=\{\bm\omega\in\R^S:\bm 1^{\!\top}\bm\omega=1,
\ \|\bm\omega\|_1\le L\},\qquad
\mathcal W_{\mathrm{aux},\mathrm{BP},L}
=\prod_{\varnothing\ne S\subseteq\{1,\ldots,8\}}\Omega_L(S).
\end{equation}
Thus $\mathcal W_{\mathrm{aux},\mathrm{BP},L}$ has $2^8-1=255$ face coordinates.  For $v\in\R^S$, let
\begin{equation}\label{eq:bp-continuous-cap}
\mathsf C_{L,S}(v)=\operatorname*{arg\,min}_{u\in\Omega_L(S)}
\|u-v\|_2.
\end{equation}
For $\boldsymbol\omega\in\mathcal W_{\mathrm{aux},\mathrm{BP},L}$, the slot map is
\[
f_{\mathrm{BP}}(x,O;\bm p,\rho,S,\boldsymbol\omega)
=\sum_{k\in S}\boldsymbol\omega[S]_k
\varphi_k(x,O;\bm p,\rho).
\]
Write $C_c=\max_{k,a,y,z}|c^{(k)}_{ayz}|$.  On the declared comparison model the full-slot bound
\[
|f_{\mathrm{BP}}|\le 2LC_c(1+\epsilon^{-1})
\]
holds for every cell input, law-achievable working propensity and auxiliary face family.  Thus the exact identity~\eqref{eq:bp-dr-onewrong} and facewise unit sum establish both L1 and the expectation part of~(B) on $\cP$, so this map belongs to $\IFRL$.  The $255$ coordinates index all nonempty subsets; no claim that all are attainable active faces is needed.
\end{definition}

\begin{assumption}[Positive-definite Balke--Pearl estimation conditions]
\label{ass:bp-pd-estimation}
Fix $P_0\in\cP$ and a $\mathcal K$-fold scheme whose nuisance learners are trained off-fold.  With $\widehat U_k=\bm c^{(k)}{}^{\!\top}\widehat{\bm p}$, assume:
\begin{enumerate}[wide=0pt,label={\textup{(B\arabic*)}},itemsep=1pt]
\item Each fitted cell vector $(\widehat p_{ay\mid z}(x))_{a,y}$ belongs to the probability simplex. For $z=0,1$ and some $c>0$, $c\le\rho_z^0(x)\le1-c$, and $(\widehat\rho_0,\widehat\rho_1)$ is a probability vector with $\widehat\rho_z\in[c/2,1-c/2]$.  Let $C_1<\infty$ be deterministic and satisfy
\[
\bigl\|\widetilde{\bm\Sigma}_{\cA^\ast(x)}(x;P_0)\bigr\|_{\mathrm{op}}
\le C_1
\qquad P_X\text{-a.e.}
\,.
\]
\item $\sigma_{\min}\{ \widetilde{\bm\Sigma}_{\cA^\ast(x)}(x;P_0)\}\ge c_0>0$.
\item With the convention $\min\varnothing=+\infty$,
\[
\gamma_0=\operatorname*{ess\,inf}_x
\min_{k\notin\cA^\ast(x)}\{U_k(x)-\min_jU_j(x)\}>0.
\]
\item For some $\tau_n\downarrow0$ with $\tau_n<\gamma_0/3$ eventually,
\[
\sup_x\|\widehat{\bm U}(x)-\bm U(x)\|_\infty=o_P(\tau_n),\qquad
\widehat\cA_{\tau_n}(x)=
\{k:\widehat U_k(x)\le\min_j\widehat U_j(x)+\tau_n\}.
\]
\item The foldwise product remainder satisfies
\[
\sum_{z=0}^1\left\|\,\|\widehat\rho-\rho^0\|
\,\|M_{\cdot z}(\widehat{\bm p})-M_{\cdot z}(\bm p^0)\|\right\|_{L^2(P_X)}
=o_P(n^{-1/2}).
\]
\Needspace{5\baselineskip}
\item The plug-in $\widehat{\bm\Sigma}$ from~\eqref{eq:bp-Sigma-dr} converges to $\widetilde{\bm\Sigma}$ entrywise in $L^1(P_X)$ in probability.
\item The plug-in scores are $L^2(P_0)$-consistent.
\end{enumerate}
All conditions hold foldwise, and all displays are understood at $P_X$-a.e.\ $x$ where applicable.
\end{assumption}

The product bound (B5) follows, for example, from foldwise $o_P(n^{-1/4})$ rates for its two factors in $L^4(P_X)$.  More generally, any H\"older-compatible pair of norms whose product is in $L^2(P_X)$ suffices.  Condition (B6) imposes no convergence rate and no uniform spectral control on the plug-in block.  The inverse weights do not enter the exact conditional-bias identity, and their empirical-process contribution is controlled by the pointwise cap.

\begin{definition}[Positive-definite Balke--Pearl estimator]
\label{def:bp-pd-estimator}
Under Assumption~\ref{ass:bp-pd-estimation}, for fixed $L\ge1$ set
\begin{equation}\label{eq:bp-stabilized-weights}
\widehat\lambda^{\rm raw}_{\rm pd}(x)=
\begin{cases}
\dfrac{\widehat{\bm\Sigma}_{\widehat\cA}(x)^{-1}\bm 1}
{\bm 1^{\!\top}\widehat{\bm\Sigma}_{\widehat\cA}(x)^{-1}\bm 1},
&\sigma_{\min}\{\widehat{\bm\Sigma}_{\widehat\cA}(x)\}\ge c_0/2,\\[2ex]
|\widehat\cA(x)|^{-1}\bm 1,&\text{otherwise},
\end{cases}
\qquad
\widehat\lambda(x)=\mathsf C_{L,\widehat\cA(x)}
\{\widehat\lambda^{\rm raw}_{\rm pd}(x)\},
\end{equation}
where $\widehat\cA=\widehat\cA_{\tau_n}$.  Complete this selected coordinate to the total face family
\begin{equation}\label{eq:bp-family-construction}
\widehat{\bm\omega}=(\widehat\omega_S)_{\varnothing\ne S\subseteq
\{1,\ldots,8\}},\qquad
\widehat\omega_S(x)=
\begin{cases}
\widehat\lambda(x),&S=\widehat\cA(x),\\
|S|^{-1}\bm 1_S,&S\ne\widehat\cA(x),
\end{cases}
\end{equation}
and define
\[
T_n=\mathbb P_n\!\left[f_{\mathrm{BP}}
\{X,O;\widehat{\bm p}^{(-k)},\widehat\rho^{(-k)},
\widehat\cA_{\tau_n},\widehat{\bm\omega}^{(-k)}\}\right].
\]
\end{definition}

Definition~\ref{def:bp-face-family} supplies one total map for every structural face.  A single eight-vector cannot do so: normalization on all singletons fixes every coordinate at one and contradicts normalization on any pair.  The unused coordinates in \eqref{eq:bp-family-construction} are uniform on their own faces, while the selected coordinate contains the adaptive weight.  Since $\Omega_L(S)$ is nonempty, compact, and convex, $\mathsf C_{L,S}$ is single-valued, Borel, $1$-Lipschitz, and the identity on $\Omega_L(S)$. The cap therefore preserves continuity at its boundary, whereas a hard accept-or-fallback test at $\|v\|_1=L$ is discontinuous there.  It also prevents an estimated nearly singular block on a small set from producing unbounded inverse weights; Remark~\ref{rem:bp-weight-adversary} gives an exact construction.  Facewise exact double robustness holds for every unit-sum coordinate, including both branches of \eqref{eq:bp-stabilized-weights}.

\begin{theorem}[Positive-definite branch: fixed-law expansion and benchmark equality]
\label{thm:bp-achieve}
Assume the Balke--Pearl model of Section~\ref{sec:bp-bound}, Assumption~\ref{ass:bp-pd-estimation}, and the slot and estimator in Definitions~\ref{def:bp-face-family}--\ref{def:bp-pd-estimator}.  Put
\[
\lambda^\ast(x)=
\frac{\widetilde{\bm\Sigma}_{\cA^\ast(x)}(x;P_0)^{-1}\bm 1}
{\bm 1^{\!\top}\widetilde{\bm\Sigma}_{\cA^\ast(x)}(x;P_0)^{-1}\bm 1},
\qquad
\lambda_L^\ast(x)=\mathsf C_{L,\cA^\ast(x)}\{\lambda^\ast(x)\}.
\]
Let $\bm\omega_L^\ast(x)$ be the element of $\mathcal W_{\mathrm{aux},\mathrm{BP},L}$ whose $\cA^\ast(x)$ coordinate is $\lambda_L^\ast(x)$ and whose other coordinates are facewise uniform, and define
\[
g_L^{P_0}=f_{\mathrm{BP}}
(X,O;\bm p^0,\rho^0,\cA^\ast,\bm\omega_L^\ast).
\]
For every fixed $L\ge1$,
\[
T_n=\mathbb P_n[g_L^{P_0}]+o_{P_0}(n^{-1/2}),
\]
so $T_n$ is asymptotically linear at $P_0$ with limiting evaluation $g_L^{P_0}$.  Its conditional variance is
\[
(\lambda_L^\ast)^{\!\top}
\widetilde{\bm\Sigma}_{\cA^\ast}\lambda_L^\ast
\ge b_{\mathrm{BP}}(x;P_0).
\]
Equality holds if and only if $\lambda_L^\ast=\lambda^\ast$, equivalently $\lambda^\ast\in\Omega_L(\cA^\ast)$.  In particular, every $L\ge\sqrt{C_1/c_0}$ gives
\[
\Var_{Q_x^{P_0}}\!\{g_L^{P_0}(x,\cdot)\}
=b_{\mathrm{BP}}(x;P_0)
\qquad P_X\text{-a.e.}
\]
\end{theorem}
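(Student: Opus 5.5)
I would follow the template of the proof of Theorem~\ref{thm:achieve}. Fix $P_0$ and $L$, let $\widehat g_k$ denote the fitted fold-$k$ evaluation of $f_{\mathrm{BP}}$, and put $D_k=\widehat g_k-g_L^{P_0}$. The starting point is the exact decomposition
\[
T_n-\mathbb P_n g_L^{P_0}=\sum_k\frac{n_k}{n}(\mathbb P_{n,k}-P_0)D_k+\sum_k\frac{n_k}{n}P_0D_k .
\]
Step~1 is face recovery. By (B3)--(B4), on an event $E_n$ of probability tending to one we have $\sup_x\|\widehat{\bm U}-\bm U\|_\infty\le\tau_n/2$ and $\tau_n<\gamma_0/3$. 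The argument of Lemma~\ref{lem:face-recovery} then gives $\widehat\cA_{\tau_n}(x)=\cA^\ast(x)$ for $P_X$-a.e.\ $x$ on $E_n$. This is exact equality, not just a small-probability error, because the inactive gap is uniformly separated. So on $E_n$ the active-set error disappears entirely.

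Step~2 handles the bias term. On $E_n$ the selected face is the true one and $\widehat\lambda$ has unit sum. The identity~\eqref{eq:bp-dr-onewrong} then gives, with frozen fitted weights,
\[
\E[\widehat g_k\mid X,\mathcal G_k]-\min_jU_j=\sum_{k'\in\cA^\ast}\widehat\lambda_{k'}\sum_z\bigl(1-\rho_z^0/\widehat\rho_z\bigr)\bigl\{M_{k'z}(\widehat{\bm p})-M_{k'z}(\bm p^0)\bigr\},
\]
since the active $U_{k'}$ are tied. The factors obey $\|\widehat\lambda\|_1\le L$ and $|1-\rho^0_z/\widehat\rho_z|\le C\|\widehat\rho-\rho^0\|$ by (B1). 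Cauchy--Schwarz (or simply $L^1\le L^2$) together with (B5) gives $P_0D_k=o_P(n^{-1/2})$. The key point is that the weights never enter this bias except through their $\ell^1$ cap.

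Step~3, which I expect to be the main obstacle, is showing $P_0D_k^2=o_P(1)$. Split
\[
D_k=\sum_{k'}\widehat\lambda_{k'}\bigl(\widehat\varphi_{k'}-\varphi_{k'}^{P_0}\bigr)+\sum_{k'}\bigl(\widehat\lambda_{k'}-\lambda^\ast_{L,k'}\bigr)\varphi^{P_0}_{k'} .
\]
The first sum is $o_P(1)$ in $L^2$ by (B7) and the cap. For the second sum the scores are bounded by $C_c(1+2/c)$, so it suffices to show $\|\widehat\lambda-\lambda_L^\ast\|_{L^2(P_X)}=o_P(1)$, and boundedness by $2L$ reduces this to convergence in $P_X$-probability. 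From (B6), $\widehat{\bm\Sigma}_{\cA^\ast}\to\widetilde{\bm\Sigma}_{\cA^\ast}$ in $P_X$-measure. By (B2) and Weyl's inequality, the set where $\sigma_{\min}(\widehat{\bm\Sigma})<c_0/2$ has vanishing measure. Off that set, the map from the matrix to the raw weights is continuous, indeed Lipschitz on the block $\{\sigma_{\min}\ge c_0/2,\ \|\cdot\|\le C_1+1\}$, and a further vanishing set handles operator-norm excursions. Since $\mathsf C_{L,S}$ is $1$-Lipschitz, $\widehat\lambda\to\mathsf C_{L,\cA^\ast}(\lambda^\ast)=\lambda_L^\ast$ in measure. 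The conditional Chebyshev bound from the proof of Theorem~\ref{thm:achieve} then gives the empirical-process term $o_P(n^{-1/2})$. The care needed here is that no rate is available, so only continuous-mapping arguments in measure are allowed.

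Step~4 is the variance claim. Within-$Z$ centering gives conditional variance $(\lambda_L^\ast)^\top\widetilde{\bm\Sigma}_{\cA^\ast}\lambda_L^\ast$. Because $\widetilde{\bm\Sigma}_{\cA^\ast}$ is positive definite, the unit-sum program has the unique minimizer $\lambda^\ast$ with value $b_{\mathrm{BP}}$, and the identity
\[
\lambda^\top\widetilde{\bm\Sigma}\lambda-b_{\mathrm{BP}}=(\lambda-\lambda^\ast)^\top\widetilde{\bm\Sigma}(\lambda-\lambda^\ast)
\]
(as in Corollary~\ref{cor:barycentric-covariance}) yields the inequality, with equality if and only if $\lambda_L^\ast=\lambda^\ast$, that is, $\lambda^\ast\in\Omega_L$. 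Finally, $\|\lambda^\ast\|_1\le\sqrt{m}\,\|\lambda^\ast\|_2$ with $\|\lambda^\ast\|_2^2\le b_{\mathrm{BP}}/c_0$ and $b_{\mathrm{BP}}\le C_1/m$, since $\bm1^\top\widetilde{\bm\Sigma}^{-1}\bm1\ge m/C_1$. Together these give $\|\lambda^\ast\|_1\le\sqrt{C_1/c_0}$, so $L\ge\sqrt{C_1/c_0}$ makes the cap inactive.
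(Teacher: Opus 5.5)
Your proposal is correct and follows essentially the same route as the paper's proof: exact face recovery on the uniform-gap event from (B3)--(B4), the doubly robust product bias with the capped weights entering only as a bounded multiplier and (B5), $L^2(P_X)$ weight convergence via convergence in measure, the $c_0/2$ eigenvalue guard and the $1$-Lipschitz projection $\mathsf C_{L,S}$, conditional Chebyshev for the centered term, and the bound $\|\lambda^\ast\|_1\le\sqrt m\,\|\lambda^\ast\|_2\le\sqrt{C_1/c_0}$ for the cap. The only slip is cosmetic and changes nothing: the truth scores are bounded by $2C_c(1+c^{-1})$, not $C_c(1+2/c)$, because $|U_k|$ can reach $2C_c$ and $|c^{(k)}_{AYz}-M_{kz}|$ can reach $2C_c$.
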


Theorem~\ref{thm:bp-achieve} gives an asymptotic linear expansion and equality with the conditional variance program at the evaluating law. The full-domain map satisfies~(B) on $\cP$, and Proposition~\ref{prop:bp-rigidity} identifies the program value with $\Vmin$ under its perturbation conditions. Corollary~\ref{cor:bp-achieve-global} uses one common implementation throughout a learnable subclass.

\begin{proof}
Work on one validation fold of size $n_k\asymp n$, condition on its training observations, and suppress the fold index on fitted quantities. Write $\widehat f=f_{\mathrm{BP}}(\widehat\eta,\widehat\cA,\widehat{\bm\omega})$ and $g=g_L^{P_0}$. The decomposition is
\[
\mathbb P_{n_k}(\widehat f-g)
=(\mathbb P_{n_k}-P_0)(\widehat f-g)+P_0(\widehat f-g).
\]

By (B3)--(B4), the event
\[
E_n=\left\{\sup_x\|\widehat{\bm U}(x)-\bm U(x)\|_\infty
\le\tau_n/2,\quad \tau_n<\gamma_0/3\right\}
\]
has probability tending to one. On $E_n$, every active index lies within $\tau_n$ of the fitted minimum, whereas every inactive index has fitted gap at least $\gamma_0-\tau_n>\tau_n$. Hence $\widehat\cA(x)=\cA^\ast(x)$ throughout the fold's covariate domain.

For any nonempty $S$ and any unit-sum vector $\omega$, the conditional bias identity is
\begin{multline}\label{eq:bp-face-product}
\E_{P_0}[f_{\mathrm{BP}}(x,O;\widehat{\bm p},\widehat\rho,S,
\boldsymbol\omega)\mid X=x]-\min_jU_j(x)\\
=\sum_{k\in S}\omega_k\{U_k(x)-\min_jU_j(x)\}
+\sum_{k\in S}\omega_k\sum_z
\left(1-\frac{\rho_z^0(x)}{\widehat\rho_z(x)}\right)
\{M_{kz}(\widehat{\bm p})-M_{kz}(\bm p^0)\}(x).
\end{multline}
On $E_n$ the first term is zero. The cap and fitted overlap give
\[
|P_0(\widehat f-g)|
\le\frac{2L}{c}\sum_z
\left\|\|\widehat\rho-\rho^0\|
\,\|M_{\cdot z}(\widehat{\bm p})-M_{\cdot z}(\bm p^0)\|\right\|_{L^1(P_X)}
=o_P(n^{-1/2})
\]
by (B5) and $\|h\|_1\le\|h\|_2$. The weights appear only as a bounded multiplier of the nuisance-product error.

For weight convergence, (B6) implies convergence of the true active block in $P_X$-measure, in probability. The population eigenvalues are at least $c_0$, so the set on which the fitted block has an eigenvalue below $c_0/2$ has $P_X$-measure tending to zero. On its complement, the inverse-weight map is continuous. Projection onto $\Omega_L(S)$ is continuous and bounds both fitted and limiting weights by $L$. Bounded convergence in measure therefore gives, after padding by zeros outside the true active set,
\[
\|\widehat\lambda-\lambda_L^\ast\|_{L^2(P_X)}=o_P(1).
\]
There are finitely many branch scores, and their truth evaluations are uniformly bounded by overlap and finite cells. Together with (B7), this yields
\[
\|\widehat f-g\|_{L^2(P_0)}
\le L\max_k\|\widehat\varphi_k-\varphi_k^{P_0}\|_{L^2(P_0)}
+C\|\widehat\lambda-\lambda_L^\ast\|_{L^2(P_X)}
=o_P(1)
\]
on $E_n$, for a fixed finite $C$. Conditional Chebyshev now gives
$(\mathbb P_{n_k}-P_0)(\widehat f-g)=o_P(n^{-1/2})$. Since $P_0(E_n^c)\to0$, these convergence-in-probability statements extend to the whole sample space. Combining the fixed number of folds proves the expansion.

Finally, for an $m\times m$ positive-definite matrix $\bm\Sigma$, the optimal vector has
\begin{equation}\label{eq:bp-weight-norm}
\|\bm\lambda(\bm\Sigma)\|_1
\le\sqrt m\,\|\bm\lambda(\bm\Sigma)\|_2
\le\sqrt{\|\bm\Sigma\|_{\mathrm{op}}/\sigma_{\min}(\bm\Sigma)}.
\end{equation}
Indeed, comparison with the uniform feasible vector gives
\[
\sigma_{\min}(\bm\Sigma)\|\bm\lambda(\bm\Sigma)\|_2^2
\le\nu(\bm\Sigma)
\le\frac{\bm1^{\!\top}\bm\Sigma\bm1}{m^2}
\le\frac{\|\bm\Sigma\|_{\mathrm{op}}}{m}.
\]
Thus $L\ge\sqrt{C_1/c_0}$ contains the population optimizer. The variance formula and equality criterion follow from strict convexity of the positive-definite quadratic program.
\end{proof}

\begin{remark}[Weight stabilization]\label{rem:bp-weight-adversary}
Integrated covariance consistency need not control unregularized inverse weights. To see this at the matrix level, let the truth be $\bm I_2$ and change the fitted matrix only on a set of $P_X$-measure $n^{-2}$, to
\[
\begin{pmatrix}1-d+d^2&1\\1&1+d\end{pmatrix},\qquad d=n^{-2}.
\]
Its determinant is $d^3$ and its unit-sum minimizing vector is $(d^{-1},1-d^{-1})$. Thus its covariance error is $O(n^{-2})$ in $L^1(P_X)$ while the squared $L^2(P_X)$ weight error has order $n^2$. The estimator's eigenvalue threshold and continuous cap control this instability. The singular selectors below use spectral thresholds together with the same cap.
\end{remark}

\begin{corollary}[Integrated remainder criterion]\label{cor:bp-integrated-gap}
Use the cross-fitted estimator with the total face-family map and a fixed cap $L\ge1$. Suppose the fitted scores are square integrable and, on each fold, for a fixed $g_\infty^{P_0}\in L^2(P_0)$,
\begin{align*}
&\left\|\sum_{k\in\widehat S}\widehat\lambda_k
\{U_k-\min_jU_j\}\right\|_{L^1(P_X)}=o_P(n^{-1/2}),\\
&\left\|\sum_{k\in\widehat S}\widehat\lambda_k\sum_z
\left(1-\frac{\rho_z^0}{\widehat\rho_z}\right)
\{M_{kz}(\widehat{\bm p})-M_{kz}(\bm p^0)\}
\right\|_{L^1(P_X)}=o_P(n^{-1/2}),\\
&\left\|f_{\mathrm{BP}}(\widehat\eta,\widehat S,
\widehat{\bm\omega})-g_\infty^{P_0}\right\|_{L^2(P_0)}=o_P(1).
\end{align*}
Then $T_n=\mathbb P_n g_\infty^{P_0}+o_P(n^{-1/2})$ and $\E_{P_0}g_\infty^{P_0}=\Psi(P_0)$. If $g_\infty^{P_0}$ is a truth evaluation with a minimizing active-set weight, its conditional variance equals $b_{\mathrm{BP}}(x;P_0)$.
\end{corollary}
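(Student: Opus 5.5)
The plan is to repeat the cross-fitted decomposition used in the proofs of Theorem~\ref{thm:achieve} and Theorem~\ref{thm:bp-achieve}, with the fixed limit $g_\infty^{P_0}$ in place of the oracle evaluation. On one validation fold of size $n_k\asymp n$ we condition on the training sample $\mathcal G_k$ and write $\widehat f=f_{\mathrm{BP}}(\widehat\eta,\widehat S,\widehat{\bm\omega})$. The exact split is
\[
\mathbb P_{n_k}(\widehat f-g_\infty^{P_0})=(\mathbb P_{n_k}-P_0)(\widehat f-g_\infty^{P_0})+P_0\widehat f-P_0g_\infty^{P_0}.
\]
The empirical-process term is handled by conditional Chebyshev, as in the earlier proofs. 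Given $\mathcal G_k$, it has conditional second moment at most $n_k^{-1}\|\widehat f-g_\infty^{P_0}\|_{L^2(P_0)}^2$. By the third displayed hypothesis this is $o_P(n^{-1})$, and the bounded-conditional-probability argument turns it into an unconditional $o_P(n^{-1/2})$ statement.

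For the drift term, the key step is the conditional bias identity~\eqref{eq:bp-face-product}. For every face $S$ and every unit-sum $\bm\omega$, it writes $\E_{P_0}[\widehat f\mid X]-\min_jU_j(X)$ exactly as the face-gap sum plus the nuisance-product sum. Integrating over $P_X$ and applying the first two hypotheses gives $P_0\widehat f-\Psi(P_0)=o_P(n^{-1/2})$. It then remains to show $P_0g_\infty^{P_0}=\Psi(P_0)$.

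That mean identity is the point needing care, because $g_\infty^{P_0}$ is only an $L^2$ limit and not necessarily a named evaluation. Since $P_0$ is a probability measure, $L^2$ convergence implies $L^1$ convergence, so $P_0\widehat f\to P_0g_\infty^{P_0}$ in probability. Together with the previous display, this forces $P_0g_\infty^{P_0}=\Psi(P_0)$, since a deterministic quantity that is within $o_P(1)$ of $\Psi(P_0)$ must equal it. The drift term is then $o_P(n^{-1/2})$. Averaging over the fixed number of folds with weights $n_k/n$ yields $T_n=\mathbb P_ng_\infty^{P_0}+o_P(n^{-1/2})$.

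For the final sentence, suppose $g_\infty^{P_0}$ is the truth evaluation $f_{\mathrm{BP}}(\bm p^0,\rho^0,\cA^\ast,\bm\omega)$ with active coordinate $\lambda$ in the minimizing set of Remark~\ref{rem:bp-rigidity-branches}. By~\eqref{eq:bp-truth-residual}, its conditional residual is $\sum_k\lambda_k(\varphi_k^{P_0}-U_k)$, whose conditional variance is $\lambda^\top\widetilde{\bm\Sigma}_{\cA^\ast}\lambda$. Minimality of $\lambda$ together with Proposition~\ref{prop:cov-singular} (in the form of $\nu$ in~\eqref{eq:bp-nu}) then gives the value $b_{\mathrm{BP}}(x;P_0)$.

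I expect the only real obstacle to be the mean identity $P_0g_\infty^{P_0}=\Psi(P_0)$ for an abstract limit. The $L^2\Rightarrow L^1$ passage described above should settle it.
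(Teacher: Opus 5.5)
Your proposal is correct and follows essentially the same route as the paper's proof. Both split each fold's average into a centered empirical term, controlled by conditional Chebyshev and the $L^2$ hypothesis, and a drift term, bounded by the two $L^1$ remainders through the face-product bias identity. Both then use $L^2$ convergence to a fixed limit to obtain $\E_{P_0}g_\infty^{P_0}=\Psi(P_0)$, and read off the benchmark variance from the minimizing unit-sum weight.
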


Corollary~\ref{cor:bp-integrated-gap} replaces uniform active-set and spectral separation by the integrated remainders needed for asymptotic linearity. For example, at the matrix field $\bm M(x)=\operatorname{diag}(x,0)$ with $X\sim\operatorname{Unif}(0,1]$, thresholding at $a_n$ selects $(0,1)$ for $x>a_n$ and the uniform vector for $x\le a_n$. The squared weight error integrates to $a_n/2\to0$ despite the absence of a uniform positive eigenvalue bound. This is a matrix illustration; applying the corollary to a Balke--Pearl model requires the three displayed remainders for its own cell probabilities. A common rule satisfying these remainders throughout a learnable subclass yields estimator membership as in Corollary~\ref{cor:bp-one-estimator}; minimizing limits and Proposition~\ref{prop:bp-rigidity} then give efficiency on that subclass.

\begin{proof}[Proof of Corollary~\ref{cor:bp-integrated-gap}]
Conditional on a training fold, subtract $g_\infty^{P_0}$ from the fitted score and decompose its validation average into a centered empirical term and its expectation. The third displayed condition and conditional Chebyshev make the centered term $o_P(n^{-1/2})$. Identity~\eqref{eq:bp-face-product} shows that the expectation differs from the target by at most the first two displayed norms, hence by $o_P(n^{-1/2})$. The same conditions and $L^2$ convergence imply $\E_{P_0}g_\infty^{P_0}=\Psi(P_0)$. Combining folds gives the stated asymptotic linear expansion. If the limiting truth weight minimizes the program, its conditional variance equals the benchmark.
\end{proof}

\begin{proposition}[Exact matrix-level rank-boundary obstruction]
\label{prop:bp-rank-boundary}
Let $u_0=(1,1)^{\!\top}/\sqrt2$, $k_0=(1,-1)^{\!\top}/\sqrt2$, $u_\theta=\cos\theta\,u_0+\sin\theta\,k_0$ and $k_\theta=-\sin\theta\,u_0+\cos\theta\,k_0$.  For $\epsilon>0$ set
\[
M_{\epsilon,\theta}=u_\theta u_\theta^{\!\top}
+\epsilon k_\theta k_\theta^{\!\top}.
\]
The unique unit-sum minimizer and its value are
\begin{align*}
\lambda_{\epsilon,\theta}
&=\frac{\cos\theta\,u_\theta-\epsilon^{-1}\sin\theta\,k_\theta}
{\sqrt2\{\cos^2\theta+\epsilon^{-1}\sin^2\theta\}},\\
\nu(M_{\epsilon,\theta})
&=\{2(\cos^2\theta+\epsilon^{-1}\sin^2\theta)\}^{-1}.
\end{align*}
Consequently, along $\theta=c\epsilon$, $\lambda_{\epsilon,\theta}\to(u_0-c k_0)/\sqrt2$, so different $c$ give different finite limits.  Along $\theta=c\sqrt\epsilon$ with $c\ne0$, $\|\lambda_{\epsilon,\theta}\|_1$ diverges at order $\epsilon^{-1/2}$ while $\nu(M_{\epsilon,\theta})\to\{2(1+c^2)\}^{-1}$.  At the rank-one limit $M_{0,0}=u_0u_0^{\!\top}$ the canonical minimizer is $u_0/\sqrt2$ and $\nu(M_{0,0})=1/2$.

Thus the exact full-rank oracle has no path-independent continuous extension at this rank boundary, its norm is not locally bounded, and even the benchmark is path-discontinuous.  This proves that no fixed cap can preserve equality throughout a whole matrix neighbourhood; it does \emph{not} assert that this abstract matrix path is realizable by Balke--Pearl cell laws.  Such an endogenous conclusion would require a separate covariance-realization lemma.
\end{proposition}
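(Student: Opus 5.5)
The plan is to solve the unit-sum quadratic program for $M_{\epsilon,\theta}$ in closed form, working in the orthonormal eigenbasis $\{u_\theta,k_\theta\}$. Any $\lambda\in\R^2$ can be written as $\lambda=a\,u_\theta+b\,k_\theta$, and then
\[
\lambda^\top M_{\epsilon,\theta}\lambda=a^2+\epsilon b^2 .
\]
Since $\bm1=\sqrt2\,u_0$, we have $\bm1^\top u_\theta=\sqrt2\cos\theta$ and $\bm1^\top k_\theta=-\sqrt2\sin\theta$. The unit-sum constraint therefore becomes
\[
\sqrt2\,(a\cos\theta-b\sin\theta)=1 .
\]
Because $\epsilon>0$, the matrix is positive definite, so Lagrange multipliers (or Cauchy--Schwarz, as in the proof of Theorem~\ref{thm:kink}) give the unique minimizer. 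Its coordinates satisfy $a\propto\cos\theta$ and $b\propto-\epsilon^{-1}\sin\theta$, with common factor
\[
\bigl\{\sqrt2(\cos^2\theta+\epsilon^{-1}\sin^2\theta)\bigr\}^{-1}.
\]
This yields the displayed $\lambda_{\epsilon,\theta}$. The value $a^2+\epsilon b^2$ then simplifies to $\{2(\cos^2\theta+\epsilon^{-1}\sin^2\theta)\}^{-1}$, which agrees with $\nu=(\bm1^\top M^{-1}\bm1)^{-1}$ as a check.

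Next I would substitute the two paths into the formula.

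\textbf{Path $\theta=c\epsilon$.} Here $\epsilon^{-1}\sin^2\theta=O(\epsilon)\to0$ and $\epsilon^{-1}\sin\theta\to c$. Also $u_\theta\to u_0$ and $k_\theta\to k_0$. Hence $\lambda_{\epsilon,\theta}\to(u_0-c\,k_0)/\sqrt2$, and distinct values of $c$ give distinct limits because $k_0\neq0$.

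\textbf{Path $\theta=c\sqrt\epsilon$ with $c\neq0$.} Here $\epsilon^{-1}\sin^2\theta\to c^2$, so $\nu\to\{2(1+c^2)\}^{-1}$. The $k_\theta$ coefficient, however, is of order $\epsilon^{-1}\sin\theta\sim c\,\epsilon^{-1/2}$. Since $k_\theta$ is a unit vector whose $\ell^1$ norm is bounded below (it tends to $k_0$, with $\|k_0\|_1=\sqrt2$), it follows that $\|\lambda\|_1\asymp\epsilon^{-1/2}$.

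\textbf{Rank-one limit.} At $M_{0,0}=u_0u_0^\top$ we have $\bm1\in\operatorname{range}(M_{0,0})$ and $M_{0,0}^+=u_0u_0^\top$. Therefore $\bm1^\top M^+\bm1=2$, so $\nu=1/2$, and the pseudoinverse minimizer of Proposition~\ref{prop:cov-singular} is $M^+\bm1/2=u_0/\sqrt2$.

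For the consequences, path-dependence of the finite limits along $\theta=c\epsilon$ rules out a continuous extension of the oracle at $(0,0)$. Divergence along $\theta=c\sqrt\epsilon$ shows the oracle is not locally bounded. Comparing $\{2(1+c^2)\}^{-1}$ with $1/2$ shows that $\nu$ itself is path-discontinuous.

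For the cap statement, I would use strict convexity, as in Theorem~\ref{thm:bp-achieve}: projecting onto $\Omega_L$ preserves the optimal value only if the minimizer already lies in $\Omega_L$. Along the $\sqrt\epsilon$ path, every neighbourhood of $M_{0,0}$ contains matrices with $\|\lambda\|_1>L$, so any fixed cap loses equality somewhere in the neighbourhood.

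I expect no real obstacle, since all of this is elementary. The main care point is bookkeeping of $\ell^1$ versus Euclidean norms in the divergence claim, and stating explicitly that the argument is purely matrix-level, with no realizability by Balke--Pearl cell laws claimed.
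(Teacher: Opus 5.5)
Your proposal is correct and essentially follows the paper's proof. The paper writes $M_{\epsilon,\theta}^{-1}=u_\theta u_\theta^{\top}+\epsilon^{-1}k_\theta k_\theta^{\top}$ and substitutes into $M^{-1}\bm1/(\bm1^{\top}M^{-1}\bm1)$ using the same inner products $\sqrt2\cos\theta$ and $-\sqrt2\sin\theta$; this is equivalent to your eigenbasis Lagrange solution, and the path limits and the rank-one pseudoinverse computation are the same. Your uniqueness argument for the cap consequence (once the unique minimizer has $\ell^1$ norm above $L$, no element of $\Omega_L$ attains $\nu$) makes explicit a step the paper leaves implicit.
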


\begin{proof}[Proof of Proposition~\ref{prop:bp-rank-boundary}]
The vectors $u_\theta,k_\theta$ are orthonormal, so
\[
M_{\epsilon,\theta}^{-1}=u_\theta u_\theta^{\!\top}
+\epsilon^{-1}k_\theta k_\theta^{\!\top}.
\]
Their inner products with $\bm1$ are $\sqrt2\cos\theta$ and $-\sqrt2\sin\theta$, respectively. Substitution into the positive-definite unit-sum formulas gives the displayed optimizer and value. Using $\sin\theta=\theta+o(\theta)$ and $\cos\theta=1+o(1)$ yields both stated path limits and the divergent norm along $\theta=c\sqrt\epsilon$. At $\epsilon=\theta=0$, $\bm1=\sqrt2u_0$ belongs to the range and the pseudoinverse equals $u_0u_0^{\!\top}$, giving the stated minimizer and value $1/2$.
\end{proof}

\begin{definition}[Thresholded singular-branch selectors]
\label{def:bp-singular-selectors}
For the eigenpairs $(\widehat w_j,\widehat v_j)$ of the symmetrized active plug-in block, define
\begin{equation}\label{eq:bp-thresholded-weights}
\begin{aligned}
\widehat{\bm\Sigma}^{+,a}
&=\sum_{j:\widehat w_j>a}\widehat w_j^{-1}
\widehat v_j\widehat v_j^{\!\top},&
\widehat d_a&=\bm 1^{\!\top}\widehat{\bm\Sigma}^{+,a}\bm 1,\\
\widehat R_a
&=\|\{\bm I-\widehat{\bm\Sigma}\widehat{\bm\Sigma}^{+,a}\}\bm 1\|
 /\|\bm 1\|,&
\widehat\lambda^{\rm raw}_{\rm ran}
&=\begin{cases}
\widehat{\bm\Sigma}^{+,a_n}\bm 1/\widehat d_{a_n},
&\widehat R_{a_n}\le\kappa_n,\ \widehat d_{a_n}>0,\\
|\widehat\cA|^{-1}\bm 1,&\text{otherwise},
\end{cases}\\
\widehat\lambda_{\rm ran}
&=\mathsf C_{L,\widehat\cA}
 \{\widehat\lambda^{\rm raw}_{\rm ran}\}.&&
\end{aligned}
\end{equation}
For the kernel branch, define
\begin{equation}\label{eq:bp-kernel-weights}
\begin{aligned}
\widehat{\bm K}
&=\bm I-\widehat{\bm\Sigma}\widehat{\bm\Sigma}^{+,a_n},&
\widehat d_K&=\bm 1^{\!\top}\widehat{\bm K}\bm 1,\\
\widehat\lambda^{\rm raw}_{\rm ker}
&=\begin{cases}
\widehat{\bm K}\bm 1/\widehat d_K,
&|\widehat d_K|\ge\tfrac12\kappa_0^2\|\bm 1\|^2,\\
|\widehat\cA|^{-1}\bm 1,&\text{otherwise},
\end{cases}&
\widehat\lambda_{\rm ker}
&=\mathsf C_{L,\widehat\cA}
 \{\widehat\lambda^{\rm raw}_{\rm ker}\}.
\end{aligned}
\end{equation}
All quantities are evaluated at $x$; the argument is suppressed. For a truth block $\bm M=\widetilde{\bm\Sigma}_{\cA^\ast(x)}(x;P_0)$, the corresponding projected population weights are
\begin{align*}
\lambda_{L,\rm ran}^\ast(x)
&=\mathsf C_{L,\cA^\ast(x)}
\left\{\frac{\bm M^+\bm 1}{\bm 1^{\!\top}\bm M^+\bm 1}\right\},
&&\bm 1\in\operatorname{range}(\bm M),\\
\lambda_{L,\rm ker}^\ast(x)
&=\mathsf C_{L,\cA^\ast(x)}
\left\{\frac{\bm K\bm 1}{\bm 1^{\!\top}\bm K\bm 1}\right\},
\quad \bm K=\Pi_{\ker(\bm M)},
&&\bm 1\notin\operatorname{range}(\bm M).
\end{align*}
\end{definition}

\begin{assumption}[Separated singular Balke--Pearl strata]
\label{ass:bp-singular-strata}
Assume (B1) and (B3)--(B7) of Assumption~\ref{ass:bp-pd-estimation}.  At $P_X$-a.e.\ $x$, impose one of the following alternatives.
\begin{enumerate}[wide=0pt,label={\textup{(\alph*)}},itemsep=1pt]
\item \emph{Stable range.}~ $\bm 1\in\operatorname{range}(\widetilde{\bm\Sigma}_{\cA^\ast})$; $r(x)=\operatorname{rank}(\widetilde{\bm\Sigma}_{\cA^\ast})$ is constant on each cell of a measurable partition; every nonzero eigenvalue is at least $c_r>0$; and
\[
a_n\downarrow0,\quad a_n<c_r/2,\quad
\sup_x\|\widehat{\bm\Sigma}_{\widehat\cA}
-\widetilde{\bm\Sigma}_{\cA^\ast}\|=o_P(a_n),\quad
\kappa_n\downarrow0,\quad a_n=o(\kappa_n).
\]
Use $\widehat\lambda_{\rm ran}$ from~\eqref{eq:bp-thresholded-weights}.
\item \emph{Separated kernel.}~ $\bm 1\notin\operatorname{range}(\widetilde{\bm\Sigma}_{\cA^\ast})$, $\|\Pi_{\ker\widetilde{\bm\Sigma}_{\cA^\ast}}\bm 1\| \ge\kappa_0\|\bm 1\|$ for some $\kappa_0>0$, and every nonzero eigenvalue belongs to a common interval $[c_\ast,C_\ast]$ with $c_\ast>0$.  Moreover,
\[
a_n\downarrow0,\quad a_n<c_\ast/2,\quad
\sup_x\|\widehat{\bm\Sigma}_{\widehat\cA}
-\widetilde{\bm\Sigma}_{\cA^\ast}\|=o_P(a_n).
\]
Use $\widehat\lambda_{\rm ker}$ from~\eqref{eq:bp-kernel-weights}.
\end{enumerate}
The threshold inequalities are required only eventually.
\end{assumption}

\begin{theorem}[Stable-rank and kernel strata: fixed-law expansion and benchmark equality]
\label{thm:bp-stable-rank}
Assume the Balke--Pearl model and Assumption~\ref{ass:bp-singular-strata}.  Construct the face family and cross-fitted estimator of Definition~\ref{def:bp-pd-estimator}, replacing its selected positive-definite weight by the selector specified in Assumption~\ref{ass:bp-singular-strata}.  Let $\lambda_{L,\rm br}^\ast$ be the applicable population weight from Definition~\ref{def:bp-singular-selectors}, complete it by facewise uniform coordinates to $\bm\omega_{L,\rm br}^\ast$, and set
\[
g_{L,\rm br}^{P_0}=f_{\mathrm{BP}}
(X,O;\bm p^0,\rho^0,\cA^\ast,\bm\omega_{L,\rm br}^\ast).
\]
For every fixed $L\ge1$,
\begin{gather*}
T_n=\mathbb P_n[g_{L,\rm br}^{P_0}]+o_{P_0}(n^{-1/2}),
\\
\Var_{Q_x^{P_0}}(g_{L,\rm br}^{P_0})\ge b_{\mathrm{BP}}(x;P_0).
\end{gather*}
Equality holds if and only if $\lambda_{L,\rm br}^\ast$ belongs to the branch-specific minimizing set in Remark~\ref{rem:bp-rigidity-branches}.  Sufficient common-cap conditions are
\[
\begin{aligned}
L&\ge\sqrt{C_1/c_r}&&\text{on the stable-range stratum},\\
L&\ge1/\kappa_0&&\text{on the separated-kernel stratum}.
\end{aligned}
\]
Under these conditions, the respective limiting conditional variances equal the algebraic benchmark values
\[
(\bm 1^{\!\top}\widetilde{\bm\Sigma}_{\cA^\ast}^{+}\bm 1)^{-1}
\quad\text{and}\quad 0.
\]
For smaller $L$, the estimator remains bounded and asymptotically linear at $P_0$, but benchmark equality is not asserted.  Class-level membership and equality with $\Vmin$ require the common-rule conditions of Corollary~\ref{cor:bp-one-estimator}.
\end{theorem}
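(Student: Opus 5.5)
The proof will reuse the architecture of Theorem~\ref{thm:bp-achieve}. Work on one validation fold, conditional on its training data, and split $\mathbb P_{n_k}(\widehat f-g_{L,\rm br}^{P_0})$ into a centered empirical term and a conditional-bias term. On the event $E_n$ from (B3)--(B4), active-set recovery is exact: $\widehat\cA(x)=\cA^\ast(x)$ for all $x$. Identity~\eqref{eq:bp-face-product} then kills the first bias sum. Because the continuous cap keeps $\|\widehat{\bm\omega}\|_1\le L$, fitted overlap and (B5) bound the product term by $o_P(n^{-1/2})$, exactly as before. None of this depends on which selector produced the unit-sum weight, so the bias step carries over verbatim. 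The centered term reduces, by conditional Chebyshev and (B7), to showing $\|\widehat\lambda_{\rm br}-\lambda^\ast_{L,\rm br}\|_{L^2(P_X)}=o_P(1)$; the score bound $\|\widehat f-g\|_{L^2}\le L\max_k\|\widehat\varphi_k-\varphi_k\|+C\|\widehat\lambda-\lambda^\ast_L\|$ is unchanged.

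The main obstacle is weight convergence, and I would prove it in the stronger uniform form via Weyl's inequality. Write $\bm M=\widetilde{\bm\Sigma}_{\cA^\ast}(x;P_0)$ and let $\eta_n=\sup_x\|\widehat{\bm\Sigma}_{\widehat\cA}-\bm M\|=o_P(a_n)$.

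\emph{Stable-range stratum (a).} Every nonzero eigenvalue of $\bm M$ is at least $c_r>2a_n$. Hence, with probability tending to one, exactly $r(x)$ fitted eigenvalues exceed $a_n$, and the rest lie in $[-\eta_n,\eta_n]$. Davis--Kahan with eigengap at least $c_r/2$ gives that the spectral projector onto the top $r$ eigenvectors is within $O(\eta_n/c_r)$ of $\Pi_{\operatorname{range}\bm M}$. The truncated inverse $\widehat{\bm\Sigma}^{+,a_n}$ is then within $O(\eta_n/c_r^2)$ of $\bm M^+$, uniformly in $x$. The residual $\widehat R_{a_n}$ equals the norm of the fitted small-eigenvalue projection of $\bm 1$, normalized. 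Since $\bm 1\in\operatorname{range}\bm M$, this is $O(\eta_n/c_r)+O(\eta_n)=o_P(a_n)=o_P(\kappa_n)$, so the test accepts. The denominator $\widehat d_{a_n}\to\bm 1^\top\bm M^+\bm 1\ge m/C_1>0$, so the raw ratio converges uniformly to $\bm M^+\bm 1/\bm 1^\top\bm M^+\bm 1$. The $1$-Lipschitz cap $\mathsf C_{L,S}$ transfers this to $\lambda^\ast_{L,\rm ran}$. The measurable partition by rank only ensures that $r(x)$ is well defined; the bounds are uniform across cells.

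\emph{Separated-kernel stratum (b).} The same eigenvalue separation (gap $c_\ast/2$) makes $\widehat{\bm K}=\bm I-\widehat{\bm\Sigma}\widehat{\bm\Sigma}^{+,a_n}$ the fitted projector onto the small-eigenvalue space, within $O(\eta_n/c_\ast)$ of $\bm K=\Pi_{\ker\bm M}$. Since $\bm 1^\top\bm K\bm 1\ge\kappa_0^2\|\bm 1\|^2$, the threshold $\tfrac12\kappa_0^2\|\bm 1\|^2$ is eventually met. The ratio is then uniformly Lipschitz, and the cap again finishes the argument. On $E_n$ the fallback branches are never used, so no adversarial behavior of the kind in Remark~\ref{rem:bp-weight-adversary} arises. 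Combining the folds gives the expansion.

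For the variance statements, the conditional variance of $g^{P_0}_{L,\rm br}$ is $\lambda^{\ast\top}_{L}\bm M\lambda^\ast_L$, because the propensity-only term $h_x$ is absent. Proposition~\ref{prop:cov-singular} and Remark~\ref{rem:bp-rigidity-branches} then give the lower bound $\nu(\bm M)=b_{\mathrm{BP}}$, with equality exactly on the minimizing set. The cap sufficiency conditions are where the capped population weight equals the uncapped one.

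\emph{Range branch.} $\|\lambda\|_1\le\sqrt m\|\lambda\|_2$. For $\lambda=\bm M^+\bm 1/\bm 1^\top\bm M^+\bm 1\in\operatorname{range}\bm M$ we have $c_r\|\lambda\|_2^2\le\lambda^\top\bm M\lambda=\nu\le\bm 1^\top\bm M\bm 1/m^2\le C_1/m$. This gives $\|\lambda\|_1\le\sqrt{C_1/c_r}$, mirroring~\eqref{eq:bp-weight-norm}.

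\emph{Kernel branch.} $\lambda=\bm K\bm 1/\|\bm K\bm 1\|^2$ has $\|\lambda\|_2=1/\|\bm K\bm 1\|\le1/(\kappa_0\sqrt m)$. Hence $\|\lambda\|_1\le1/\kappa_0$, and its variance is $0$.

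In both cases $\mathsf C_{L,S}$ acts as the identity, which yields the stated benchmark values. For smaller $L$ the expansion still holds, but the limiting weight need not minimize, so equality is not claimed.
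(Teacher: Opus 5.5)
Your proposal is correct and follows essentially the same route as the paper. You reuse the fold decomposition and face-product bias bound from Theorem~\ref{thm:bp-achieve}, obtain uniform weight convergence on each stratum from Weyl separation plus spectral-projection and truncated-inverse perturbation bounds (so the range test and denominator guards pass with probability tending to one), and show the cap is inactive via the $\ell^1$ bounds $\sqrt{C_1/c_r}$ and $1/\kappa_0$; you name Davis--Kahan explicitly where the paper cites spectral projection bounds generically.
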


\begin{remark}[Denominator checks and weight bounds]\label{rem:bp-singular-selector-guards}
The denominator checks define finite weights for every fitted matrix. Under the respective convergence conditions they pass with probability tending to one: the range denominator has a positive population limit, and the kernel denominator is at least $\kappa_0^2\|\bm1\|^2$. The continuous projection separately enforces $\|\widehat\lambda\|_1\le L$, which controls the conditional bias and the integrated score error in the attainment proof.
\end{remark}

\begin{proof}
The proof of Theorem~\ref{thm:bp-achieve} applies once the selected weights converge in $L^2(P_X)$ to their stated limits. Its face identity and nuisance-product bound hold for every capped unit-sum vector.

On either stratum, let $e_n=\sup_x\|\widehat{\bm\Sigma}_{\widehat\cA}-\widetilde{\bm\Sigma}_{\cA^\ast}\|$. On the event of correct active-set recovery, $e_n=o_P(a_n)$ and the positive eigenvalues are bounded below. Weyl's inequality separates the empirical eigenvalues above $a_n$ from those corresponding to the population kernel. Spectral projection and inverse perturbation bounds therefore give
\[
\|\widehat\Pi_{\rm ran}-\Pi_{\rm ran}\|_{\infty}=O_P(e_n),
\qquad
\|\widehat{\bm\Sigma}^{+,a_n}-\widetilde{\bm\Sigma}_{\cA^\ast}^{+}\|_{\infty}=O_P(e_n),
\]
with constants depending on the stratum's fixed positive eigenvalue bound.

On the stable-range stratum, $\Pi_{\rm ran}\bm1=\bm1$, so $\widehat R_{a_n}=O_P(e_n)=o_P(\kappa_n)$. Moreover,
\[
\bm1^{\!\top}\widetilde{\bm\Sigma}_{\cA^\ast}^{+}\bm1
\ge |\cA^\ast|/C_1>0.
\]
The range test and denominator test therefore pass with probability tending to one, and the raw weights converge to the pseudoinverse weights. Applying~\eqref{eq:bp-weight-norm} on the range yields their $\ell^1$ bound $\sqrt{C_1/c_r}$.

On the separated-kernel stratum, $\widehat{\bm K}\to\bm K$ uniformly in probability and
\[
\bm1^{\!\top}\bm K\bm1=\|\bm K\bm1\|_2^2
\ge\kappa_0^2|\cA^\ast|.
\]
The denominator test passes with probability tending to one. The raw weights converge to $\bm K\bm1/(\bm1^{\!\top}\bm K\bm1)$, whose norm satisfies
\[
\left\|\frac{\bm K\bm1}{\bm1^{\!\top}\bm K\bm1}\right\|_1
\le\frac{\sqrt{|\cA^\ast|}}{\|\bm K\bm1\|_2}
\le\kappa_0^{-1}.
\]

Continuous projection preserves convergence and bounds every selected weight by $L$. Thus convergence holds in $L^2(P_X)$ on both strata, and the cross-fitted expansion follows. The stated cap bounds leave the respective population minimizers unchanged. For any cap, the equality criterion is precisely membership of the projected limit in the minimizing set of Proposition~\ref{prop:cov-singular}.
\end{proof}

\begin{remark}[Singular Balke--Pearl examples]\label{rem:bp-singular-strata}
The positive-cell example with $p_{\cdot\mid z}=(0.1,0.1,0.7,0.1)$ and $\rho_1=0.4$ has active-block eigenvalues $(0,0.2312,0.6494,1.0361)$ and $\bm1$ in its range. Its minimizing weights are $(0.25,0.15,0.35,0.25)$ and its conditional value is $0.16$, so its matrix belongs to the stable-range case.

The boundary example in Section~\ref{sec:bp-degenerate} has rank one and normalized kernel projection $\sqrt{11/12}$. Its kernel weight $(1/2,0,1/2,0)$ has variance zero. The unqualified pseudoinverse vector $(1,0,-1,1)$ instead has variance $57/28$. At this law nonnegativity and the bounded kernel construction establish $\Vmin=0$ directly. Theorem~\ref{thm:bp-stable-rank} supplies estimator attainment when its learning conditions hold. More general covariance fields with vanishing spectral gaps can be handled when the integrated conditions of Corollary~\ref{cor:bp-integrated-gap} hold.
\end{remark}

\begin{definition}[Unified stabilized spectral selector]
\label{def:bp-unified-selector}
Given the symmetrized plug-in $\widehat{\bm\Sigma}_{\widehat\cA}(x)$, threshold $a_n$, range tolerance $\kappa_n$, and cap $L$, set
\begin{equation*}
\begin{aligned}
\widehat r(x)&\eqdef
\begin{cases}
\lambda\bigl(\widehat{\bm\Sigma}^{+,a_n}\bigr),
&\text{if }\widehat R_{a_n}\le\kappa_n
\text{ and }\widehat d_{a_n}>0,\\[1ex]
\widehat{\bm K}\bm 1/\widehat d_K,
&\text{else if }|\widehat d_K|
\ge\tfrac12\kappa_0^2\|\bm 1\|^2,\\[1ex]
\bigl|\widehat\cA(x)\bigr|^{-1}\bm 1,&\text{otherwise,}
\end{cases}\\
\widehat\lambda(x)&\eqdef
\mathsf C_{L,\widehat\cA(x)}\{\widehat r(x)\},
\end{aligned}
\end{equation*}
where $\lambda(\bm M)\eqdef\bm M\bm 1/ (\bm 1^{\!\top}\bm M\bm 1)$ and $\widehat{\bm K}$ is defined in~\eqref{eq:bp-kernel-weights}.  The guards make $\widehat r$ total, and the common projection makes $\widehat\lambda$ unit-sum and bounded by $L$ in $\ell^1$.
\end{definition}

\begin{corollary}[One estimator across all three strata]
\label{cor:bp-one-estimator}
Fix the rich comparison model $\cP$ described above and let $\cP_{\mathrm{ach}}\subseteq\cP$ be a subclass on which one fold scheme, one family of learners, and one tuple $(\tau_n,a_n,\kappa_n,L,c_0,c_r,\kappa_0,C_1)$, fixed independently of $P$, satisfy the conditions of Theorem~\ref{thm:bp-achieve} on the positive-definite stratum and Theorem~\ref{thm:bp-stable-rank} on the stable-range and separated-kernel strata, and such that, independently of which strata the subclass contains, $\kappa_n\downarrow0$ and $a_n=o(\kappa_n)$.  Construct the estimator with Definition~\ref{def:bp-unified-selector}.  For each $P\in\cP_{\mathrm{ach}}$, let $\lambda_{L,\mathrm{br}}^\ast(x;P)$ denote the applicable projected population weight at $x$ and define the capped benchmark
\begin{equation}\label{eq:bp-capped-benchmark}
\begin{aligned}
V_{\mathrm{BP},L}(P)
&\eqdef\Var_{P_X}\!\left[\min_k U_k(X)\right]\\
&\quad+\E_{P_X}\!\left[
\lambda_{L,\mathrm{br}}^\ast(X;P)^{\!\top}
\widetilde{\bm\Sigma}_{\cA^\ast(X)}(X;P)
\lambda_{L,\mathrm{br}}^\ast(X;P)\right].
\end{aligned}
\end{equation}
Then
\[
T_n\in\TRL(\Psi;\cP_{\mathrm{ach}}\mid\cP)
\]
for every fixed $L\ge1$, and $\Vlow(P)\le V_{\mathrm{BP},L}(P)$ for every $P\in\cP_{\mathrm{ach}}$.

Here $\Vmin$ and $\Vlow$ use the maps satisfying~(B) on $\cP$; $\cP_{\mathrm{ach}}$ need not be closed under gluing.  If Proposition~\ref{prop:bp-rigidity} applies using perturbations in $\cP$ at every law in the subclass, with same-face neighbourhoods required only where $\nu>0$, then
\[
V_{\mathrm{BP}}(P)\le\Vlow(P)\le V_{\mathrm{BP},L}(P).
\]
If, in addition, the projected population limit is in the branch-specific minimizing set at $P_X$-a.e.\ $x$, then
\[
\Vmin(x;P)
=\nu\bigl(\widetilde{\bm\Sigma}_{\cA^\ast(x)}(x;P)\bigr)
\quad P_X\text{-a.e.},
\qquad
V_{\mathrm{BP},L}(P)=\Vlow(P)=V_{\mathrm{BP}}(P).
\]
The latter condition holds under the common branchwise cap bounds $L\ge\sqrt{C_1/c_0}$ on the positive-definite stratum, $L\ge\sqrt{C_1/c_r}$ on the stable-range stratum, and $L\ge1/\kappa_0$ on the separated-kernel stratum.  The hypotheses require the corresponding branchwise learning conditions throughout the subclass.
\end{corollary}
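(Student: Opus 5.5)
The plan has three stages. First I show that the unified selector of Definition~\ref{def:bp-unified-selector} behaves, at each law and on each stratum, like the branch-specific selector used in Theorems~\ref{thm:bp-achieve} and~\ref{thm:bp-stable-rank}. That gives asymptotic linearity at every $P\in\cP_{\mathrm{ach}}$. Second, I verify global membership of the full-slot map in $\IFRL$. Third, I sandwich $\Vlow(P)$ using the algebraic results already proved.

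\textbf{Stage one: branch recovery and asymptotic linearity.} Fix $P\in\cP_{\mathrm{ach}}$ and work on the event $E_n$ where the active set is recovered exactly; this uses (B3)--(B4) as in the proof of Theorem~\ref{thm:bp-achieve}. I then show that, with probability tending to one and uniformly in $x$ within each stratum, the first guard of $\widehat r$ is triggered exactly on the positive-definite and stable-range strata, and the second guard exactly on the separated-kernel stratum.
\begin{itemize}
\item On the positive-definite stratum, $\bm 1$ lies in the range and every eigenvalue exceeds $c_0>a_n$. So $\widehat{\bm\Sigma}^{+,a_n}$ is eventually the full inverse, and $\lambda(\widehat{\bm\Sigma}^{+,a_n})$ coincides with the inverse weights.
\item On the stable-range stratum, Weyl's inequality gives $\widehat R_{a_n}=O_P(e_n)=o_P(\kappa_n)$ and a positive denominator, as in the proof of Theorem~\ref{thm:bp-stable-rank}.
\item On the kernel stratum, $\widehat R_{a_n}$ converges to $\|\Pi_{\ker}\bm1\|/\|\bm1\|\ge\kappa_0$. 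Since $\kappa_n\downarrow0$, the range test eventually fails, and the kernel denominator test passes.
\end{itemize}
Two cases need care: the positive-definite stratum needs a uniform spectral bound rather than only (B6), and a law may mix strata. I would therefore invoke each theorem's proof strata-wise and glue the resulting $L^2(P_X)$ weight convergence over the finite partition. The cap gives boundedness, and the face-product identity~\eqref{eq:bp-face-product} with (B5) gives the bias bound. Together these yield $T_n=\mathbb P_n[g_{L,\mathrm{br}}^{P}]+o_P(n^{-1/2})$ with a common limiting evaluation.

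\textbf{Stage two: membership in $\IFRL$.} By Definition~\ref{def:bp-face-family}, the total face-family map $f_{\mathrm{BP}}$ is bounded by $2LC_c(1+\epsilon^{-1})$ on the entire comparison model. It is also exactly one-wrong unbiased by~\eqref{eq:bp-dr-onewrong} together with facewise unit sums, so it belongs to $\IFRL$. The limiting auxiliary input is $\bm\omega_{L,\mathrm{br}}^\ast(\cdot;P)$, which is measurable. Condition (A) then holds with $w_{f,P}=\bm\omega_{L,\mathrm{br}}^\ast$, so $T_n\in\TRL(\Psi;\cP_{\mathrm{ach}}\mid\cP)$.

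\textbf{Stage three: the sandwich and equality.} By Theorem~\ref{thm:ulb}, $\Vlow(P)\le\Var_P[g_{L,\mathrm{br}}^P]$. Computing this variance by the law of total variance, with within-$Z$ centering as in Proposition~\ref{prop:bp-rigidity}, gives exactly $V_{\mathrm{BP},L}(P)$. When Proposition~\ref{prop:bp-rigidity} applies, it gives $\Vmin\ge\nu$ pointwise, and integrating yields $V_{\mathrm{BP}}\le\Vlow$. If the projected limit lies in the minimizing set (Remark~\ref{rem:bp-rigidity-branches}), the conditional variance equals $\nu$. This forces all three quantities to coincide and, by the essential-infimum argument, identifies $\Vmin=\nu$ almost everywhere. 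The stated cap bounds ensure membership in the minimizing set via~\eqref{eq:bp-weight-norm} and the kernel $\ell^1$ bound from the proof of Theorem~\ref{thm:bp-stable-rank}.

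I expect the main obstacle to be Stage one's uniform guard classification across strata with one fixed tuning tuple. In particular, the ordering $a_n=o(\kappa_n)$ must simultaneously pass the range test on the range strata and fail it on the kernel stratum, and this must hold uniformly in $x$ near the stratum boundaries within a single law.
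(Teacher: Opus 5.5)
Your proposal is correct and matches the paper's proof step for step. The paper likewise argues branch recovery of the unified selector on each stratum: $a_n=o(\kappa_n)$ passes the range test on the stable-range stratum, and $\kappa_n\downarrow0$ makes it fail on the separated-kernel stratum. It then gets (A) from the common learners and (B) from the bounded face-family map, $\Vlow(P)\le V_{\mathrm{BP},L}(P)$ from the variance of that member, the reverse bound from Proposition~\ref{prop:bp-rigidity}, and equality when the cap is inactive at the minimizer.

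The point you left vague, the positive-definite stratum, needs no uniform spectral bound. Condition (B6) together with $a_n<c_0/2$ eventually makes the unified raw selector coincide with the ordinary-inverse weights outside a covariate set whose $P_X$-measure tends to zero in probability. The common cap then turns this agreement into the required $L^2(P_X)$ weight convergence.
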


\begin{proof}
On the positive-definite stratum, $a_n<c_0/2$ eventually.  Covariance consistency and the fixed matrix dimension imply that, outside a set whose $P_X$-measure converges to zero in probability, every empirical active eigenvalue exceeds $a_n$, the thresholded range residual is zero, and the unified raw selector equals the ordinary-inverse selector of Theorem~\ref{thm:bp-achieve}.  The common continuous cap is uniformly bounded, so this agreement on the good set transfers the $L^2(P_X)$ weight convergence and hence the fixed-law expansion proved there.  The stable-range and separated-kernel expansions follow from Theorem~\ref{thm:bp-stable-rank}, provided the unified selector uses the branch that theorem prescribes.  On the stable-range stratum this is the content of $a_n=o(\kappa_n)$, as in that theorem.  On the separated-kernel stratum it uses $\kappa_n\downarrow0$: by covariance consistency with $a_n<c_\ast/2$ eventually, the thresholded range residual $\widehat R_{a_n}$ converges to $\|\Pi_{\ker\widetilde{\bm\Sigma}_{\cA^\ast}}\bm1\|/\|\bm1\| \ge\kappa_0>0$, so the range test $\widehat R_{a_n}\le\kappa_n$ fails with probability tending to one and the kernel branch is selected. The condition cannot be dropped when the subclass contains no stable-range law: at the rank-one law of Section~\ref{sec:bp-degenerate} the residual is $\sqrt{11/12}<1$, so $\kappa_n\equiv1$ would pass the range test permanently and return the unit-sum weights $(1,0,-1,1)$, with conditional variance $57/28$, whereas the kernel branch returns a unit-sum kernel vector with conditional variance $0$.  Because the fold scheme, learners, tuning tuple, all-face slot, and selector are common across the subclass, these fixed-law expansions belong to one estimator satisfying (A) on $\cP_{\mathrm{ach}}$.  The bounded full-slot construction in Definition~\ref{def:bp-face-family} independently gives~(B) on $\cP$.

The variance of that member is exactly~\eqref{eq:bp-capped-benchmark}, which proves $\Vlow\le V_{\mathrm{BP},L}$.  Proposition \ref{prop:bp-rigidity} gives the reverse lower comparison $V_{\mathrm{BP}}\le\Vlow$.  If the projected population weight is a branch-specific minimizer, its conditional quadratic value equals $\nu(\widetilde{\bm\Sigma}_{\cA^\ast})$ pointwise, proving all three equalities.  The displayed cap bounds make the projection inactive at the corresponding algebraic minimizer.
\end{proof}

\begin{remark}[Boundedness and equality for the unified selector]
The cap guarantees boundedness but may alter an algebraic minimizer. For a two-dimensional active block with kernel spanned by $(10,-9)$ and remaining eigenvalue $q=0.052559$, the kernel candidate is $(10,-9)$.  At $\kappa_0=0.05$ it passes the denominator guard, while its $\ell^1$ norm is $19$.  With $L=5$, projection returns $(3,-2)$, whose variance is $49q/181>0$ instead of the kernel benchmark $0$.

The selector must also read the estimated covariance.  For $\widetilde{\bm\Sigma}=\operatorname{diag}(1,4)$ and $\operatorname{diag}(4,1)$, the oracle weights are respectively $(0.8,0.2)$ and $(0.2,0.8)$, each with value $0.800$; either vector used at the other law has value $2.600$.  Definition \ref{def:bp-unified-selector} is one law-independent map whose argument adapts to both covariance matrices.
\end{remark}
\begin{corollary}[Global membership; Balke--Pearl equality on the
positive-definite branch]\label{cor:bp-achieve-global} Fix the rich comparison model $\cP$ described above.  Let $\cP_{\mathrm{ach}}\subseteq\cP$ be a subclass on which the foldwise conditions of Theorem~\ref{thm:bp-achieve} hold at every $P$ for \emph{one} fold scheme, one family of learning algorithms, one deterministic tolerance sequence $\tau_n$ and one stabilization rule, all fixed in advance and not depending on $P$.  Suppose the common-cap condition $L\ge\sqrt{C_1/c_0}$ holds and the comparison model supplies the localized perturbations and same-face hypotheses of Proposition~\ref{prop:bp-rigidity} at every $P\in\cP_{\mathrm{ach}}$.  Then $(T_n)$ satisfies Definition~\ref{def:ralu}~(A) at every such $P$ and $T_n\in\TRL(\Psi;\cP_{\mathrm{ach}}\mid\cP)$: its bounded full-slot map satisfies~(B) on $\cP$, whereas the common learning conditions supply~(A) on $\cP_{\mathrm{ach}}$.  No gluing closure is imposed on the learnable subclass.  Combined with the lower bound of Proposition~\ref{prop:bp-rigidity}, at every $P\in\cP_{\mathrm{ach}}$ whose active blocks are positive definite $P_X$-a.e.,
\[
\Vmin(x;P)=\nu\bigl(\widetilde{\bm\Sigma}_{\cA^\ast(x)}(x;P)\bigr)
=\bigl(\bm 1^{\!\top}\widetilde{\bm\Sigma}_{\cA^\ast(x)}^{-1}
\bm 1\bigr)^{-1}
\quad\text{at }P_X\text{-a.e.\ }x,
\]
and $\Vlow(P)=V_{\mathrm{BP}}(P)$, the right-hand side of~\eqref{eq:bp-Vlow-final}, so the inequality there is an equality on this subclass.  The one-estimator rank-deficient counterpart is Corollary~\ref{cor:bp-one-estimator}, with the branchwise common-cap conditions stated there; Theorem~\ref{thm:bp-stable-rank} alone is only the fixed-law ingredient.  Covariance fields beyond the separated cases can instead be assessed using Corollary~\ref{cor:bp-integrated-gap}.
\end{corollary}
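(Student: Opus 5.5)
The proof assembles three ingredients already established: the fixed-law expansion of Theorem~\ref{thm:bp-achieve}, the bounded full-slot construction of Definition~\ref{def:bp-face-family}, and the lower bound of Proposition~\ref{prop:bp-rigidity}. The plan separates membership in $\TRL$ from the variance identification, as in Corollary~\ref{cor:bp-one-estimator}.

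\emph{Membership.} Condition~(B) on $\cP$ needs no learning input. The map $f_{\mathrm{BP}}$ is bounded by $2LC_c(1+\epsilon^{-1})$ at every cell input, law-achievable working propensity and face family. Its facewise unit sum, together with the exact bias identity~\eqref{eq:bp-dr-onewrong} evaluated at the true structural face, gives $\E_P f_{\mathrm{BP}}(\eta(P'),\kappa(P),\bm\omega)=\Psi(P)$ whenever $P'$ agrees with $P$ in either the cell or the propensity slot. This holds for every measurable auxiliary field, including the fitted face family. For condition~(A), I fix $P\in\cP_{\mathrm{ach}}$ and apply Theorem~\ref{thm:bp-achieve}. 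The fold scheme, learners, $\tau_n$ and stabilization rule do not depend on $P$, so one sequence $T_n$ satisfies $T_n=\mathbb P_n[g_L^{P}]+o_P(n^{-1/2})$ at every such law. The auxiliary map in~\eqref{eq:AL} is taken as $w_{f,P}=\bm\omega_L^\ast(\cdot;P)$, which is measurable because $\lambda^\ast$ is a Borel function of $\widetilde{\bm\Sigma}$ and $\mathsf C_{L,S}$ is Borel. Together these give $T_n\in\TRL(\Psi;\cP_{\mathrm{ach}}\mid\cP)$. Gluing closure of $\cP_{\mathrm{ach}}$ is never used, because~(B) is checked on $\cP$ directly.

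\emph{Equality.} Suppose the active blocks at $P$ are positive definite $P_X$-a.e. Then $\nu(\widetilde{\bm\Sigma}_{\cA^\ast})=(\bm1^\top\widetilde{\bm\Sigma}_{\cA^\ast}^{-1}\bm1)^{-1}>0$, so the set $G$ of Proposition~\ref{prop:bp-rigidity} has full measure. That proposition, with the assumed same-face neighbourhoods, then gives $\Vmin\ge\nu$ a.e. For the reverse inequality I exhibit a competitor. The cap condition $L\ge\sqrt{C_1/c_0}$ and the norm bound~\eqref{eq:bp-weight-norm} place $\lambda^\ast\in\Omega_L(\cA^\ast)$, so $\lambda_L^\ast=\lambda^\ast$. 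The evaluation $g_L^P$ is the truth evaluation of the member $f_{\mathrm{BP}}\in\IFRL$ with auxiliary field $\bm\omega_L^\ast$. It has finite variance since it is bounded, and its conditional variance is $(\lambda^\ast)^\top\widetilde{\bm\Sigma}_{\cA^\ast}\lambda^\ast=\nu$. It therefore lies in the competitor family, which gives $\Vmin\le\nu$. Lemma~\ref{rem:vmin-w-invariant} removes the dependence on $w$, and integrating through~\eqref{eq:ulb-bound} yields $\Vlow(P)=V_{\mathrm{BP}}(P)$, which turns~\eqref{eq:bp-Vlow-final} into an equality.

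The main obstacle is bookkeeping: the constants $C_1,c_0,c$ and the spectral and margin bounds must be uniform over $\cP_{\mathrm{ach}}$, so that a single cap $L$ and a single stabilization threshold $c_0/2$ serve every law. I would state this explicitly as part of the common-rule hypothesis. I would also verify that the eigenvalue guard in~\eqref{eq:bp-stabilized-weights} is law-independent once $c_0$ is fixed, since that is exactly what lets the fixed-law theorem apply at every $P$ with one estimator.
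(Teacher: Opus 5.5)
Your proposal is correct and follows essentially the same route as the paper's proof. Condition (B) on $\cP$ comes from the bounded face-family map together with the facewise doubly robust identity, and condition (A) on $\cP_{\mathrm{ach}}$ comes from Theorem~\ref{thm:bp-achieve} applied with the common rules. The upper bound comes from the capped oracle member, whose cap contains $\lambda^\ast$ by \eqref{eq:bp-weight-norm}, and the lower bound from Proposition~\ref{prop:bp-rigidity}; you spell out in more detail what the paper's short argument compresses. Your remark that $c_0$, $C_1$, and $L$ must be common across the subclass is exactly what the hypothesis of a single stabilization rule fixed in advance encodes.
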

\begin{proof}[Proof of Corollary~\ref{cor:bp-achieve-global}]
Apply Theorem~\ref{thm:bp-achieve} at each law in the learnable subclass using the common fold scheme, learners, tolerance, and stabilization rule. Its asymptotic linear representations satisfy~(A), while the integrable full-domain map in Definition~\ref{def:bp-face-family} satisfies~(B) on the comparison model. The common cap contains the oracle weights, so the represented conditional variance equals the quadratic-program value. Proposition~\ref{prop:bp-rigidity} gives the reverse lower comparison over all admissible maps. The conditional and integrated equalities follow.
\end{proof}

The mechanism is worth isolating, since it is what a general nuisance-dependent rigidity theorem would have to reproduce: the extra freedom left by the pointwise constraint is exactly a function of the instrument, and within-stratum centring of the doubly robust score makes that freedom orthogonal to the residuals, so it can only add variance.  Thus the frozen-cell identity pins down the span in \eqref{eq:bp-rigid-form}, and within-stratum centering makes the remaining function of the instrument orthogonal to the doubly robust residuals.

\begin{remark}[Frozen-evaluation identities]\label{rem:frozen-template}
The finite-cell proof illustrates a more general implication of exact robustness. Fix $P_0$ and a family of laws that varies one primary component while preserving the others, $P_X$ and $\kappa$. Write $f_0=f\{O;\eta(P_0),\kappa(P_0),w\}$. Suppose $f_0$ is integrable under the family, and that its conditional kernels have the representation
\[
Q_{x,\delta}-Q_{x,0}=L_x\delta,\qquad \delta\in D_x,
\]
where $L_x:T_x\to M_0(\cW)$ is linear and $D_x$ is a relatively open neighbourhood of zero in the finite-dimensional space $T_x$. Here $M_0(\cW)$ denotes finite signed measures of total mass zero. Assume these kernels are dominated by $Q_{x,0}$, their Borel graphs and finite model costs permit the localized substitutions of Assumption~\ref{ass:rich}, and the conditional target changes by $\langle c_x,\delta\rangle$.

Condition~(B) and marginal localization give
\begin{equation}\label{eq:frozen-template}
\langle L_x^\ast f_{0,x}-c_x,\delta\rangle=0
\quad(\delta\in D_x),\qquad
L_x^\ast f_{0,x}-c_x\in T_x^\circ,
\end{equation}
where $\langle L_x^\ast f,\delta\rangle=\int f\,d(L_x\delta)$ and $T_x^\circ$ is the annihilator in the dual pairing. The extension from $D_x$ to $T_x$ follows by linearity. In the Balke--Pearl application, $L_x\delta=\sum_c\rho_{z(c)}\delta_c\delta_{\{c\}}$, so $L_x^\ast f=\bm r\circ\bm f$ and the annihilator is $T_x^\perp$.

A variance characterization additionally requires an explicit description of this annihilator and a minimizing admissible map. For Balke--Pearl, the description yields the active residual span and an orthogonal function of $Z$; Definition~\ref{def:bp-face-family} supplies the admissible map. For smooth dominated paths the corresponding identity is $\E[f_{0,x}s_\delta]=\dot\psi_x[\delta]$ when differentiation of the frozen integrand is justified. The mediation proof below obtains it from separate bounded one-component tilts of the outcome and mediator laws. Its mediator density ratio is a functional of the single mediator-distribution component.
\end{remark}

\subsubsection{Numerical illustration at a kink point}\label{sec:bp-numerical}

The following calculations use the integer coefficient matrix of~\eqref{eq:bp-U-explicit}.  Take a single covariate stratum, so that $\Var_{P_X}[\min_k U_k]=0$ and the floor is the conditional one.  With $\rho=P(Z=1)=0.15399$ and conditional cells
\begin{equation}\label{eq:bp-num-law}
\setlength{\arraycolsep}{4pt}
\begin{array}{c|cc|cc}
& A=0,Y=0 & A=0,Y=1 & A=1,Y=0 & A=1,Y=1 \\\hline
Z=0 & 0.884931 & 0.026210 & 0.050839 & 0.038020\\
Z=1 & 0.095170 & 0.015781 & 0.050839 & 0.838210
\end{array}
\end{equation}
the eight bounds are
\[
\begin{aligned}
U={}&(0.922951,\ 0.933380,\ 0.981829,\ 0.922951,\\[-2pt]
&\phantom{(}0.933380,\ 1.750732,\ 1.818311,\ 1.018121),
\end{aligned}
\]
so $\min_kU_k=0.922951$ is attained at $\cA^\ast=\{1,4\}$: a genuine kink point with two tied active indices.  The active block of the doubly robust covariance~\eqref{eq:bp-Sigma-dr} has eigenvalues $(0.08094,\,0.34665)$ and is positive-definite, so the program of~\eqref{eq:bp-Vmin} gives
\begin{equation}\label{eq:bp-num-vmin}
\Vmin=\bigl(\bm 1^\top\widetilde{\bm\Sigma}_{\cA^\ast}^{-1}\bm 1\bigr)^{-1}
=0.075752,
\qquad
\bm\lambda^\ast=\frac{\widetilde{\bm\Sigma}_{\cA^\ast}^{-1}\bm 1}
{\bm 1^\top\widetilde{\bm\Sigma}_{\cA^\ast}^{-1}\bm 1}=(0.14974,\,0.85026).
\end{equation}
The optimal weights are interior, so the covariance optimization makes a nontrivial weighting choice. Monte Carlo simulation of the influence function under~\eqref{eq:bp-num-law} confirms the displayed variances. The equal-weight limit in \citet[Corollary~4.5]{Whitehouse2025Softmax} gives $0.121194$, compared with the covariance-optimal value above, a variance ratio of $1.60$. The residuals are centered within each $Z$-stratum, so their covariance excludes between-stratum variation. The calculations use the full-precision probabilities, with the tie-preserving six-decimal values displayed in~\eqref{eq:bp-num-law}.

\subsubsection{A singular active face}\label{sec:bp-degenerate}

One mechanism producing rank deficiency of $\bm\Sigma_{\cA^\ast}$ is support-induced algebraic dependence among the active scores.  The following boundary law makes this mechanism transparent.  Rank deficiency is not confined to boundary laws, as the strictly positive rank-three example above shows.  Take $p_{11\mid0}=p_{00\mid1}=p_{11\mid1}=0$ with $\rho=0.3$ and
\begin{equation}\label{eq:bp-num-law-sing}
\setlength{\arraycolsep}{4pt}
\begin{array}{c|cc|cc}
& A=0,Y=0 & A=0,Y=1 & A=1,Y=0 & A=1,Y=1 \\\hline
Z=0 & 0.05 & 0.05 & 0.90 & 0\\
Z=1 & 0 & 0.05 & 0.95 & 0
\end{array}
\end{equation}
Then $U=(0,\,0,\,0,\,0.05,\,0.05,\,0.05,\,0,\,0.1)$ and $\cA^\ast=\{1,2,3,7\}$, a fourfold tie.  The active block $\widetilde{\bm\Sigma}_{\cA^\ast}$ has eigenvalues $(0,\,0,\,0,\,0.678571)$: rank $1$, and $\bm 1\notin\operatorname{range}(\widetilde{\bm\Sigma}_{\cA^\ast})$.  The value of the program is therefore $0$, with unit-sum weights $\bm\lambda^\ast=(\tfrac12,0,\tfrac12,0)$ for which
\[
\tfrac12(\varphi_1^P-U_1)+\tfrac12(\varphi_3^P-U_3)=0
\qquad Q_x^P\text{-a.s.}:
\]
the coefficient vectors $c^{(1)}$ and $c^{(3)}$ sum to zero on every cell of positive probability, so the two centred residuals cancel.  The conditional variance of the corresponding residual combination is $0$, against $0.014137$ at weights spread equally over the active face. At this law the conditional benchmark is zero, so its lower bound uses only variance nonnegativity.  The displayed weights have $\ell^1$ norm one and hence define a section of the bounded full-slot map for every $L\ge1$.  Consequently $\Vmin=0$ here without a positive-cell neighbourhood.  This is a map-level statement; a common estimating procedure still needs the learning conditions of the separated-kernel attainment result.  With varying $X$, conditional zero would leave the marginal variance term intact.

\begin{remark}[Reading the singular case]\label{rem:singular-implication}
Singularity here is a property of the law, not of the framework: at laws where some observed cells vanish, distinct bound expressions become indistinguishable as random variables, and an exactly determined linear combination of active scores exists.  The framework locates this through the range criterion of Proposition~\ref{prop:cov-singular} rather than through a kernel membership test; the two differ, and the kernel test alone would misclassify laws with $\bm 1\notin\operatorname{range}$ but $\bm 1\notin\ker$.  In implementations the active covariance should be tested for rank and the pseudo-inverse used, rather than inverted naively.
\end{remark}

\subsection{Mediation: cross-world functional}\label{sec:mediation}

The mediation natural indirect effect (NIE) is $\Psi_{\mathrm{NIE}}(P)=\E[Y(1,M(1))]-\E[Y(1,M(0))]$. Both summands are identified under sequential ignorability \cite{Imai2010}, and a triple-robust IF for the NIE is the difference of triple-robust IFs for the two summands. Of these, the second summand, the cross-world counterfactual $\E[Y(1,M(0))]$, is the mediator-specific piece exhibiting the $d=3$ multiple robustness pattern; the first summand $\E[Y(1,M(1))]=\E[Y(1)]$ is a standard $d=2$ ATE-like target.

We focus on the cross-world piece $\Psi_{\mathrm{cross}}(P)\eqdef\E[Y(1,M(0))]$, which is the canonical setting where ``at most $1$ wrong of $3$'' robustness is a nontrivial extension of double robustness, and the framework should recover the \cite{TchetgenShpitser2012} triple-robust IF.

Under the overlap, dominance, square-integrability, and score-path conditions stated below, the cross-world functional has the classical efficient representation of \citet{TchetgenShpitser2012}. The same population map satisfies three exact unbiasedness identities, one for each possible misspecified nuisance component. These identities imply conditional orthogonality between its residual and the difference from any competing RALU map. The resulting variance decomposition gives the lower bound, and Corollary~\ref{cor:nie-achieve-global} constructs one estimator sequence that attains it throughout the specified learnable subclass.

\subsubsection{Setup}\label{sec:mediation-setup}

Let $\cW=\cA\times\cM\times\cY$ with $\cA=\{0,1\}$, $\cM$ a Polish mediator space, $\cY=\R$. Write $W=(A,M,Y)$. Under sequential ignorability assumptions \cite{Imai2010}, the cross-world expectation
\begin{equation}\label{eq:nie-target}
\Psi_{\mathrm{cross}}(P)=\E[Y(1,M(0))]
\end{equation}
is identified by
\begin{equation}\label{eq:nie-id}
\Psi_{\mathrm{cross}}(P)=\E_{P_X}\!\left[\int\eta_1(P)(X,m)\,
\mathrm{d}\eta_3(P)(m\mid X,A=0)\right],
\end{equation}
\Needspace{6\baselineskip}
with three nuisance functionals:
\begin{itemize}[leftmargin=2em,itemsep=0pt]
\item $\eta_1(P)(x,m)=\E_P[Y\mid X=x,A=1,M=m]$: outcome regression;
\item $\eta_2(P)(x)=P(A=1\mid X=x)$: treatment propensity;
\item $\eta_3(P)(\cdot\mid x,a)=P_M(\cdot\mid X=x,A=a)$: mediator distribution.
\end{itemize}

For later variance formulas, write
\[
\sigma_Y^2(x,m;P)\eqdef
\Var_P(Y\mid X=x,A=1,M=m).
\]

This fits \eqref{eq:mi-rep} with $\cU$ the space of (regression, mediator-distribution) pairs and
\begin{equation}\label{eq:nie-psi}
\psi(x,(\eta_1,\eta_3))=\int\eta_1(x,m)\,\mathrm{d}\eta_3(m\mid x,0).
\end{equation}
The structural component $\kappa$ is trivial (no kink). The number of working models is $d=3$.

\begin{lemma}[Jointly Borel Radon--Nikodym selector]
\label{lem:rn-selector}
Let $\cM$ be a Polish space, equip $\cP(\cM)$ with the weak topology, fix $C>1$, and set
\[
\mathfrak D_C\eqdef
\{(Q_0,Q_1)\in\cP(\cM)^2:Q_0\le C Q_1\}.
\]
The set $\mathfrak D_C$ is closed and hence Polish, and there is one Borel map
\begin{equation}\label{eq:rn-selector}
R:\cP(\cM)^2\times\cM\to[0,C]
\end{equation}
such that $R(Q_0,Q_1,\cdot)$ is a version of $\mathrm dQ_0/\mathrm dQ_1$ for every $(Q_0,Q_1)\in\mathfrak D_C$.
\end{lemma}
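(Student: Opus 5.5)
The proof has two parts: closedness of $\mathfrak D_C$, then a Borel density built by a countable martingale (differentiation along partitions) construction that is uniform in the pair of measures.

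\emph{Closedness.} The relation $Q_0\le CQ_1$ is equivalent to $\int\phi\,dQ_0\le C\int\phi\,dQ_1$ for every bounded continuous $\phi\ge0$. One direction is immediate. For the other, outer regularity on the metric space $\cM$ approximates indicators of open sets by increasing continuous functions, and regularity then passes from open sets to all Borel sets. Each such inequality is a weakly closed condition on $(Q_0,Q_1)$, so $\mathfrak D_C$ is an intersection of closed sets. A closed subset of the Polish space $\cP(\cM)^2$ is Polish.

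\emph{Construction of $R$.} Fix a compatible totally bounded metric on $\cM$, which exists after embedding $\cM$ in the Hilbert cube. Build a refining sequence of finite Borel partitions $\mathcal E_n=\{E_{n,1},\dots,E_{n,k_n}\}$ whose mesh tends to zero and which generate the Borel $\sigma$-algebra. Define
\[
R_n(Q_0,Q_1,m)=\sum_{i}\1\{m\in E_{n,i}\}\frac{Q_0(E_{n,i})}{Q_1(E_{n,i})},
\]
with the convention $0/0=0$, and truncate to $[0,C]$. The map $Q\mapsto Q(E)$ is Borel on $\cP(\cM)$ for every Borel $E$, because it is a monotone-class limit of continuous maps. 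Hence each $R_n$ is jointly Borel. Set
\[
R=\min\Bigl\{C,\ \limsup_n R_n\Bigr\},
\]
which is Borel as a countable limsup of Borel maps.

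\emph{Verification on $\mathfrak D_C$.} Fix $(Q_0,Q_1)\in\mathfrak D_C$. Then $Q_0\ll Q_1$ with density bounded by $C$. Under $Q_1$, the sequence $R_n$ is the conditional expectation of $dQ_0/dQ_1$ given $\sigma(\mathcal E_n)$. The cells of zero $Q_1$-mass form a $Q_1$-null set, so the $0/0$ convention is harmless there. By the martingale convergence theorem, and because the $\sigma(\mathcal E_n)$ increase to the Borel $\sigma$-algebra, $R_n\to dQ_0/dQ_1$ $Q_1$-a.s. The limsup therefore coincides with the density $Q_1$-a.s., and the truncation at $C$ is inactive almost surely. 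This shows that $R(Q_0,Q_1,\cdot)$ is a version of $dQ_0/dQ_1$, valued in $[0,C]$.

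\emph{Main obstacle.} The delicate step is the generating refining partition, together with joint measurability in the measure arguments. One cannot use a Radon--Nikodym derivative chosen separately for each pair. The construction must be one fixed countable scheme, which is why the partitions are fixed in advance: they are the preimages of dyadic cubes under a Borel isomorphism of $\cM$ onto a Borel subset of $[0,1]$, or of $[0,1]^{\N}$. Generation of the Borel $\sigma$-algebra then follows from the Borel isomorphism theorem. No continuity in $(Q_0,Q_1)$ is needed, only Borel measurability.
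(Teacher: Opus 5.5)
Your proposal is correct and takes essentially the same route as the paper. Closedness comes from the weakly continuous inequalities $\int\phi\,\mathrm dQ_0\le C\int\phi\,\mathrm dQ_1$, and $R$ is the truncated $\limsup$ of cell ratios along one fixed refining sequence of finite Borel partitions that generates the Borel $\sigma$-algebra, identified with $\mathrm dQ_0/\mathrm dQ_1$ by martingale convergence under $Q_1$. You add detail the paper leaves implicit, namely the explicit construction of the partitions and the Borel measurability of $Q\mapsto Q(E)$, but the argument is the same.
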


\begin{proof}
The order condition is equivalent to $\int\phi\,\mathrm dQ_0\le C\int\phi\,\mathrm dQ_1$ for every bounded nonnegative continuous $\phi$.  These inequalities are preserved under weak convergence, so $\mathfrak D_C$ is closed in the Polish product $\cP(\cM)^2$.

Choose refining finite partitions generated by a countable algebra $\{A_{n,i}\}$ for the Borel structure of $\cM$, and let $A_n(m)$ be the cell containing $m$.  Define
\[
R_n(Q_0,Q_1,m)\eqdef
\begin{cases}
Q_0\{A_n(m)\}/Q_1\{A_n(m)\},&Q_1\{A_n(m)\}>0,\\
0,&Q_1\{A_n(m)\}=0,
\end{cases}
\qquad
R\eqdef\bigl(\limsup_n R_n\bigr)\wedge C .
\]
Evaluation of a probability measure on a fixed Borel set is Borel, so each $R_n$ and hence $R$ are jointly Borel.  On $\mathfrak D_C$, the sequence is the martingale approximation to $\mathrm dQ_0/\mathrm dQ_1$ along the generated filtration and converges $Q_1$-a.e.\ to that derivative, which is bounded by $C$.
\end{proof}

\begin{definition}[Mediation slot space]
\label{def:med-slot-space}
Fix $C>1$, $\epsilon\in(0,\tfrac12)$, and a Polish space $\mathcal H$ of candidate regression functions with a jointly Borel evaluation map
\begin{equation}\label{eq:med-regression-slot}
\mathrm{ev}:\mathcal H\times\cM\to\R,
\qquad
\sup_{h\in\mathcal H,\,m\in\cM}
|\mathrm{ev}(h,m)|\le C_\mu<\infty .
\end{equation}
A pointwise mediation slot is an element of the Polish product
\[
\mathcal H\times[\epsilon,1-\epsilon]\times\mathfrak D_C.
\]
A law-level slot is a Borel map from $\cX$ into this product.  Writing $\bm\mu=\mathrm{ev}(h,\cdot)$ and choosing Borel kernel versions, define
\[
\begin{aligned}
\cN_3^{\mathrm{ov}}
&\eqdef\bigl\{(G_0,G_1):
 (G_0(\cdot\mid x),G_1(\cdot\mid x))\in\mathfrak D_C
 \text{ for all }x\bigr\},\\
r_G(x,m)&\eqdef
R\bigl(G_0(\cdot\mid x),G_1(\cdot\mid x),m\bigr).
\end{aligned}
\]
Values on exceptional sets are assigned one fixed default pair $(Q_\circ,Q_\circ)\in\mathfrak D_C$.
\end{definition}

\begin{assumption}[Mediation slot membership and uniform slack]
\label{ass:med-slot-domain}
Every $P\in\cP$ satisfies $\E_P[\1\{A=1\}|Y|]<\infty$. Every truth slot and every law-achievable working slot used in condition~(B) belongs to the law-level space of Definition~\ref{def:med-slot-space}.  Thus the regression component factors through a Borel field $h:\cX\to\mathcal H$, the treatment component lies in $[\epsilon,1-\epsilon]$, and the mediator component lies in $\cN_3^{\mathrm{ov}}$.  At every truth there is a constant $\delta>0$ for which the following intervals are nonempty and, for $P_X$-a.e.\ $x$,
\begin{equation}\label{eq:med-interior}
r_P(x,\cdot)\le C-\delta
\quad G_1^P(\cdot\mid x)\text{-a.e.},
\qquad
\epsilon+\delta\le\eta_2(P)(x)\le1-\epsilon-\delta .
\end{equation}
Every score path named below is required to remain in this declared slot domain for all sufficiently small positive and negative path parameters.
\end{assumption}

\subsubsection{Multiple robustness and the Tchetgen Tchetgen--Shpitser rule}\label{sec:nie-mr}

Condition~(B) of Definition~\ref{def:ralu}, specialized to $d=3$, requires: for $P,P'\in\cP$ with $\eta_l(P)=\eta_l(P')$ for $l\ne j$ (any single $j\in\{1,2,3\}$),
\begin{equation}\label{eq:nie-MR}
\E_P[f(X,W,\eta(P')(X),w(X))]=\Psi_{\mathrm{cross}}(P).
\end{equation}
Thus at least two of the three nuisance components must be correct, the triple-robustness pattern of \citet{TchetgenShpitser2012}. Section~\ref{sec:nie-IF} verifies these identities directly. Proposition~\ref{prop:nie-two-wrong-ceiling} shows that, under the bilinear mixture condition below, a guarantee requiring only the propensity to be correct is impossible.

The leave-one-out structure of the Tchetgen Tchetgen--Shpitser rule suggests the candidate below.  Its three exact one-wrong identities are proved by direct integration.  Proposition~\ref{prop:nie-two-wrong-ceiling} then proves, under the stated bilinear mixture condition, that the stronger guarantee allowing two simultaneous misspecifications is impossible on the declared mediation class.

\begin{assumption}[Bilinear mixture richness for mediation]
\label{ass:med-bilinear-rich}
On a set of covariate values of positive $P_X$-measure, the model contains a subfamily with the propensity $e$ and treated-mediator law $G_1$ fixed, two endpoint laws $Q^+,Q^-$, and every mixture $Q_\lambda=\lambda Q^++(1-\lambda)Q^-$, $0\le\lambda\le1$.  The endpoint outcome regressions and control-mediator laws obey
\[
 \mu_\lambda=\lambda\mu_++(1-\lambda)\mu_- ,\qquad
 G_{0,\lambda}=\lambda G_{0,+}+(1-\lambda)G_{0,-},
\]
and
\begin{equation}\label{eq:med-bilinear-witness}
 \int(\mu_+-\mu_-)\,\mathrm d(G_{0,+}-G_{0,-})\ne0.
\end{equation}
The subfamily is closed under the marginal gluing used to localize a violation.
\end{assumption}

\begin{proposition}[Sharp two-wrong impossibility]
\label{prop:nie-two-wrong-ceiling}
Under Assumption~\ref{ass:med-bilinear-rich}, no single measurable influence map can satisfy the strengthening of condition~(B) that permits the regression component and the mediator-pair component to be simultaneously wrong while only the propensity component is correct. Thus the one-wrong guarantee in~\eqref{eq:nie-MR} is maximal on this declared mediation class.
\end{proposition}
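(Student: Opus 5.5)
We argue by contradiction. Suppose a measurable map $f$ satisfies the strengthened two-wrong condition: for every pair $(P,P')$ with $\eta_2(P')=\eta_2(P)$, the expectation $\E_P[f(X,W,\eta(P')(X),w(X))]$ equals $\Psi_{\mathrm{cross}}(P)$. Fix the working slot $P'$ to be an endpoint law $Q^-$ of the bilinear subfamily of Assumption~\ref{ass:med-bilinear-rich}, glued with the baseline kernel off the positive-measure covariate set. Then vary the integrating law along $Q_\lambda=\lambda Q^++(1-\lambda)Q^-$. Every $Q_\lambda$ shares the propensity $e$ with $Q^-$, so the strengthened condition applies at each $(Q_\lambda,Q^-)$. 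Mixture closure and (M1)-type marginal restriction, as in Step~1 of the proof of Theorem~\ref{thm:ulb}, localize the identity to $X=x$. Combining this with the violation-selection Lemma~\ref{lem:violation-selection} over a countable dense set of $\lambda$ gives the local identity
\[
\E_{Q_\lambda(x)}\bigl[f(x,\cdot,\eta(Q^-)(x),w(x))\bigr]
=\int\mu_\lambda\,\mathrm dG_{0,\lambda}
\]
for all $\lambda\in[0,1]$, outside one null set.

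The key step is a degree comparison in $\lambda$. The frozen integrand does not depend on $\lambda$. Because $Q_\lambda$ is a convex combination of conditional laws, the left side is affine in $\lambda$. The right side, however, expands as
\[
\int\mu_-\,\mathrm dG_{0,-}+\lambda\{\cdots\}
+\lambda^2\int(\mu_+-\mu_-)\,\mathrm d(G_{0,+}-G_{0,-}).
\]
Its quadratic coefficient is nonzero by \eqref{eq:med-bilinear-witness}. An affine function cannot equal a genuinely quadratic one on $[0,1]$; comparing the values at $\lambda=0,\tfrac12,1$ already gives a contradiction. Between $\lambda$ and the working slot, the regression and control-mediator components are both wrong while the propensity stays correct. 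This is exactly the configuration the strengthened condition allows and that~\eqref{eq:nie-MR} excludes.

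The main obstacle is measurability and localization. The witness \eqref{eq:med-bilinear-witness} holds only on a positive-measure covariate set, so the contradiction must be produced by a genuine law in $\cP$. I would select the endpoint pair measurably on that set, keep the baseline elsewhere, and use the gluing closure stated in Assumption~\ref{ass:med-bilinear-rich}. Integrability of $f$ along the mixtures then follows from the strengthened (B) itself, applied at each glued $Q_\lambda$, together with the fact that an integral against a mixture is the mixture of integrals. Integrating the pointwise quadratic discrepancy over a set where the witness has fixed sign then contradicts the global identity. Maximality of the one-wrong guarantee follows, since the two-wrong pattern that keeps only the propensity correct is ruled out.
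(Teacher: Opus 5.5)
Your proposal is correct and follows essentially the paper's argument: freeze the working slot at $\eta(Q^-)$, note that the expectation of the frozen score is affine in $\lambda$ while the conditional target $\int\mu_\lambda\,\mathrm dG_{0,\lambda}$ has nonzero quadratic coefficient $c_x$, compare the two via the values at $\lambda=0,\tfrac12,1$, and localize to a positive-measure set where $c_x$ has one sign. The only difference is that your appeal to the violation-selection lemma over a countable dense set of $\lambda$ is unnecessary. As the paper notes, the three mixtures $\lambda=0,\tfrac12,1$ suffice, so a finite union of null sets already gives the contradiction.
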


\begin{proof}
Fix the working nuisance value at an endpoint, say $\eta(Q^-)$, and write $h_x$ for the resulting measurable function of $W$. Put $\Delta\mu=\mu_+-\mu_-$ and $\Delta G_0=G_{0,+}-G_{0,-}$. For the conditional target along the mixture,
\[
q_x(\lambda)\eqdef\int\mu_\lambda\,\mathrm dG_{0,\lambda}
=q_x(0)+\lambda\left\{\int\Delta\mu\,\mathrm dG_{0,-}
 +\int\mu_-\,\mathrm d\Delta G_0\right\}
 +\lambda^2 c_x,
\qquad c_x\eqdef\int\Delta\mu\,\mathrm d\Delta G_0.
\]
The strengthened robustness condition would give $\E_{Q_\lambda}h_x=q_x(\lambda)$, because the propensity is unchanged. Integrability at both endpoints and the mixture identity instead imply
\[
\E_{Q_\lambda}h_x
=(1-\lambda)\E_{Q^-}h_x+\lambda\E_{Q^+}h_x,
\qquad
q_x(0)-2q_x(1/2)+q_x(1)=\tfrac12 c_x\ne0.
\]
These equations are incompatible. For a covariate-dependent mixture family, use the marginal localization in Assumption~\ref{ass:med-bilinear-rich} on a positive-measure set where $c_x$ has one sign, restricting further to finite model cost if necessary. The integrated second difference remains nonzero, while the expectation of the fixed working function has second difference zero. Only the three mixtures $\lambda=0,1/2,1$ are needed, so their exceptional sets can be removed together.
\end{proof}

A uniform-slack example satisfying the assumption is obtained as follows.  On a two-point mediator space take $G_1=(1/2,1/2)$, $G_{0,+}=(3/4,1/4)$, $G_{0,-}=(1/4,3/4)$, $\mu_+=(1,0)$ and $\mu_-=(0,1)$.  The target is $\lambda^2-\lambda+3/4$, whereas every frozen-slot expectation is affine; all Radon--Nikodym derivatives lie in $[1/2,3/2]\subset(0,2)$.  This explicit uniform-slack witness supplies the mediation-specific richness that the proposition uses, without invoking the shared-score fixed-mean condition~(COV1).  Mediation instead uses the bilinear-mixture richness stated in the proposition.

Section~\ref{sec:nie-IF} constructs and verifies an influence function achieving~\eqref{eq:nie-MR}; the leave-one-out algebraic structure above motivates its form, and Proposition~\ref{prop:nie-two-wrong-ceiling} establishes maximality on the declared mediation class.

\subsubsection{Verification of the Tchetgen Tchetgen--Shpitser influence function}\label{sec:nie-IF}

Write a working slot as $\eta=(\mu,e,G)$, where $G=(G_0,G_1)$ is the third primary component.  Definition \ref{def:med-slot-space} supplies the jointly Borel quantities
\[
r_\eta(x,m)\eqdef R\{G_0(\cdot\mid x),G_1(\cdot\mid x),m\},
\qquad
b_0^\eta(x)\eqdef\int\mu(x,m)\,\mathrm dG_0(m\mid x).
\]
\Needspace{7\baselineskip}
The population rule is
\begin{multline}\label{eq:nie-IF-candidate}
f_R^{\mathrm{TS}}(W;\eta)
=\frac{\1\{A=1\}}{e(X)}r_\eta(X,M)\{Y-\mu(X,M)\}\\
+\frac{\1\{A=0\}}{1-e(X)}\{\mu(X,M)-b_0^\eta(X)\}
+b_0^\eta(X).
\end{multline}
The structural slot is trivial and the auxiliary slot is unread.  The Radon--Nikodym factor is a deterministic Borel function of the mediator pair $G$, not an additional input. We write $f^{\mathrm{TS}}=f_R^{\mathrm{TS}}$ when the selector is clear. For every working slot, the common envelope
\[
|f_R^{\mathrm{TS}}|
\le \frac{C}{\epsilon}\1\{A=1\}(|Y|+C_\mu)
       +\frac{2C_\mu}{\epsilon}+C_\mu
\]
is integrable at every truth in $\cP$.  Thus the following exact identities establish the full condition~(B), including L1, on the comparison model independently of estimation rates.

Let $(\mu_P,e_P,G^P)$ denote the truth.  Direct conditioning gives the single identity
\begin{equation}\label{eq:nie-one-wrong-mean}
\begin{aligned}
\E_P\{f_R^{\mathrm{TS}}(W;\eta)\mid X\}
&=\frac{e_P}{e}\int r_\eta(\mu_P-\mu)\,\mathrm dG_1^P\\
&\quad+\frac{1-e_P}{1-e}
 \left\{\int\mu\,\mathrm dG_0^P-b_0^\eta\right\}+b_0^\eta .
\end{aligned}
\end{equation}
All functions and kernels in this display are evaluated at the same covariate value.

Case~(a): only $\eta_1$ wrong. Set $\eta_2=\eta_2(P)$, $\eta_3=\eta_3(P)$, $\eta_1$ arbitrary.  In~\eqref{eq:nie-one-wrong-mean}, $e=e_P$, $G=G^P$, and $r_\eta\,\mathrm dG_1^P=\mathrm dG_0^P$.  The first term is $\int(\mu_P-\mu)\,\mathrm dG_0^P$, the bracketed term is zero, and the result is $\int\mu_P\,\mathrm dG_0^P$.

Case~(b): only $\eta_2$ wrong. Set $\eta_1=\eta_1(P)$, $\eta_3=\eta_3(P)$, $\eta_2$ arbitrary.  Now $\mu=\mu_P$ and $G=G^P$.  Both residual integrals in \eqref{eq:nie-one-wrong-mean} vanish, independently of the working propensity, and $b_0^\eta=\int\mu_P\,\mathrm dG_0^P$.

Case~(c): only $\eta_3$ wrong. Set $\eta_1=\eta_1(P)$, $\eta_2=\eta_2(P)$, $\eta_3$ arbitrary.  Here $\mu=\mu_P$ and $e=e_P$.  The first integral in \eqref{eq:nie-one-wrong-mean} is zero.  The second term plus $b_0^\eta$ equals $\int\mu_P\,\mathrm dG_0^P$, regardless of the working mediator pair.

If $\mu=\mu_P$ while $e$ and $G$ are both substituted, the conditional bias is
\[
\left\{\frac{1-e_P}{1-e}-1\right\}
\left\{\int\mu_P\,\mathrm dG_0^P-b_0^\eta\right\}.
\]
The bias vanishes when either the propensity or mediator pair is correct. When both differ, the displayed product gives its exact value. Proposition~\ref{prop:nie-two-wrong-ceiling} rules out a uniform guarantee over all configurations with only the propensity correct.

\subsubsection{The mediator density ratio}
\label{sec:nie-ratio}

The identity $r_\eta\,\mathrm dG_1=\mathrm dG_0$ transports integration from the treated-mediator law to the control-mediator law appearing in the target. Lemma~\ref{lem:rn-selector} defines this change of measure as one jointly Borel function of the mediator pair throughout the declared domain. The pair $G=(G_0,G_1)$ therefore determines the ratio used by the population rule; no auxiliary input is needed.

\subsubsection{Pointwise variance minimum: an exact Pythagorean identity}
\label{sec:nie-rigid}

To compare an arbitrary admissible map with the Tchetgen Tchetgen--Shpitser rule, hold their nuisance inputs fixed and vary one component of the observation law. Exact robustness makes their expectations agree along each such path. The next assumption supplies enough bounded score directions to turn these equalities into a conditional variance decomposition.

Write, at the truth,
\[
 b_0(x)\eqdef\int\eta_1(P)(x,m)\,\mathrm dG_0^P(m\mid x),
 \qquad
 r_P(x,m)\eqdef R\{G_0^P(\cdot\mid x),G_1^P(\cdot\mid x),m\},
\]
and define the two centered components
\begin{align}
S_Y(W;P)&\eqdef
 \frac{\1\{A=1\}}{\eta_2(P)(X)}r_P(X,M)
 \{Y-\eta_1(P)(X,M)\},\nonumber\\
S_{M_0}(W;P)&\eqdef
 \frac{\1\{A=0\}}{1-\eta_2(P)(X)}
 \{\eta_1(P)(X,M)-b_0(X)\}.
\label{eq:nie-two-score-components}
\end{align}
Then $f^{\mathrm{TS}}-b_0=S_Y+S_{M_0}$ pointwise and
\begin{equation}\label{eq:nie-eff-bound}
\begin{aligned}
V^{\mathrm{TS}}(x;P)
&=\frac1{\eta_2(P)(x)}\int
  r_P(x,m)^2\sigma_Y^2(x,m;P)\,\mathrm dG_1^P(m\mid x)\\
&\quad+\frac1{1-\eta_2(P)(x)}\int
  \{\eta_1(P)(x,m)-b_0(x)\}^2\,\mathrm dG_0^P(m\mid x).
\end{aligned}
\end{equation}

\begin{assumption}[Mediation score paths]\label{ass:med-tangent}
At each law under consideration, $S_Y$ and $S_{M_0}$ in \eqref{eq:nie-two-score-components} belong to $L^2(P)$.  There are bounded, conditionally centered scores $s_{Y,k}$ and $s_{M_0,k}$ of two-sided bounded linear tilts $\mathrm dP_t=(1+ts)\mathrm dP$ in the comparison model, which move, respectively, only $Y\mid X,A{=}1,M$ and only $M\mid X,A{=}0$, such that
\[
s_{Y,k}\longrightarrow S_Y,
\qquad s_{M_0,k}\longrightarrow S_{M_0}
\qquad\text{in }L^2(P).
\]
For every bounded Borel $b(X)$, the products $b(X)s_{Y,k}$ and $b(X)s_{M_0,k}$ also generate bounded linear tilts in the declared model, preserving $P_X$ and moving only the indicated component.  All these paths stay in the regression slot space $\mathcal H$ and remain in the declared mediator-law domain for all sufficiently small positive and negative path parameters.
\end{assumption}

\begin{theorem}[Cross-world exact excess-variance identity]
\label{thm:nie-pythagoras}
Suppose Assumptions~\ref{ass:rich} and~\ref{ass:meas-main} hold, together with Assumptions~\ref{ass:med-slot-domain} and~\ref{ass:med-tangent}.  Let $f\in\IFRL$ for $\Psi_{\mathrm{cross}}$ on the comparison model $\cP$.  Fix a measurable auxiliary map $w$, suppose $f_w^P\in L^2(P)$, and put $g_w^P\eqdef f_w^P-f^{\mathrm{TS},P}$.  At $P_X$-a.e. $x$,
\begin{equation}\label{eq:nie-pythagoras}
\Var_{Q_x^P}\!\left[f^P_w(x,\cdot)\right]
=V^{\mathrm{TS}}(x;P)
+\E_{Q_x^P}\!\left[(g^P_w(x,\cdot))^2\right].
\end{equation}
Thus $V^{\mathrm{TS}}(x;P)$ is a pointwise lower bound for every such map in $\IFRL$, and equality for a given map holds if and only if $f^P_w(x,\cdot)=f^{\mathrm{TS},P}_w(x,\cdot)$ $Q_x^P$-a.s.  The verified integrable full-slot map $f_R^{\mathrm{TS}}\in\IFRL$ attains the pointwise infimum at the evaluating law.  The fixed-law expansion is stated in Theorem~\ref{thm:nie-achieve}; Corollary~\ref{cor:nie-achieve-global} supplies one estimator attaining it throughout a learnable subclass.
\end{theorem}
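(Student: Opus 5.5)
The proof rests on one orthogonality identity. We show that the difference $g_w^P=f_w^P-f^{\mathrm{TS},P}$ is conditionally uncorrelated with $S_Y+S_{M_0}$ given $X$. Once that holds, the variance splits as in~\eqref{eq:nie-pythagoras}.

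\textbf{Conditional mean of $g$.} Both $f$ and $f_R^{\mathrm{TS}}$ satisfy~(B) on $\cP$. The latter was verified by the envelope and cases (a)--(c) above. Step~1 of the proof of Theorem~\ref{thm:ulb}, which uses (M1), therefore gives $\E_{Q_x^P}[f_w^P]=\E_{Q_x^P}[f^{\mathrm{TS},P}]=b_0(x)$ almost surely. Hence $\E_P[g_w^P\mid X]=0$.

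\textbf{Orthogonality along the $Y$-component.} Fix $k$ and a bounded Borel $b(X)$. Consider the path $\mathrm dP_t=(1+t\,b\,s_{Y,k})\,\mathrm dP$ from Assumption~\ref{ass:med-tangent}. It changes only $\eta_1$, so $P$ and $P_t$ differ in one primary component. Condition~(B), applied with ambient law $P_t$ and working law $P$, holds for both maps. The structural slot is trivial, and the slot domain is preserved by Assumption~\ref{ass:med-slot-domain}. It gives $\E_{P_t}[f_w^P]=\Psi_{\mathrm{cross}}(P_t)=\E_{P_t}[f^{\mathrm{TS},P}]$ for all small $t$ of both signs. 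Subtracting and using linearity in $t$, we obtain $\E_P[g_w^P\,b(X)\,s_{Y,k}]=0$. Integrability comes from $f_w^P\in L^2(P)$ and boundedness of the tilt. Now let $k\to\infty$. Since $s_{Y,k}\to S_Y$ in $L^2$, Cauchy--Schwarz gives $\E_P[g\,b(X)S_Y]=0$ for every bounded $b$. Therefore $\E_P[g\,S_Y\mid X]=0$ almost surely. The identical argument with $s_{M_0,k}$, which moves only $\eta_3$, gives $\E_P[g\,S_{M_0}\mid X]=0$.

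\textbf{Assembly.} Write $f_w^P-b_0=(S_Y+S_{M_0})+g$. Expanding the conditional second moment, the cross term vanishes by the previous step. This yields $\Var_{Q_x^P}[f_w^P]=\E[(S_Y+S_{M_0})^2\mid X]+\E[g^2\mid X]$. Next we identify the first term with $V^{\mathrm{TS}}$. The indicators $\1\{A=1\}$ and $\1\{A=0\}$ are disjoint, so $\E[S_YS_{M_0}\mid X]=0$. The second moments of $S_Y$ and $S_{M_0}$ then integrate to the two terms of~\eqref{eq:nie-eff-bound}. This uses $\E[A\mid X]=\eta_2$ and $\E[(Y-\eta_1)^2\mid X,A=1,M]=\sigma_Y^2$. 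Equality in~\eqref{eq:nie-pythagoras} holds exactly when $g=0$ $Q_x^P$-a.s., and taking $f=f_R^{\mathrm{TS}}$ attains the bound.

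\textbf{Main obstacle.} The delicate step is the passage from the integrated path identities to a pointwise statement holding outside one null set. In particular, we need that $g\in L^2$ makes the $k\to\infty$ limit legitimate. Square-integrability of $S_Y$ and $S_{M_0}$, together with the countable family of $b$'s generating $\sigma(X)$, handles this. The null sets arising from countably many rational $b$'s and indices $k$ can then be unioned.
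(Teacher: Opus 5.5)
Your proposal is correct and follows the paper's proof essentially step for step. Both arguments use the same four steps: conditional centering of $g^P$ from Step~1 of the proof of Theorem~\ref{thm:ulb}; frozen-slot identities from (B) along the one-component tilts $b(X)s_{j,k}$; passage to $S_Y,S_{M_0}$ by Cauchy--Schwarz, with $g^P\in L^2(P)$ because $f^{\mathrm{TS},P}=b_0+S_Y+S_{M_0}$ and $b_0$ is bounded; and the disjoint-support expansion of the conditional second moment.

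The countable-union concern in your last paragraph is unnecessary. Testing against every bounded $b(X)$ already gives $\E_P[g^PS_j\mid X]=0$ almost surely, directly from the definition of conditional expectation.
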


\begin{proof}
Suppress $w$ and write $g^P=f_w^P-f^{\mathrm{TS},P}$. The bounded regression and Assumption~\ref{ass:med-tangent} give $f^{\mathrm{TS},P}\in L^2(P)$, hence $g^P\in L^2(P)$. The conditional-unbiasedness conclusion of Theorem~\ref{thm:ulb}, applied to $f$ and to the directly verified map $f_R^{\mathrm{TS}}$, gives
\[
\E_P[g^P\mid X]=0.
\]
For a bounded score $s$ in Assumption~\ref{ass:med-tangent}, let $\mathrm dP_t=(1+ts)\mathrm dP$ and keep the working value equal to $\eta(P)$. The laws $P_t$ and $P$ differ in at most one nuisance component. Condition~(B) for the two maps therefore gives
\begin{equation}\label{eq:frozen-slot-zero}
0=\E_{P_t}[g\{W;\eta(P)\}]
 =\E_P[g^P]+t\E_P[g^Ps].
\end{equation}
The integrals are finite because $s$ is bounded and $g^P\in L^1(P)$. Taking any sufficiently small $t\ne0$ yields the exact identity
\begin{equation}\label{eq:score-identity}
\E_P[g^Ps]=0.
\end{equation}
For $j\in\{Y,M_0\}$ and bounded Borel $b(X)$, apply this identity to $b(X)s_{j,k}$. Cauchy--Schwarz gives
\[
\left|\E_P[g^P b(X)(S_j-s_{j,k})]\right|
\le\|b\|_\infty\|g^P\|_2\|S_j-s_{j,k}\|_2\longrightarrow0.
\]
Thus $\E_P[g^P b(X)S_j]=0$ for every bounded $b$, and consequently
\begin{equation}\label{eq:nie-score-orth}
\E_P[g^P S_Y\mid X]=0,
\qquad
\E_P[g^P S_{M_0}\mid X]=0.
\end{equation}
The components $S_Y$ and $S_{M_0}$ have conditional mean zero and disjoint treatment supports, so $S_YS_{M_0}=0$ pointwise. Since $f_w^P-b_0=S_Y+S_{M_0}+g^P$, expansion of the conditional square now gives
\[
\begin{aligned}
\Var_P(f_w^P\mid X)
&=\E_P[(S_Y+S_{M_0})^2\mid X]
 +2\E_P[g^P(S_Y+S_{M_0})\mid X]
 +\E_P[(g^P)^2\mid X]\\
&=V^{\mathrm{TS}}(X;P)+\E_P[(g^P)^2\mid X].
\end{aligned}
\]
The excess is zero precisely when $g^P=0$ conditionally almost surely. The map $f_R^{\mathrm{TS}}$ itself has $g^P=0$, belongs to $\IFRL$ by the verified integrable identities, and has finite variance under the stated assumptions; it therefore attains the infimum.
\end{proof}

Theorem~\ref{thm:nie-pythagoras} identifies the excess conditional variance as the squared distance from the Tchetgen Tchetgen--Shpitser estimating function. Exact robustness supplies the orthogonality needed for this identity.

\begin{remark}[Role of the slot domain]\label{rem:nie-slot-domain}
Definition~\ref{def:med-slot-space} supplies a real-valued measurable rule for every admissible nuisance input. Dominance is needed for the change of measure: if $G_0=\delta_{m_0}$ and $G_1=\delta_{m_1}$ with $m_0\ne m_1$, every measurable $r$ satisfies $\int_{\{m_0\}}r\,\mathrm dG_1=0$, whereas $G_0\{m_0\}=1$, so no Radon--Nikodym derivative exists. Treatment overlap makes the two inverse-probability factors finite. The uniform bounds on these factors and on $r$ yield the common integrable envelope and the product-rate bounds used below.
\end{remark}

\begin{remark}[Truncation and exact robustness]
\label{rem:nie-truncation-scope}
The population rule uses the exact ratio on its declared domain. To see the effect of changing it, take $G_0=(0.9,0.1)$, $G_1=(0.1,0.9)$, truth regression $\mu\equiv0$, and working regression $(1,0)$, with the propensity and mediator pair correct. Replacing $r=(9,1/9)$ by a cap at $2$ gives conditional bias
\[
\int(\widehat\mu-\mu)(r-\widetilde r)\,\mathrm dG_1
=(9-2)(0.1)=0.7,
\]
by~\eqref{eq:nie-product-bias}, while the exact ratio gives zero. A cap at or above the model-wide ratio bound leaves the population rule unchanged. For estimation, the bounded learner $\widetilde r$ is required to satisfy the consistency and product-rate conditions below.
\end{remark}

\begin{theorem}[Fixed-law expansion for the cross-fitted cross-world estimator]
\label{thm:nie-achieve}
Let $f_R^{\mathrm{TS}}$ denote the population slot map in~\eqref{eq:nie-IF-candidate}, with its ratio evaluated by the single Borel selector $R(G_0,G_1)$ of~\eqref{eq:rn-selector}.  This is the globally defined measurable map used in condition~(B).

Use a fixed finite number $\mathcal K$ of nonempty folds, with the partition chosen independently of the observations. Let $n_k$ be the size of fold $k$ and $\mathbb P_{n,k}$ its empirical measure. For fold $k$, fit the nuisance functions using observations outside that fold and put
\[
\widehat b_0^{(-k)}(x)\eqdef
 \int\widehat\eta_1^{(-k)}(x,m)\,
      \mathrm d\widehat G_0^{(-k)}(m\mid x)
\]
and define the finite-sample score
\begin{align}
\widetilde f_k^{\mathrm{TS}}
&\eqdef
 \frac{\1\{A=1\}}{\widehat e^{(-k)}(X)}
 \widetilde r^{(-k)}(X,M)
 \{Y-\widehat\eta_1^{(-k)}(X,M)\}\nonumber\\*
&\quad+
 \frac{\1\{A=0\}}{1-\widehat e^{(-k)}(X)}
 \{\widehat\eta_1^{(-k)}(X,M)-\widehat b_0^{(-k)}(X)\}
 +\widehat b_0^{(-k)}(X),\nonumber\\*
T_n&\eqdef\sum_{k=1}^{\mathcal K}\frac{n_k}{n}\mathbb P_{n,k}[\widetilde f_k^{\mathrm{TS}}].
\label{eq:nie-finite-score}
\end{align}
The finite-sample score uses the bounded learner $\widetilde r$, which may be a truncated ratio estimate. Its convergence and product rates determine whether the score has the population representation $f_R^{\mathrm{TS},P_0}$.

Fix $P_0$.  With all quantities below computed off fold, assume:
\begin{enumerate}[wide=0pt,label={\textup{(N\arabic*)}},itemsep=1pt]
\item \emph{Overlap and clipping.}  For some $\epsilon_0>0$, $\epsilon_0\le e\le1-\epsilon_0$ $P_X$-a.e., and $\widehat e$ is clipped to $[\epsilon_0/2,1-\epsilon_0/2]$.

\item \emph{Boundedness.}  The outcome and fitted regression are uniformly bounded, the truth ratio satisfies $0\le r\le C$, and the single finite-sample learner satisfies $0\le\widetilde r\le C'$. 

\item \emph{Foldwise score stability.}
\begin{equation}\label{eq:nie-score-stability}
\|\widetilde f_k^{\mathrm{TS}}
  -f_R^{\mathrm{TS}}\{\cdot;\eta(P_0)\}\|_{L^2(P_0)}=o_P(1).
\end{equation}
A sufficient primitive set under~(N1)--(N2) is
\begin{equation}\label{eq:nie-primitives}
\begin{aligned}
\|\widehat\eta_1-\eta_1\|_{L^2(P_X\otimes G_0)}
 +\|\widehat\eta_1-\eta_1\|_{L^2(P_X\otimes G_1)}&=o_P(1),\\
\|\widehat e-e\|_{L^2(P_X)}
 +\|\widetilde r-r\|_{L^2(P_X\otimes G_1)}
 +\|\widehat b_0-b_0\|_{L^2(P_X)}&=o_P(1).
\end{aligned}
\end{equation}

\item \emph{Product-rate conditions.}
\begin{align}
\|\widehat e-e\|_{L^2(P_X)}
\left\|\int\widehat\eta_1\,
  \mathrm d(\widehat G_0-G_0)\right\|_{L^2(P_X)}
&=o_P(n^{-1/2}),\nonumber\\
\|\widehat\eta_1-\eta_1\|_{L^2(P_X\otimes G_1)}
\|\widetilde r-r\|_{L^2(P_X\otimes G_1)}
&=o_P(n^{-1/2}),\nonumber\\
\|\widehat e-e\|_{L^2(P_X)}
\|\widehat\eta_1-\eta_1\|_{L^2(P_X\otimes G_1)}
&=o_P(n^{-1/2}).
\label{eq:nie-product-rates}
\end{align}
\end{enumerate}
Then
\[
T_n=\mathbb P_n[f_R^{\mathrm{TS}}\{\cdot;\eta(P_0)\}]
 +o_{P_0}(n^{-1/2}).
\]
Thus condition~(A) holds at $P_0$ with influence function $f_R^{\mathrm{TS},P_0}-\Psi_{\mathrm{cross}}(P_0)$.  The population map $f_R^{\mathrm{TS}}$ satisfies exact condition~(B) at every law whose truth and admissible working slots satisfy Assumption~\ref{ass:med-slot-domain}, by the one-wrong algebra in Section~\ref{sec:nie-IF}.
\end{theorem}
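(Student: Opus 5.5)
The plan is to mirror the cross-fitted argument of Theorem~\ref{thm:achieve}. Fix $P_0$ and write $f_0=f_R^{\mathrm{TS}}\{\cdot;\eta(P_0)\}$. For fold $k$ put $D_k=\widetilde f_k^{\mathrm{TS}}-f_0$, which gives the exact decomposition
\[
T_n-\mathbb P_nf_0=\sum_k\frac{n_k}{n}(\mathbb P_{n,k}-P_0)D_k+\sum_k\frac{n_k}{n}P_0D_k .
\]
There are two terms to control. The empirical-process term will be handled by conditioning on the training data $\mathcal G_k$ of each fold and applying conditional Chebyshev. By (N3), $P_0D_k^2=o_P(1)$, so $P\{\sqrt{n_k}|(\mathbb P_{n,k}-P_0)D_k|>\varepsilon\mid\mathcal G_k\}\le\min\{1,P_0D_k^2/\varepsilon^2\}\to0$ in probability. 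Bounded convergence and the fixed fold count then give $o_P(n^{-1/2})$. In passing I would confirm that the primitive set~\eqref{eq:nie-primitives} implies~\eqref{eq:nie-score-stability}. Under (N1)--(N2) each piece of $\widetilde f_k-f_0$ is a product of bounded factors and exactly one error, and the triangle inequality does the rest. I also need $P_0$-integrability of $f_0$, which follows from the envelope in Section~\ref{sec:nie-IF}.

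The main work is the bias term $P_0D_k$. I will compute $\E_{P_0}[\widetilde f_k^{\mathrm{TS}}\mid X,\mathcal G_k]$ exactly, in the spirit of~\eqref{eq:nie-one-wrong-mean}, but with the learner $\widetilde r$ in place of the selector ratio. Conditioning on $A$ and then on $M$, write $\Delta_e=\widehat e-e$. The treated term contributes $(e/\widehat e)\int\widetilde r(\eta_1-\widehat\eta_1)\,\mathrm dG_1$, and the control term contributes $\{(1-e)/(1-\widehat e)\}\{\int\widehat\eta_1\,\mathrm dG_0-\widehat b_0\}$. Adding $\widehat b_0$ and subtracting $b_0=\int\eta_1\,\mathrm dG_0=\int r\eta_1\,\mathrm dG_1$, I expect the remainder to regroup as
\[
\int(\widetilde r-r)(\eta_1-\widehat\eta_1)\,\mathrm dG_1
-\frac{\Delta_e}{\widehat e}\int\widetilde r(\eta_1-\widehat\eta_1)\,\mathrm dG_1
+\frac{\Delta_e}{1-\widehat e}\int\widehat\eta_1\,\mathrm d(G_0-\widehat G_0).
\]
Here I use $r\,\mathrm dG_1=\mathrm dG_0$ at the truth and $\widehat b_0=\int\widehat\eta_1\,\mathrm d\widehat G_0$. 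This regrouping is the product-bias identity the paper cites as~\eqref{eq:nie-product-bias}.

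Each term is then bounded by Cauchy--Schwarz, using the clipping (N1) and the bounds (N2). The first term is dominated by the second product in~\eqref{eq:nie-product-rates}. The second term, with $|\widetilde r|\le C'$ and $\widehat e\ge\epsilon_0/2$, is dominated by the third. The third term is dominated by the first. All three are $o_P(n^{-1/2})$, so $P_0D_k=o_P(n^{-1/2})$ for every fold, and summing over the finitely many folds proves the expansion.

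Condition (A) follows because $\E_{P_0}f_0=\Psi_{\mathrm{cross}}(P_0)$ by Case~(a) with $\eta=\eta(P_0)$. The final sentence of the statement about exact (B) is just the one-wrong algebra of Cases (a)--(c) together with the envelope, so I would only cite it.

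I expect the bias regrouping to be the main obstacle. Because $\widetilde r$ is a free learner and not the selector ratio of the working $\widehat G$, the second-order terms have to be arranged so that only the three displayed products appear. In particular, no term like $\|\widetilde r-r\|\cdot\|\widehat G_0-G_0\|$ may be left over. If one does survive, I would absorb it into the first product of~\eqref{eq:nie-product-rates} by rewriting $\int\widehat\eta_1\,\mathrm d(\widehat G_0-G_0)$.
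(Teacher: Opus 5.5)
Your proposal is correct and follows the paper's proof step for step: the same fold decomposition, conditional Chebyshev for the centered term, and the same exact product-bias identity~\eqref{eq:nie-product-bias} (your regrouping checks out term by term), bounded by Cauchy--Schwarz against the three products in~\eqref{eq:nie-product-rates}. The contingency in your last paragraph is unnecessary, because the identity you display is exact and leaves no $\|\widetilde r-r\|\cdot\|\widehat G_0-G_0\|$ remainder.
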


\begin{proof}
Write $f_0=f_R^{\mathrm{TS}}\{\cdot;\eta(P_0)\}$ and $D_k=\widetilde f_k^{\mathrm{TS}}-f_0$. Conditional on the training sample for fold $k$, $D_k$ is fixed and the validation observations are independent with law $P_0$. Hence
\[
\E\!\left[\left\{\sqrt n\frac{n_k}{n}(\mathbb P_{n,k}-P_0)D_k\right\}^2
\,\middle|\,\text{training sample for fold }k\right]
=\frac{n_k}{n}\Var_{P_0}(D_k)
\le\|D_k\|_{L^2(P_0)}^2=o_P(1).
\]
Conditional Chebyshev and the fixed number of folds control the sum of the centered terms. The primitive conditions in~\eqref{eq:nie-primitives} suffice for this step because, under~(N1)--(N2), for a constant $K$ depending only on the bounds,
\[
\|D_k\|_{L^2(P_0)}
\le K\left\{\|\widehat e-e\|_{L^2(P_X)}
+\|\widetilde r-r\|_{L^2(P_X\otimes G_1)}
+\sum_{a=0}^1\|\widehat\eta_1-\eta_1\|_{L^2(P_X\otimes G_a)}
+\|\widehat b_0-b_0\|_{L^2(P_X)}\right\}.
\]
For the conditional bias, suppress the fold superscript and put $\delta_e=\widehat e-e$, $\delta_\mu=\widehat\eta_1-\eta_1$, and $A_0(X)=\int\widehat\eta_1\,\mathrm d(\widehat G_0-G_0)$. Conditioning on treatment first gives
\[
\E_{P_0}[\widetilde f_k^{\mathrm{TS}}\mid X]-b_0
=-\frac{e}{\widehat e}\int\widetilde r\delta_\mu\,\mathrm dG_1
-\frac{\delta_e}{1-\widehat e}A_0
+\int\delta_\mu\,\mathrm dG_0.
\]
Substitute $\mathrm dG_0=r\,\mathrm dG_1$ and $e/\widehat e=1-\delta_e/\widehat e$ to obtain
\begin{align}
&\E_{P_0}[\widetilde f_k^{\mathrm{TS}}\mid X]-b_0(X)\nonumber\\
&=-\frac{\delta_e}{1-\widehat e}
  \int\widehat\eta_1\,\mathrm d(\widehat G_0-G_0)
 +\int\delta_\mu(r-\widetilde r)\,\mathrm dG_1
 +\frac{\delta_e}{\widehat e}
  \int\delta_\mu\widetilde r\,\mathrm dG_1.
\label{eq:nie-product-bias}
\end{align}
Cauchy--Schwarz, the clipped denominators, and $\widetilde r\le C'$ imply
\[
\begin{aligned}
|P_0D_k|\le K\bigl\{&\|\delta_e\|_{L^2(P_X)}\|A_0\|_{L^2(P_X)}
+\|\delta_\mu\|_{L^2(P_X\otimes G_1)}\|\widetilde r-r\|_{L^2(P_X\otimes G_1)}\\
&+\|\delta_e\|_{L^2(P_X)}\|\delta_\mu\|_{L^2(P_X\otimes G_1)}\bigr\}
=o_P(n^{-1/2}).
\end{aligned}
\]
The third bound uses Jensen's inequality for the integral over $G_1(\cdot\mid X)$. Finally,
\[
T_n-\mathbb P_nf_0
=\sum_{k=1}^{\mathcal K}\frac{n_k}{n}
 \{(\mathbb P_{n,k}-P_0)D_k+P_0D_k\}
=o_P(n^{-1/2}).
\]
The population map satisfies~(B) by the integrable one-wrong identities in Section~\ref{sec:nie-IF}.
\end{proof}

\begin{remark}[Why the score-stability norms include $G_0$]
The mediator-stratum score depends on the integrated regression as well as the density ratio. With $X$ constant, $e=\widehat e=1/2$, $G_0=G_1=(1/2,1/2)$, $\widehat G_0=\widehat G_1=(0.9,0.1)$, and $\widehat\eta_1=\eta_1=(1,0)$, both density ratios equal one, but $b_0=1/2$ and $\widehat b_0=0.9$. The mediator-stratum terms then differ by $-0.8\1\{A=0\}$, with $L^2(P_0)$ norm $0.8/\sqrt2$. The regression function itself has norms $1/\sqrt2$ under $G_0$ and $\sqrt{0.9}$ under $\widehat G_0$. These calculations explain the separate integrated-regression condition and the explicit measure subscripts in~\eqref{eq:nie-primitives}.
\end{remark}

\begin{assumption}[Cross-world attainment on a subclass]
\label{ass:nie-uniform-achievement}
Fix a rich comparison model $\cP$ and let $\cP_{\mathrm{ach}}\subseteq\cP$ satisfy:
\begin{enumerate}[wide=0pt,label={\textup{(U\arabic*)}},itemsep=1pt]
\item One fold scheme, learner family, and clipping rule, all fixed independently of $P$, satisfy the foldwise conditions of Theorem~\ref{thm:nie-achieve} at every $P\in\cP_{\mathrm{ach}}$.
\item The comparison model satisfies Assumptions~\ref{ass:rich} and~\ref{ass:meas-main}.
\item The paths of Assumption~\ref{ass:med-tangent} lie in $\cP$ and are available at every $P\in\cP_{\mathrm{ach}}$.
\item Assumption~\ref{ass:med-slot-domain} holds throughout $\cP$ for every truth and every law-achievable working slot, with the same declared slot constants and one positive slack constant in \eqref{eq:med-interior}.
\end{enumerate}
\end{assumption}

\begin{corollary}[Global membership; mediation equality]
\label{cor:nie-achieve-global}
Under Assumption~\ref{ass:nie-uniform-achievement},
\[
T_n\in\TRL(\Psi_{\mathrm{cross}};\cP_{\mathrm{ach}}\mid\cP).
\]
Moreover, for every $P\in\cP_{\mathrm{ach}}$,
\[
\begin{aligned}
\Vmin(x;P)&=V^{\mathrm{TS}}(x;P)&&P_X\text{-a.e.},\\
\Vlow(P)&=\Var_{P_X}\{b_0(X)\}
+\E_{P_X}\{V^{\mathrm{TS}}(X;P)\}.
\end{aligned}
\]
Both benchmarks are taken over $\IFRL$ on the rich comparison model $\cP$; the learnable subclass need not be closed under gluing.
\end{corollary}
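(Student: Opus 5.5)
The proof has two parts: class membership, and the variance identities. We start with membership. Under (U1), Theorem~\ref{thm:nie-achieve} holds at every $P\in\cP_{\mathrm{ach}}$ with one fixed fold scheme, one learner family and one clipping rule. It gives $T_n=\mathbb P_n[f_R^{\mathrm{TS}}\{\cdot;\eta(P)\}]+o_P(n^{-1/2})$, which is condition~(A) with the single score map $f=f_R^{\mathrm{TS}}$. This map has a trivial structural slot and does not read the auxiliary slot, so any $w_{f,P}$ will do.

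For condition~(B) on all of $\cP$ we use (U4) and Assumption~\ref{ass:med-slot-domain}. Every truth slot and every law-achievable working slot lies in the declared Polish slot space. The Radon--Nikodym selector of Lemma~\ref{lem:rn-selector} is therefore defined on that space, and the common envelope displayed after \eqref{eq:nie-IF-candidate} is integrable at every truth, since $\E_P[\1\{A=1\}|Y|]<\infty$. Given integrability, the three one-wrong cases (a)--(c) computed from \eqref{eq:nie-one-wrong-mean} give conditional unbiasedness. Integrating over $P_X$ then gives \eqref{eq:nie-MR}. Hence $f_R^{\mathrm{TS}}\in\IFRL$ and $T_n\in\TRL(\Psi_{\mathrm{cross}};\cP_{\mathrm{ach}}\mid\cP)$.

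Next we identify $\Vmin$. Fix $P\in\cP_{\mathrm{ach}}$. By (U2)--(U3), the hypotheses of Theorem~\ref{thm:nie-pythagoras} hold at $P$. That theorem gives $\Var_{Q_x^P}[f_w^P(x,\cdot)]\ge V^{\mathrm{TS}}(x;P)$ at $P_X$-a.e.\ $x$ for every $f\in\IFRL$ with $f_w^P\in L^2(P)$. These are exactly the members of the competitor family $\mathcal V_{P,w}$. The essential-infimum remark after Definition~\ref{def:vmin} then gives $\Vmin\ge V^{\mathrm{TS}}$.

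For the reverse inequality we need $f_R^{\mathrm{TS}}$ to have finite total variance at $P$. Assumption~\ref{ass:med-tangent} places $S_Y$ and $S_{M_0}$ in $L^2(P)$, and $b_0$ is bounded by $C_\mu$. So $f^{\mathrm{TS},P}=b_0+S_Y+S_{M_0}\in L^2(P)$, and its conditional variance equals $V^{\mathrm{TS}}$, because $S_Y$ and $S_{M_0}$ are conditionally centered with disjoint treatment supports. This gives $\Vmin\le V^{\mathrm{TS}}$, and therefore equality. By Lemma~\ref{rem:vmin-w-invariant} the result does not depend on $w$.

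For $\Vlow$ we use its definition \eqref{eq:Vlow}, noting that $\psi(x,u_P(x))=b_0(x)$ by \eqref{eq:nie-psi}. This gives the displayed formula. Equivalently, it follows from \eqref{eq:ulb-bound}, which rests on (M1) through Theorem~\ref{thm:ulb}(i).

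The main obstacle is the quantifier and integrability bookkeeping in the class step. Condition~(B) must hold for every pair $P,P'\in\cP$, not only at learnable laws. So we must check that every law-achievable working mediator pair satisfies the dominance $G_0\le CG_1$. This is what makes $R$ a genuine Radon--Nikodym derivative in case~(a), where $r_\eta\,\mathrm dG_1^P=\mathrm dG_0^P$ requires the working pair to be the truth's and dominated. In the other cases the envelope bound must also be uniform over working slots. Both facts come from the uniform slot constants in (U4), and I would state them explicitly rather than only at $P\in\cP_{\mathrm{ach}}$.
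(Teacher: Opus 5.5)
Your proposal is correct and follows essentially the same route as the paper: (B) on $\cP$ comes from (U4), the integrable envelope and the one-wrong calculation; (A) on $\cP_{\mathrm{ach}}$ comes from (U1) via Theorem~\ref{thm:nie-achieve}; the lower bound comes from the score-orthogonality identity of Theorem~\ref{thm:nie-pythagoras} under (U2)--(U3); and attainment is by the truth evaluation of $f_R^{\mathrm{TS}}$. Your version spells out two steps that the paper's four-line proof leaves implicit: the finite-variance check $f^{\mathrm{TS},P}=b_0+S_Y+S_{M_0}\in L^2(P)$, and the essential-infimum bookkeeping.
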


\begin{proof}
Condition (U4) makes the universal-selector slot a single measurable, real-valued rule on the comparison model, and its integrable envelope and the direct one-wrong calculation give condition~(B) on $\cP$. Condition (U1) and the three-product identity \eqref{eq:nie-product-bias} give condition~(A) for one estimator fixed over the subclass.  Conditions (U2)--(U3) permit the score-orthogonality lower bound~\eqref{eq:nie-pythagoras}.  The cross-fitted rule attains that lower bound at the true nuisance values.
\end{proof}

\begin{assumption}[Joint score paths for the full effect]
\label{ass:nie-joint-tangent}
At each law under consideration, write $\mu(x,m)=\E_P[Y\mid X=x,A=1,M=m]$, $e(x)=P(A=1\mid X=x)$, $G_a(\cdot\mid x)=P(M\in\cdot\mid X=x,A=a)$, $b_a(x)=\int\mu(x,m)\,\mathrm dG_a(m\mid x)$, and $r=R(G_0,G_1)$. Define
\begin{align}
S_Y^\Delta&\eqdef\frac{\1\{A=1\}}{e(X)}
 (1-r(X,M))\{Y-\mu(X,M)\},\nonumber\\
S_{M_1}^\Delta&\eqdef\frac{\1\{A=1\}}{e(X)}
 \{\mu(X,M)-b_1(X)\},\nonumber\\
S_{M_0}^\Delta&\eqdef-\frac{\1\{A=0\}}{1-e(X)}
 \{\mu(X,M)-b_0(X)\}.
\label{eq:nie-joint-score-components}
\end{align}
These three centered components belong to $L^2(P)$.  For each $j\in\{Y,M_1,M_0\}$ there are bounded, conditionally centered scores $s_{j,k}$ generating two-sided bounded linear tilts $\mathrm dP_t=(1+ts_{j,k})\mathrm dP$ in $\cP$, which move only, respectively, $Y\mid X,A{=}1,M$, $M\mid X,A{=}1$, and $M\mid X,A{=}0$, and
\[
s_{j,k}\longrightarrow S_j^\Delta\qquad\text{in }L^2(P).
\]
For every bounded Borel $b(X)$, $b(X)s_{j,k}$ is also the score of a bounded linear tilt in the comparison model, preserving $P_X$ and moving only component $j$.  Each path stays in the regression slot space $\mathcal H$ and remains in the declared mediator-law domain for all sufficiently small positive and negative path parameters, as required by Assumption~\ref{ass:med-slot-domain}.
\end{assumption}

\begin{theorem}[Joint identity for the full natural indirect effect]
\label{thm:nie-joint}
Let $\Delta(P)\eqdef\E_P[Y^{1,M^1}]-\E_P[Y^{1,M^0}]$.  Under the same identification conditions, its marginal-integral map is
\[
\psi_\Delta\{x,(\bm\mu,G)\}
=\int\bm\mu(x,m)\,\mathrm dG_1(m\mid x)
-\int\bm\mu(x,m)\,\mathrm dG_0(m\mid x),
\qquad G=(G_0,G_1).
\]
At a truth $P$, suppress law arguments and write
\[
b_a(x)\eqdef\int\bm\mu(x,m)\,\mathrm dG_a(m\mid x),
\qquad a=0,1.
\]
Both components must be built on \emph{one} common slot tuple $(\bm\mu,e,G_0,G_1)$, and the treated-arm rule must be recentred on the slot's own integrated regression,
\begin{equation}\label{eq:nie-F11}
F^{11}\eqdef\frac{\1\{A=1\}}{e}\bigl\{Y-\bar\mu(\bm\mu,G_1)\bigr\}
+\bar\mu(\bm\mu,G_1),
\qquad
\bar\mu(\bm\mu,G_1)\eqdef\int\bm\mu\,\mathrm dG_1 ,
\end{equation}
with $F^{10}$ the cross-world rule built from the same tuple through the canonical selector~\eqref{eq:rn-selector}, and $F^{\Delta}\eqdef F^{11}-F^{10}$.  Under~\eqref{eq:nie-F11}, each of the three declared one-wrong faces $(\bm\mu,e,G)$, with $G=(G_0,G_1)$, is exactly unbiased.

Assume Assumptions~\ref{ass:rich} and~\ref{ass:meas-main}.  Let $f^{\mathrm{TS}}_1\eqdef F^{11}$ and $f^{\mathrm{TS}}_0\eqdef F^{10}$ be those rules, under the slot domain of Assumption~\ref{ass:med-slot-domain}, and assume Assumption~\ref{ass:nie-joint-tangent} for the outcome, $M\mid A{=}1$, and $M\mid A{=}0$ score families.  For any map $f\in\IFRL$ for $\Delta$ on the comparison model $\cP$, fix a measurable auxiliary map $w$ and suppose $f_w^P\in L^2(P)$. Put $g\eqdef f-(f^{\mathrm{TS}}_1-f^{\mathrm{TS}}_0)$.  Then at $P_X$-a.e.\ $x$,
\begin{equation}\label{eq:nie-joint-pythagoras}
\Var_{Q_x^P}\bigl[f^P_w(x,\cdot)\bigr]
=\Var_{Q_x^P}\bigl[f^{\mathrm{TS}}_1-f^{\mathrm{TS}}_0\bigr]
+\E_{Q_x^P}\bigl[g^2\bigr],
\end{equation}
so the displayed variance is a conditional lower bound for every such map.  Equality for a given map holds if and only if $f=f^{\mathrm{TS}}_1-f^{\mathrm{TS}}_0$ $Q_x^P$-a.s.  Exact minimization over $\IFRL$ follows because the common-slot map $F^{11}-F^{10}$ is globally integrable by the same envelope used for $F^{10}$ and satisfies~(B).  Theorem~\ref{thm:nie-joint-achieve} establishes estimator attainment on a learnable subclass.

For the comparison with the separate component variances, put $D_a\eqdef F^{1a}-\E[F^{1a}\mid X]$ at the truth and $V_a(x)\eqdef\Var(D_a\mid X=x)$. At covariate values where $V_1(x)+V_0(x)<\infty$, direct conditioning yields
\begin{equation}\label{eq:nie-joint-covariance}
\Cov(D_1,D_0\mid X=x)
=\frac1{e(x)}\int\sigma_Y^2(x,m)\,\mathrm dG_0(m\mid x)\ge0.
\end{equation}
Consequently, whenever $V_1(x)+V_0(x)>0$,
\begin{equation}\label{eq:nie-joint-ratio-range}
0\le
\frac{\Var(F^{11}-F^{10}\mid X=x)}{V_1(x)+V_0(x)}
\le1.
\end{equation}
The interval is sharp. The variance of the full effect includes this covariance between its two component representations.
\end{theorem}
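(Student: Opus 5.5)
The argument has three parts: the one-wrong algebra for the common-slot map, the Pythagorean identity copied from Theorem~\ref{thm:nie-pythagoras}, and a direct covariance computation.

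\textbf{Exact robustness of $F^\Delta$.} Condition on $X$ to get one conditional-mean identity for $F^{11}$:
\[
\E_P[F^{11}\mid X]=\frac{e_P}{e}\bigl(b_1^P-\bar\mu\bigr)+\bar\mu .
\]
On each face this collapses to $b_1^P$. If $e$ is correct it does so directly. If $\bm\mu$ and $G_1$ are correct, then $\bar\mu=b_1^P$. The regression-wrong and mediator-wrong faces leave $e$ correct. Subtracting the three one-wrong cases for $F^{10}$ from Section~\ref{sec:nie-IF} gives exact unbiasedness for $\Delta$. For integrability, the $F^{11}$ envelope is bounded by $(|Y|+C_\mu)/\epsilon+C_\mu$, so $F^\Delta\in\IFRL$.

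\textbf{Identity \eqref{eq:nie-joint-pythagoras}.} First decompose the truth residual:
\[
F^{11}-F^{10}-(b_1-b_0)=S_Y^\Delta+S_{M_1}^\Delta+S_{M_0}^\Delta .
\]
Here $F^{11}$ contributes $\1\{A=1\}(Y-\mu)/e+\1\{A=1\}(\mu-b_1)/e$, and $F^{10}$ contributes $S_Y+S_{M_0}$. Set $g^P=f^P_w-(F^{11}-F^{10})^P$. Theorem~\ref{thm:ulb}(i) applied to both maps gives $\E[g^P\mid X]=0$. Next take each bounded tilt $b(X)s_{j,k}$ from Assumption~\ref{ass:nie-joint-tangent}. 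It moves exactly one declared component while the working slot is frozen at $\eta(P)$, so (B) gives $\E[g^Pb(X)s_{j,k}]=0$. Passing to the $L^2$ limit and using arbitrary bounded $b$ yields $\E[g^PS_j^\Delta\mid X]=0$ for all three $j$. Expanding the conditional square then gives the identity, and the equality criterion follows. The one point to check is that each tilt moves only one declared component.
\begin{itemize}
\item A $Y$-tilt changes $\bm\mu$ only.
\item An $M\mid A{=}1$ tilt changes $G_1$, which lies inside the third component $G$.
\item An $M\mid A{=}0$ tilt changes $G_0$, also inside $G$.
\end{itemize}
So each path is a genuine one-wrong pair.

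\textbf{Covariance and ratio bounds.} This step needs care because the orthogonalities differ from the separate-component case. $D_1$ is supported on $A=1$ and equals $(Y-\mu)/e+(\mu-b_1)/e$. $D_0$ is $S_Y+S_{M_0}$, whose $A=1$ part is $r(Y-\mu)/e$ and whose $A=0$ part is $S_{M_0}$. The two $A=0$ pieces do not overlap, so only $A=1$ products survive. The cross term $(\mu-b_1)\,r(Y-\mu)$ has zero conditional mean given $M$, so it drops out. What remains is
\[
\Cov(D_1,D_0\mid X=x)=e\cdot e^{-2}\int r\,\sigma_Y^2\,\mathrm dG_1=e^{-1}\int\sigma_Y^2\,\mathrm dG_0\ \ge 0,
\]
which is \eqref{eq:nie-joint-covariance}. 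Since $\Var(D_1-D_0)=V_1+V_0-2\Cov$, the upper bound in \eqref{eq:nie-joint-ratio-range} follows, and the lower bound is trivial.

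\textbf{Sharpness.} I expect this to be the main obstacle, because it needs explicit laws. For the upper endpoint, take $\sigma_Y^2\equiv0$: the covariance vanishes and the ratio is $1$. For the lower endpoint, take $G_0=G_1$ (so $r\equiv1$) and $\mu$ independent of $m$: then $S_Y^\Delta$, $S_{M_1}^\Delta$ and $S_{M_0}^\Delta$ all vanish, the ratio is $0$, and $V_1+V_0>0$ whenever $\sigma_Y^2>0$. In both cases the laws must be checked against the slot domain of Assumption~\ref{ass:med-slot-domain}.
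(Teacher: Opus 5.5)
Your proposal is correct and follows the paper's proof essentially step for step: the conditional-mean identity $\E_P[F^{11}\mid X]-b_1^P=(e_P/e-1)(b_1^P-\bar\mu)$ is combined with the $F^{10}$ one-wrong algebra, frozen-slot bounded tilts $b(X)s_{j,k}$ give $\E[g^PS_j^\Delta\mid X]=0$, and direct conditioning yields the covariance, with the same sharpness witnesses. Two small fixes: keep the factor $\1\{A=1\}$ in the $F^{11}$ envelope, since Assumption~\ref{ass:med-slot-domain} only gives $\E_P[\1\{A=1\}|Y|]<\infty$; and in the ratio-one example take $\bm\mu(x,\cdot)$ nonconstant on the mediator support, since otherwise $\sigma_Y^2\equiv0$ forces $V_1+V_0=0$.
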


\begin{proof}
For a working nuisance tuple, put $\bar\mu=\int\mu\,\mathrm dG_1$ and let $b_1^P=\int\mu_P\,\mathrm dG_1^P$. Direct conditioning gives
\[
\E_P(F^{11}\mid X)-b_1^P
=\left(\frac{e_P}{e}-1\right)(b_1^P-\bar\mu).
\]
If only $\mu$ or $G$ is substituted, then $e=e_P$; if only $e$ is substituted, then $\bar\mu=b_1^P$. The displayed bias is therefore zero in all three cases. The same identities for $F^{10}$ were proved in~\eqref{eq:nie-one-wrong-mean}, so $F_R^\Delta=F^{11}-F^{10}$ satisfies~(B). Its global integrability follows by adding the envelopes of the two terms.

At the truth, direct subtraction gives
\[
F_R^{\Delta,P}-(b_1-b_0)
=S_Y^\Delta+S_{M_1}^\Delta+S_{M_0}^\Delta.
\]
These components are in $L^2(P)$ by Assumption~\ref{ass:nie-joint-tangent}; hence so is $g^P=f_w^P-F_R^{\Delta,P}$. Conditional unbiasedness gives $\E[g^P\mid X]=0$. For every bounded Borel $b(X)$ and every approximating score $s_{j,k}$, freeze the working tuple at $\eta(P)$ in the path with score $b(X)s_{j,k}$. The two exact identities~(B) imply
\[
0=\E_{P_t}[g\{W;\eta(P)\}]
=\E_P[g^P]+t\E_P[g^P b(X)s_{j,k}],
\qquad
\E_P[g^P b(X)s_{j,k}]=0.
\]
Furthermore,
\[
|\E_P[g^P b(X)(S_j^\Delta-s_{j,k})]|
\le\|b\|_\infty\|g^P\|_2\|S_j^\Delta-s_{j,k}\|_2\longrightarrow0.
\]
Testing against bounded $b$ yields $\E[g^P S_j^\Delta\mid X]=0$ for each $j\in\{Y,M_1,M_0\}$. Therefore
\[
\Var_P(f_w^P\mid X)
=\Var_P(F_R^{\Delta,P}\mid X)
 +\E_P[(g^P)^2\mid X],
\]
which proves~\eqref{eq:nie-joint-pythagoras} and its equality criterion. The verified map $F_R^\Delta$ has zero excess and finite variance under the joint score assumptions, so it attains the infimum.

For the separate-component comparison, fix a covariate value at which $V_1+V_0<\infty$. At the truth,
\[
D_1=\frac{\1\{A=1\}}e(Y-b_1),\qquad
D_0=\frac{\1\{A=1\}}e r(Y-\mu)
 +\frac{\1\{A=0\}}{1-e}(\mu-b_0).
\]
The treatment indicators have disjoint supports and $\E[Y-\mu\mid X,A=1,M]=0$, so
\[
\begin{aligned}
\Cov(D_1,D_0\mid X)
&=\frac1e\int r\,\E[(Y-b_1)(Y-\mu)\mid X,A=1,M=m]\,\mathrm dG_1(m)\\
&=\frac1e\int r\sigma_Y^2\,\mathrm dG_1
=\frac1e\int\sigma_Y^2\,\mathrm dG_0\ge0.
\end{aligned}
\]
It follows that $0\le\Var(D_1-D_0\mid X)=V_1+V_0-2\Cov(D_1,D_0\mid X)\le V_1+V_0$, giving~\eqref{eq:nie-joint-ratio-range} when the denominator is positive. With $G_0=G_1$, constant regression and positive bounded outcome noise attain ratio zero; nonconstant regression and deterministic outcome attain ratio one.
\end{proof}

\begin{remark}[Recentering, tangent families, and sharp endpoints]
\label{rem:nie-joint-boundaries}
The recentering in~\eqref{eq:nie-F11} is required for the $G_1$ face of condition~(B).  With $X$ constant, $e=\tfrac12$, $\bm\mu=(0,1)$, truth $G_1=(\tfrac12,\tfrac12)$, and substituted $G_1=(0.9,0.1)$, the unrecentered rule $\1\{A=1\}e^{-1}(Y-\bm\mu)+\bar\mu$ has expectation $0.1$ for a target equal to $\tfrac12$.  In~\eqref{eq:nie-F11}, the two integrated regression terms cancel on that face.

All three score families in Assumption~\ref{ass:nie-joint-tangent} are needed for the full-model identity.  If $G_1$ is instead fixed and known, an exact rule may omit its score.  At $X$ constant, $e=\tfrac12$, $G_0=G_1$, $\bm\mu=(0,1)$, and unit conditional variances, the resulting restricted-model variance is $\tfrac12$, whereas the full-model benchmark is $1$.

Finally, with $G_0=G_1$, a constant regression and positive outcome noise attain ratio $0$ in~\eqref{eq:nie-joint-ratio-range}.  A nonconstant regression with deterministic outcome attains ratio $1$; positive outcome noise can approach that endpoint.
\end{remark}

\begin{definition}[Full-effect population rule and cross-fitted estimator]
\label{def:nie-joint-estimator}
Let $F_R^{10}$ be the cross-world population slot map using the Borel selector $R(G_0,G_1)$ in~\eqref{eq:rn-selector}, let $F^{11}$ be \eqref{eq:nie-F11}, and set $F_R^\Delta=F^{11}-F_R^{10}$.  Its declared factorization is
\[
\eta_\Delta=(\bm\mu,e,G),\qquad G=(G_0,G_1).
\]
Use the fixed finite fold scheme of Theorem~\ref{thm:nie-achieve}. For one collection of off-fold learners, suppress the fold superscript inside each validation score and define
\begin{align}
\widehat b_a(x)&=\int\widehat\mu(x,m)\,
                    \mathrm d\widehat G_a(m\mid x),\qquad a=0,1,
                    \nonumber\\
\widetilde F^{11}
&=\frac{\1\{A=1\}}{\widehat e(X)}
       \{Y-\widehat b_1(X)\}+\widehat b_1(X),\nonumber\\
\widetilde F^{10}
&=\frac{\1\{A=1\}}{\widehat e(X)}
       \widetilde r(X,M)\{Y-\widehat\mu(X,M)\}
 +\frac{\1\{A=0\}}{1-\widehat e(X)}
       \{\widehat\mu(X,M)-\widehat b_0(X)\}
 +\widehat b_0(X),\nonumber\\
T_n&=\sum_{k=1}^{\mathcal K}\frac{n_k}{n}\mathbb P_{n,k}[\widetilde F_k^{11}-\widetilde F_k^{10}].
\label{eq:nie-joint-estimator}
\end{align}
Here $\widetilde r$ is the single bounded ratio learner used by the estimator, including any prespecified truncation.
\end{definition}

\begin{assumption}[Full-effect estimation rates]
\label{ass:nie-joint-rates}
Fix $P_0$ and impose (N1)--(N4) of Theorem~\ref{thm:nie-achieve} on the same folds as Definition~\ref{def:nie-joint-estimator}.  In addition, foldwise,
\begin{enumerate}[wide=0pt,label={\textup{(J\arabic*)}},itemsep=1pt]
\item
\[
\|\widehat b_1-b_1\|_{L^2(P_X)}
+\|\widehat e-e\|_{L^2(P_X)}=o_P(1);
\]
\item
\begin{equation}\label{eq:nie-joint-extra-rate}
\|\widehat e-e\|_{L^2(P_X)}
\left\|\int\widehat\mu\,
       \mathrm d(\widehat G_1-G_1)\right\|_{L^2(P_X)}
=o_P(n^{-1/2}).
\end{equation}
\end{enumerate}
\end{assumption}

\begin{lemma}[Full-effect conditional-bias identity]
\label{lem:nie-joint-product-bias}
Write $\delta_e=\widehat e-e$, $\delta_\mu=\widehat\mu-\mu$, and $r=R(G_0,G_1)$.  Conditional on the training sample and $X$,
\begin{align}
&\E_P[\widetilde F^{11}-\widetilde F^{10}\mid X]
   -\{b_1(X)-b_0(X)\}\nonumber\\
&=\frac{\delta_e}{\widehat e}
   \int\widehat\mu\,\mathrm d(\widehat G_1-G_1)
 +\frac{\delta_e}{\widehat e}
   \int\delta_\mu(1-\widetilde r)\,\mathrm dG_1\nonumber\\
&\quad
 +\frac{\delta_e}{1-\widehat e}
   \int\widehat\mu\,\mathrm d(\widehat G_0-G_0)
 +\int\delta_\mu(\widetilde r-r)\,\mathrm dG_1.
\label{eq:nie-joint-product-bias}
\end{align}
For the population slot,
\begin{equation}\label{eq:nie-F11-bias}
\E_P[F^{11}(\bm\mu^\ast,e^\ast,G^\ast)\mid X]-b_1
=\frac{e-e^\ast}{e^\ast}
 \{b_1-\bar\mu(\bm\mu^\ast,G_1^\ast)\}.
\end{equation}
Consequently, $F_R^\Delta$ satisfies condition~(B) on the three declared one-wrong faces of $(\bm\mu,e,G)$.
\end{lemma}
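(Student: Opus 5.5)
The plan is to prove both displays by direct conditioning on $X$, first on the treatment indicator and then on $M$, using the existing cross-world computation for $\widetilde F^{10}$ as a module. For \eqref{eq:nie-F11-bias} the computation is one line. Conditioning on $(X,A=1)$, the outcome term has mean $e\,b_1/e^\ast - e\,\bar\mu^\ast/e^\ast$, where $\bar\mu^\ast=\bar\mu(\bm\mu^\ast,G_1^\ast)$. Adding $\bar\mu^\ast$ and subtracting $b_1$ gives
\[
(e/e^\ast)(b_1-\bar\mu^\ast)-(b_1-\bar\mu^\ast)=\frac{e-e^\ast}{e^\ast}\,(b_1-\bar\mu^\ast).
\]
The finite-sample version is the same algebra with $(\widehat\mu,\widehat e,\widehat G_1)$. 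It yields $(e/\widehat e-1)(b_1-\widehat b_1)=-(\delta_e/\widehat e)(b_1-\widehat b_1)$.

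For $\widetilde F^{10}$, I will reuse the conditional-bias computation from the proof of Theorem~\ref{thm:nie-achieve}, namely the line preceding \eqref{eq:nie-product-bias}:
\[
\E[\widetilde F^{10}\mid X]-b_0=-\tfrac{e}{\widehat e}\int\widetilde r\delta_\mu\,\mathrm dG_1-\tfrac{\delta_e}{1-\widehat e}A_0+\int\delta_\mu\,\mathrm dG_0,
\]
with $A_0=\int\widehat\mu\,\mathrm d(\widehat G_0-G_0)$. I will then subtract this from the $F^{11}$ expression rather than from \eqref{eq:nie-product-bias} directly. The reason is that the cross-world term $e/\widehat e=1-\delta_e/\widehat e$ must be split so that one piece combines with the treated-arm bias.

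The main obstacle is bookkeeping the treated-arm difference
\[
b_1-\widehat b_1=-\int\widehat\mu\,\mathrm d(\widehat G_1-G_1)-\int\delta_\mu\,\mathrm dG_1.
\]
Substituting, $-(\delta_e/\widehat e)(b_1-\widehat b_1)$ becomes $(\delta_e/\widehat e)\int\widehat\mu\,\mathrm d(\widehat G_1-G_1)+(\delta_e/\widehat e)\int\delta_\mu\,\mathrm dG_1$. On the $F^{10}$ side, write $-\int\delta_\mu\,\mathrm dG_0=-\int\delta_\mu r\,\mathrm dG_1$ using $r\,\mathrm dG_1=\mathrm dG_0$ from Lemma~\ref{lem:rn-selector}. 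Also write $(e/\widehat e)\int\widetilde r\delta_\mu\,\mathrm dG_1=\int\widetilde r\delta_\mu\,\mathrm dG_1-(\delta_e/\widehat e)\int\widetilde r\delta_\mu\,\mathrm dG_1$. Collecting terms then gives $(\delta_e/\widehat e)\int\delta_\mu(1-\widetilde r)\,\mathrm dG_1$, the term $+(\delta_e/(1-\widehat e))A_0$, and $\int\delta_\mu(\widetilde r-r)\,\mathrm dG_1$. This is exactly \eqref{eq:nie-joint-product-bias}. I will check the signs carefully at this step, since they are the only delicate part.

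The condition (B) claim follows by evaluating the population identities on each one-wrong face, with $\widetilde r=r_\eta$. On the $\bm\mu$ face and the $G$ face we have $e^\ast=e$, so \eqref{eq:nie-F11-bias} vanishes. On the $e$ face, $\bar\mu^\ast=b_1$, so it again vanishes. The $F^{10}$ part is unbiased on all three faces by cases (a)--(c) of Section~\ref{sec:nie-IF}. Integrability on every face follows from the summed envelopes of Section~\ref{sec:nie-IF} under Assumption~\ref{ass:med-slot-domain}. Integrating the conditional identity over $P_X$ then gives the expectation identity in (B).
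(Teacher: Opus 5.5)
Your proposal is correct and follows essentially the paper's proof. You condition on treatment to get $\E[\widetilde F^{11}\mid X]-b_1=(\delta_e/\widehat e)(\widehat b_1-b_1)$, expand $\widehat b_1-b_1$, subtract the cross-world bias, and read off condition~(B) from \eqref{eq:nie-F11-bias} together with cases (a)--(c). The paper subtracts the already-simplified form \eqref{eq:nie-product-bias} directly, which is equivalent to your redoing the split $e/\widehat e=1-\delta_e/\widehat e$ and the substitution $\mathrm dG_0=r\,\mathrm dG_1$; your signs, including the $+\frac{\delta_e}{1-\widehat e}A_0$ and $\int\delta_\mu(\widetilde r-r)\,\mathrm dG_1$ terms, are correct.
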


\begin{proof}
Let $A_a(X)=\int\widehat\mu\,\mathrm d(\widehat G_a-G_a)$ for $a=0,1$. Then $\widehat b_a-b_a=A_a+\int\delta_\mu\,\mathrm dG_a$. Conditioning on treatment gives
\[
\E_P[\widetilde F^{11}\mid X]-b_1
=\frac{\delta_e}{\widehat e}(\widehat b_1-b_1)
=\frac{\delta_e}{\widehat e}
 \left(A_1+\int\delta_\mu\,\mathrm dG_1\right).
\]
Subtract the cross-world bias in~\eqref{eq:nie-product-bias}:
\[
\begin{aligned}
&\E_P[\widetilde F^{11}-\widetilde F^{10}\mid X]-(b_1-b_0)\\
&=\frac{\delta_e}{\widehat e}A_1
 +\frac{\delta_e}{\widehat e}\int\delta_\mu(1-\widetilde r)\,\mathrm dG_1
 +\frac{\delta_e}{1-\widehat e}A_0
 +\int\delta_\mu(\widetilde r-r)\,\mathrm dG_1.
\end{aligned}
\]
This is~\eqref{eq:nie-joint-product-bias}. For population arguments, the first conditioning calculation reads
\[
\E_P[F^{11}(\bm\mu^\ast,e^\ast,G^\ast)\mid X]-b_1
=\left(\frac e{e^\ast}-1\right)
 \{b_1-\bar\mu(\bm\mu^\ast,G_1^\ast)\},
\]
which is~\eqref{eq:nie-F11-bias}. Its bias vanishes on each declared one-wrong configuration, as does that of $F_R^{10}$, proving~(B) for their difference.
\end{proof}

The bias identity uses the population change of measure $\mathrm dG_0=r\,\mathrm dG_1$ and is valid for the bounded learner $\widetilde r$ used in the estimator. Condition~(J2) controls its first term; the three products in~(N4) control the remaining terms. The nuisance declaration has three components, $(\bm\mu,e,G)$, with the two mediator laws grouped as $G=(G_0,G_1)$.

\begin{theorem}[One estimator for the full natural indirect effect]
\label{thm:nie-joint-achieve}
Under Assumption~\ref{ass:nie-joint-rates}, the estimator in Definition~\ref{def:nie-joint-estimator} satisfies
\[
T_n=\mathbb P_n[F_R^\Delta\{W;\eta(P_0)\}]
       +o_{P_0}(n^{-1/2}).
\]
Suppose, throughout a subclass $\cP_{\mathrm{ach}}$, that one fold scheme, learner family, clipping rule, and tuple of constants, all fixed independently of the law, satisfy Assumption~\ref{ass:nie-joint-rates}; that Assumption~\ref{ass:med-slot-domain} holds on the rich comparison model $\cP\supseteq\cP_{\mathrm{ach}}$ for every truth and every law-achievable working slot.  Then $T_n\in\TRL(\Delta;\cP_{\mathrm{ach}}\mid\cP)$.  If, in addition, the comparison model supplies Assumptions~\ref{ass:rich} and~\ref{ass:meas-main} and the paths of Assumption~\ref{ass:nie-joint-tangent} at each law in the learnable subclass, then, for every $P\in\cP_{\mathrm{ach}}$,
\[
\Vmin(\cdot;P)=V_\Delta(\cdot;P)\quad\text{in }L^0(P_X),
\]
where the benchmark is defined over $\IFRL$ on $\cP$, and
\begin{equation}\label{eq:nie-joint-floor}
V_\Delta(x)=\frac1{e(x)}\int
 \left[(1-r)^2\sigma_Y^2+(\bm\mu-b_1)^2\right]\,\mathrm dG_1
+\frac1{1-e(x)}\int(\bm\mu-b_0)^2\,\mathrm dG_0.
\end{equation}
\end{theorem}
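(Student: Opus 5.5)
The proof has three parts, mirroring Theorem~\ref{thm:nie-achieve} and Corollary~\ref{cor:nie-achieve-global}: the fixed-law expansion, class membership, and the variance identification.

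\textbf{Fixed-law expansion.} Fix $P_0$ and a fold $k$. Write $f_0=F_R^\Delta\{W;\eta(P_0)\}$ and $D_k=\widetilde F_k^{11}-\widetilde F_k^{10}-f_0$. Decompose $T_n-\mathbb P_nf_0=\sum_k(n_k/n)\{(\mathbb P_{n,k}-P_0)D_k+P_0D_k\}$, exactly as in the proof of Theorem~\ref{thm:nie-achieve}.

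For the empirical-process term, bound $\|D_k\|_{L^2(P_0)}$ by the triangle inequality, splitting $D_k$ into its two components.
\begin{itemize}
\item The $F^{10}$ part is $o_P(1)$ by (N3).
\item The $F^{11}$ part differs from its population version by terms controlled by $\|\widehat b_1-b_1\|_{L^2(P_X)}$ and $\|\widehat e-e\|_{L^2(P_X)}$, times constants. These constants come from clipping (N1) and boundedness of $Y$ and $\widehat\mu$ (N2). Condition (J1) makes this part $o_P(1)$.
\end{itemize}
Conditional Chebyshev given the training fold then yields $o_P(n^{-1/2})$.

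For the bias term, integrate the conditional identity \eqref{eq:nie-joint-product-bias} of Lemma~\ref{lem:nie-joint-product-bias} over $P_X$. By clipping and Cauchy--Schwarz:
\begin{itemize}
\item The first term is bounded by the product in (J2).
\item The third term is bounded by the first product in (N4).
\item The fourth term is bounded by the second product in (N4).
\item The second term is bounded by a constant times $\|\delta_e\|\,\|\delta_\mu\|_{L^2(P_X\otimes G_1)}$, using $|1-\widetilde r|\le 1+C'$ and Jensen. This is the third product in (N4).
\end{itemize}
Summing over the fixed number of folds gives the expansion.

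\textbf{Class membership.} Lemma~\ref{lem:nie-joint-product-bias} shows that the population map $F_R^\Delta$ is exactly unbiased on each of the three one-wrong faces of $(\bm\mu,e,G)$. The envelope for $F^{10}$ in Section~\ref{sec:nie-IF} plus the analogous bound $|F^{11}|\le\epsilon^{-1}\1\{A=1\}(|Y|+C_\mu)+2C_\mu$ gives global integrability under Assumption~\ref{ass:med-slot-domain}. So $F_R^\Delta\in\IFRL$ on $\cP$. Because the learners and constants are common across $\cP_{\mathrm{ach}}$, the first paragraph yields (A) at every law there, hence $T_n\in\TRL(\Delta;\cP_{\mathrm{ach}}\mid\cP)$.

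\textbf{Variance identification.} Under the additional path assumptions, Theorem~\ref{thm:nie-joint} gives the Pythagorean identity \eqref{eq:nie-joint-pythagoras}. Thus $\Var_{Q_x^P}[F_R^{\Delta,P}]$ lower-bounds the conditional variance of every finite-variance member of $\IFRL$, and $F_R^\Delta$ attains it, so $\Vmin=\Var_{Q_x^P}[F_R^{\Delta,P}]$ in $L^0(P_X)$. It remains to compute this variance as \eqref{eq:nie-joint-floor}. Write $F_R^{\Delta,P}-(b_1-b_0)=S_Y^\Delta+S_{M_1}^\Delta+S_{M_0}^\Delta$. Cross terms vanish for two reasons:
\begin{itemize}
\item $S_{M_0}^\Delta$ has treatment support disjoint from the other two.
\item $\E[S_Y^\Delta\mid X,A=1,M]=0$ kills $S_Y^\Delta S_{M_1}^\Delta$.
\end{itemize}
The three squared terms integrate to the three displayed integrals.

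\textbf{Main obstacle.} The step needing most care is the $L^2$ stability of the $F^{11}$ component. Its residual $Y-\widehat b_1$ uses the integrated regression rather than $\widehat\mu(X,M)$, so it must be checked that (J1), not a mediator-stratum norm, suffices. The remaining steps reduce to citing earlier results.
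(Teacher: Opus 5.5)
Your proposal is correct and follows the paper's proof essentially step for step. It uses the same split into a centered empirical term, with the $F^{11}$ stability controlled by (J1) through the exact difference identity and the $F^{10}$ part by (N3), and a bias term whose four pieces in \eqref{eq:nie-joint-product-bias} are matched to (J2) and the three products in (N4). Membership comes, as in the paper, from the one-wrong identities of Lemma~\ref{lem:nie-joint-product-bias} plus the two envelopes. The floor is likewise obtained from Theorem~\ref{thm:nie-joint} and the pairwise conditional orthogonality of $S_Y^\Delta$, $S_{M_1}^\Delta$, $S_{M_0}^\Delta$. The one point you leave implicit is that $F_R^{\Delta,P}\in L^2(P)$; this follows from $S_j^\Delta\in L^2(P)$ in Assumption~\ref{ass:nie-joint-tangent} and is what places the attaining map in the competitor family defining $\Vmin$.
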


\begin{proof}
For each fold, the treated-arm score satisfies the identity
\[
\widetilde F^{11}-F^{11,P_0}
=\1\{A=1\}\left(\frac1{\widehat e}-\frac1e\right)(Y-b_1)
 +\left(1-\frac{\1\{A=1\}}{\widehat e}\right)(\widehat b_1-b_1).
\]
Bounded outcomes and clipped denominators therefore imply
\[
\|\widetilde F^{11}-F^{11,P_0}\|_{L^2(P_0)}
\le K\{\|\widehat e-e\|_{L^2(P_X)}
       +\|\widehat b_1-b_1\|_{L^2(P_X)}\}=o_P(1).
\]
Together with~(N3), this proves $L^2(P_0)$ convergence of the full fitted score. The foldwise conditional-variance bound in the proof of Theorem~\ref{thm:nie-achieve} thus makes the centered empirical difference $o_P(n^{-1/2})$.

Put $A_a=\int\widehat\mu\,\mathrm d(\widehat G_a-G_a)$. Lemma~\ref{lem:nie-joint-product-bias}, Cauchy--Schwarz, conditional Jensen, and boundedness of the inverse propensities and $\widetilde r$, together with~(J2) and~(N4), give
\[
\begin{aligned}
&\left|P_0(\widetilde F^{11}-\widetilde F^{10})-\Delta(P_0)\right|\\
&\quad\le K\bigl\{
\|\delta_e\|_{L^2(P_X)}(\|A_1\|_{L^2(P_X)}+\|A_0\|_{L^2(P_X)})
+\|\delta_e\|_{L^2(P_X)}\|\delta_\mu\|_{L^2(P_X\otimes G_1)}\\
&\hspace{5.3em}
+\|\delta_\mu\|_{L^2(P_X\otimes G_1)}
 \|\widetilde r-r\|_{L^2(P_X\otimes G_1)}\bigr\}
=o_P(n^{-1/2})
\end{aligned}
\]
Summing over the fixed folds proves the expansion. The common learner specification gives~(A) at every law of the learnable subclass, while the integrable population identities give~(B) on the comparison model.

Under the joint score-path assumptions, Theorem~\ref{thm:nie-joint} identifies this represented variance with the algebraic floor. To evaluate it, the three components in~\eqref{eq:nie-joint-score-components} are pairwise conditionally orthogonal: the two mediator terms have disjoint treatment supports, and the outcome residual has conditional mean zero given $(X,A,M)$. Hence
\[
\Var_P(F_R^{\Delta,P}\mid X)
=\E_P[(S_Y^\Delta)^2+(S_{M_1}^\Delta)^2+(S_{M_0}^\Delta)^2\mid X],
\]
whose three conditional integrals are exactly~\eqref{eq:nie-joint-floor}.
\end{proof}

\begin{remark}[Roles of the uniform-class conditions]
\label{rem:nie-uniform-roles}
Condition~(U1) specifies one estimation procedure whose expansion holds at each law of the learnable subclass. Condition~(U4) fixes the population nuisance domain and its integrable envelope. Condition~(U3) supplies the model-contained score paths used by the conditional variance identity, including their preservation of the regression space and the mediator dominance bound.
\end{remark}

\begin{remark}[Why score richness cannot be dropped]
\label{rem:nie-tangent-countermodel}
Let $X$ be constant, $e=1/2$, and let $M\mid A=a\sim\operatorname{Bernoulli}(1/2)$ in both arms.  Fix $\eta_1(m)=m$ and the remaining nuisance components, and allow only the shape of $Y\mid A=1,M$ to vary at fixed conditional mean.  The target is identically $1/2$, so $T_n\equiv1/2$ has variance zero and satisfies (A) and (B).  If $Y=M+E$ with $E\in\{-1,1\}$ symmetric, however, $V^{\mathrm{TS}}=5/2$.  The equality fails because the model has no path moving $\eta_1$ or $\eta_3$.  A regression path with $\left.\partial_t\int\eta_1(P_t)(m)\,\mathrm dG_0(m)\right|_{t=0}\ne0$ makes the target vary and excludes the constant estimator.
\end{remark}

\section{The bridge to classical efficiency}\label{ssec:bridge}

This section proves Theorem~\ref{thm:bridge} and Corollary~\ref{cor:bridge-maxarm}. All expectations, closures, and inner products are at the evaluating law $P$. The statistical experiments are generated by the finite-dimensional paths in (G3) and, for the affine max-arm specialization, condition~\textup{(L)}.

\subsection{Conditional projection and the target derivative}

Let $b_j(P)(X)$ be finite conditional branches, $m_P(X)=\max_{j\le K}b_j(P)(X)$, $\Psi(P)=\E_P[m_P(X)]$, and $q=m_P-\Psi(P)\in L^2(P)$. Let $\zeta=(\zeta_1,\ldots,\zeta_K)^\top$ have components in $L^2(P)$ with $\E[\zeta_j\mid X]=0$. Write $\cA^*(X)=\argmax_j b_j(P)(X)$ and $\Sigma=\E[\zeta\zeta^\top\mid X]$. The active covariance block is $S=\Sigma_{\cA^*}$.  In Section~\ref{sec:bridge}, these same objects are denoted by $J$, $q_P$, $\bm\Gamma(X)$, and $\bm\Gamma_{\mathrm{act}}(X)$, respectively, in place of $K$, $q$, $\Sigma$, and $S$ used in this proof.

Define the closed linear space
\begin{equation}\label{eq:sbridge-H}
 \mathcal H=\{s\in L_0^2(P):
 \1\{i,j\in\cA^*(X)\}\E[(\zeta_i-\zeta_j)s\mid X]=0,
 \ i,j\le K\}.
\end{equation}
Closedness follows because each constraint is continuous from $L^2(P)$ to $L^1(P_X)$, by conditional expectation and Cauchy--Schwarz.

For a positive-semidefinite matrix $S$, a measurable minimizing unit-sum vector is
\begin{equation}\label{eq:sbridge-weights}
 \lambda^*(S)=
 \begin{cases}
 \displaystyle\frac{S^+\1}{\1^\top S^+\1},
       &\1\in\operatorname{range}(S),\\[6pt]
 \displaystyle\frac{(I-SS^+)\1}{\|(I-SS^+)\1\|^2},
       &\1\notin\operatorname{range}(S).
 \end{cases}
\end{equation}
Indeed, $S\lambda^*=c(S)\1$, where
\begin{equation}\label{eq:sbridge-c}
 c(S)=\min_{\1^\top\lambda=1}\lambda^\top S\lambda
 =\begin{cases}
 (\1^\top S^+\1)^{-1},&\1\in\operatorname{range}(S),\\
 0,&\1\notin\operatorname{range}(S).
 \end{cases}
\end{equation}
In the first case, decompose an arbitrary feasible vector as $\lambda=\lambda^*+v$, where $\1^\top v=0$; the objective is $c(S)+v^\top Sv$.  In the second, $\lambda^*$ lies in the kernel and attains zero.  The denominators in their respective cases are strictly positive.  Matrix pseudoinversion, rank, and the two cases are Borel measurable; enumerating the finitely many active sets therefore gives a measurable $\lambda^*(X)$.  Extend it by zero to inactive coordinates and put $r^*=\lambda^{*\top}\zeta$ and $D^*=q+r^*$.  Conditional optimality against any active coordinate gives
\begin{equation}\label{eq:sbridge-oracle-L2}
 \E[(r^*)^2\mid X]=c(S)
 \le\sum_{j=1}^K\E[\zeta_j^2\mid X].
\end{equation}
Thus $r^*,D^*\in L^2(P)$, even if $\lambda^*$ is unbounded.

\begin{lemma}[Projection]\label{lem:sbridge-projection}
For active unit-sum weights $\lambda$ such that $D_\lambda=q+\lambda^\top\zeta\in L^2(P)$,
\[
 D^*\in\mathcal H,\qquad
 D_\lambda-D^*\perp\mathcal H,\qquad
 \|D^*\|_2^2=\Var(m_P(X))+\E[c(S)].
\]
\end{lemma}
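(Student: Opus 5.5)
The argument has three steps: show $D^*\in\mathcal H$ directly, show that $D_\lambda-D^*$ is orthogonal to $\mathcal H$ by conditioning on $X$, and compute the norm by the law of total variance. Throughout we use the two identities already recorded before the lemma: $S\lambda^*=c(S)\1$ on the active block, and $\E[(r^*)^2\mid X]=c(S)$.

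\emph{Membership.} First, $\E[D^*]=\E[q]+\E[\E(r^*\mid X)]=0$ because each $\zeta_j$ is conditionally centred, and $D^*\in L^2(P)$ by \eqref{eq:sbridge-oracle-L2}. Fix active indices $i,j\in\cA^*(X)$. The $q$ part contributes nothing, since $q$ is a function of $X$ and
\[
\E[(\zeta_i-\zeta_j)q\mid X]=q\,\E[\zeta_i-\zeta_j\mid X]=0 .
\]
The $r^*$ part gives $(\Sigma\lambda^*)_i-(\Sigma\lambda^*)_j$. Because $\lambda^*$ vanishes off the active set, this equals $(S\lambda^*)_i-(S\lambda^*)_j=c(S)-c(S)=0$. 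All conditional products are integrable by conditional Cauchy--Schwarz, because $\zeta\in L^2$. Hence $D^*\in\mathcal H$.

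\emph{Orthogonality.} Put $\delta=\lambda-\lambda^*$. It is supported on $\cA^*(X)$ and satisfies $\1^\top\delta=0$, and $D_\lambda-D^*=\delta^\top\zeta$. For $s\in\mathcal H$, the constraint defining $\mathcal H$ says that $\E[\zeta_js\mid X]$ takes a common value $a(X)$ across all active $j$. Therefore
\[
\E[\delta^\top\zeta\,s\mid X]=\textstyle\sum_{j\in\cA^*}\delta_j\,a(X)=0 .
\]
Taking expectations gives $\E[(D_\lambda-D^*)s]=0$.

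This is the step I expect to be the main obstacle, because $\delta$ may be unbounded and $\E[\delta^\top\zeta\,s]$ need not obviously be integrable. I would use that both $D_\lambda$ and $D^*$ lie in $L^2$, so $\delta^\top\zeta\in L^2$ and the product with $s$ is integrable. To justify the conditional computation for unbounded $\delta$, I would truncate to $\{\|\delta\|\le M\}$, apply the identity there, and pass to the limit by dominated convergence using $|\delta^\top\zeta\,s|\in L^1$.

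\emph{Norm.} Since $q$ is a function of $X$ and $\E[r^*\mid X]=0$, the cross term vanishes and
\[
\|D^*\|_2^2=\E[q^2]+\E[(r^*)^2]=\Var(m_P(X))+\E[c(S)] .
\]

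The conclusion $\Pi_{\mathcal H}D_\lambda=D^*$ then follows from membership together with orthogonality, since $\mathcal H$ is closed.
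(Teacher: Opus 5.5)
Your proposal is correct and follows the paper's proof essentially step for step: membership from $S\lambda^*=c(S)\1$, orthogonality from the zero-sum active difference $\lambda-\lambda^*$ after truncating to $\{\|\lambda-\lambda^*\|\le M\}$, and the norm identity from conditional centering of $r^*$. The only cosmetic differences are that the paper writes $D_\lambda-D^*=\sum_{j\in\cA^*}v_j(\zeta_j-\zeta_{j_0})$ to apply the constraint directly, and it controls the untruncated tail by Cauchy--Schwarz rather than by dominated convergence; both justifications are valid.
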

\begin{proof}
Conditional centering gives $\E[D^*]=0$. For active $i,j$,
\[
\E[(\zeta_i-\zeta_j)D^*\mid X]
=(S\lambda^*)_i-(S\lambda^*)_j=0,
\]
so $D^*\in\mathcal H$. Let $v=\lambda-\lambda^*$ and $j_0=\min\cA^*$. Since $\sum_jv_j=0$,
\[
D_\lambda-D^*=\sum_{j\in\cA^*}v_j(\zeta_j-\zeta_{j_0}).
\]
For $A_m=\{\|v(X)\|\le m\}$ and $s\in\mathcal H$, each coefficient in the restricted sum is bounded, giving $\E[\1_{A_m}(D_\lambda-D^*)s]=0$. Moreover,
\[
\left|\E[\1_{A_m^c}(D_\lambda-D^*)s]\right|
\le\|\1_{A_m^c}(D_\lambda-D^*)\|_2\|s\|_2\longrightarrow0.
\]
Therefore $D_\lambda-D^*\perp\mathcal H$. Finally $\E[r^*\mid X]=0$ and~\eqref{eq:sbridge-oracle-L2} yield
$\|D^*\|_2^2=\E[q^2]+\E[(r^*)^2]=\Var(m_P(X))+\E[c(S)]$.
\end{proof}

Suppose a bounded mean-zero score $s$ defines a feasible path $dP_t=(1+ts)dP$ and, for every $j$,
\begin{equation}\label{eq:sbridge-expansion}
 b_j(P_t)=b_j(P)+t\dot b_j(s)+o_{L^1(P_X)}(|t|),
 \qquad \dot b_j(s)=\E[\zeta_js\mid X].
\end{equation}
For $s\in\mathcal H$, put $\dot b_*(s)=\dot b_{j_0}(s)$ with $j_0=\min\cA^*$, and let
\[
M_t=\max_j\{b_j(P)+t\dot b_j(s)\},\qquad
H_s=\max_j|\dot b_j(s)|.
\]
The active derivatives agree and $H_s\in L^1(P_X)$. Define $\delta_+(X)=\min_{j\notin\cA^*}\{m_P(X)-b_j(P)(X)\}$, with an empty minimum equal to infinity. Finite $K$ gives $\delta_+>0$ pointwise. For either sign of $t$,
\[
0\le M_t-m_P-t\dot b_*(s)
\le2|t|H_s\1\{\delta_+\le2|t|H_s\}.
\]
Dominated convergence makes the right side $o_{L^1}(|t|)$. Since the maximum is Lipschitz in its finite vector of arguments, the remainders in~\eqref{eq:sbridge-expansion} give $m_{P_t}=m_P+t\dot b_*(s)+o_{L^1}(|t|)$. Moreover $dP_{t,X}/dP_X=1+t\E[s\mid X]$ exactly, so
\begin{align}
\Psi(P_t)-\Psi(P)
&=\E[m_{P_t}-m_P]+t\E[m_P\E[s\mid X]]\notag\\*
&\quad+t\E[(m_{P_t}-m_P)\E[s\mid X]]\notag\\*
&=t\E[D_\lambda s]+o(|t|)
=t\E[D^*s]+o(|t|).
\label{eq:sbridge-derivative}
\end{align}
The last cross term is $O(t^2)+o(t^2)$ because $s$ is bounded. Any square-integrable unit-sum active combination $D_\lambda$ represents the common active derivative by conditional expectation and coefficient localization as in Lemma~\ref{lem:sbridge-projection}.

For a feasible score outside $\mathcal H$, the same Lipschitz and dominated-convergence argument gives right and left derivatives
\[
\dot\Psi_+(s)=\E[qs]+\E[\max_{j\in\cA^*}\dot b_j(s)],\qquad
\dot\Psi_-(s)=\E[qs]+\E[\min_{j\in\cA^*}\dot b_j(s)].
\]
Their difference is positive because the active derivatives disagree on a set of positive probability. This proves the derivative characterization, without a margin-rate assumption.

Let $\mathcal S$ and $\mathcal T$ be the score space and its closure specified in (G3). In a finite-dimensional experiment with score vector $s=(s_1,\ldots,s_d)^\top$, put $h_j=(\dot b_j(s_l))_{l=1}^d$ and $H=\max_j\|h_j\|$. The preceding maximum remainder is bounded by
\[
2\|t\|H\1\{\delta_+\le2\|t\|H\}=o_{L^1}(\|t\|).
\]
Thus $\Psi(P_t)=\Psi(P)+t^\top v+o(\|t\|)$ with $v=\E[sD^*]$. Boundedness of $s$ also gives the quadratic-mean expansion
\[
\left\|\sqrt{1+t^\top s}-1-\tfrac12t^\top s\right\|_2=O(\|t\|^2),
\]
and the local log-likelihood ratio, for fixed $h\in\R^d$, is
\[
\log\frac{dP_{h/\sqrt n}^{\otimes n}}{dP^{\otimes n}}
=h^\top n^{-1/2}\sum_{i=1}^ns(O_i)
-\tfrac12h^\top I h+o_P(1),\qquad I=\E[ss^\top].
\]
The finite-experiment convolution variance is $v^\top I^+v=\|\Pi_{\operatorname{span}(s_1,\ldots,s_d)}D^*\|_2^2$. Singular score vectors are reduced to a linearly independent basis. Along increasing finite spans dense in $\mathcal T$, these variances increase to $\|\Pi_{\mathcal T}D^*\|_2^2$, giving the convolution factor for an estimator regular in all the specified experiments by the semiparametric convolution theorem \citep{BKRW1993}. Hence the canonical gradient is $\Pi_{\mathcal T}D^*$, and equals $D^*$ when $D^*\in\mathcal T$. This proves Theorem~\ref{thm:bridge}.

\subsection{Bounded approximation with conditional constraints}

The following lemma supplies the density statement without bounded outcomes, a uniform eigenvalue lower bound, or a bound on oracle coefficients.

\begin{lemma}[Conditional Gram truncation]\label{lem:sbridge-density}
Let $Z$ be a finite-dimensional random vector with $\E[\|Z\|^2\mid X]<\infty$ almost surely.  If $r\in L^2(P)$ and $\E[Zr\mid X]=0$, there are bounded $r_l$ such that $\E[Zr_l\mid X]=0$ and $r_l\to r$ in $L^2(P)$. The conditional equalities are understood after localization when $Zr$ is not globally integrable.
\end{lemma}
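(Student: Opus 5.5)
The approach is to truncate $r$, project the truncation back onto the conditional constraint set using the conditional Gram matrix, and then localize on covariate sets where that projection has bounded coefficients.

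\textbf{Step 1 (truncation).} Fix $m\ge1$ and put $\tilde r_m=r\1\{|r|\le m\}$. Then $\tilde r_m\to r$ in $L^2(P)$ by dominated convergence, but the constraint fails: $\E[Z\tilde r_m\mid X]=-\E[Zr\1\{|r|>m\}\mid X]\eqdef e_m(X)$.

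\textbf{Step 2 (conditional Gram correction).} Let $G(X)=\E[ZZ^\top\mid X]$, and work on the localizing sets $B_k=\{\|G\|+\|G^+\|_{\mathrm{op}}\le k\}\cap\{\E[\|Z\|^2\mid X]\le k\}$, which increase to a $P_X$-full set. The correction $G^+e_m$ lies in the range of $G$: since $Zr\1\{|r|>m\}$ is supported on the range of $Z$ conditionally, the vector $e_m$ belongs to $\operatorname{range}G$ almost surely. Set
\[
r_{m,k}=\1_{B_k}(X)\bigl\{\tilde r_m-Z^\top G(X)^+e_m(X)\bigr\}\1\{\|Z\|\le k'\}+\1_{B_k^c}(X)\cdot 0 .
\]
A direct computation gives $\E[Z(\tilde r_m-Z^\top G^+e_m)\mid X]=e_m-GG^+e_m=0$ on $B_k$. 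Outside $B_k$ the function is zero, so the constraint holds trivially there.

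\textbf{Step 3 (boundedness).} The product $Z^\top G^+e_m$ need not be bounded, because $Z$ is unbounded. I would therefore truncate $Z$ as well, replacing $Z$ by $Z_{k'}=Z\1\{\|Z\|\le k'\}$ inside the correction. The Gram matrix is then $G_{k'}=\E[Z_{k'}Z_{k'}^\top\mid X]$, and I would correct with $G_{k'}^+$ applied to the residual $\E[Z_{k'}\tilde r_m\mid X]$, restricted to the sets where $\|G_{k'}^+\|$ is bounded. The main obstacle is that the constraint concerns $Z$, not $Z_{k'}$. I would try enforcing $\E[Z r_l\mid X]=0$ by choosing the correction in the form $\alpha(X)^\top Z_{k'}$ and solving $\E[ZZ_{k'}^\top\mid X]\alpha=-\E[Z\tilde r_m\mid X]$. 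This matrix converges to $G$ as $k'\to\infty$, so it has the same range and is invertible on range for large $k'$ locally in $X$. On $B_k$ with $k'$ large, $\alpha$ is bounded, and the correction $\alpha^\top Z_{k'}$ is bounded by $\|\alpha\|k'$.

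\textbf{Step 4 (convergence).} Bound the correction in $L^2$ by $\|\alpha\|^2\E[\|Z\|^2\mid X]\lesssim \|G^+\|^2\|e_m\|^2\E\|Z\|^2$. On $B_k$, conditional Cauchy--Schwarz gives $\|e_m\|^2\le\E[\|Z\|^2\mid X]\,\E[r^2\1\{|r|>m\}\mid X]$, so the correction tends to $0$ in $L^2$ as $m\to\infty$ for fixed $k$. The loss from $B_k^c$, namely $\|r\1_{B_k^c}\|_2$, tends to $0$ as $k\to\infty$. A diagonal sequence $l\mapsto(m_l,k_l,k'_l)$ then gives bounded $r_l\to r$ in $L^2(P)$ with the exact conditional constraints. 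Where $Zr$ is not globally integrable, these conditional equalities are read on the localizing sets $B_k$, as the statement allows.
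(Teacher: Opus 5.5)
Your proposal is correct and is essentially the paper's argument: truncate $r$, add a correction $\alpha(X)^\top Z_{k'}$ solved from $\E[ZZ_{k'}^\top\mid X]$, which is exactly the truncated Gram matrix $\E[ZZ^\top\1\{\|Z\|\le k'\}\mid X]$ (the paper's $C_j$), localize to covariate sets where its pseudoinverse is bounded, and take a diagonal sequence. One point needs tightening: the localizing set must also require $\operatorname{rank}\E[ZZ_{k'}^\top\mid X]=\operatorname{rank}G(X)$, as the paper's $E_j$ does. This holds eventually in $k'$ because the truncated Gram matrices increase to $G$ in Loewner order, so their ranges sit inside $\operatorname{range}G$ and eventually fill it; convergence alone would not give this. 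It is exactly what places $e_m$ in their range, so that the correcting equation can be solved.
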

\begin{proof}
Define
\[
 C=\E[ZZ^\top\mid X],\quad
 Z_j=Z\1\{\|Z\|\le j\},\quad
 C_j=\E[ZZ^\top\1\{\|Z\|\le j\}\mid X],
\]
and the increasing measurable sets
\[
 E_j=\{\E[\|Z\|^2\mid X]\le j,
       \ \operatorname{rank}(C_j)=\operatorname{rank}(C),
       \ \|C_j^+\|_{\mathrm{op}}\le j\}.
\]
For almost every $x$, $C_j(x)$ increases to $C(x)$ in Loewner order.  Its ranges are nested, and their union spans the range of $C(x)$.  Finite dimension implies eventual equality of the ranks; on that common range the nonzero eigenvalues then converge to positive eigenvalues of $C(x)$.  Thus $E_j\uparrow\cX$ modulo a $P_X$-null set.  Monotonicity of the inverse on the common range also shows that these sets are increasing.

Let $t_n=\max(-n,\min(r,n))$ and $m_n=\E[Zt_n\mid X]$. On $E_j$, $m_n$ lies in $\operatorname{range}(C)=\operatorname{range}(C_j)$, since every vector in the conditional kernel of $C$ is orthogonal to $Z$ conditionally almost surely.  Set
\[
 r_{j,n}=\1_{E_j}\{t_n-Z_j^\top C_j^+m_n\}.
\]
Then $\E[Zr_{j,n}\mid X]=0$ exactly.  Furthermore, $\|m_n\|\le n\sqrt j$ on $E_j$, so $|r_{j,n}|\le n+n j^{5/2}$.  Since $m_n=\E[Z(t_n-r)\mid X]$, the correction has conditional second moment
\[
 \E[(Z_j^\top C_j^+m_n)^2\mid X]
 =m_n^\top C_j^+m_n
 \le j^2\E[(t_n-r)^2\mid X]\quad\text{on }E_j.
\]
It follows that
\[
 \|r_{j,n}-\1_{E_j}r\|_2\le(1+j)\|t_n-r\|_2.
\]
For each fixed $j$, choose $n(j)$ so that the right side is at most $1/j$.  Then $r_{j,n(j)}\to r$ by dominated convergence on the exhausting sets $E_j$.
\end{proof}

Apply the lemma with $Z$ consisting of $1$ and the finitely many vectors $\1\{i,j\in\cA^*(X)\}(\zeta_i-\zeta_j)$. For $s\in\mathcal H$, write $s=a(X)+r$, where $a=\E[s\mid X]$ and $\E[r\mid X]=0$.  Conditional centering of the branch gradients implies that $r$ satisfies all these constraints.  The lemma approximates $r$ by bounded, conditionally centered constrained residuals; centered clipping approximates $a$ by bounded mean-zero functions of $X$.  Their sums show
\begin{equation}\label{eq:sbridge-H-density}
 \overline{\mathcal H\cap L^\infty(P)}^{\,L^2(P)}=\mathcal H.
\end{equation}
If the propensity of a finite-valued variable $A$ is fixed, include the arm indicators $\1\{A=a\}$ in $Z$ instead of $1$.  The same proof yields density in
\begin{equation}\label{eq:sbridge-H0}
 \mathcal H_0=\{s\in\mathcal H:
                   \E[s\mid X,A]=\E[s\mid X]\}.
\end{equation}
Density is a property of the conditional constraints.  To identify a statistical tangent space, the approximating paths must also belong to the model.

\subsection{Exact substitution identities and restricted regularity}

Fix a full-slot map $f\in\IFRL$, an admissible auxiliary section, and its truth evaluation $f_P\in L^2(P)$; put $D_f=f_P-\Psi(P)$.  Let $\mathcal S$ and $\mathcal T$ describe the local experiment above.  Suppose a collection $\mathcal S_B$ of scores in $\mathcal T\cap L^\infty(P)$ has dense linear span in $\mathcal T$, and each of its paths $dP_t=(1+ts)dP$ is feasible, preserves $\kappa$ exactly, changes at most one nuisance component from its value at $P$, and has the branch expansion \eqref{eq:sbridge-expansion}.  The auxiliary section is held fixed at its evaluating-law choice.  By (B),
\[
 \E_{P_t}[f_P]=\Psi(P_t).
\]
The left side is exactly $\E_P[f_P]+t\E_P[f_Ps]$, so \eqref{eq:sbridge-derivative} gives
\begin{equation}\label{eq:sbridge-B-gradient}
 \E[D_fs]=\E[D^*s]\quad(s\in\mathcal S_B).
\end{equation}
Continuity extends this identity to $\mathcal T$.  Consequently, if an estimator has the expansion
\[
 \sqrt n\{\widehat\Psi_n-\Psi(P)\}
 =n^{-1/2}\sum_{i=1}^nD_f(O_i)+o_P(1),
\]
it is regular in the stated bounded-tilt experiments.  To see this directly, joint central limit convergence of $D_f$ and any finite score vector, local asymptotic normality, and Le Cam's third lemma give a shift $\E[D_fs^\top]h$ under $P_{h/\sqrt n}^{\otimes n}$.  Contiguity transfers the remainder. The target shifts by $\E[D^*s^\top]h$, which is the same by \eqref{eq:sbridge-B-gradient}; the centered limit is therefore $N(0,\|D_f\|_2^2)$, independent of $h$.  This proves regularity under the stated dense one-component score condition.

\subsection{Exact paths and conditional sharpness for affine max-arm models}

Consider $b_a(P)(X)=g'_a(X)u_{P,a}(X)+\rho(X,a)$ with $g'_a(X)\ne0$, $\pi_a^P(X)>0$, and finite positive active conditional variances $\sigma_a^2(X;P)$.  In this subsection assume $q\in L^2(P)$ and
\[
 \zeta_a(O)=\frac{\1\{A=a\}g'_a(X)}{\pi_a^P(X)}
       \{h_a(\widetilde W)-u_{P,a}(X)\}\in L^2(P)
 \quad(a\in\cA).
\]
The active covariance matrix is diagonal, with entries $(g'_a)^2\sigma_a^2/\pi_a^P$.  Therefore
\begin{equation}\label{eq:sbridge-harmonic}
 c(X)=\left\{\sum_{a\in\cA^*(X)}
 \frac{\pi_a^P(X)}{(g'_a(X))^2\sigma_a^2(X;P)}\right\}^{-1},
 \qquad \E[(r^*)^2\mid X]=c(X).
\end{equation}

Here is the precise local closure sufficient for the corollary. Call $s=a(X)+r(O)$ admissible if $a$ is bounded and mean zero, $r$ is bounded, and the following three conditions hold; require that for every finite family $s_1,\ldots,s_d$ of admissible scores the model contains $dP_t=(1+\sum_{l=1}^dt_ls_l)dP$ for all $t$ in a neighbourhood of $0\in\R^d$.  The conditions are:
\begin{enumerate}[wide=0pt,label={(\roman*)},itemsep=2pt]
\item $\E[r\mid X,A]=0$;
\item $d_b(X)=\E[\zeta_b r\mid X]$ has a common value on $b\in\cA^*(X)$;
\item $r$ vanishes outside a covariate set on which $\max_b\E[|\zeta_b|\mid X]$ is bounded and the smallest strictly positive inactive gap $\min_{b\notin\cA^*(X)}\{m_P(X)-b_b(P)(X)\}$ is bounded away from zero.  An empty minimum is $+\infty$.
\end{enumerate}
This is local closure for the explicitly described bounded changes, not closure under all measurable kernel selections.  It holds in the unrestricted conditional-law model with the relevant integrability requirements: density ratios are uniformly bounded above and below, and the conditional branch changes below are bounded on their support.  Binding structural restrictions require separate verification.

For such a tilt, the covariate density ratio is $1+ta(X)$ and the propensity is unchanged exactly.  Direct conditional integration gives
\begin{equation}\label{eq:sbridge-exact-affine}
 b_b(P_t)(X)=b_b(P)(X)+\frac{t\,d_b(X)}{1+ta(X)}.
\end{equation}
On the support of $r$, choose finite $M$ and positive $\delta$ such that $\max_b|d_b|\le M$ and every inactive gap is at least $\delta$. Then every gap remains positive whenever
\[
\frac{2|t|M}{1-|t|\|a\|_\infty}<\delta,
\qquad |t|\|a\|_\infty<1.
\]
Equality among active branches is preserved by their common $d_b$; outside this support the branches are unchanged. The active set and propensity therefore remain fixed, and only the mean nuisance component changes. Moreover,
\[
\|b_b(P_t)-b_b(P)-td_b\|_{L^1(P_X)}
\le\frac{t^2\|a\|_\infty\|d_b\|_1}{1-|t|\|a\|_\infty}=O(t^2),
\]
which proves the required branch expansion. If $r=0$, all branch changes vanish and the same conclusions hold directly.

Let $\mathcal S_0$ be the linear space of admissible scores, including $r=0$; it is a linear space because (i)--(iii) are preserved under linear combinations, the support of a combination being the finite union of the supports, with the minimum gap and maximum moment bound.  Finite-dimensional tilts by members of $\mathcal S_0$ are model-feasible on a neighbourhood of the origin by the closure premise, and the exact branch formula above gives their expansion with remainder $o(\|t\|)$.  By construction $\mathcal S_0\subset\mathcal H_0$. To prove density, apply Lemma~\ref{lem:sbridge-density} to the residual of any $s\in\mathcal H_0$, with arm indicators and active contrasts in $Z$.  After its bounded approximation, multiply by the indicators of
\[
 G_j=\{\max_b\E[|\zeta_b|\mid X]\le j,\quad
       \min_{b\notin\cA^*(X)}(m_P-b_b(P))\ge1/j\}.
\]
These sets increase to full measure, preserve every conditional constraint, and give the support property in (iii).  A diagonal choice preserves $L^2$ convergence.  Centered clipping treats the covariate part.  Hence
\begin{equation}\label{eq:sbridge-exact-density}
 \overline{\mathcal S_0}^{\,L^2(P)}=\mathcal H_0.
\end{equation}
In particular $D^*\in\mathcal H_0$ and is in the closure of exactly active-set-preserving, one-component path scores. The projection and regularity assertions of the affine max-arm corollary now follow from the preceding subsections.  No propensity-replacement or inactive-arm-reachability assumption is used.

For completeness, the same paths give the conditional lower bound for every $L^1$ truth evaluation, even without global finite variance. Marginal reweighting and (B) imply
\begin{equation}\label{eq:sbridge-conditional-mean}
 \E[f_P\mid X]=m_P(X).
\end{equation}
Let $v_f(X)=\E[(f_P-m_P(X))^2\mid X]\in[0,\infty]$ and $F_m=\{v_f\le m\}$.  For an arbitrary bounded measurable $\phi(X)$, apply the residual approximation above to $\phi(X)\1_{F_m}r^*$ and multiply the approximants by $\1_{F_m}$.  Call the resulting residual-only scores $r_l$. They obey $r_l\to\phi\1_{F_m}r^*$ in $L^2(P)$, keep the propensity and active set exactly fixed, and change only the mean slot.  From (B) and \eqref{eq:sbridge-derivative},
\[
 \E[(f_P-\Psi(P))r_l]=\E[D^*r_l].
\]
Conditional centering gives $\E[q(X)r_l]=0$. Since $\|\1_{F_m}(f_P-m_P)\|_2\le\sqrt m$,
\[
\left|\E[(f_P-m_P)\{r_l-\phi\1_{F_m}r^*\}]\right|
\le\sqrt m\,\|r_l-\phi\1_{F_m}r^*\|_2\longrightarrow0.
\]
The right side of the score identity converges by $r^*\in L^2(P)$. Thus
\[
 \E[\phi(X)\1_{F_m}(f_P-m_P(X))r^*]
       =\E[\phi(X)\1_{F_m}(r^*)^2].
\]
As $\phi$ was arbitrary, on $F_m$,
\[
 \E[(f_P-m_P(X))r^*\mid X]=c(X).
\]
Conditional Cauchy--Schwarz now gives $\Var(f_P\mid X)c(X)\ge c(X)^2$; where $c(X)>0$, $\Var(f_P\mid X)\ge c(X)$.  Exhausting over $m$ proves this where $v_f$ is finite; where it is infinite the inequality is automatic.  A zero conditional oracle variance, when allowed in a singular extension, needs only nonnegativity. Together with \eqref{eq:sbridge-conditional-mean},
\[
 \Var(f_P)\ge\Var(m_P(X))+\E[c(X)].
\]
If one globally integrable full-slot member realizes $f_P=m_P+r^*$, it attains both inequalities.  Thus the essential infimum of the members' conditional variances equals $c(X)$, and the overall variational infimum equals the classical bound of this local experiment.  This proof does not first remove ancillary between-arm intercepts or inactive-arm residual terms. Full-slot realization is a separate requirement from the evaluating-law $L^2$ property of $D^*$.

\subsection{Nuisance-dependent branches and the scope of the bridge}

The gradients in the bridge are conditional derivatives at $P$; they need not be evaluations of fixed, nuisance-free identifying scores.  For example, let $A$ be a finite instrument, $C$ an observed finite cell, $\pi_z=P(A=z\mid X)$, and $p_{zc}=P(C=c\mid X,A=z)$.  For a branch
\[
 b_j(P)=\alpha_j+\sum_{z,c}B_{jzc}p_{zc},\qquad
 M_{jz}=\sum_cB_{jzc}p_{zc},
\]
the centered truth DR signal is
\begin{equation}\label{eq:sbridge-BP-gradient}
 \zeta_j=
 \sum_z\frac{\1\{A=z\}}{\pi_z(X)}
      \{B_{jzC}(X)-M_{jz}(X)\}.
\end{equation}
For a bounded tilt with score $s$, conditional integration gives
\[
 p_{zc}(P_t)-p_{zc}(P)
 =\frac{t\,\E[(\1\{C=c\}-p_{zc})s\mid X,A=z]}
        {1+t\E[s\mid X,A=z]}.
\]
The bounded denominators imply \eqref{eq:sbridge-expansion} when the branch gradients are in $L^2(P)$, for example under bounded branch coefficients and uniform instrument overlap. The branch-gradient projection therefore applies to conditional Balke--Pearl branches. In the unrestricted observed-law model, sufficiently small bounded tilts are feasible. Restrictions on the cell model determine which of these paths remain feasible and hence the tangent space used for the classical bound. The BP finite-substitution and realization arguments identify its exact variational floor; the projection describes the corresponding feasible local experiment.

Finally, the score-span condition cannot be omitted from an assertion about arbitrary declarations.  In the normal location model $Y\sim N(\theta,1)$, declare both nuisance components to be $\theta$ and take $\kappa$ constant.  One-component compatible comparisons then force the same $\theta$, so (B) only requires truth unbiasedness.  The full-slot map $f(y,e_1,e_2)=y\1\{e_1\ne0\}$ satisfies that requirement.  The Hodges estimator $\overline Y_n\1\{|\overline Y_n|>n^{-1/4}\}$ has the associated asymptotic linear expansion at every fixed $\theta$, but is not regular at zero.  This explains why the bridge uses specific feasible score spaces and a nonvacuous substitution condition.  It also explains why exact finite-substitution robustness can have an additional variance cost in structurally constrained persistent-tie models: their finite identities need not reduce to the first-order constraints of this section.

\section{Numerical verifications}\label{ssec:num-checks}

This section provides the analytical benchmarks and full numerical results for the comparison with the Whitehouse estimator in Section~\ref{sec:numerics}. Oracle calculations isolate the variance predicted by Corollary~\ref{cor:whitehouse-otr-dominance}, and paired cross-fitted implementations evaluate the resulting gain in finite samples. The additional equal-weight estimator isolates the contribution of adaptive weighting, while the Xu--Guo procedure supplies an optimality control. We report variance, bias, and coverage together.

For clarity, all Monte Carlo summaries use the following definitions.  For a method $m$, let $\widehat\psi_{m,r}$ and $\widehat{\rm se}_{m,r}$ denote the estimate and estimated standard error in replication $r=1,\ldots,R$, let $\overline\psi_m=R^{-1}\sum_r\widehat\psi_{m,r}$, and set
\[
 s_m^2=\frac{1}{R-1}\sum_{r=1}^R
   (\widehat\psi_{m,r}-\overline\psi_m)^2.
\]
The reported scaled variance and scaled bias are $ns_m^2$ and $\sqrt n(\overline\psi_m-\psi)$, respectively.  Empirical standard error is $s_m$, mean standard error is $R^{-1}\sum_r\widehat{\rm se}_{m,r}$, and their calibration ratio is the latter divided by $s_m$.  Coverage and mean interval length are
\[
 \frac1R\sum_{r=1}^R
 \1\{|\widehat\psi_{m,r}-\psi|\leq z_{.975}
       \widehat{\rm se}_{m,r}\},
 \qquad
 \frac{2z_{.975}}R\sum_{r=1}^R\widehat{\rm se}_{m,r}.
\]
Writing $\widehat\mu_{4,m}=R^{-1}\sum_r (\widehat\psi_{m,r}-\overline\psi_m)^4$, the displayed Monte Carlo standard error for $ns_m^2$ is the finite-fourth-moment plug-in value
\[
 n\left[\frac{\widehat\mu_{4,m}
 -(R-3)s_m^4/(R-1)}{R}\right]_+^{1/2}.
\]
A variance ratio is $s_m^2/s_{\rm R}^2$. Its percentile interval resamples the matched replication index jointly for method $m$ and its RALU reference and quantifies Monte Carlo uncertainty in the estimated ratio.

\subsection{Binary OTR: analytical benchmarks and implementation}
\label{ssec:otr-efficiency-details}

For the binary design in Section~\ref{sec:numerics}, write $v_a(x)=\sigma_a^2(x)/\pi_a(x)$.  At a tie, the RALU-optimal arm-one weight and residual variance are
\[
\lambda_1^\ast(x)
=\frac{\pi_1(x)/\sigma_1^2(x)}
 {\pi_1(x)/\sigma_1^2(x)+\pi_0(x)/\sigma_0^2(x)},
\qquad
V_{\rm R}(x)=\frac{v_0(x)v_1(x)}{v_0(x)+v_1(x)}.
\]
The equal-face weight is $1/2$, with variance $V_{\rm eq}(x)=\{v_0(x)+v_1(x)\}/4$.  Direct subtraction gives
\begin{equation}\label{eq:supp-binary-equal-gap}
V_{\rm eq}(x)-V_{\rm R}(x)
=\frac{\{v_0(x)-v_1(x)\}^2}{4\{v_0(x)+v_1(x)\}}\ge0.
\end{equation}
This is the binary specialization of Corollary~\ref{cor:whitehouse-otr-dominance}.  It also shows why a strict gap requires positive tie probability and unequal values of $\sigma_a^2/\pi_a$ on that tie set.

The exact integrated values are as follows.

\begin{center}
\begin{tabular}{lrrr}
\toprule
design & RALU variance & equal-face variance & equal/RALU\\
\midrule
unequal information, $q=0.30$ & 1.4737909583 & 1.5706767150 & 1.065739\\
unequal information, $q=0.80$ & 0.6591543777 & 0.8632477027 & 1.309629\\
information equality, $q=0.80$ & 1.2047360000 & 1.2047360000 & 1.000000\\
\bottomrule
\end{tabular}
\end{center}

The three methods are RALU, Whitehouse softmax, and the additional equal-weight estimator. Each is evaluated with true nuisance functions and with cross-fitted estimates. The oracle RALU and equal-weight scores use the true set of optimal treatments, while the corresponding feasible estimators share its tolerance estimate. The Whitehouse estimator uses the softmax value and its derivative in both implementations.

For $u=(u_0,u_1)$, the Whitehouse estimator uses
\[
w_a^\beta(u)=\frac{\exp(\beta u_a)}{\sum_b\exp(\beta u_b)},
\qquad
s_\beta(u)=\sum_a w_a^\beta(u)u_a,
\]
and
\begin{equation}\label{eq:supp-softmax-literal-gradient}
d_a^\beta(u)
=\frac{\partial s_\beta(u)}{\partial u_a}
=w_a^\beta(u)\{1+\beta[u_a-s_\beta(u)]\}.
\end{equation}
The one-step score is
\[
s_\beta\{\widehat u(X)\}
+\sum_{a=0}^1 d_a^\beta\{\widehat u(X)\}
 \frac{\1(A=a)}{\widehat\pi_a(X)}
 \{Y-\widehat u_a(X)\}.
\]
Equation~\eqref{eq:supp-softmax-literal-gradient} is the derivative in \citet[Proposition~3.2, equation~(4)]{Whitehouse2025Softmax}. Its second term is retained, including when a derivative coordinate is negative. Following their Section~5.2, equation~(11), we use margin exponent two and temperature
\[
\beta_n=1.5n^{1/6}\log\log n.
\]

The formal feasible experiment uses the correctly specified, law-specific working models
\[
\mu_a(x)=b_{a0}+b_{a1}(x-q)_+,
\qquad
\pi_1(x)=c_0+c_1x,
\qquad
\sigma_a^2(x)=s_a^2.
\]
The three core estimators share the observations, random fold vector, and cross-fitted outcome and propensity predictions. RALU and the additional equal-weight estimator also share the conditional-variance fits and tolerance active set. Fitted propensities are clipped to $[0.05,0.95]$, fitted conditional variances are bounded below by $0.02$, and the active-set tolerance is $1.2n^{-1/4}$. The learner treats $q$ as known. The comparison is made on the subclass specified in Section~\ref{sec:num-results}, where the true regressions have a common intercept and the positive-gap margin exponent is two.

We now derive the asymptotic representation of the implemented softmax estimator in these Gaussian designs. Write $S_\beta(O;\mu,\pi)$ for the score in~\eqref{eq:num-whitehouse-score}, and let $f_{\rm eq}$ be its equal-weight limit at the true nuisances. With $z=\beta(u_1-u_0)$ and $p(z)=(1+e^{-z})^{-1}$,
\[
s_\beta(u)=u_0+(u_1-u_0)p(z),\qquad
d_1^\beta(u)=p(z)+zp(z)\{1-p(z)\},\qquad d_0^\beta(u)=1-d_1^\beta(u).
\]
Both $p(z)+zp(z)\{1-p(z)\}$ and its derivative are bounded on $\R$. Thus $\sup_u\|\nabla s_\beta(u)\|\le C$ and $\sup_u\|\nabla^2s_\beta(u)\|\le C\beta$, with $C$ independent of $\beta\ge1$.

For a training fold, put $\delta_a=\widehat\mu_a-\mu_a$, $r_{\mu,n}=\max_a\|\delta_a\|_\infty$, and $r_{\pi,n}=\max_a\|\widehat\pi_a-\pi_a\|_\infty$. Below these rates denote the largest error over the fixed finite number of folds. Conditional expectation gives the exact identity
\begin{align*}
&P\{S_{\beta_n}(\widehat\mu,\widehat\pi)-S_{\beta_n}(\mu,\pi)\}\\
&\quad=\E_{P_X}\!\left[s_{\beta_n}(\widehat\mu)-s_{\beta_n}(\mu)
 -\sum_a d_a^{\beta_n}(\widehat\mu)\delta_a
 +\sum_a d_a^{\beta_n}(\widehat\mu)
       \left(1-\frac{\pi_a}{\widehat\pi_a}\right)\delta_a\right].
\end{align*}
Taylor's formula, the derivative bounds, and clipped propensities therefore yield
\begin{align*}
\left|P\{S_{\beta_n}(\widehat\mu,\widehat\pi)-S_{\beta_n}(\mu,\pi)\}\right|
 &\le C\{\beta_n r_{\mu,n}^2+r_{\mu,n}r_{\pi,n}\},\\
\|S_{\beta_n}(\widehat\mu,\widehat\pi)-S_{\beta_n}(\mu,\pi)\|_{L^2(P)}
 &\le C\{\beta_n r_{\mu,n}+r_{\pi,n}\}.
\end{align*}
The second inequality also uses the uniformly bounded conditional second moments in the displayed design. The fixed-dimensional least-squares fits give $r_{\mu,n}+r_{\pi,n}=O_P(n^{-1/2})$.

Let $\Delta=|\mu_1(X)-\mu_0(X)|$. The smoothing error is zero at a tie and is $\Delta/(1+e^{\beta\Delta})$ otherwise. On each design the density of the positive gap satisfies $g_\Delta(t)\le Ct$, so
\[
0\le\Psi(P)-P S_\beta(\mu,\pi)
=\E\!\left[\frac{\Delta}{1+e^{\beta\Delta}}\right]
\le C\int_0^\infty\frac{t^2}{1+e^{\beta t}}\,\mathrm dt
=O(\beta^{-3}).
\]
For $\beta_n=1.5n^{1/6}\log\log n$, both $\beta_n/\sqrt n\to0$ and $\sqrt n\,\beta_n^{-3}\to0$. Moreover, the bounded derivative coefficients converge to uniform weights on the optimal arms and zero on the others. Dominated convergence gives $\|S_{\beta_n}(\mu,\pi)-f_{\rm eq}\|_{L^2(P)}\to0$.

Let $\nu_k=n_k/n$, and let $\mathbb P_{n,k}$ denote the average over held-out fold $k$. With $\widehat S_k=S_{\beta_n}(\widehat\mu^{(-k)},\widehat\pi^{(-k)})$ and $S_n=S_{\beta_n}(\mu,\pi)$, the estimator satisfies
\begin{align*}
T_{n,\rm W}-\Psi(P)
&=(\mathbb P_n-P)f_{\rm eq}
 +\sum_k\nu_k(\mathbb P_{n,k}-P)(\widehat S_k-f_{\rm eq})\\
&\quad+\sum_k\nu_k P(\widehat S_k-S_n)+P S_n-\Psi(P).
\end{align*}
Conditional on the training observations for fold $k$, the variance of its centered empirical term is at most $\|\widehat S_k-f_{\rm eq}\|_{L^2(P)}^2/n_k=o_P(n_k^{-1})$. The bias bounds above control the last two terms. Conditional Chebyshev's inequality and the fixed number of folds thus give $T_{n,\rm W}-\Psi(P)=(\mathbb P_n-P)f_{\rm eq}+o_P(n^{-1/2})$, establishing the equal-weight asymptotic variance used in the comparison.

A separate sensitivity analysis uses equal-width histogram regression with $\lceil n^{1/3}\rceil$ bins.  The correctly specified, oracle, and histogram panels use $n=4{,}000,16{,}000,64{,}000$ with $2{,}000,2{,}000,1{,}000$ replications, respectively.  A paired bootstrap resamples replication indices jointly for each comparator and its RALU reference.

\subsection{Binary OTR: oracle variance comparison}
\label{ssec:otr-oracle-results}

The next table evaluates all three methods with the true nuisance functions. The equal-weight score gives the Whitehouse asymptotic variance in Corollary~\ref{cor:whitehouse-otr-dominance}; the Whitehouse rows use its implemented smoothing sequence. Comparing these rows also shows the finite-sample contribution of smoothing near treatment ties.

\begin{center}
\scriptsize
\setlength{\tabcolsep}{2.5pt}
\begin{tabular}{cccrrrc}
\toprule
$q$ & $n$ & score & $n\Var$ (MCSE) & $\sqrt n$ bias & coverage
& variance/RALU $[\text{paired }95\%\ \mathrm{MC}]$\\
\midrule
0.30 & 4,000 & RALU & 1.371 (0.045) & $0.022$ & 0.953 & 1.000\\
     &       & equal face & 1.469 (0.048) & $0.030$ & 0.957
     & 1.071 $[1.045,1.096]$\\
     &       & Whitehouse & 1.508 (0.049) & $-0.083$ & 0.959
     & 1.099 $[1.075,1.125]$\\
0.30 & 16,000 & RALU & 1.460 (0.046) & $0.008$ & 0.951 & 1.000\\
     &        & equal face & 1.581 (0.050) & $0.015$ & 0.950
     & 1.083 $[1.059,1.107]$\\
     &        & Whitehouse & 1.603 (0.051) & $-0.076$ & 0.949
     & 1.098 $[1.074,1.122]$\\
0.30 & 64,000 & RALU & 1.419 (0.060) & $0.046$ & 0.955 & 1.000\\
     &        & equal face & 1.508 (0.064) & $0.037$ & 0.958
     & 1.063 $[1.031,1.098]$\\
     &        & Whitehouse & 1.522 (0.064) & $-0.040$ & 0.955
     & 1.072 $[1.040,1.106]$\\
\midrule
0.80 & 4,000 & RALU & 0.675 (0.021) & $0.015$ & 0.948 & 1.000\\
     &       & equal face & 0.881 (0.029) & $0.039$ & 0.950
     & 1.304 $[1.249,1.359]$\\
     &       & Whitehouse & 0.916 (0.030) & $-0.176$ & 0.950
     & 1.357 $[1.303,1.416]$\\
0.80 & 16,000 & RALU & 0.668 (0.021) & $-0.002$ & 0.941 & 1.000\\
     &        & equal face & 0.859 (0.027) & $-0.005$ & 0.950
     & 1.286 $[1.231,1.343]$\\
     &        & Whitehouse & 0.898 (0.028) & $-0.244$ & 0.945
     & 1.344 $[1.284,1.403]$\\
0.80 & 64,000 & RALU & 0.639 (0.028) & $0.016$ & 0.948 & 1.000\\
     &        & equal face & 0.846 (0.038) & $0.042$ & 0.951
     & 1.323 $[1.251,1.402]$\\
     &        & Whitehouse & 0.878 (0.039) & $-0.196$ & 0.944
     & 1.374 $[1.301,1.454]$\\
\bottomrule
\end{tabular}
\end{center}

The equal-weight/RALU ratios track the exact benchmarks $1.066$ and $1.310$. The Whitehouse oracle variance is also larger than the RALU variance in both designs, with coverage close to $0.95$. Scaled bias is reported relative to the unsmoothed optimal-treatment value, so that the table includes the effect of smoothing on inference for the target of interest.

\subsection{Binary OTR: feasible comparison under correctly specified common nuisances}
\label{ssec:otr-feasible-results}

The feasible comparison uses the common nuisance fits described above. The Whitehouse rows report the primary comparison in Section~\ref{sec:num-results} over all three sample sizes; the equal-weight rows isolate the change due to variance-adaptive treatment weights.

\begin{center}
\scriptsize
\setlength{\tabcolsep}{3.5pt}
\begin{tabular}{cclrrrl}
\toprule
design & $n$ & score & $n\Var$ (MCSE) & $\sqrt n$ bias & coverage
& variance/RALU $[\text{paired }95\%\ \mathrm{MC}]$\\
\midrule
$q=.30$ & 4,000 & RALU & 1.414 (.047) & $-.115$ & .949 & 1.000\\
& & equal face & 1.516 (.050) & $-.056$ & .950 & 1.073 $[1.052,1.095]$\\
& & Whitehouse & 1.681 (.056) & $-.225$ & .946 & 1.189 $[1.148,1.231]$\\
& 16,000 & RALU & 1.481 (.047) & $-.091$ & .950 & 1.000\\
& & equal face & 1.601 (.051) & $-.047$ & .948 & 1.081 $[1.059,1.104]$\\
& & Whitehouse & 1.679 (.053) & $-.195$ & .946 & 1.134 $[1.100,1.169]$\\
& 64,000 & RALU & 1.425 (.061) & $-.021$ & .951 & 1.000\\
& & equal face & 1.512 (.064) & $-.003$ & .952 & 1.061 $[1.030,1.094]$\\
& & Whitehouse & 1.535 (.065) & $-.121$ & .948 & 1.077 $[1.041,1.116]$\\
\midrule
$q=.80$ & 4,000 & RALU & .887 (.026) & $-.595$ & .776 & 1.000\\
& & equal face & 1.001 (.033) & $-.374$ & .884 & 1.128 $[1.083,1.173]$\\
& & Whitehouse & 1.244 (.044) & $-.429$ & .896 & 1.402 $[1.317,1.497]$\\
& 16,000 & RALU & .878 (.029) & $-.395$ & .866 & 1.000\\
& & equal face & .992 (.031) & $-.269$ & .907 & 1.129 $[1.082,1.179]$\\
& & Whitehouse & 1.021 (.034) & $-.399$ & .921 & 1.163 $[1.096,1.236]$\\
& 64,000 & RALU & .668 (.030) & $-.183$ & .939 & 1.000\\
& & equal face & .879 (.039) & $-.091$ & .939 & 1.315 $[1.245,1.390]$\\
& & Whitehouse & .929 (.042) & $-.294$ & .936 & 1.390 $[1.299,1.483]$\\
\midrule
equality & 4,000 & RALU & 1.315 (.042) & $-.433$ & .908 & 1.000\\
& & equal face & 1.317 (.042) & $-.431$ & .909 & 1.0015 $[.9980,1.0053]$\\
& & Whitehouse & 1.823 (.064) & $-.594$ & .895 & 1.387 $[1.312,1.465]$\\
& 16,000 & RALU & 1.362 (.041) & $-.325$ & .916 & 1.000\\
& & equal face & 1.364 (.041) & $-.325$ & .915 & 1.0014 $[.9994,1.0033]$\\
& & Whitehouse & 1.571 (.051) & $-.480$ & .917 & 1.153 $[1.098,1.211]$\\
& 64,000 & RALU & 1.322 (.056) & $-.112$ & .931 & 1.000\\
& & equal face & 1.323 (.056) & $-.112$ & .931 & 1.0007 $[.9997,1.0018]$\\
& & Whitehouse & 1.506 (.064) & $-.361$ & .921 & 1.139 $[1.099,1.183]$\\
\bottomrule
\end{tabular}
\end{center}

The released summary also reports standard-error calibration, interval length, Wilson coverage intervals, and paired variance gaps.  At $n=64{,}000$, the RALU standard-error calibration ratios are $1.014$, $.972$, and $.946$ in the $q=.30$, $q=.80$, and equality designs.

In the high-tie-mass design, RALU coverage increases from $.776$ at $n=4{,}000$ and $.866$ at $n=16{,}000$ to $.939$ at $n=64{,}000$. At the largest sample size, Whitehouse coverage is $.936$ and its variance is $1.390$ times the RALU variance. These results show the finite-sample approach to the variance ordering together with the improvement in interval calibration.

\subsection{Sensitivity analysis with binned nuisance learners}
\label{ssec:otr-histogram-sensitivity}

The following table replaces the correctly specified nuisance models with binned regressions and evaluates the sensitivity of variance and interval coverage to nuisance estimation.

\begin{center}
\scriptsize
\setlength{\tabcolsep}{3.5pt}
\begin{tabular}{cccrrrc}
\toprule
$q$ & $n$ & score & $n\Var$ (MCSE) & $\sqrt n$ bias & coverage
& variance/RALU $[\text{paired }95\%\ \mathrm{MC}]$\\
\midrule
0.30 & 4,000 & RALU & 1.818 (0.059) & $-0.486$ & 0.912 & 1.000\\
     &       & equal face & 1.786 (0.059) & $-0.320$ & 0.933
     & 0.982 $[0.965,1.000]$\\
     &       & Whitehouse & 2.012 (0.065) & $-0.631$ & 0.907
     & 1.107 $[1.073,1.141]$\\
0.30 & 16,000 & RALU & 1.711 (0.056) & $-0.390$ & 0.925 & 1.000\\
     &        & equal face & 1.729 (0.057) & $-0.260$ & 0.936
     & 1.011 $[0.991,1.031]$\\
     &        & Whitehouse & 1.960 (0.063) & $-0.724$ & 0.903
     & 1.145 $[1.110,1.182]$\\
0.30 & 64,000 & RALU & 1.563 (0.068) & $-0.240$ & 0.943 & 1.000\\
     &        & equal face & 1.590 (0.068) & $-0.143$ & 0.945
     & 1.017 $[0.989,1.045]$\\
     &        & Whitehouse & 1.791 (0.078) & $-0.761$ & 0.913
     & 1.146 $[1.097,1.199]$\\
\midrule
0.80 & 4,000 & RALU & 1.276 (0.039) & $-0.604$ & 0.811 & 1.000\\
     &       & equal face & 1.332 (0.043) & $-0.398$ & 0.875
     & 1.044 $[1.011,1.077]$\\
     &       & Whitehouse & 1.965 (0.060) & $-1.327$ & 0.735
     & 1.540 $[1.475,1.612]$\\
0.80 & 16,000 & RALU & 1.205 (0.038) & $-0.571$ & 0.841 & 1.000\\
     &        & equal face & 1.214 (0.038) & $-0.398$ & 0.894
     & 1.008 $[0.974,1.040]$\\
     &        & Whitehouse & 1.743 (0.056) & $-1.694$ & 0.648
     & 1.447 $[1.374,1.523]$\\
0.80 & 64,000 & RALU & 0.999 (0.049) & $-0.399$ & 0.894 & 1.000\\
     &        & equal face & 1.108 (0.053) & $-0.247$ & 0.922
     & 1.109 $[1.057,1.165]$\\
     &        & Whitehouse & 1.683 (0.082) & $-1.988$ & 0.560
     & 1.684 $[1.559,1.813]$\\
\bottomrule
\end{tabular}
\end{center}

At $q=0.30$ and $n=64{,}000$, the equal-weight/RALU ratio is $1.017$ with paired interval $[0.989,1.045]$. In the $q=0.80$ design, the binned nuisance fits yield appreciable scaled bias and undercoverage: the corresponding RALU and Whitehouse coverages are $.894$ and $.560$. These results show the sensitivity of interval calibration to nuisance estimation in the high-tie-mass design.

\subsection{Binned information-equality control}\label{ssec:otr-equality-control}

In the equality control, $\sigma_0^2/\pi_0=\sigma_1^2/\pi_1=2$ on every tie.  Hence the oracle RALU and equal-face scores are observationwise identical.  Their empirical variance ratio is exactly one at every sample size.  With binned nuisance fits and estimated conditional variances, the feasible equal-weight/RALU ratios are
\[
1.0074\ [0.9994,1.0154],\quad
0.9937\ [0.9886,0.9986],\quad
0.9987\ [0.9937,1.0038]
\]
at $n=4{,}000,16{,}000,64{,}000$.  The corresponding coverage pairs (RALU/equal face) are $0.887/0.890$, $0.894/0.896$, and $0.916/0.920$, in order of increasing $n$. The oracle identity verifies the exact equality condition.  The feasible ratios remain within $0.8\%$ of one and have no consistent direction across sample sizes.  The corresponding control with correctly specified nuisance models is reported in Section~\ref{ssec:otr-feasible-results}.

The Whitehouse estimator has oracle variance ratios $1.027$, $1.038$, and $1.032$ relative to RALU in this control. Smoothing near the boundary contributes to these finite-sample differences. As the temperature diverges, its limiting variance equals the common variance of the RALU and equal-weight scores.

\subsection{Xu and Guo positive control}\label{ssec:xuguo-positive-control}

For the homoscedastic binary model, \citet{XuGuo} set $t_0(x)=\pi_1(x)$.  The RALU optimizer satisfies
\[
t_0^\ast(x)=
\frac{\pi_1(x)/\sigma_1^2(x)}
 {\pi_1(x)/\sigma_1^2(x)+\pi_0(x)/\sigma_0^2(x)}
=\pi_1(x)
\]
when $\sigma_0^2(x)=\sigma_1^2(x)$.  Thus their estimator is a positive control for attainment of the binary RALU bound and an external optimality benchmark.  To distinguish this positive control from the information-equality control, we use the $q=0.80$ outcome and covariate law with $\sigma_0^2=\sigma_1^2=1$ and constant $\pi_1=p=0.75$.  On the tie stratum, Xu--Guo and RALU both have residual variance one, whereas equal-face weighting has residual variance
\[
\frac14\left(\frac1p+\frac1{1-p}\right)
=\frac{1}{4p(1-p)}=\frac43.
\]
The variance of the conditional target is $0.004736$.  Adding the residual contribution gives
\[
 V_{\rm R}=V_{\rm XG}=0.004736+0.8(1)+0.2(1/0.75)
 =1.0714026667,
\]
whereas
\[
 V_{\rm eq}=0.004736+0.8\left(\frac43\right)+0.2(1/0.75)
 =1.3380693333,
 \qquad \frac{V_{\rm eq}}{V_{\rm R}}=1.248895.
\]
Thus Xu--Guo/RALU equality and strict equal-face/RALU inequality are separate analytical statements.

We implement their piecewise-linear rule
\[
d_s\{X;\widehat\tau,h,t_0\}
=\operatorname{clip}\{\widehat\tau(X)/h+t_0(X),0,1\}
\]
and their data-driven bandwidth.  For each outer training fold, write $I_j=I_{j,1}\mathbin{\dot\cup}I_{j,2}$, $j\in\{1,2\}$, where the two inner halves have equal size.  The estimated approximation error for a contrast trained on $I_j$ is
\[
\widehat{\operatorname{EAE}}(\widehat\tau_{I_j})
=\frac{1}{|I_{3-j}|}
\sum_{k=1}^2\sum_{i\in I_{3-j,k}}
\left\{
\widehat\tau_{I_{3-j,3-k}}(X_i)-\widehat\tau_{I_j}(X_i)
\right\}^2.
\]
It determines the fold-specific bandwidth
\[
h_{n,I_j}=C\log(n)n^{1/4}
\left\{\frac{\log n}{Cn}\vee
\widehat{\operatorname{EAE}}(\widehat\tau_{I_j})\right\}^{3/4},
\qquad C=0.05.
\]
The implementation retains the nested split structure of their Algorithms~1 and~2.  The contrast used for smoothing is trained on the opposite outer fold, the auxiliary contrast in the EAE calculation is trained on the opposite inner half, and the nuisances in the AIPW score are trained on the complement of the evaluation inner fold.  These fits are not collapsed into a single ordinary cross-fit.  The table's mean bandwidth averages $h_{n,I_1}$ and $h_{n,I_2}$ over replications.

\begin{center}
\scriptsize
\setlength{\tabcolsep}{3.5pt}
\begin{tabular}{clrrrrl}
\toprule
$n$ & score & $n\Var$ (MCSE) & $\sqrt n$ bias & coverage & mean $h$
& variance/RALU $[\text{paired }95\%\ \mathrm{MC}]$\\
\midrule
4,000 & RALU & 1.179 (.045) & $-.224$ & .934 & & 1.000\\
& equal face & 1.532 (.056) & $-.444$ & .920 & & 1.299 $[1.246,1.351]$\\
& Whitehouse & 2.402 (.092) & $-.727$ & .875 & & 2.037 $[1.901,2.184]$\\
& Xu--Guo & 1.420 (.048) & $-.085$ & .930 & .3053 & 1.204 $[1.151,1.257]$\\
16,000 & RALU & 1.139 (.039) & $-.235$ & .930 & & 1.000\\
& equal face & 1.499 (.051) & $-.429$ & .919 & & 1.316 $[1.266,1.368]$\\
& Whitehouse & 1.830 (.065) & $-.629$ & .900 & & 1.607 $[1.507,1.716]$\\
& Xu--Guo & 1.209 (.040) & $-.066$ & .947 & .1994 & 1.062 $[1.023,1.102]$\\
64,000 & RALU & 1.022 (.044) & $-.088$ & .952 & & 1.000\\
& equal face & 1.282 (.056) & $-.191$ & .949 & & 1.254 $[1.183,1.326]$\\
& Whitehouse & 1.543 (.071) & $-.486$ & .924 & & 1.509 $[1.400,1.631]$\\
& Xu--Guo & 1.126 (.050) & $-.011$ & .951 & .1256 & 1.101 $[1.058,1.147]$\\
256,000 & RALU & 1.028 (.059) & $-.002$ & .962 & & 1.000\\
& equal face & 1.248 (.071) & $-.038$ & .960 & & 1.214 $[1.118,1.320]$\\
& Whitehouse & 1.395 (.080) & $-.314$ & .942 & & 1.358 $[1.233,1.492]$\\
& Xu--Guo & 1.107 (.067) & $+.047$ & .962 & .0772 & 1.077 $[1.022,1.138]$\\
\bottomrule
\end{tabular}
\end{center}

The selected Xu--Guo bandwidth decreases from $.3053$ at $n=4{,}000$ to $.0772$ at $n=256{,}000$. Its empirical variance ratios relative to RALU are $1.101\,[1.058,1.147]$ and $1.077\,[1.022,1.138]$ at the two largest sample sizes. The analytical identity $t_0^\ast=\lambda_1^\ast$ gives their common asymptotic variance, while the table quantifies the remaining finite-sample difference. Equal weighting has the larger limiting ratio $1.248895$.

\subsection{Three-arm all-tie control}\label{ssec:three-arm-control}

To verify that the gap is not a binary-treatment artifact, consider a constant covariate, $\cA=\{1,2,3\}$, three exactly tied arms, treatment probabilities $(0.6,0.3,0.1)$, and unit conditional variances.  The RALU weights are proportional to the propensities and the exact residual variance is
\[
V_{\rm R}=\left(\sum_{a=1}^3\pi_a\right)^{-1}=1.
\]
Equal active-arm weights give
\[
V_{\rm eq}=\frac{1}{9}\sum_{a=1}^3\frac{1}{\pi_a}=\frac53,
\qquad
\frac{V_{\rm eq}}{V_{\rm R}}=\frac53.
\]
At an exact all-arm tie, the equal-weight score is the Whitehouse limiting score. With $5{,}000$ observations and $5{,}000$ oracle replications, the empirical scaled variances are $1.0194$ for RALU and $1.6664$ for equal weighting, with coverages $0.9508$ and $0.9488$. The empirical variance ratio is $1.6347\,[1.5813,1.6917]$, consistent with the exact ratio $5/3$.

\subsection{Effect of hard tie breaking}
\label{ssec:hard-argmax-diagnostic}

Hard selection chooses one treatment even at an exact tie. The following Gaussian calculation illustrates the variance contribution of this choice in a two-fold construction, using the conditional variances from the $q=0.30$ design.

With two folds, fold one is evaluated using the arm chosen by fold two and conversely.  Let $\{Z_{r,a}:r\in\{1,2\},a\in\{0,1\}\}$ be mutually independent, with $Z_{r,a}\sim\cN\{0,v_a(x)\}$, and define $S_r=\1(Z_{r,1}>Z_{r,0})$.  The within-fold independence follows from joint Gaussianity and the zero covariance of the centered arm scores, whose supports are disjoint.  The cross-selected residual in this Gaussian experiment is
\[
H_x=\{Z_{1,S_2}+Z_{2,S_1}\}/\sqrt2.
\]
Put $v=v_0(x)+v_1(x)$. Since $Z_{r,1}-Z_{r,0}\sim\cN(0,v)$, $P(S_r=1)=1/2$, and Gaussian regression gives
\[
\E[Z_{r,0}\1\{S_r=1\}]=-\frac{v_0(x)}{\sqrt{2\pi v}},\qquad
\E[Z_{r,1}\1\{S_r=1\}]=\frac{v_1(x)}{\sqrt{2\pi v}}.
\]
The corresponding moments with $S_r=0$ have the opposite signs. Independence between folds then yields
\[
\E[Z_{1,S_2}]=0,\qquad
\E[Z_{1,S_2}^2]=\frac v2,\qquad
\E[Z_{1,S_2}Z_{2,S_1}]
=\sum_{a,b=0}^1\E[Z_{1,a}\1\{S_1=b\}]\E[Z_{2,b}\1\{S_2=a\}]
=\frac v{2\pi}.
\]
Consequently
\begin{equation}\label{eq:hard-tie-var}
\Var(H_x)=\frac{v_0(x)+v_1(x)}{2}
\left(1+\frac1\pi\right).
\end{equation}
Integrating~\eqref{eq:hard-tie-var} over the tie set and adding the strict region gives
\[
1.4737909583+0.4366931744=1.9104841327.
\]
This integrated Gaussian benchmark is $29.6\%$ above the RALU floor. The increase comes from selecting one arm at a tie and from the dependence induced by using each fold to choose the treatment evaluated in the other fold.

\subsection{The product form of the nuisance remainder}

Identity~\eqref{eq:R2-product-pointwise} states that on the event $\{\widehat\kappa=\kappa(P)\}$ the conditional bias of the weighted family is a pure product of the two componentwise errors.  We evaluate the conditional bias by simulation ($8\times10^{6}$ draws) at a two-arm law with tied active set, and separately in the strict region, in four configurations.

\begin{center}
\begin{tabular}{lcccc}
\toprule
& \multicolumn{2}{c}{tie point} & \multicolumn{2}{c}{strict region}\\
\cmidrule(lr){2-3}\cmidrule(lr){4-5}
configuration & simulated & formula & simulated & formula\\
\midrule
correct nuisances        & $\phantom{-}0.00009$ & $0$ & $0.00052$ & $0$\\
outcome model wrong only & $-0.00053$ & $0$ & $0.00029$ & $0$\\
propensity wrong only    & $-0.00014$ & $0$ & $0.00019$ & $0$\\
both wrong               & $\phantom{-}0.08152$ & $0.08158$ & $0.07745$ & $0.07800$\\
\bottomrule
\end{tabular}
\end{center}

The first three rows agree with the zero bias implied by Definition~\ref{def:ralu}(B). When both nuisance components are misspecified, the fourth row agrees with the product bias in~\eqref{eq:R2-product-pointwise}.

\subsection{Tolerance-active-set recovery}

Lemma~\ref{lem:face-recovery} predicts, on the sup-norm event, that no true active arm is dropped, that the faces disagree only on a shell of probability $O(\tau^{\gamma})$, and that the integrated face bias is $O(\tau^{1+\gamma})$.  At a law with tie mass $0.30$ and margin index $\gamma=2$:

\begin{center}
\begin{tabular}{ccccc}
\toprule
$\tau$ & false exclusion & face disagreement & ratio & $\E|\text{face bias}|$\\
\midrule
$0.200$ & $0$ & $0.02324$ & \text{n/a} & $4.01\times10^{-3}$\\
$0.100$ & $0$ & $0.00566$ & $4.1$ & $4.87\times10^{-4}$\\
$0.050$ & $0$ & $0.00142$ & $4.0$ & $6.1\times10^{-5}$\\
$0.025$ & $0$ & $0.00036$ & $3.9$ & $8\times10^{-6}$\\
\bottomrule
\end{tabular}
\end{center}

In the table, halving $\tau$ reduces the disagreement probability by approximately four and the integrated bias by approximately eight, in agreement with the powers $\tau^\gamma$ and $\tau^{1+\gamma}$ for $\gamma=2$. False exclusion is zero, as guaranteed on the sup-norm event of Lemma~\ref{lem:face-recovery}.  Let $\Delta_{\mathrm{ord}}(x)$ denote the ordinary best-to-second-best gap, defined to be zero when the best arm is tied.  On the same law, $P_X\{\Delta_{\mathrm{ord}}(X)\le t\}\ge0.30$ for every $t>0$; hence a condition $P_X\{\Delta_{\mathrm{ord}}(X)\le t\}\le Ct^\gamma$ cannot hold for any $\gamma>0$.  The positive-gap condition used here excludes the tie set from its event.

\subsection{Balke--Pearl}

The eight upper-bound expressions of~\eqref{eq:bp-U-explicit} are generated as the dual vertices described there and validated against a direct solution of the response-type linear program on $1{,}624$ feasible random laws, using the fixed invocation \texttt{bp\_lp.py --n-random 1624 --seed 0}, with maximum discrepancy below $5\times10^{-15}$.  The recorded maximum discrepancy for that invocation is $2.22\times10^{-16}$. Deterministic checks include the perfect-compliance law with $A=Z$ and $Y=A$, for which the bounds are $[1,1]$ as they must be, and two laws with an unidentified effect, for which they are $[0,1]$.  The kink and singular examples of Sections~\ref{sec:bp-numerical} and~\ref{sec:bp-degenerate} are computed from the same coefficient matrix, with the reported floors confirmed by simulation.

\section{Additional details for the Card--NLSYM application}
\label{ssec:card-details}

We analyze the National Longitudinal Survey of Young Men data introduced by \citet{Card1995}, using the processed replication files deposited by \citet{Wang2017NLSData} and the covariate-assisted Balke--Pearl setup of \citet{levis2025covariate}.  The analysis includes all $3{,}010$ observations.  The instrument is proximity to a four-year college, the exposure is education beyond high school, and the binary outcome is $\1\{\text{1976 hourly wage}>537.5\text{ cents}\}$.  The full-cohort median of $537.5$ cents was fixed when constructing the analysis outcome, before nuisance fitting and cross-fitting.  We adjust for age, parental education, southern and metropolitan residence, race, IQ, and the corresponding missingness indicators.  Survey weights are normalized to sum to the sample size.  The weighted first stage is $0.117901$, with a lower $95\%$ Wald endpoint of $0.073475$.

The primary analysis uses twenty deterministic, cell-stratified five-fold partitions.  Within each instrument level, the four conditional probabilities of $(D,Y)$ are estimated by survey-weighted ridge multinomial logistic regression; the instrument propensity is estimated by survey-weighted ridge logistic regression.  Penalties are selected by training-only three-fold cross-validation.  The full conditional covariance matrix of the eight Balke--Pearl scores is then calculated from the fitted coherent cell probabilities, retaining every off-diagonal entry.

The estimated active face uses $\tau_n=0.25n^{-1/4}=0.0337519$.  RALU and the equal-face comparator share the observations, folds, nuisance estimates, and estimated active face, and differ only in their weights on that face.  The RALU weights are produced by the stabilized selector of Section~\ref{sec:bp-bound} with $\ell^1$ cap $L=5$ and guard fallback.

In the primary weighted analysis the cap activation fraction and the projection-or-fallback fraction are zero on all $20$ splits at both endpoints, so the primary weights are the unconstrained covariance-optimal weights. The largest recorded lower-endpoint $\ell^1$ norm is $3.638015$.  Over the full tolerance grid of the weighted analysis the cap affects at most $0.39458\%$ of the weighted observations, and in the boosting companion's primary tolerance at most $0.15121\%$; these quantities are recorded in the archived tolerance summaries.

The survey normalization has the usual H\'ajek ratio interpretation. For a frozen complete score $F$, let $P_s$ denote the sampling law of independent records and let $w$ be the record's survey weight.  If $0<\E_{P_s}w<\infty$ and $\E_{P_s}|wF|<\infty$, the ratio target and its influence representation are
\[
\theta_w(F)=\frac{\E_{P_s}[wF]}{\E_{P_s}w},
\qquad D_{w,F}=\frac{w\{F-\theta_w(F)\}}{\E_{P_s}w};
\]
when in addition $\E_{P_s}[D_{w,F}^2]<\infty$, the ratio estimator is asymptotically linear with influence function $D_{w,F}$. The variance comparison concerns the complete fitted scores with the same survey normalization for both methods.  The archived quantities use the sample-size scale $\widehat V=n\widehat{\mathrm{SE}}^{\,2}$ with $n=3010$; the normalization therefore cancels in their variance ratio.  When $w$ is included in the conditioning variables, its conditional $w^2$ factor is common to all active-face weights; the conditional optimization is then unchanged by that factor.

Across the twenty partitions, the componentwise median estimates and estimated standard errors are:
\begin{center}
\small
\begin{tabular}{@{}lrrrrrr@{}}
\toprule
endpoint
& \multicolumn{2}{c}{RALU}
& \multicolumn{2}{c}{equal face}
& \multicolumn{2}{c}{hard branch}\\
\cmidrule(lr){2-3}\cmidrule(lr){4-5}\cmidrule(lr){6-7}
& estimate & SE & estimate & SE & estimate & SE\\
\midrule
lower & $-0.396026$ & $0.020253$ & $-0.403418$ & $0.021676$
      & $-0.400673$ & $0.023454$\\
upper & $\phantom{-}0.523302$ & $0.021296$
      & $\phantom{-}0.528294$ & $0.023321$
      & $\phantom{-}0.525698$ & $0.025716$\\
\bottomrule
\end{tabular}
\end{center}
The median complete-score variance ratios $\widehat V_{\rm eq}/\widehat V_{\rm R}$ are $1.148601$ for the lower endpoint and $1.197891$ for the upper endpoint.  Equivalently, RALU reduces the estimated variance by $12.94\%$ and $16.52\%$, respectively.  All twenty partitions favor RALU for both endpoints. The splitwise reductions range from $7.75\%$ to $22.16\%$ for the lower endpoint and from $13.12\%$ to $23.11\%$ for the upper endpoint.

The lower-endpoint conclusion is stable across the prespecified implementation variants:
\begin{center}
\small
\begin{tabular}{@{}lrrr@{}}
\toprule
analysis & primary median ratio & grid minimum & favorable comparisons\\
\midrule
weighted ridge   & $1.148601$ & $1.026157$ & $60/60$\\
unweighted ridge & $1.163915$ & $1.040759$ & $60/60$\\
weighted boosting& $1.077519$ & $1.015398$ & $15/15$\\
\bottomrule
\end{tabular}
\end{center}
For the weighted ridge analysis, the median variance reductions at tolerance multipliers $0.125$, $0.25$, and $0.50$ are $6.21\%$, $12.94\%$, and $26.41\%$.  Thus every split favors RALU at every prespecified tolerance.  The primary active face contains multiple branches for a median $53.17\%$ of weighted observations at the lower endpoint and $66.65\%$ at the upper endpoint.

We also reproduced the published five-fold Super Learner analysis of \citet{levis2025covariate}, using \texttt{SL.glm}, \texttt{SL.rpart}, \texttt{SL.ranger}, and \texttt{SL.polymars} \citep{vanDerLaanEtAl2007SuperLearner}.  It returns direct point bounds $(-0.417,0.529)$ and the outward Wald interval $(-0.468,0.580)$, closely reproducing the published analysis.  The replication archive contains the frozen aggregate tables, exact split definitions, software versions, and code for all reported comparisons.

\section{Scope and open extensions}\label{ssec:limitations}

The conditional-variance reduction is an influence-function result.  Its specialized closed forms and attainment statements use additional structure, whose roles are summarized here.

The harmonic formula for max-arm problems requires the arm index to partition the residual observation space.  When several candidate scores use the same observation, their covariance must be retained and the relevant program is the quadratic form based on the full score covariance matrix.  The Balke--Pearl application illustrates this shared-score geometry.  The finite-arm theory does not cover continuous actions because point evaluation at a data-dependent action generally lacks an $L^2$ representer under an absolutely continuous treatment law.  Continuous-action policy learning can instead be developed under additional structure on the action dependence, such as the known-feature semiparametric value model of \citet{DemirerSyrgkanis2019}.

The moment reductions require a real-valued identifying score and the fixed-mean perturbations stated in the richness assumptions. Conditional quantiles, conditional densities, and models with binding shape constraints require a different characterization, typically in the appropriate tangent space rather than through first moments alone. Cross-covariate smoothness constraints also prevent arbitrary measurable kernel replacement.  In such a constrained model, localization and the variance program must be re-derived using constraint-preserving perturbations.  Equality with the nonparametric bound then requires a separate analysis of the constraint-preserving perturbations and their variance restrictions.

On a strict max-arm branch, the exact perturbation equation forces the active link to be affine on the bridge-reachable interval.  Along a tie-preserving path, the same equation yields the linear program of Theorem~\ref{thm:finite-path} and need not force armwise affineness. The present paper proves attainment for the affine max-arm class and for the explicit applications treated above.  Constructing one globally measurable estimator that realizes the exact perturbation program at every law remains open in the general nonlinear tied case.

For tolerance-based active-set recovery, the admissible tolerance window couples the nuisance sup-norm rate to the near-tie margin index; positive tie mass itself is allowed.  Estimated auxiliary variances and covariances may be used under the convergence conditions in the corresponding attainment theorems.

The mediation results require treatment overlap and the stated one-sided bounded dominance of the control-mediator law by the treated-mediator law.  Extensions to unbounded Radon--Nikodym transport weights would require matched moment and product-rate conditions.  The shared-score attainment arguments use bounded estimated weights and convergence of their score evaluations. The stable-rank and separated-kernel results verify these conditions through spectral bounds; the integrated criterion in Corollary~\ref{cor:bp-integrated-gap} also covers sequences with shrinking gaps when its remainder conditions hold. At an unstable rank boundary, an oracle weight may have no path-independent continuous extension, and general attainment across such boundaries remains open.

The variational benchmark is defined over globally integrable score maps satisfying exact multiple robustness on the comparison model. One estimator attains it on the separate subclass where its learning conditions hold.  Section~\ref{ssec:bridge} identifies a classical interpretation on specified feasible local experiments; it does not assert regularity along arbitrary tie-breaking alternatives.  In constrained persistent-tie models, finite substitution identities can impose a positive variance cost beyond first-order regularity. A local-minimax analysis would instead optimize worst-case risk over shrinking neighborhoods and need not yield the same value as the fixed-law RALU bound.

The main open directions are measurable attainment of the general exact perturbation program, endogenous realization of covariance paths near unstable rank changes, closed forms for identifying objects beyond conditional means, and localization within smooth models subject to nontrivial structural constraints.

\bibliographystyle{abbrvnat}
\let\doi\relax
\bibliography{references}
\end{document}